\documentclass[12pt]{article}

%%%%%%%%%%######################################### MASTER FILE ###############################################

%\documentclass[12pt]{article}%
\usepackage{amssymb, amsfonts, amsmath}
\usepackage{mathrsfs}
\usepackage{graphicx}
\usepackage[usenames,dvipsnames]{xcolor}
\usepackage{tikz}
\usepackage{float}
\usepackage{enumitem}
\usepackage{comment}
\usepackage{setspace}
\usepackage{caption}
\usepackage{subcaption}
\usepackage[hidelinks]{hyperref}
\usepackage{array}%
\usepackage{multicol}

\setcounter{MaxMatrixCols}{30}%

\newcolumntype{C}[1]{>{\centering\let\newline\\\arraybackslash\hspace{0pt}}m{#1}}
\providecommand{\U}[1]{\protect\rule{.1in}{.1in}}

% -- DIMENSIONS --
\textwidth=6.4in
\textheight=9in
\evensidemargin=0in
\oddsidemargin=0in
\topmargin=-0.6in
\topskip=0pt
\baselineskip=12pt
\parskip=2mm

\begin{comment}
\newtheorem{theorem}{Theorem}
  
\newtheorem{algo}{Algorithm}
\newtheorem{axiom}[theorem]{Axiom}
\newtheorem{case}[theorem]{Case}
\newtheorem{claim}[theorem]{Claim}
\newtheorem{conclusion}[theorem]{Conclusion}
\newtheorem{condition}[theorem]{Condition}
\newtheorem{conj}{Conjecture}
\newtheorem{cons}[theorem]{Construction}
\newtheorem{coro}[theorem]{Corollary}
\newtheorem{criterion}[theorem]{Criterion}
\newtheorem{defn}[theorem]{Definition}
\newtheorem{exercise}[theorem]{Exercise}
\newtheorem{lemma}[theorem]{Lemma}
\newtheorem{notation}[theorem]{Notation}
\newtheorem{obs}[theorem]{Observation}
\newtheorem{problem}{Problem}
\newtheorem{prop}[theorem]{Proposition}
\newtheorem{remark}[theorem]{Remark}
\newtheorem{solution}[theorem]{Solution}
\newtheorem{summary}[theorem]{Summary}
\end{comment}

%\newenvironment{proof}[1][Proof]{\noindent\textbf{#1.} }{\ \hfill \rule{0.5em}{0.5em}}
%\renewenvironment{proof}[1][Proof]{\noindent\textbf{#1.} }{\ \hfill \rule{0.5em}{0.5em}}

%--------------------------COMMANDS -------------------------------------------------

%%% Notes to self: %%%
%\newcommand{\nts}[1]{{ \noindent \color{blue} \vspace{5mm} {\bf **NOTE** }  #1}}

%%% Math Shortcuts  %%%
\newcommand{\Mod}[1]{\ (\text{mod}\ #1)}
\newcommand{\ig}[1]{\mathscr{I}(#1)}
\newcommand{\agraph}{\alpha}
\newcommand{\ag}[1]{\mathscr{A}(#1)}
\newcommand{\cp}[1]{\overline{#1}}
\newcommand{\thet}[1]{\Theta\left\langle #1\right\rangle }

\providecommand{\rad}{\operatorname{rad}}

%\renewcommand{\mod}{\operatorname{mod}\ }
%\renewcommand{\vec}[1]{\mathbf{#1}}

%--- manual proof slug

%--- special graphs

\newcommand{\Dia}{\mathfrak{D}}
\newcommand{\house}{\mathcal{H}}

\newcommand{\ntni}{\mathfrak{T}}

%--- Notation for cycle i-graphs

%%% Common Graph Things  
\newcommand{\dual}[1]{\widetilde{#1}}

%%% 2-part work around for \edge definition.  No idea how it works.  Why do you do this to yourself?
%- 4 parameter - X,x,y,Y - version
\def\ieInner(#1,#2,#3,#4){#1 \overset{#2 #3}{\sim} #4}
\def\edge#1{\ieInner(#1)}
%
%- 3 parameter - X,x,y - mid list - version
\def\ieInnerA(#1,#2,#3){\overset{#1 #2}{\sim} #3}
\def\adedge#1{\ieInnerA(#1)}
\def\sedge(#1,#2){#1\sim #2}
%
%- 5 parameter - X,x,y,Y,G - version
%\def\ieInnerB(#1,#2,#3,#4,#5){#1 \underset{#5}{\overset{#2 #3}{\sim}} #4} %True Under Version
\def\ieInnerB(#1,#2,#3,#4,#5){#1 \overset{#2 #3}{\sim_{#5}} #4} %Under to side version
\def\edgeG#1{\ieInnerB(#1)}
%
%- 3 parameter - X,Y,G - version
\def\ieInnerC(#1,#2,#3){#1 {\sim_{#3}} #2 }

%
%-  parameter - X1,X2,...Xk - version
\def\ieInnerD(#1,#2,#3,#4){#1 \sim #2 \sim \dots \sim #3 \sim #4}

%
%- no inner parameter X~Y version
\def\ieInnerE(#1,#2){#1 \sim #2}

%- tilde only version
\def\ieInnerF(#1,#2){\overset{#1 #2}{\sim}}

%%% HELP %%% \providecommand defines a new command if it isn't already defined.
%%% HELP %%% \newcommand defines a new command, and makes an error if it is already defined.
%%% HELP %%% \renewcommand redefines a predefined command, and makes an error if it is not yet defined.

%+++++ Custom Colours ++++
\definecolor{ltsky}{RGB}{0,191,255}
\definecolor{ltteal}{RGB}{0, 128, 128 }
\definecolor{medOrch}{RGB}{122,55,139}
\definecolor{royalBlue}{RGB}{65,105,225}
\definecolor{forGreen}{RGB}{34,139,34}
\definecolor{dand}{RGB}{255,193,37}
\definecolor{lightBlue}{RGB}{176,226,255}
\definecolor{lgrey}{RGB}{209,209,209}
\definecolor{lgray}{gray}{0.95}
\definecolor{mgray}{gray}{0.40}
\definecolor{mmgray}{gray}{0.60}
\definecolor{zelim}{RGB}{7,163,82}
\definecolor{zelim2}{RGB}{5,114,57}

%++++ TIKZ  GARBAGE ++++
\usetikzlibrary{fit,positioning,calc, arrows, shapes}
\usetikzlibrary{decorations.pathreplacing}
\usetikzlibrary{backgrounds}
%\usetikzlibrary{patterns}
%\usetikzlibrary{intersections}
%\usepackage{pgfplots}
\tikzstyle{std}=[ circle, draw=black,fill=black,thick, inner sep=2pt, minimum size=2.5mm]
\tikzstyle{wstd}=[ circle, draw=black,fill=white,thick, inner sep=2pt, minimum size=2.5mm]
\tikzstyle{ir}=[ circle, draw=black,fill=green,thick,  inner sep=2pt, minimum size=2mm]
\tikzstyle{mp}=[circle, draw=black,fill=Dandelion,thick,  inner sep=2pt, minimum size=2mm]
\tikzstyle{bred}=[circle, draw=black,fill=red,thick,  inner sep=2pt, minimum size=2mm]
\tikzstyle{byellow}=[circle, draw=black,fill=yellow,very thick,  inner sep=2pt, minimum size=3mm]
\tikzstyle{bgray}=[circle, draw=black,fill=mmgray,thick,  inner sep=2pt, minimum size=2mm]
\tikzstyle{bzed}=[circle, draw=black,fill=zelim,thick,  inner sep=2pt, minimum size=2mm]
\tikzstyle{smred}=[ circle, draw=black,fill=red,thick,  inner sep=1pt, minimum size=1.5mm]
\tikzstyle{bblue}=[ circle, draw=black,fill=blue,thick,  inner sep=2pt, minimum size=2.5mm]
\tikzstyle{bgreen}=[ circle, draw=black,fill=green,thick,  inner sep=2pt, minimum size=2.5mm]
\tikzstyle{regRed}=[ circle, draw=black,fill=red,thick,  inner sep=2pt, minimum size=2.5mm]
\tikzstyle{sqRed}=[rectangle, draw=black,fill=red,thick,  inner sep=2pt, minimum size=2.5mm]
\tikzstyle{regG}=[ circle, draw=black,fill=green,thick,  inner sep=2pt, minimum size=2mm]
\tikzstyle{dred}=[diamond, draw=black,fill=red,thick,  inner sep=2pt, minimum size=2mm]
\tikzstyle{tr}=[color=black, style=dotted]
\tikzstyle{sp}=[color=ProcessBlue, line width = 2pt]

\tikzstyle{bline}=[color=blue, line width = 1pt]
\tikzstyle{rline}=[color=red, line width = 1pt]
\tikzstyle{rlinelt}=[color=red, line width = 1pt,opacity=0.2]
\tikzstyle{rlinemb}=[color=red, line width = 1pt, densely dashdotted]
\tikzstyle{gline}=[color=mmgray, line width = 1pt]
\tikzstyle{zline}=[color=zelim, line width = 1pt]
\tikzstyle{ll}=[color=gray]

\tikzstyle{vertex}=[circle, fill=black, inner sep= 0, minimum size = 4]

%%% Custom Broad Arrow Shape with optional fill, border and internal text
\tikzset{broadArrow/.style={single arrow, fill=red!50, anchor=base, align=center,text width=2.8cm}}
% Example Call: \node[broadArrow,fill=white, draw=black, ultra thick, text width=1cm] at (3.5cm,0){text goes here};
% Edit from: https://tex.stackexchange.com/questions/84143/fancy-arrows-with-tikz

\pgfdeclarelayer{blob}    % declare background layer
\pgfdeclarelayer{bedge}    % declare background layer
\pgfsetlayers{bedge,blob,main}

%-- CONVEX PATH

%%%%%%%%%% STYLE NOTES %%%%%%%%%%%

%+++++ Spellings and Style ++++
% "nonplanar" - one word,  no hyphen,.

\usepackage{authblk}

\newtheorem{theorem}{Theorem}[section]

\newtheorem{cons}[theorem]{Construction}
\newtheorem{coro}[theorem]{Corollary}

\newtheorem{lemma}[theorem]{Lemma}

\newtheorem{obs}[theorem]{Observation}

\newtheorem{problem}{Problem}
\newtheorem{prop}[theorem]{Proposition}

\newenvironment{proof}[1][Proof]{\noindent\textbf{#1.} }{\ \hfill \rule{0.5em}{0.5em}}

%--------------------------------------------------------------------------------------%
%							FRONTMATTER										
%--------------------------------------------------------------------------------------%
\begin{document}

\title{\textbf{The Realizability of Theta Graphs as Reconfiguration Graphs of \linebreak Minimum Independent Dominating Sets}}
%%%%%
% This version compesses the addresses to 2 lines and uses superscripts as references.
%%%%%
% \author[1]{R. C. Brewster\thanks{Funded by Discovery Grants from the Natural Sciences and
% Engineering Research Council of Canada, RGPIN-2014-04760, 253271.}}
% \affil[1]{Department of Mathematics and Statistics\\Thompson Rivers University\\805 TRU Way\\Kamloops, B.C.\\ \textsc{Canada} V2C 0C8}
% \author[2]{C. M. Mynhardt$^{\dag}$}
% \author[2]{L. E. Teshima}
% \affil[2]{Department of Mathematics and Statistics\\University of Victoria\\PO BOX 1700 STN CSC\\Victoria, B.C.\\\textsc{Canada} V8W 2Y2 \authorcr {\small rbrewster@tru.ca, kieka@uvic.ca, lteshima@uvic.ca}}

\author{R. C. Brewster\thanks{Funded by Discovery Grants from the Natural Sciences and
Engineering Research Council of Canada, RGPIN-2014-04760, RGPIN-03930-2020.}}
\affil{Department of Mathematics and Statistics\\Thompson Rivers University\\805 TRU Way\\Kamloops, B.C.\\ \textsc{Canada} V2C 0C8}
\author{C. M. Mynhardt$^{*}$}
\author{L. E. Teshima}
\affil{Department of Mathematics and Statistics\\University of Victoria\\PO BOX 1700 STN CSC\\Victoria, B.C.\\\textsc{Canada} V8W 2Y2 \authorcr {\small rbrewster@tru.ca, kieka@uvic.ca, lteshima@uvic.ca}}

\maketitle

\begin{abstract}

The independent domination number $i(G)$ of a graph $G$ is the minimum cardinality of a maximal independent set 
of $G$, also 
called an $i(G)$-set.  The $i$-graph of $G$, denoted $\ig{G}$, is the graph whose vertices correspond to the 
$i(G)$-sets, and where two $i(G)$-sets are adjacent if and only if they differ by two adjacent vertices. Not all graphs are $i$-graph realizable, that is, given a target graph $H$, there does not necessarily exist a source graph $G$ such that $H \cong \ig{G}$. 
We consider a class of graphs called ``theta graphs'': a theta graph is the union of three internally disjoint nontrivial paths with the same two distinct end vertices. We characterize theta graphs that are $i$-graph realizable, showing that there are only finitely many that are not. We also characterize those line graphs and claw-free graphs that are $i$-graphs, and show that all $3$-connected cubic bipartite planar graphs are $i$-graphs.
\end{abstract}

\noindent\textbf{Keywords:\hspace{0.1in}}independent domination number, graph reconfiguration, $i$-graph, theta graph

\noindent\textbf{AMS Subject Classification Number 2020:\hspace{0.1in}}05C69	

\section{Introduction}
\label{sec:intro}
Our main topic is the characterization of theta graphs that are obtainable as reconfiguration graphs with respect to the minimum independent dominating sets of some graph.

In graph theory, reconfiguration problems are often concerned with solutions to a specific problem that are
vertex subsets of a graph. When this is the case, the reconfiguration problem can be viewed as 
a token manipulation problem, where
a solution subset is represented by placing a token at each vertex of the subset. Each solution is 
represented as a vertex of a new graph, referred to as a \textit{reconfiguration graph}, where adjacency between vertices corresponds to a predefined token manipulation rule called the \emph{reconfiguration step}. The reconfiguration step we consider here consists of sliding a single token along an edge between adjacent vertices
belonging to different solutions.

More formally, given a graph $G$, the \textit{slide graph} of $G$ is the graph $H$ such that each vertex of $H$ represents a solution of some problem
on $G$, and two vertices $u$ and $v$ of $H$ are adjacent if and only if the solution in $G$ corresponding
to $u$ can be transformed into the solution corresponding to $v$ by sliding a single
token along the edge $uv \in E(G)$. See \cite{MN20} for a survey on reconfiguration of colourings and dominating sets in graphs. 

We use the standard notation of $\alpha(G)$ for the independence number of a graph $G$. 
The \emph{independent domination number} $i(G)$ of $G$ is the
minimum cardinality of a maximal independent set of $G$, or, equivalently,
the minimum cardinality of an independent domination set of $G$. An independent dominating set of $G$ of cardinality $i(G)$ is also called an $i$-\emph{set} of $G$, or an $i(G)$-\emph{set}. An $\alpha$-\emph{set} of $G$, or an $\alpha(G)$-\emph{set}, is defined similarly. When $i(G)=\alpha(G)$, we say that $G$ is \emph{well-covered}.

Given a graph $G$, we consider the slide graph $\ig{G}$ of $G$, formally defined in Section \ref{sec:iGr} below, with respect to its $i$-sets. Our main result, Theorem \ref{thm:c:thetas}, concerns the class $\Theta$ of ``theta graphs'' for which we characterize, in Sections \ref{sec:c:thetaComp} and \ref{sec:c:nonthetas}, those graphs $H$ for which there exists a graph $G \in \Theta$ such that $H \cong \ig{G}$. We state known results required here in Section~\ref{sec:previous}. We introduce the technique we use for theta graphs in Section \ref{sec:c:line}, where we apply it to the simpler problem of characterizing line graphs and claw-free graphs that are realizable as $i$-graphs. In Section \ref{subsubsec:c:nonThetnonI} we exhibit a graph that is neither a theta graph nor an $i$-graph, and in Section \ref{subsubsec:c:maxPlanar} we show that certain planar graphs are $i$-graphs and $\alpha$-graphs. We conclude with some open problems in Section \ref{sec:open}.

In general, we follow the notation of \cite{CLZ}. For other domination principles and
terminology, see \cite{HHS2, HHS1}.

\section{$i$-Graphs and Theta Graphs}
\label{sec:iGr}
The \emph{$i$-graph} of a graph $G$, denoted $\ig{G}=(V(\ig{G}),E(\ig{G}))$, is the graph with vertices 
representing the minimum independent dominating sets of $G$ (that is, the {$i$-sets} of $G$), and where $u,v\in V(\ig{G})$, corresponding to the $i(G)$-sets  $S_u$ and $S_v$, respectively, are 
adjacent in $\ig{G}$ if and only if there exists  $xy \in E(G)$ such that $S_u = (S_v-x)\cup\{y\}$. Imagine that there is a token on each vertex of an $i$-set $S$ of $G$.  
Then $S$ is adjacent, in $\ig{G}$, to an $i(G)$-set $S'$ if and only if a single token can slide along an 
edge of $G$ to transform $S$ into $S'$. 
Similarly, the \emph{$\agraph$-graph}  $\ag{G}$ of a graph $G$ is the slide reconfiguration graph with vertices representing the $\alpha(G)$-sets, and where adjacency is defined as for the $i$-graph.  In Section \ref{sec:c:thetaComp} we present several constructions for $i$-graphs that are also constructions for $\agraph$-graphs.

We say $H$ \emph{is an $i$-graph}, or is $i$-\emph{graph realizable}, if there exists some graph $G$ such that 
$\ig{G}\cong H$. Moreover, we refer to $G$ as the \emph{seed graph} of the $i$-graph $H$.  
Going forward, we mildly abuse notation to denote both the $i$-set $X$ of $G$ and its corresponding 
vertex in $H$ as $X$, so that $X \subseteq V(G)$ and $X \in V(H)$.

% A token is said to be \emph{frozen} if there are no available vertices to which it can slide.

In acknowledgment of the slide-action in $i$-graphs, given $i$-sets  $X = \{x_1,x_2,\dots,x_k\}$ and 
$Y=\{y_1,x_2,\dots x_k\}$ of $G$ with $x_1y_1 \in E(G)$, we denote the adjacency of $X$ and $Y$ in $\ig{G}$ 
as $\edge{X,x_1,y_1,Y}$, where we imagine transforming the $i$-set $X$ into $Y$ by sliding the token at $x_1$ 
along an edge to $y_1$.  When discussing several graphs, we use the notation $\edgeG{X,x_1,y_1,Y,G}$ 
to specify that the relationship is on $G$.  
More generally, we use $x \sim y$ to denote the adjacency of vertices $x$ and $y$ (and $x\not \sim y$ to 
denote non-adjacency); this is used in the context of both the seed graph and the target graph.

The study of $i$-graphs was initiated by Teshima in \cite{LauraD}. In \cite{BMT1}, Brewster, Mynhardt and Teshima investigated $i$-graph realizability and proved some results concerning the 
adjacency of
vertices in an $i$-graph and the structure of their associated $i$-sets in the seed graph. They presented the 
three smallest graphs that are not $i$-graphs: the diamond graph $\Dia = K_4-e$, $K_{2,3}$ and the graph
$\kappa$, which is $K_{2,3}$ with an edge subdivided. They showed that several graph classes, like trees
and cycles, are $i$-graphs. They demonstrated that known $i$-graphs can be used to construct new $i$-graphs and 
applied these results to build other classes of $i$-graphs, such as block graphs, hypercubes, forests, 
cacti, and unicyclic graphs.

The diamond $\Dia$, $K_{2,3}$, and $\kappa$ are examples of \emph{theta graphs}: graphs that 
are the union of three internally disjoint nontrivial paths with the same two distinct end vertices.
% (not to be confused with the Theta graph, or $\Theta$-graph, of computation geometry introduced by Clarkson \cite{clarkson87}).  
The graph $\thet{j,k,\ell}$, where $j \leq k \leq \ell$, is the theta graph with paths of lengths $j$, $k$,  and $\ell$.  In this notation, the three non-$i$-graph realizable examples are $\Dia \cong \thet{1,2,2}$, $K_{2,3}\cong\thet{2,2,2}$, and $\kappa \cong \thet{2,2,3}$.

%================================== SECTION: Previous Results  =============================	

 \section{Previous Results}
 \label{sec:previous}

 To begin, we state several useful observations and lemmas from \cite{BMT1, LauraD} about the structure of $i$-sets within given $i$-graphs. Given a set $S \subseteq V(G)$ and a vertex $v \in S$, the \emph{private neighbourhood} of $v$ with respect to $S$ is the set $\mathrm{pn}(v,S)=N[v]-N[S-\{v\}]$, and the \emph{external private neighbourhood} of $v$ with
respect to $S$ is the set $\mathrm{epn}(v,S)=\mathrm{pn}(v,S)-\{v\}$.

%>>>S Obs {obs:i:edge}
\begin{obs} \label{obs:i:edge} \emph{\cite{BMT1, LauraD}}
	Let $G$ be a graph and $H=\ig{G}$.  A vertex $X \in V(H)$ has $\deg_H(X)\geq 1$ if and only if for some $v \in X \subseteq V(G)$, there exists $u\in \mathrm{epn}(v,X)$ such that $u$ dominates $\mathrm{pn}(v,X)$.  
\end{obs}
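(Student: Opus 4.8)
The plan is to prove the more explicit statement from which the observation follows at once: for a fixed vertex $v\in X$ and a fixed vertex $u\notin X$ with $vu\in E(G)$, the set $Y=(X-v)\cup\{u\}$ is an $i(G)$-set if and only if $u\in\mathrm{epn}(v,X)$ and $u$ dominates $\mathrm{pn}(v,X)$. Granting this, $\deg_H(X)\geq 1$ holds exactly when such a $Y$ exists, which happens exactly when there are $v\in X$ and $u\in\mathrm{epn}(v,X)$ with $u$ dominating $\mathrm{pn}(v,X)$, since $u\in\mathrm{epn}(v,X)$ already forces $u\notin X$, $u\neq v$, and $vu\in E(G)$. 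Throughout I would use the standard fact that an independent set is dominating if and only if it is maximal, so that $N[X]=V(G)$, i.e.\ $N[X-v]\cup N[v]=V(G)$.

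First I would check that $Y$ is independent if and only if $u\notin N[X-v]$: the set $X-v$ is already independent, and $Y$ adjoins the single vertex $u$, which is distinct from and non-adjacent to every vertex of $X-v$ precisely when $u\notin N[X-v]$. Combining this with $u\in N[v]$ (from $vu\in E(G)$) and $u\neq v$, independence of $Y$ is equivalent to $u\in (N[v]-N[X-v])-\{v\}=\mathrm{epn}(v,X)$.

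Next, assuming $Y$ is independent, I would check that $Y$ is dominating if and only if $u$ dominates $\mathrm{pn}(v,X)$. Here $N[Y]=N[X-v]\cup N[u]$, and since $N[X-v]\cup N[v]=V(G)$, the vertices of $G$ possibly missed by $N[X-v]$ are exactly those of $N[v]-N[X-v]=\mathrm{pn}(v,X)$; hence $N[Y]=V(G)$ if and only if $\mathrm{pn}(v,X)\subseteq N[u]$. Since $|Y|=|X|=i(G)$ automatically (as $v\in X$ and $u\notin X$), such a $Y$ is an $i(G)$-set distinct from $X$, and $X\sim Y$ in $H$ via $\edge{X,v,u,Y}$; conversely, any neighbour of $X$ in $H$ arises this way.

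There is no genuine obstacle here; the proof is a direct translation of the definitions. The only points worth stating carefully are that $\mathrm{pn}(v,X)$ is precisely the set of vertices whose domination is ``lost'' when the token leaves $v$ — everything outside $N[v]-N[X-v]$ stays dominated by the untouched tokens of $X-v$ — and that one must verify $u\notin X$ so that the slide yields another set of size $i(G)$ rather than a smaller dominating set, which is automatic once $u\in\mathrm{epn}(v,X)$.
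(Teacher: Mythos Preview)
Your argument is correct and is the natural one. The paper itself does not prove this observation; it is quoted from \cite{BMT1, LauraD} as a known fact, so there is no ``paper's own proof'' to compare against here. Your reduction to the equivalence ``$Y=(X-v)\cup\{u\}$ is an $i(G)$-set $\Leftrightarrow$ $u\in\mathrm{epn}(v,X)$ and $u$ dominates $\mathrm{pn}(v,X)$'' is exactly the right granularity, and both halves (independence $\Leftrightarrow u\in\mathrm{epn}(v,X)$; domination $\Leftrightarrow \mathrm{pn}(v,X)\subseteq N[u]$, using $V(G)-N[X-v]=\mathrm{pn}(v,X)$) are clean.
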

%<<<E Obs{obs:i:edge}

For some path $X_1,X_2,\dots,X_k$ in $H$, at most one vertex of the $i$-set is changed at each step, and so $X_1$ and $X_k$ differ on at most $k-1$ vertices.  This is yields the following immediate observation.  

%>>>S Obs {obs:i:dist}
\begin{obs} \label{obs:i:dist} \emph{\cite{BMT1, LauraD}}
	Let $G$ be a graph and $H = \ig{G}$.  Then for any $i$-sets $X$ and $Y$ of $G$, the distance $d_H(X,Y) \geq |X-Y|$.  
\end{obs}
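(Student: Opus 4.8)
The plan is to argue directly from the slide rule defining $E(\ig{G})$, tracking along a shortest path joining $X$ and $Y$ how far the current $i$-set is from the fixed target $Y$. First I would fix a shortest path $X = X_0, X_1, \dots, X_d = Y$ in $H$, where $d = d_H(X,Y)$. By the definition of adjacency in $\ig{G}$, each edge $X_{j-1}X_j$ of this path corresponds to a single token slide, so $X_j = (X_{j-1} - x_j)\cup\{y_j\}$ for some $x_jy_j \in E(G)$; in particular, passing from $X_{j-1}$ to $X_j$ deletes exactly one vertex from the $i$-set and inserts exactly one.

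Next I would set $f(j) = |X_j - Y|$, the number of tokens of $X_j$ sitting on vertices outside $Y$. Then $f(0) = |X - Y|$ and $f(d) = 0$, and since each step removes one element and adds one element, $f$ can decrease by at most $1$ per step, i.e. $f(j) \ge f(j-1) - 1$ for every $j$. Chaining these inequalities from $j=1$ to $j=d$ gives $0 = f(d) \ge f(0) - d$, hence $d_H(X,Y) = d \ge |X - Y|$, as claimed. (Equivalently: after $d$ single-token moves at most $d$ tokens can have changed position relative to their starting vertices, so $X$ and $Y$ can differ in at most $d$ vertices.)

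I do not anticipate any real obstacle here; the observation is immediate once the bookkeeping is set up. The only points requiring care are that a single reconfiguration step removes exactly one element and inserts exactly one — which is precisely the slide rule defining $E(\ig{G})$ — and the remark that $|X - Y| = |Y - X|$ since $|X| = |Y| = i(G)$, so the statement is symmetric in $X$ and $Y$. The identical argument also establishes the analogous bound for $\ag{G}$.
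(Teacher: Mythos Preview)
Your argument is correct and is exactly the paper's approach: the paper simply notes that along any path $X_1,\dots,X_k$ in $H$ at most one vertex of the $i$-set changes per step, so $X_1$ and $X_k$ differ in at most $k-1$ vertices, which is precisely your bookkeeping with $f(j)=|X_j-Y|$ spelled out in more detail.
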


%<<<E obs {obs:i:dist}

%>>>S  Lemma {lem:i:claw}
\begin{lemma} \label{lem:i:claw} \emph{\cite{BMT1, LauraD}}
	Let $G$ be a graph with $H = \ig{G}$.  Suppose $XY$ and $YZ$ are edges in $H$ with $\edge{X,x,y_1,Y}$ and $\edge{Y,y_2,z,Z}$, with $X \neq Z$.  Then $XZ$ is an edge of $H$ if and only if $y_1=y_2$.
\end{lemma}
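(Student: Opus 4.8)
The plan is to prove the two implications separately, writing $X=\{x_1,\dots,x_k\}$ with $x=x_1$, so that $Y=\{y_1,x_2,\dots,x_k\}$ and $y_2\in Y$. The only structural input needed is Observation~\ref{obs:i:dist}: two $i$-sets that differ on more than one vertex cannot be adjacent in $H$. Everything else follows from the fact that $X$ is independent together with the requirement that each set produced by a slide is dominating, plus some bookkeeping about which of $x,y_1,y_2,z$ can coincide (all handled by the absence of loops on the edges $xy_1$, $y_2z$ and by the equal cardinalities of the $i$-sets).

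For the ($\Leftarrow$) direction, suppose $y_1=y_2=:y$. Since $|Y|=|X|$ we get $y\notin X-x$, hence $Y-y=X-x$ and therefore $Z=(Y-y_2)\cup\{z\}=(X-x)\cup\{z\}$. As $|Z|=|X|$ and $X\neq Z$, we have $z\notin X$ and $z\neq x$, so $X$ and $Z$ agree except that $x\in X\setminus Z$ and $z\in Z\setminus X$. It remains to verify $xz\in E(G)$: since $Z$ dominates $x$ and $X$ is independent, no vertex of $X-x$ lies in $N[x]$ (none equals $x$ and none is adjacent to $x$), so $z$ must dominate $x$, giving $x\in N[z]$ and thus $xz\in E(G)$. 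Hence $\edge{X,x,z,Z}$ and $XZ\in E(H)$.

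For the ($\Rightarrow$) direction, I argue the contrapositive: assuming $y_1\neq y_2$, I show $|X\setminus Z|\geq 2$, which by Observation~\ref{obs:i:dist} forces $d_H(X,Z)\geq 2$ and hence $XZ\notin E(H)$. Since $y_2\in Y=(X-x)\cup\{y_1\}$ and $y_2\neq y_1$, we have $y_2\in X-x\subseteq X$; as $X$ is independent and $y_2z\in E(G)$, this yields $z\notin X$. Now $Z=((X-x)-y_2)\cup\{y_1,z\}$, and one checks $x\notin Z$ (it is not $y_1$, since $xy_1\in E(G)$ and $G$ is loopless; not $z$, since $z\notin X$; and not in $(X-x)-y_2$) and $y_2\notin Z$ (it is not $y_1$ by assumption; not $z$, since $y_2z\in E(G)$; and not in $(X-x)-y_2$). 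As $y_2\in X-x$ gives $x\neq y_2$, we conclude $\{x,y_2\}\subseteq X\setminus Z$, so $|X\setminus Z|\geq 2$, completing the argument.

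I do not expect a serious obstacle here; the proof is short, and the only point requiring mild care is the case analysis ruling out coincidences among $x,y_1,y_2,z$, all of which are dispatched by looplessness and cardinality equality. Conceptually, the lemma says adjacency propagates along a ``fan'' of slides sharing a common landing vertex $y_1=y_2$, and the genuine content is entirely carried by Observation~\ref{obs:i:dist} together with the independence of $X$.
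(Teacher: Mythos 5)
Your proof is correct: the ``if'' direction rightly uses that $Z$ must dominate $x$ while independence of $X$ rules out every other dominator, forcing $xz\in E(G)$, and the ``only if'' direction rightly reduces to $\{x,y_2\}\subseteq X\setminus Z$ when $y_1\neq y_2$ and then applies Observation~\ref{obs:i:dist}. The paper itself states Lemma~\ref{lem:i:claw} without proof, citing \cite{BMT1, LauraD}, and your argument is the standard one that those tools suggest, so there is nothing further to reconcile.
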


\noindent Combining the results from Lemma \ref{lem:i:claw} with Observation \ref{obs:i:dist} yields the following observation for vertices of $i$-graphs at distance $2$. 

%>>>S Obs {obs:i:d2}
\begin{obs} \label{obs:i:d2} \emph{\cite{BMT1, LauraD}}
	Let $G$ be a graph and $H = \ig{G}$.  Then for any $i$-sets $X$ and $Y$ of $G$, if $d_H(X,Y) = 2$, then $|X-Y|=2$.  
\end{obs}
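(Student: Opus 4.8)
The plan is to trap $|X-Y|$ between $1$ and $2$ and then rule out the middle value. By Observation~\ref{obs:i:dist}, $d_H(X,Y)=2$ gives $|X-Y|\le 2$. Since $d_H(X,Y)=2$ we have $X\ne Y$, and because any two $i$-sets of $G$ have the same cardinality $i(G)$, $X\ne Y$ forces $|X-Y|=|Y-X|\ge 1$. Thus $|X-Y|\in\{1,2\}$, and the whole statement reduces to showing that $|X-Y|=1$ is impossible.

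To do that I would isolate the subclaim: \emph{if $X$ and $Y$ are $i$-sets with $|X-Y|=1$, then $X$ and $Y$ are adjacent in $\ig{G}$.} Put $S=X\cap Y$ and write $X=S\cup\{x\}$, $Y=S\cup\{y\}$ with $x\ne y$ and $x,y\notin S$. Since $X$ is an $i$-set it is a \emph{maximal} independent set, so the vertex $y\notin X$ has a neighbour in $X$; as $S\cup\{y\}=Y$ is independent, $y$ has no neighbour in $S$, hence $xy\in E(G)$. Then $Y=(X\setminus\{x\})\cup\{y\}$ with $xy\in E(G)$ and both $X,Y$ being $i$-sets, which is exactly the reconfiguration step defining adjacency in $\ig{G}$; so $XY\in E(\ig{G})$. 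Granting the subclaim, $|X-Y|=1$ would give $d_H(X,Y)=1$, contradicting $d_H(X,Y)=2$; hence $|X-Y|=2$.

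For the variant that more literally "combines Lemma~\ref{lem:i:claw} with Observation~\ref{obs:i:dist}", take a common neighbour $W$ of $X$ and $Y$ in $H$, say $\edge{X,x_1,w_1,W}$ and $\edge{W,w_2,y_1,Y}$. As $d_H(X,Y)=2$ we have $XY\notin E(\ig{G})$, so Lemma~\ref{lem:i:claw} forces $w_1\ne w_2$; consequently $w_2\in X\setminus\{x_1\}$ and $Y=(X\setminus\{x_1,w_2\})\cup\{w_1,y_1\}$. A short case split on whether $y_1\in X$ shows that $|X-Y|=1$ is possible only when $y_1=x_1$, i.e.\ $Y=(X\setminus\{w_2\})\cup\{w_1\}$ with $w_2\in X$ and $w_1\notin X$ --- and at that point one must invoke the maximality of $X$ exactly as above: it forces $w_1w_2\in E(G)$, so the single slide $w_2\to w_1$ already realises $XY$ as an edge, contradicting $d_H(X,Y)=2$. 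The main obstacle is precisely this last point: Lemma~\ref{lem:i:claw} alone does not eliminate $|X-Y|=1$, since the configuration $y_1=x_1$ (a token leaves a slot along an edge while a second token slides back into it) passes the lemma's test; what actually finishes the argument is the structural fact that an $i$-set is a maximal independent set, producing the "missing" edge $w_1w_2$ --- equivalently, the clean subclaim that $i$-sets at set-difference $1$ are always adjacent in $\ig{G}$. Everything else is routine bookkeeping.
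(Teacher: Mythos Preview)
Your argument is correct. The paper itself offers no detailed proof---only the sentence ``Combining the results from Lemma~\ref{lem:i:claw} with Observation~\ref{obs:i:dist} yields the following observation''---so your write-up is strictly more thorough than what appears there.

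One remark on your second variant: once you isolate the critical case $y_1=x_1$, you do not need maximality to dispose of it. The notation $\edge{W,w_2,y_1,Y}$ already encodes $w_2y_1\in E(G)$; if $y_1=x_1$ this gives $w_2x_1\in E(G)$, but you have already shown $w_2\in X\setminus\{x_1\}$, so $w_2$ and $x_1$ both lie in the independent set $X$---an immediate contradiction. Hence the configuration $y_1=x_1$ is ruled out by independence alone, and the paper's hint ``Lemma~\ref{lem:i:claw} plus Observation~\ref{obs:i:dist}'' really does finish the job without the extra maximality step. Your first approach, proving directly that $i$-sets with $|X-Y|=1$ are adjacent in $\ig{G}$, is a clean self-contained alternative that sidesteps Lemma~\ref{lem:i:claw} altogether.
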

%<<<E Obs {obs:i:d2}

%>>>S: Lemma {lem:i:K1m}
\begin{lemma} \label{lem:i:K1m}   \emph{\cite{BMT1, LauraD}}
Let $G$ be a graph and  $H = \ig{G}$.  Suppose $H$ contains an induced $K_{1,m}$ with vertex set $\{X,Y_1,Y_2, \dots, Y_m\}$ and $\deg_H(X)=m$.  Let $i \neq j$.  Then in $G$,  
	\begin{enumerate}[itemsep=1pt, label=(\roman*)]
		\item $X-Y_i \neq X-Y_j$, \label{lem:K1m:a}
		\item  $|Y_i \cap Y_j| = i(G)-2$, and  \label{lem:K1m:b}
		\item $m \leq i(G)$. \label{lem:K1m:c}
	\end{enumerate}
\end{lemma}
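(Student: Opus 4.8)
The plan is to read the claimed facts off in the seed graph $G$ and then push them through Lemma~\ref{lem:i:claw} and Observation~\ref{obs:i:d2}. Put $k=i(G)$. For each $t\in\{1,\dots,m\}$, since $XY_t\in E(H)$ there exist $x_t\in X$ and $y_t\in V(G)\setminus X$ with $x_ty_t\in E(G)$ and $Y_t=(X-x_t)\cup\{y_t\}$; in particular $X-Y_t=\{x_t\}$ and $Y_t-X=\{y_t\}$, so $X-Y_i=X-Y_j$ is equivalent to $x_i=x_j$.

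For \ref{lem:K1m:a}, suppose $x_i=x_j$ for some $i\neq j$. Since $Y_i\neq Y_j$ this forces $y_i\neq y_j$, so $Y_i=(X-x_i)\cup\{y_i\}$ and $Y_j=(X-x_i)\cup\{y_j\}$ differ in exactly one vertex. Applying Lemma~\ref{lem:i:claw} to $Y_i\sim X\sim Y_j$ in $H$, with $\edge{Y_i,y_i,x_i,X}$ followed by $\edge{X,x_i,y_j,Y_j}$ (whose common ``middle'' token is $x_i$), gives $Y_i\sim Y_j$ in $H$, contradicting that the $K_{1,m}$ is induced. Hence $x_i\neq x_j$ whenever $i\neq j$, which is \ref{lem:K1m:a}; as a bonus, $x_1,\dots,x_m$ are then distinct elements of $X$, so $m\le|X|=i(G)$, giving \ref{lem:K1m:c}.

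For \ref{lem:K1m:b}, fix $i\neq j$. By \ref{lem:K1m:a} we have $x_i\neq x_j$, so $X-\{x_i,x_j\}\subseteq Y_i\cap Y_j$ and $|X-\{x_i,x_j\}|=k-2$. As $y_i,y_j\notin X$ and $Y_i\subseteq (X-x_i)\cup\{y_i\}$, $Y_j\subseteq(X-x_j)\cup\{y_j\}$, the intersection can be strictly larger only if $y_i=y_j$. But if $y_i=y_j$ then $Y_i$ and $Y_j$ differ in the single vertex $x_j\in Y_i\setminus Y_j$ (equivalently $x_i\in Y_j\setminus Y_i$), so $|Y_i-Y_j|=1$; on the other hand $Y_i\not\sim Y_j$ (induced star) and $Y_i\neq Y_j$ give $d_H(Y_i,Y_j)\ge 2$, while $Y_i\sim X\sim Y_j$ gives $d_H(Y_i,Y_j)\le 2$, so $d_H(Y_i,Y_j)=2$ and Observation~\ref{obs:i:d2} forces $|Y_i-Y_j|=2$, a contradiction. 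Hence $y_i\neq y_j$, so $Y_i\cap Y_j=X-\{x_i,x_j\}$ and $|Y_i\cap Y_j|=k-2=i(G)-2$.

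I expect the only delicate step to be \ref{lem:K1m:b}: correctly identifying the single vertex that distinguishes $Y_i$ and $Y_j$ in the case $y_i=y_j$, and then combining the non-adjacency $Y_i\not\sim Y_j$ with the length-two walk through $X$ to pin $d_H(Y_i,Y_j)$ down to exactly $2$ so that Observation~\ref{obs:i:d2} applies. Parts \ref{lem:K1m:a} and \ref{lem:K1m:c} should be routine once Lemma~\ref{lem:i:claw} is set up with the correct orientation of the token slides.
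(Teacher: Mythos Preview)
Your proof is correct. The paper does not include its own proof of this lemma (it is quoted from \cite{BMT1, LauraD}), so there is no in-paper argument to compare against; your use of Lemma~\ref{lem:i:claw} for part~\ref{lem:K1m:a} and the distance pinning via Observation~\ref{obs:i:d2} for part~\ref{lem:K1m:b} is exactly the natural approach and matches how these tools are deployed elsewhere in the paper. One minor remark: you never invoke the hypothesis $\deg_H(X)=m$, and indeed your argument only needs that the $K_{1,m}$ is induced (so that $Y_i\not\sim Y_j$); the degree condition appears to be present for clarity of setup rather than as an essential ingredient.
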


% >>>S:  Prop  {prop:i:C4struct}
\begin{prop} \label{prop:i:C4struct}  \emph{\cite{BMT1, LauraD}}
	Let $G$ be a graph and $H=\ig{G}$.  Suppose $H$ has an induced $C_4$ with vertices $X,A,B,Y$, where $XY, AB \notin E(H)$.   Then, without loss of generality, the set composition of $X,A,B,Y$ in $G$, and the edge labelling  of  the induced $C_4$ in $H$, are as in Figure \ref{fig:i:C4struct}.
\end{prop}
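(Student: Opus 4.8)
The plan is to trace the four $i$-sets through $G$ using symmetric differences, with the two distance-$2$ conditions supplying all the rigidity. Since $X,A,B,Y$ induce a $C_4$ in $H$ with $XY,AB\notin E(H)$, the non-adjacent pair $X,Y$ is joined by the path $X\sim A\sim Y$, so $d_H(X,Y)=2$, and likewise $d_H(A,B)=2$; Observation~\ref{obs:i:d2} then gives $|X-Y|=|A-B|=2$. Set $W=X\cap Y$, so $|W|=i(G)-2$, and write $X\setminus Y=\{p,q\}$, $Y\setminus X=\{r,s\}$. Since $\{p,q\}\subseteq X$ and $\{r,s\}\cap X=\varnothing$, the vertices $p,q,r,s$ are distinct and none lies in $W$, whence $X=W\cup\{p,q\}$ and $Y=W\cup\{r,s\}$.

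Next I would pin down $A$. Writing $\edge{X,x_0,a_0,A}$ and $\edge{A,a_1,y_0,Y}$, we have $x_0\in X$, $a_0\in A\setminus X$, $A=(X-x_0)\cup\{a_0\}$, $a_1\in A$, $y_0\in Y\setminus A$, $Y=(A-a_1)\cup\{y_0\}$, with $x_0a_0,\,a_1y_0\in E(G)$. If $a_0=a_1$ then $Y=(A-a_0)\cup\{y_0\}=(X-x_0)\cup\{y_0\}$, forcing $|X-Y|\le 1$, a contradiction (equivalently, apply Lemma~\ref{lem:i:claw} to $X\sim A\sim Y$); so $a_0\neq a_1$, which gives $a_1\in A-a_0=X-x_0$, i.e. $a_1\in X$ and $a_1\neq x_0$. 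Substituting, $Y=(X-x_0-a_1)\cup\{a_0,y_0\}$. Now $y_0\notin A=(X-x_0)\cup\{a_0\}$ leaves only $y_0=x_0$ or $y_0\notin X$; the first would give $Y=(X-a_1)\cup\{a_0\}$ and $|X-Y|\le 1$, so $y_0\notin X$. Since $y_0\notin A$ while $a_0\in A$ we get $a_0\neq y_0$, and therefore $X\setminus Y=\{x_0,a_1\}$, $Y\setminus X=\{a_0,y_0\}$, $A=W\cup\{a_1,a_0\}$. Relabelling so that $p=x_0$, $q=a_1$, $r=a_0$, $s=y_0$ — this is the ``without loss of generality'' in the statement — gives $A=W\cup\{q,r\}$, with $XA$ realized by $\edge{X,p,r,A}$ along $pr\in E(G)$ and $AY$ by $\edge{A,q,s,Y}$ along $qs\in E(G)$.

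The same argument, run with $B$ in place of $A$ (writing $\edge{X,x_0',b_0,B}$ and $\edge{B,b_1,y_0',Y}$), gives $B=W\cup\{b_1,b_0\}$ with $\{x_0',b_1\}=\{p,q\}$ and $\{b_0,y_0'\}=\{r,s\}$, so $b_1\in\{p,q\}$ and $b_0\in\{r,s\}$; hence $B$ is one of $W\cup\{q,r\}$, $W\cup\{q,s\}$, $W\cup\{p,r\}$, $W\cup\{p,s\}$. The first equals $A$ and is excluded since $A\neq B$ in an induced $C_4$. For $B=W\cup\{q,s\}$ and $B=W\cup\{p,r\}$ one checks $|A-B|=1$, contradicting $|A-B|=2$; hence $B=W\cup\{p,s\}$. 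Tracking the tokens once more yields $\edge{Y,r,p,B}$ and $\edge{B,s,q,X}$, so around the $C_4$ the edges $XA,AY,YB,BX$ are labelled by the $G$-edges $pr,qs,pr,qs$ — opposite $C_4$-edges carrying the same label — while $XY,AB\notin E(H)$. Finally, independence of the $i$-sets $X,Y,A,B$ forces $pq,rs,qr,ps\notin E(G)$, so $G[\{p,q,r,s\}]$ consists of exactly the two edges $pr$ and $qs$; this is precisely the configuration of Figure~\ref{fig:i:C4struct}.

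The one genuinely delicate point is the bookkeeping of the second paragraph, in particular ruling out $y_0=x_0$: this is where the sharp conclusion $|X-Y|=2$ (not merely $\le 2$) of Observation~\ref{obs:i:d2} is needed. The casework for $B$ is the other place to be careful, the subtlety being that $|X-Y|=2$ alone still permits $B\in\{W\cup\{q,s\},\,W\cup\{p,r\}\}$, and it is the second distance-$2$ hypothesis $|A-B|=2$ that eliminates these.
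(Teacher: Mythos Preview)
Your proof is correct and complete. Note, however, that the paper does not itself prove Proposition~\ref{prop:i:C4struct}: it is stated in the ``Previous Results'' section with citation to \cite{BMT1, LauraD}, so there is no in-paper argument to compare against. Your approach is the natural one and matches what one would expect from the cited sources: apply Observation~\ref{obs:i:d2} to both diagonals of the $C_4$ to get $|X-Y|=|A-B|=2$, trace the two token slides through $A$ to identify $A=W\cup\{q,r\}$ (the exclusions $a_0\neq a_1$ and $y_0\neq x_0$ are handled correctly, either directly or via Lemma~\ref{lem:i:claw}), and then let the constraint $|A-B|=2$ single out $B=W\cup\{p,s\}$ from the four a~priori candidates. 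The final remark that $G[\{p,q,r,s\}]$ is exactly the matching $\{pr,qs\}$ is a nice bonus, implicit in the figure via the independence of the four $i$-sets.
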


%~~~~~~~~~~~~FIG START~~~~~~~~~~~
\begin{figure}[H]
	\centering
	\begin{tikzpicture}[scale=0.8]
		\node[std,label={270:$Y=\{y_1,y_2,v_3,\dots,v_k\}$}] (y) at (0,0) {};
		\node[std,label={180:$A=\{y_1,x_2,v_3,\dots,v_k\}$}] (a) at (-2,2) {};
		\node[std,label={0:$B=\{x_1,y_2,v_3,\dots,v_k\}$}] (b) at (2,2) {};
		\node[std,label={90:$X=\{x_1,x_2,v_3,\dots,v_k\}$}] (x) at (0,4) {};
		
		\draw[thick] (x)--(a) node[midway,left=10pt,fill=white]{$\edge{X,x_1,y_1,A}$};
		\draw[thick] (x)--(b) node[midway,right=10pt,fill=white]{$\edge{X,x_2,y_2,B}$};
		\draw[thick] (a)--(y) node[midway,left=10pt,fill=white]{$\edge{A,x_2,y_2,Y}$};
		\draw[thick] (b)--(y) node[midway,right=10pt,fill=white]{$\edge{B,x_1,y_1,Y}$};
	\end{tikzpicture}			
	\caption{Reconfiguration structure of an induced $C_4$ subgraph from Proposition \ref{prop:i:C4struct}.}
	\label{fig:i:C4struct}%
\end{figure}
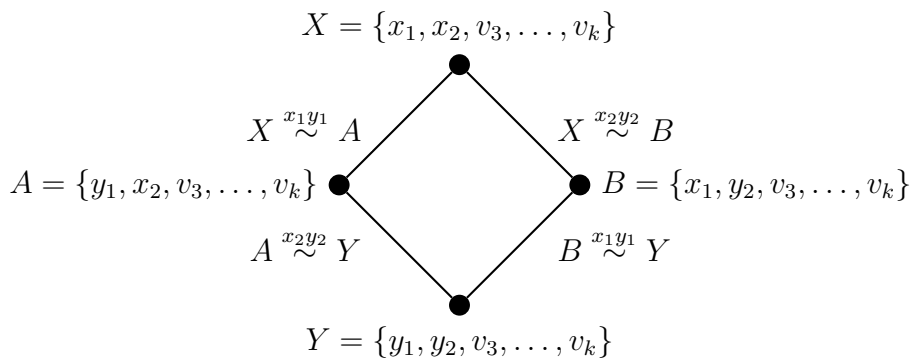
%~~~~~~~~~~~~~FIG END~~~~~~~~~~~~~

 We state the above-mentioned results regarding $\Dia $, $K_{2,3}$, and $\kappa$ here for referencing.

\begin{prop} \label{prop:i:diamond} \emph{\cite{BMT1, LauraD}}
    The graphs $\Dia $, $K_{2,3}$, and $\kappa$ are not $i$-graph realizable.
 \end{prop}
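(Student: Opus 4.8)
The plan is to treat the three graphs one at a time. In each case I would suppose for contradiction that there is a seed graph $G$ with $\ig{G}\cong H$ for $H\in\{\Dia,K_{2,3},\kappa\}$, identify $V(H)$ with the $i$-sets of $G$, and derive a contradiction from the structural results of Section~\ref{sec:previous}. The recurring engine is Observation~\ref{obs:i:d2}: two non-adjacent vertices of $H$ at distance $2$ must correspond to $i$-sets differing in exactly two vertices, so any configuration forcing such a pair to differ in only one vertex is immediately absurd. Lemma~\ref{lem:i:claw} (for the diamond) and Proposition~\ref{prop:i:C4struct} (for $K_{2,3}$ and $\kappa$) are used to expose the $i$-set structure in the first place.

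For $\Dia\cong\thet{1,2,2}$, write $V(\Dia)=\{P,Q,R,S\}$ with $PQ,PR,QR,PS,QS$ the edges and $RS$ the non-edge, so that $PQR$ and $PQS$ are triangles. Applying Lemma~\ref{lem:i:claw} to the path $P\sim Q\sim R$ with the chord $PR$ forces the vertex of $G$ gained in the slide $P\to Q$ to be the one lost in the slide $Q\to R$; unwinding this, there is a set $T$ with $|T|=i(G)-1$ and $P=T\cup\{p\}$, $Q=T\cup\{q\}$, $R=T\cup\{r\}$ for distinct $p,q,r\notin T$ that are pairwise adjacent in $G$. Crucially $T=P\cap Q$, so the same $T$ governs the triangle $PQS$ and gives $S=T\cup\{s\}$ with $s\notin T$ and $s\neq r$. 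Then $R$ and $S$ differ only in $r$ versus $s$, i.e.\ $|R-S|=1$; but $R\not\sim S$ while both are adjacent to $P$, so $d_H(R,S)=2$, contradicting Observation~\ref{obs:i:d2}.

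For $K_{2,3}\cong\thet{2,2,2}$, write $V(K_{2,3})=\{u,v\}\cup\{a,b,c\}$ with $u,v$ the degree-$3$ vertices. The $4$-cycle $(u,a,v,b)$ is induced with $uv,ab\notin E(K_{2,3})$, so Proposition~\ref{prop:i:C4struct} supplies $S=u\cap v$ (of size $i(G)-2$) and distinct $x_1,x_2,y_1,y_2\notin S$ with $x_1y_1,x_2y_2\in E(G)$ such that $u=S\cup\{x_1,x_2\}$, $v=S\cup\{y_1,y_2\}$, $a=S\cup\{y_1,x_2\}$, $b=S\cup\{x_1,y_2\}$. I would then pin down $c$ from its two adjacencies: writing $c=(u-p)\cup\{q\}$, the case $p\in S$ forces $\{x_1,x_2\}\subseteq c$ and hence $|c-v|\geq 2$, contradicting $c\sim v$, so $p\in\{x_1,x_2\}$; symmetrically $c=(v-p')\cup\{q'\}$ with $p'\in\{y_1,y_2\}$. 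Equating the two expressions forces $c=S\cup\{w,w'\}$ with $w\in\{x_1,x_2\}$, $w'\in\{y_1,y_2\}$, and each of the four choices fails: $\{x_1,y_1\}$ and $\{x_2,y_2\}$ are edges of $G$, so $c$ is not independent, while $\{x_1,y_2\}$ and $\{x_2,y_1\}$ make $c$ equal $b$ or $a$.

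For $\kappa\cong\thet{2,2,3}$, label the vertices so that $u\sim a\sim v$, $u\sim b\sim v$, and $u\sim c\sim d\sim v$ are the three internally disjoint paths; again $(u,a,v,b)$ is an induced $4$-cycle, so Proposition~\ref{prop:i:C4struct} gives exactly the description of $u,v,a,b$ above (the vertex $d$ plays no role). I would then argue with $c$ alone: $c\sim u$ while $c\not\sim a,b,v$, and $d_H(c,a)=d_H(c,b)=d_H(c,v)=2$ in $\kappa$. Writing $c=(u-p)\cup\{q\}$, the case $p\in S$ gives $\{x_1,x_2\}\subseteq c$, and then $|c-v|=2$ pins $q\in\{y_1,y_2\}$, making $c$ contain an edge $x_iy_i$; the cases $p=x_1$ and $p=x_2$ give $|c-a|=1$ and $|c-b|=1$ respectively (or $c=a$, $c=b$), each contradicting Observation~\ref{obs:i:d2}. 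The substantive difficulty is all in the bookkeeping for the last two graphs: one must check at each step that $x_1,x_2,y_1,y_2$ are genuinely distinct and outside $S$ — every such equality collapses a distance-$2$ pair, which is precisely what Observation~\ref{obs:i:d2} is invoked to forbid — and one must reconcile the two descriptions of the ``extra'' vertex $c$ without dropping a case. By contrast the diamond argument is clean because its common part $T=P\cap Q$ is canonically determined by the two triangles, so there is no branching over which base set to use.
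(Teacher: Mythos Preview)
The paper does not actually prove this proposition; it is quoted from \cite{BMT1, LauraD} in Section~\ref{sec:previous} as a known result, so there is no in-paper proof to compare against. Your argument is correct and is written in exactly the style the paper uses for its own non-realizability proofs (Propositions~\ref{prop:c:thet224}--\ref{prop:c:thet333}): extract the $i$-set structure via Lemma~\ref{lem:i:claw} (for the two triangles of $\Dia$) or Proposition~\ref{prop:i:C4struct} (for the induced $4$-cycle $(u,a,v,b)$ in $K_{2,3}$ and $\kappa$), and then kill every remaining case with Observation~\ref{obs:i:d2} or independence. The bookkeeping you flag---distinctness of $x_1,x_2,y_1,y_2$ and their exclusion from $S$---is indeed forced by Proposition~\ref{prop:i:C4struct}, since $\{x_1,x_2\}=u-v$ and $\{y_1,y_2\}=v-u$ with $S=u\cap v$.
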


 On the other hand, the house graph $\house=\thet{1,2,3}$ in Figure \ref{fig:i:houseConstruct}(b) is a theta graph, as illustrated by the seed graph $G$ in Figure  \ref{fig:i:houseConstruct}(a). The $i$-sets of $G$ and their adjacencies are overlaid on $\mathcal{H}$ in Figure  \ref{fig:i:houseConstruct}(b).
%>>>S: Prop {prop:i:house}
\begin{prop}\label{prop:i:house} \emph{\cite{BMT1, LauraD}}
	The house graph $\house$ is an $i$-graph.
\end{prop}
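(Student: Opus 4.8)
The plan is to prove the statement by exhibiting an explicit seed graph; since $\house$ has only five vertices, the verification is a short finite check. Concretely, I would take $G$ to be the graph on vertex set $\{t,a,b,c,d\}$ in which $tab$ is a triangle and $t-c-d$ is a pendant path of length two attached at $t$, so that $E(G)=\{ab,at,bt,tc,cd\}$. One can also arrive at such a $G$ from the other direction: $\house$ contains an induced $C_{4}$, so Proposition~\ref{prop:i:C4struct} dictates the shape of four of the five $i$-sets and several edges of the seed graph, and the fifth vertex of $\house$ (which together with two square-vertices forms the roof triangle) then forces one more seed vertex and two more edges. This reverse-engineering is, in miniature, the technique used for general theta graphs.

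First I would check that $i(G)=2$: $G$ has no dominating vertex, so $G$ has no maximal independent set of size $1$, while $\{t,d\}$ is an independent dominating set; hence $i(G)=2$. Next I would list the $i$-sets. A two-element independent set is a pair of non-adjacent vertices, and $G$ has exactly the five non-edges $\{a,c\},\{a,d\},\{b,c\},\{b,d\},\{t,d\}$; a quick check shows each of these pairs dominates $V(G)$ and is therefore a maximal independent set. So $\ig{G}$ has exactly five vertices, which I would label $X_{1}=\{a,c\}$, $X_{2}=\{a,d\}$, $X_{3}=\{b,c\}$, $X_{4}=\{b,d\}$, $X_{5}=\{t,d\}$.

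Finally I would read off the edges of $\ig{G}$ directly from the definition: $X_{i}\sim X_{j}$ precisely when $|X_{i}-X_{j}|=1$ and the unique vertices in which they differ form an edge of $G$. Running through all $\binom{5}{2}$ pairs, using the edges $cd$, $ab$ and $at,bt$ of $G$, yields exactly the edges $X_{1}X_{2},\ X_{1}X_{3},\ X_{2}X_{4},\ X_{2}X_{5},\ X_{3}X_{4},\ X_{4}X_{5}$, while the remaining four pairs differ in two vertices and so are non-adjacent by Observation~\ref{obs:i:dist}. The resulting graph is the triangle $X_{2}X_{4}X_{5}$ together with the path $X_{2}-X_{1}-X_{3}-X_{4}$, that is, $\thet{1,2,3}=\house$, so $\ig{G}\cong\house$. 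I do not expect any genuine obstacle: the only point requiring care is the completeness of the $i$-set enumeration (in particular, that the non-independent pairs $\{t,c\}$ and $\{c,d\}$ are correctly excluded and that no singleton is a maximal independent set), and the overlay of the $i$-sets on $\house$ can simply be recorded in a figure, as indicated in the text preceding the statement.
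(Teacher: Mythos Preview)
Your proof is correct and is essentially identical to the paper's: the seed graph you build (a triangle with a pendant path of length two) is exactly the graph in Figure~\ref{fig:i:houseConstruct} up to relabelling the vertices, and your five $i$-sets and six adjacencies match the overlay in Figure~\ref{fig:i:houseConstruct}(b). The paper simply records this via the figure rather than writing out the case check, but the argument is the same.
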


%~~~~~~~~~~~~~~~~~~~~~~~~~FIG START~~~~~~~~~~~~~~~~~~~~~~
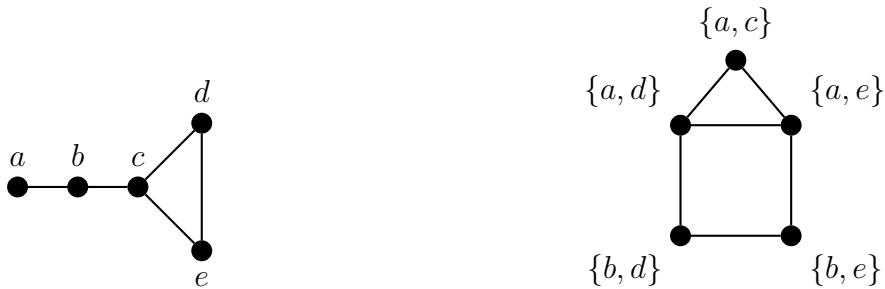
\begin{figure}[H]
	\centering	
	\begin{subfigure}{.4\textwidth}  \centering
		\begin{tikzpicture}	[scale=0.8]		
			%---------- REF-------------------
			\coordinate (cent) at (0,0) {};
			
			%----- PAW -----
			\foreach \i / \j in {a/1,b/2,c/3}
			{
				\path(cent) ++(0: \j*10mm) node[std,label={ 90:${\i}$}]  (\i) {};
			}
			\draw[thick] (a)--(c);	
			
			\foreach \i / \j in {d/45,e/-45}
			{
				\path(c) ++(\j: 15 mm) node[std,label={ 2*\j:${\i}$}]  (\i) {};
			}
			
			\draw[thick](c)--(d)--(e)--(c);	
			
			\path (cent) ++ (90:30mm) coordinate (vfil) {};
			
			% ----- HOUSE -----
			
			%	\path(cent) ++(0: 70mm) coordinate (rcent) {};
			
			%	\path(rcent) ++(0: 10mm) node[std,label={ 90:$a$}]  (x) {};	
		\end{tikzpicture}
		\caption{A graph $G$ such that $\ig{G} = \mathcal{H}$.}
		\label{fig:i:houseConstruct:a}%
	\end{subfigure}	\hspace{1cm}
	\begin{subfigure}{.5\textwidth} \centering
		\begin{tikzpicture}	[scale=0.8]		
			%---------- REF-------------------
			\coordinate (cent) at (0,0) {};
			
			%----- PAW -----
			\foreach \i / \j / \k in {1/a/e, 2/a/d, 3/b/d, 4/b/e}
			{
				\path(cent) ++(\i*90-45: 13mm) node[std,label={\i*90-45:$\{\j,\k\}$}]  (v\i) {};
			}
			\draw[thick] (v1)--(v2)--(v3)--(v4)--(v1);
			
			\path(cent) ++(90: 20mm) node[std,label={ 90:$\{a,c\}$}]  (v5) {};	
			
			\draw[thick] (v1)--(v5)--(v2);
			
		\end{tikzpicture}
		\caption{The house graph $\mathcal{H}$ with $i$-sets of $G$.}
		\label{fig:i:houseConstruct:b}%	
	\end{subfigure}		
	\caption{The graph $G$ for Proposition \ref{prop:i:house} with $\ig{G}=\mathcal{H}$.}
	\label{fig:i:houseConstruct}%
\end{figure}
 
 The next result shows that maximal cliques in $i$-graphs can be replaced by arbitrarily larger maximal cliques to form larger $i$-graphs.
 
\begin{lemma}[Max Clique Replacement Lemma] \label{lem:i:cliqueRep} \emph{\cite{BMT1, LauraD}}
	Let $H$ be an $i$-graph with a maximal $m$-vertex clique, $\mathcal{K}_m$.  The graph $H_w$ formed by adding a new vertex $w^*$ adjacent to all of $\mathcal{K}_m$ is also an $i$-graph.	
\end{lemma}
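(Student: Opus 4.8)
The plan is to start from a seed graph $G$ with $\ig{G}\cong H$, write $k=i(G)$, and obtain a seed graph for $H_w$ by adding a single new vertex to $G$ in a carefully chosen place. The maximal $m$-clique $\mathcal{K}_m$ of $H$ corresponds to $i$-sets $X_1,\dots,X_m$ of $G$ that are pairwise adjacent in $\ig{G}$, and the first task is to describe the structure of such a clique. If $m\ge 2$, repeated application of Lemma~\ref{lem:i:claw} to the edges of $\mathcal{K}_m$ shows that all the $X_i$ share a common ``core'' $C=X_1\cap X_2$ with $|C|=k-1$, that $X_i=C\cup\{w_i\}$, and that $W:=\{w_1,\dots,w_m\}$ induces a clique in $G$; in particular each $C\cup\{w_i\}$ being an $i$-set forces $w_i$ to dominate $P:=V(G)\setminus N[C]$, the set of vertices not already dominated by $C$. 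If $m=1$, I would instead simply fix any $w_1\in X_1$ and set $C:=X_1-\{w_1\}$, $W:=\{w_1\}$.

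The key observation — the one that makes everything fit — is that $W$ is \emph{exactly} the set $\{v\in V(G): C\cup\{v\}\text{ is an }i\text{-set of }G\}$. The inclusion of $W$ in this set is immediate. For the reverse, suppose $C\cup\{v\}$ is an $i$-set with $v\notin W$. Then $C\cup\{v\}$ is a vertex of $H$ distinct from every $X_i$, and $v$ dominates $P$, hence $v$ is adjacent to every $w_i$, so $C\cup\{v\}\sim X_i$ in $\ig{G}$ for all $i$; thus $\mathcal{K}_m\cup\{C\cup\{v\}\}$ is a larger clique of $H$, contradicting maximality when $m\ge 2$. When $m=1$ the same computation shows $C\cup\{v\}\sim X_1$, contradicting that $X_1$ is isolated in $H$.

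With this in hand, I would define $G_w$ to be $G$ together with one new vertex $u^*$ made adjacent to \emph{every} vertex of $V(G)\setminus C$ and to nothing else. Since $C$ is independent and $u^*\notin N[C]$, the set $C\cup\{u^*\}$ is independent; since $u^*$ dominates $V(G)\setminus C\supseteq P$ while $C$ dominates $N[C]$, it is dominating, so it is a maximal independent set of size $k$. A short counting argument — using that every $i$-set of $G$ meets $V(G)\setminus C$ (because $|C|=k-1<k$) and that an independent $(k-1)$-set contained in $C$ must equal $C$ — then shows that $i(G_w)=k$, that the $i$-sets of $G_w$ are precisely the $i$-sets of $G$ together with $C\cup\{u^*\}$, and that adjacencies among the old $i$-sets are unchanged in $\ig{G_w}$. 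Finally, $C\cup\{u^*\}$ is adjacent in $\ig{G_w}$ exactly to those old $i$-sets of the form $C\cup\{v\}$ with $v\sim u^*$, i.e.\ with $v\in (V(G)\setminus C)\cap\{v:C\cup\{v\}\text{ is an }i\text{-set}\}=W$; by the key observation this is precisely $\{X_1,\dots,X_m\}=\mathcal{K}_m$. Hence $\ig{G_w}\cong H_w$.

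I expect the main obstacle to be the structural analysis in the first two paragraphs: extracting the common-core description of an arbitrary clique purely from pairwise-adjacency data via Lemma~\ref{lem:i:claw}, and then correctly exploiting the \emph{maximality} of $\mathcal{K}_m$ (together with the degenerate $m=1$ case, where maximality means ``isolated'') to identify $W$ with the full set $\{v:C\cup\{v\}\text{ is an }i\text{-set}\}$. Once that identification is secured, the verification that the single added vertex $u^*$ realizes exactly the vertex set and adjacencies of $H_w$ is a routine, if slightly lengthy, check.
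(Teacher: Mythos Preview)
Your argument is correct and complete. The structural description of a clique in $\ig{G}$ via Lemma~\ref{lem:i:claw} (all members share a common $(k-1)$-core $C$, differing only in the ``free'' vertex $w_i$), the use of maximality to identify $W$ with the full set $\{v:C\cup\{v\}\text{ is an }i\text{-set}\}$, and the construction of $G_w$ by adjoining $u^*$ to $V(G)\setminus C$ all go through as you describe; the verification that $i(G_w)=k$, that the $i$-sets are exactly the old ones together with $C\cup\{u^*\}$, and that the new vertex is adjacent precisely to $\mathcal{K}_m$ are routine checks which you outline accurately. The $m=1$ case (an isolated vertex of $H$) is handled correctly by your observation that any $C\cup\{v\}$ with $v\neq w_1$ would produce a neighbour of $X_1$.

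Note, however, that the present paper does not itself prove this lemma: it is quoted from \cite{BMT1, LauraD} in the ``Previous Results'' section, so there is no in-paper proof to compare against. Your construction is the natural one and almost certainly coincides with the argument in the cited sources.
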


The Deletion Lemma below shows that the class of $i$-graphs is closed under vertex deletion, that is, every induced subgraph of an $i$-graph is also an $i$-graph.

%>>> START Theorem {lem:i:inducedI}
\begin{lemma}[Deletion Lemma] \emph{\cite{BMT1, LauraD}}
\label{lem:i:inducedI} 
	If $H$ is a nontrivial $i$-graph, then any induced subgraph of $H$ is also an $i$-graph.
\end{lemma}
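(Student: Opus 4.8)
The plan is to take an $i$-graph $H = \ig{G}$ with seed graph $G$, fix a vertex $Z \in V(H)$ that we wish to delete, and construct a new seed graph $G'$ whose $i$-graph is exactly $H - Z$. The natural first move is to try to ``kill'' the $i$-set $Z$ of $G$ as a minimum independent dominating set while preserving all the others and all their adjacencies. The cleanest way to do this is to attach a small gadget to $G$ that raises the cost of the particular configuration $Z$ without touching any other $i$-set: for instance, identify a suitable vertex of $G$ with one vertex of a pendant path or small tree, chosen so that every $i(G)$-set other than $Z$ extends to an $i(G')$-set of the same new cardinality, while $Z$ is forced to grow.

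First I would recall (from Observation~\ref{obs:i:edge} and the surrounding machinery of \cite{BMT1, LauraD}) the standard trick for ``protecting'' $i$-sets: appending a pendant edge $uv$ at a vertex $u \in V(G)$ forces every maximal independent set of the new graph to contain $u$ or $v$, and if we append enough such pendants we can force membership of a prescribed vertex in every $i$-set. The key step is therefore to locate a vertex $w$ of $G$ with $w \notin Z$ but $w$ contained in, or dominating in a controlled way, every other $i(G)$-set — or, more robustly, to identify a difference $Z - Y$ over all $Y \neq Z$ and attach the gadget so that it penalizes exactly the token placement characteristic of $Z$. Because the $i$-sets of $G$ are finite in number, one can choose the gadget (a path of appropriate parity, or a corona-type attachment) large enough that the independent domination number increases by a fixed amount $t$ on the gadget side, the new $i(G')$-sets are precisely $\{\,Y \cup B : Y \in V(H) - \{Z\}\,\}$ for a fixed optimal gadget-set $B$, and the slide-adjacencies among the $Y \cup B$ mirror exactly those among the $Y$ in $H$ (the gadget portion $B$ never participates in a slide because it is rigidly determined).

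The main obstacle is making sure the gadget is *simultaneously* neutral for all surviving $i$-sets and lethal for $Z$: raising $i(G)$ on the gadget side must not accidentally create brand-new $i$-sets (new configurations that were not vertices of $H$), must not merge or separate any of the old ones, and must not introduce spurious slide edges between $Y_1 \cup B$ and $Y_2 \cup B$ when $Y_1 Y_2 \notin E(H)$. This is exactly the kind of bookkeeping that is delicate but routine once the right gadget is pinned down; I would handle it by choosing the attachment point inside a part of $G$ disjoint from the symmetric differences $Z \triangle Y$, and by invoking the Max Clique Replacement Lemma (Lemma~\ref{lem:i:cliqueRep}) or an analogous composition result to package the ``glue two $i$-graphs together'' step cleanly. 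Finally, since any induced subgraph of $H$ is obtained by deleting vertices one at a time, an easy induction on $|V(H) \setminus V(H')|$ upgrades the single-vertex deletion to arbitrary induced subgraphs, giving the full statement.
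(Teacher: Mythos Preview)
Your high-level plan---modify the seed graph $G$ so as to kill exactly the $i$-set $Z$, then induct on the number of deleted vertices---is correct and matches the approach in \cite{BMT1, LauraD}. The gap is that you never pin down a gadget that actually works. A vertex $w \notin Z$ lying in every other $i$-set need not exist, and your fallback suggestions (pendant paths of ``appropriate parity'', an attachment point ``disjoint from the symmetric differences $Z \triangle Y$'', an appeal to Lemma~\ref{lem:i:cliqueRep}) are not concrete enough to verify. In particular, any pendant attachment shifts $i(G)$ and typically allows several optimal extensions of each surviving $i$-set on the gadget side, which is precisely the ``brand-new $i$-sets'' failure mode you flag but do not resolve.

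The construction actually used is a single-vertex addition, and the paper alludes to it (in complement form) in the discussion at the start of Section~\ref{sec:c:thetaComp}: to delete $Z$, add to $G$ one new vertex $x$ with $N_{G'}(x) = V(G) \setminus Z$. Then (i) $Z$ no longer dominates, since $x$ has no neighbour in $Z$; (ii) every other $i(G)$-set $Y$ contains some $y \in Y \setminus Z$ with $y \sim x$, so $Y$ remains an independent dominating set of $G'$ of size $i(G)$; (iii) no new minimum set appears, because any independent set of $G'$ containing $x$ is contained in $Z \cup \{x\}$, and any $i(G)$-subset of $Z \cup \{x\}$ containing $x$ omits some $z \in Z$ that is then undominated (neither by $x$ nor, since $Z$ is independent, by $Z - \{z\}$); and (iv) slide-adjacencies among the surviving sets are unchanged, since $x$ belongs to none of them. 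Hence $\ig{G'} \cong H - Z$ in one stroke, with none of the delicate bookkeeping your gadget would require.
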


\begin{coro} \label{coro:i:notInduced}  \emph{\cite{BMT1, LauraD}}
    If $H$ is not an $i$-graph, then any graph containing an induced copy of $H$
is also not an i-graph.
\end{coro}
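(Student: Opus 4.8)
The plan is to prove the contrapositive and invoke the Deletion Lemma (Lemma~\ref{lem:i:inducedI}). Suppose $H^{*}$ is a graph that contains an induced copy of $H$, and suppose, for contradiction, that $H^{*}$ is an $i$-graph. First I would dispose of the degenerate case hidden in the word ``nontrivial'' appearing in Lemma~\ref{lem:i:inducedI}: if $H^{*}$ is trivial, then the only induced subgraph of $H^{*}$ with at least one vertex is $H^{*}=K_{1}$ itself, so $H\cong K_{1}$; but $K_{1}\cong\ig{K_{1}}$ is an $i$-graph, contradicting the assumption that $H$ is not an $i$-graph. Hence we may assume $H^{*}$ is a nontrivial $i$-graph.

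Now Lemma~\ref{lem:i:inducedI} applies directly: every induced subgraph of the nontrivial $i$-graph $H^{*}$ is itself an $i$-graph. In particular, the induced copy of $H$ inside $H^{*}$ is an $i$-graph, so $H$ is an $i$-graph, contradicting the hypothesis. Therefore no graph containing an induced copy of $H$ can be an $i$-graph. The argument is essentially immediate once the Deletion Lemma is in hand; the only step needing even a moment's care is the trivial-graph exclusion in that lemma, which is dispatched by the separate observation that $K_{1}$ is realizable. \hfill\rule{0.5em}{0.5em}
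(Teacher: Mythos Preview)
Your proof is correct and follows the same approach as the paper, which states the result as an immediate corollary of the Deletion Lemma (Lemma~\ref{lem:i:inducedI}) without giving an explicit argument. Your extra care in handling the ``nontrivial'' hypothesis via the realizability of $K_1$ is a nice touch that the paper leaves implicit.
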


When visualizing the connections between the $i$-sets of a graph $G$, it is sometimes advantageous to consider its complement $\cp{G}$ instead. 
From a human perspective, it is curiously easier see to which vertices a vertex $v$ is adjacent, rather than to which vertices $v$ is nonadjacent.
This is especially true when $i(G) = 2$ or $3$, when we may interpret the adjacency of $i$-sets of $G$ as the adjacencies of edges and triangles (i.e. $K_3$), respectively, in~$\cp{G}$.  
In the following sections we examine how the use of graph complements can be exploited to construct the $i$-graph seeds for certain classes of line graphs, theta graphs, and maximal planar graphs.  

%---------------------------------------SECTION: Line Graphs----------------
\section{Line Graphs and Claw-free Graphs}	\label{sec:c:line}

Consider a graph $G$ with $i(G)=2$ and where $X = \{u,v\}$ is an $i$-set of $G$.  In $\cp{G}$, $u$ and $v$ are adjacent, so $X$ is represented as the edge $uv$.  Moreover, no other vertex $w$ is adjacent to both vertices of $X$ in $\cp{G}$; otherwise, $\{u,v,w\}$ is independent in $G$, contrary to $X$ being an $i$-set.   %That is, if $uv\not\in E(G)$, then $uv$ does not belong to any triangles in $\cp{G}$.

Now consider the line graph $L(\cp{G})$ of $\cp{G}$.  If $X=\{u,v\}$ is an $i$-set of $G$, then $e=uv$ is an edge of $\cp{G}$ and hence $e$ is a vertex of $L(\cp{G})$.  Thus, the $i$-sets of $G$ correspond to a subset of the vertices of $L(\cp{G})$. In the case where $\cp{G}$ is triangle-free (that is, $G$ has no independent sets of cardinality $3$), these  $i$-sets of $G$ are exactly the vertices of $L(\cp{G})$.
Now suppose $Y$ is an $i$-set of $G$ adjacent to $X$; say, $\edge{X,u,w,Y}$, so that $Y=\{v,w\}$.  Then, in $\cp{G}$, $f=vw$ is an edge, and so in $L(\cp{G})$, $f \in V(L(\cp{G}))$.  Since $e$ and $f$ are both incident with $v$ in $\cp{G}$, $ef \in E(L(\cp{G}))$.
That is, for $i$-sets $X$ and $Y$ of a well-covered graph $G$ with $i(G) = \alpha(G)=2$, $X \sim Y$ if and only if $X$ and $Y$ correspond to adjacent vertices in $L(\cp{G})$.   Thus $\ig{G} \cong L(\cp{G})$.

In the example illustrated in Figure \ref{fig:c:compEx} below, $\mathcal{H}$ is the house graph, where $X=\{a,c\}$ and $Y=\{c,e\}$ are $i$-sets with $\edge{X,a,e,Y}$.  In $L(\cp{\mathcal{H}})$, the two vertices in each of these $i$-sets are likewise adjacent.

%~~~~~~~~~~~~FIG START~~~~~~~~~~~
\begin{figure}[H]
	\centering
	\begin{tikzpicture}%[line width=0.3mm,scale=.8]			
		
		%---------- REF-------------------
		\node(cent) at (0,0) {};			
		\path (cent) ++(0:50 mm) node(mcent)  {};	
		\path (cent) ++(0:100 mm) node(rcent)  {};
		
		%----- box ----------	
		\foreach \i/\c in {a/-135,b/135,d/45, e/-45} {				
			\path (cent) ++(\c:10 mm) node [std] (v\i) {};
			\path (v\i) ++(\c:5 mm) node (Lv\i) {${\i}$};
		}
		
		\draw[thick] (va)--(vb)--(vd)--(ve)--(va)--cycle;
		
		\path (cent) ++(90:15 mm) node [std] (vc) {};
		\path (vc) ++(90:5 mm) node (Lvc) {${c}$};
		
		\draw[thick] (vb)--(vc)--(vd);	
		
		\path (cent) ++(-90: 20mm) node (HL)  {$\mathcal{H}$};

		%----- mid obj ----------	
		\foreach \i/\c in {a/-135,b/135,d/45, e/-45} {				
			\path (mcent) ++(\c:10 mm) node [std] (m\i) {};
			\path (m\i) ++(\c:5 mm) node (Lm\i) {${\i}$};
		}
		
		\path (mcent) ++(90:15 mm) node [std] (mc) {};
		\path (mc) ++(90:5 mm) node (Lmc) {${c}$};
		
		\draw[thick] (mb)--(me)--(mc)--(ma)--(md);
		
		\path (mcent) ++(-90: 20mm) node (HcL)  {$\cp{\mathcal{H}}$};
		
		%----- far obj ---------
		\foreach \i/\j/\c in {c/e/-135,b/e/135,a/d/45, a/c/-45} {				
			\path (rcent) ++(\c:10 mm) node [std] (r\i\j) {};
			\path (r\i\j) ++(\c:8 mm) node (Lr\i\j) {$\{\i,\j\}$};
		}
		
		\draw[thick] (rbe)--(rce)--(rac)--(rad);
		\path (rcent) ++(-90: 20mm) node (HiL)  {$L(\cp{\mathcal{H}}) \cong \ig{\mathcal{H}}$};
		
	\end{tikzpicture}				
	\caption{The complement and line graphs complement of the well-covered house graph $\mathcal{H}$.}
	\label{fig:c:compEx}
\end{figure}
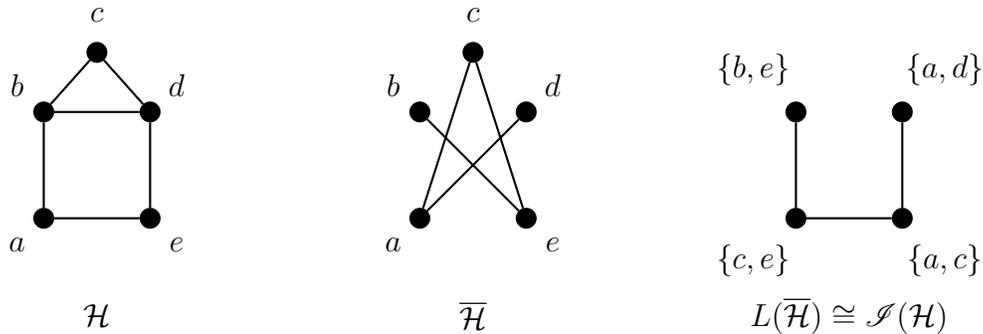
%~~~~~~~~~~~~~FIG END~~~~~~~~~~~~~

% Before continuing with the theme of $i$-graph realizability, we make the following observation on the order of $\ig{G}$ when $i(G)=2$.	

% %>>> START Prop {prop:c:numb2isets}
% \begin{prop} 
% 	\label{prop:c:numb2isets}
% 	Let $G$ be a graph of order $n$ with $i(G)=2$.  Then $G$ has at most $ \frac{1}{2}n(n-1) - \frac{1}{2}\sum_{v\in V(G)} deg(v)$ distinct $i$-sets.  That is, $|V(\ig{G})| \leq  \frac{1}{2}n(n-1) - \frac{1}{2}\sum_{v\in V(G)} \deg(v)$.
% \end{prop}

% \begin{proof}
% 	Since $i(G)=2$, each $i$-set of $G$ corresponds to an edge of $\cp{G}$.  So, $|V(\ig{G})| \leq  |E(\cp{G})|=\binom{n}{2} - |E(G)| =  \frac{1}{2}n(n-1) - \frac{1}{2}\sum_{v\in V(G)} \deg(v)$.
% \end{proof}
% % <<< END: Prop {prop:c:numb2ise`ts}

% \medskip

The connection between graphs with $i(G)=2$ and line graphs helps us not only understand the structure of $\ig{G}$, but also lends itself towards some interesting realizability results.  
We follow this thread for the remainder of this section, and build towards determining the $i$-graph realizability of line graphs and claw-free graphs. The straightforward proof of the following lemma can be found in \cite{LauraD} and is omitted here.

%>>> START Lemma {lem:c:lineK3}
\begin{lemma}
	\label{lem:c:lineK3}
	%The line graphs of a connected graph $G$ contains $\Dia$ as an induced subgraph if and only if $G$ is not $K_3$ and $G$ contains a triangle.
	
	%\ft{OR:} 
	The line graph of a connected graph $G$ of order at least $4$ contains $\Dia$ as an induced subgraph if and only if $G$ contains a triangle. (See Figure \ref{fig:c:paw}.)
\end{lemma}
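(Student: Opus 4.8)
The plan is to prove the two implications separately, using throughout the elementary fact that three pairwise adjacent edges of a graph $G$ --- equivalently, a triangle in $L(G)$ --- must be either the three edges of a triangle of $G$ or the three edges of a star $K_{1,3}$ of $G$. (A one-line proof: for three distinct, pairwise adjacent edges $e,f,g$ with $e\cap f=\{p\}$ and $e\cap g=\{q\}$, either $p=q$ --- in which case $f$ and $g$ also contain $p$ and we have a star --- or $p\neq q$, which forces $e=pq$ and $f\cap g$ to consist of a third vertex $x$, giving a triangle on $\{p,q,x\}$.)

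For the ``if'' direction I would start from a triangle of $G$ on vertices $a,b,c$ and set $e_1=ab$, $e_2=bc$, $e_3=ca$, so that $\{e_1,e_2,e_3\}$ induces a triangle in $L(G)$. Since $G$ is connected with $|V(G)|\geq 4$, some edge joins $\{a,b,c\}$ to a vertex $d\notin\{a,b,c\}$; after relabelling the triangle I may take it to be $e_4=ad$. In $L(G)$ the vertex $e_4$ is adjacent to $e_1$ and $e_3$ but not to $e_2$ (since $d\notin\{b,c\}$ and $a\notin\{b,c\}$), and $e_4$ is distinct from $e_1,e_2,e_3$; hence $\{e_1,e_2,e_3,e_4\}$ induces $K_4-e=\Dia$, whose pre-image in $G$ is precisely the paw of Figure \ref{fig:c:paw}.

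For the ``only if'' direction I would take an induced $\Dia$ in $L(G)$ on vertices $p,q,r,s$, labelled so that $pq$ is the missing edge (so $p,q$ have degree $2$ and $r,s$ have degree $3$ within the subgraph), and regard $p,q,r,s$ as edges of $G$. The two triangles $\{p,r,s\}$ and $\{q,r,s\}$ of $L(G)$ each correspond, by the fact above, to a triangle or a star of $G$; if either is a triangle we are finished, so suppose both are stars, with common vertices $u$ (for $p,r,s$) and $v$ (for $q,r,s$). Then $r$ and $s$ are distinct edges each containing both $u$ and $v$, which forces $u=v$; but then $p$ and $q$ both contain $u$, so $p\sim q$ in $L(G)$, contradicting the choice of labelling. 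Hence one of the two triangles really is a triangle of $G$.

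I do not expect a genuine obstacle here --- the paper itself calls the proof straightforward --- so the only things to get right are the bookkeeping points: in the ``if'' direction, checking that $e_4\not\sim e_2$ and that $e_4$ is a new vertex (this is exactly where connectivity and the hypothesis $|V(G)|\geq 4$ are used), and in the ``only if'' direction, fixing at the outset which pair of vertices of $\Dia$ is the non-adjacent one before reasoning about its two triangles.
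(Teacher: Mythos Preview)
Your proof is correct. The paper omits its own proof of this lemma (calling it straightforward and referring to \cite{LauraD}), so there is nothing to compare against; your argument is exactly the standard one and handles the bookkeeping points cleanly.
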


% \begin{proof}
% 	Suppose $G$ contains a triangle with vertices $a,b,$ and $c$, and let $d\notin \{a,b,c\}$ be a vertex adjacent to (say) $a$.  Then $\{ab, ac, ad\}$ and $\{ab,ac,bc\}$ are triangles in $L(G)$.  Since $\{b,c\} \cap \{a,d\} = \varnothing$ in $G$, $bc$ is nonadjacent to $ad$ in $L(G)$.  That is,  $\Dia$ is an induced subgraph of $L(G)$.
	
% 	Conversely, suppose $L(G)$ contains $\Dia$ as an induced subgraph.  Then, $G$ has at least four edges, and hence $G \not\cong K_3$.  
% 	Since the graph $H$ in Figure \ref{fig:c:paw} (i.e. ``the paw" or the ``3-pan") has $L(H) \cong \Dia$, it follows from the uniqueness of line graphs  (see \cite{W32})
% 	% in \cite{Hara}) 
% 	that $G$ contains $H$ as a subgraph.
% \end{proof}
% % <<< END: Lemma {lem:c:lineK3}

\begin{figure}[H] \centering	
	\begin{tikzpicture}	[scale=0.8]		
		%---------- REF-------------------
		\node(cent) at (0,0) {};
		\path (cent) ++(0,0 mm) node(lcent)  {};			
		\path (cent) ++(0:50 mm) node(mcent)  {};	
		
		%----- paw ----------	
		\node[std,label={0:$a$}]  at (lcent) (va) {}; 
		
		%-d
		\path (lcent) ++(90:10 mm) node [std] (vd) {};
		\path (vd) ++(90:5 mm) node (Lvd) {${d}$};
		%-c
		\path (lcent) ++(180:10 mm) coordinate (hc1) {};	
		\path (hc1) ++(-90:10 mm) node [std] (vc) {};
		\path (vc) ++(-90:5 mm) node (Lvc) {${c}$};	
		
		%-b
		\path (vc) ++(0:20 mm) node [std] (vb) {};
		\path (vb) ++(-90:5 mm) node (Lvb) {${b}$};	
		
		\draw[thick](va)--(vb)--(vc)--(va)--cycle;
		\draw[thick](va)--(vd);	
		
		\path (cent) ++(-90: 22mm) node (HL)  {$H$};		
		
		%----- mid obj ----------	
		\foreach \i/\c in {ad/0,ac/90,bc/180, ab/-90} {				
			\path (mcent) ++(\c:10 mm) node [std] (m\i) {};
			\path (m\i) ++(\c:5 mm) node (Lm\i) {${\i}$};
		}
		
		\draw[thick] (mad)--(mac)--(mbc)--(mab)--(mad) -- cycle;
		\draw[thick] (mac)--(mab);
		
		\path (mcent) ++(-90: 22mm) node (HcL)  {$L(H) \cong \Dia$};		
	\end{tikzpicture}
	\caption{The ``paw'' $H$ with $L(H) \cong \Dia$.}
	\label{fig:c:paw}		
\end{figure}

%*REFREF* Harary Textbook Theorem 8.3 - H. Whitney, Congruent Graph and Connectivity of Graphs, Amer J Math. 54 1932 150-169

%>>> START Theorem {thm:c:line}
\begin{theorem}
	\label{thm:c:line}  Let $H$ be a connected line graph.  Then $H$ is an $i$-graph if and only if $H$ is $\Dia$-free.	
\end{theorem}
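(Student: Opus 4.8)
The plan is to prove both directions. The forward direction is immediate from the results already assembled: if $H$ is a connected line graph that is an $i$-graph, then by Corollary~\ref{coro:i:notInduced} (together with Proposition~\ref{prop:i:diamond}, which says $\Dia$ is not $i$-graph realizable) $H$ cannot contain $\Dia$ as an induced subgraph, so $H$ is $\Dia$-free. The substance of the theorem is therefore the converse: every connected $\Dia$-free line graph is an $i$-graph. First I would dispose of small cases. A connected line graph $H = L(G)$ with at most three vertices is easily checked to be an $i$-graph directly (e.g.\ $K_1$, $K_2$, $P_3$, $K_3$ all arise, using known constructions for paths and complete graphs, or the constructions in \cite{BMT1}). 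So assume $H = L(G)$ with $|V(H)| \geq 4$, and we may take $G$ connected of order at least $4$ (a line graph of a disconnected graph is disconnected, and small exceptional underlying graphs contribute only the small cases already handled).

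The key idea is to run the line-graph correspondence developed in Section~\ref{sec:c:line} in reverse, using $\cp{G}$ rather than $G$ as the object to complement. Given $\Dia$-free $H = L(G)$ with $|V(H)| \ge 4$ and $G$ connected of order $\ge 4$, Lemma~\ref{lem:c:lineK3} tells us $G$ is triangle-free. Now set $F := \cp{G}$, so that $G = \cp{F}$ is triangle-free, i.e.\ $F$ has no independent set of size $3$, equivalently $i(F) = \alpha(F) = 2$ provided $F$ has at least one edge (which holds since $G$ has order $\ge 4$ and $H$ is nonempty). The discussion preceding Figure~\ref{fig:c:compEx} establishes exactly that for such an $F$ the $i$-sets of $F$ are precisely the vertices of $L(\cp{F}) = L(G) = H$, and two $i$-sets of $F$ are adjacent in $\ig{F}$ if and only if the corresponding edges of $\cp{F} = G$ share an endpoint, i.e.\ are adjacent in $L(G) = H$. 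Hence $\ig{F} \cong H$, so $H$ is an $i$-graph with seed graph $F = \cp{G}$. I would then observe that this simultaneously handles claw-free graphs: a connected claw-free graph that is $\Dia$-free has bounded clique structure, and the claw-free case either reduces to the line-graph case or is handled by the Max Clique Replacement Lemma (Lemma~\ref{lem:i:cliqueRep}) applied to complete graphs — though strictly the theorem as stated only asks for line graphs.

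The main obstacle I anticipate is making the "$i(F)=2$ and $F$ has no larger independent set" bookkeeping airtight, including the degenerate situations: one must verify $F = \cp{G}$ really does have $\alpha = 2$ (not $\alpha = 1$, which would happen only if $G$ is edgeless — excluded since $|V(H)| \ge 1$ forces $G$ to have an edge — and not $\alpha \ge 3$, which is exactly the triangle-free conclusion of Lemma~\ref{lem:c:lineK3}), and one must confirm that $F$ contains no isolated vertices causing spurious $i$-sets, or handle them: an isolated vertex of $F$ lies in every maximal independent set, but since $i(F) = 2$ this would force $\alpha(F - \{v\}) = 1$ on the rest, a case one checks gives only small $H$ already covered. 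A secondary subtlety is the requirement in the source material that $G$ be \emph{connected} of order $\ge 4$ for Lemma~\ref{lem:c:lineK3} to apply verbatim; I would note that if $G$ has order $\le 3$ then $H = L(G)$ has at most $3$ vertices and falls under the base cases, so no generality is lost. Once these edge cases are cleared, the construction $F = \cp{G}$ is the whole proof.
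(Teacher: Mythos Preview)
Your proposal is correct and follows essentially the same route as the paper: for the converse, write $H = L(G)$, use Lemma~\ref{lem:c:lineK3} to conclude that $G$ is triangle-free, and take the seed graph to be $\overline{G}$, whose $i$-sets are exactly the edges of $G$ and whose $i$-graph is therefore $L(G)=H$. The paper splits off the degenerate case by treating $H$ complete separately (where $\ig{H}\cong H$ trivially) rather than by bounding $|V(H)|$, but the core construction is identical.

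One point in your write-up needs tightening. You assert $i(\overline{G}) = \alpha(\overline{G}) = 2$ ``provided $\overline{G}$ has at least one edge,'' but an edge does not preclude a universal vertex, which would force $i=1$. The correct justification---and the one the paper gives---is that $G$ is connected with at least two vertices, hence has no isolated vertex, hence $\overline{G}$ has no universal vertex, so $i(\overline{G})\ge 2$. Your subsequent worry about \emph{isolated} vertices of $F=\overline{G}$ is misdirected: those correspond to universal vertices of $G$ and are irrelevant to the $i(F)=2$ claim; it is \emph{universal} vertices of $F$ (i.e.\ isolated vertices of $G$) that must be excluded. Once that is fixed, your argument is complete and matches the paper's.
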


\begin{proof}  Suppose $H$ is an $i$-graph.  By Proposition \ref{prop:i:diamond} 
	%($\Dia$ result)
	and Corollary \ref{coro:i:notInduced},  
	%(induced subgraphs result), 
	$H$ is $\Dia$-free.
	
	Conversely, suppose that $H$ is $\Dia$-free.  If $H$ is complete, then $H$ is the $i$-graph of itself.  So, assume that $H$ is not complete.  Say $H$ is the line graph of some graph $F$, where we may assume $F$ has no isolated vertices (as isolated vertices do not affect line graphs).  Since $H$ is $\Dia$-free and connected, $F$ has no triangles by Lemma  \ref{lem:c:lineK3}.  Since $F$ has edges (which it does since $H$ exists), $
	\alpha(\cp{F}) \leq 2$.  Moreover, as $F$ is connected, $\cp{F}$ has no universal vertices, and so $i(\cp{F})\geq 2$.   Thus, $i(\cp{F}) = \alpha(\cp{F})$ and $\cp{F}$ is well-covered. It follows that every edge of $F$ corresponds to an $i$-set of $\cp{F}$.  Since $H$ is the line graph of $F$, it is the $i$-graph of $\cp{F}$.  
\end{proof}
% <<< END: Theorem {thm:c:line}

\medskip

Finally, if we examine Beineke's forbidden subgraph characterization of line graphs (see \cite{B70} or \cite[Theorem 6.26]{CLZ}), we note that eight of the nine minimal non-line graphs contain an induced $\Dia$ and are therefore not $i$-graphs.  The ninth minimal non-line graph is the claw, $K_{1,3}$.  Thus, $\Dia$-free claw-free graphs are $\Dia$-free line graphs, hence $i$-graphs. 

%>>> START  Coro {coro:c:clawfree} 
\begin{coro}	\label{coro:c:clawfree}  
	Let $H$ be a connected claw-free graph.  Then $H$ is an $i$-graph if and only if $H$ is $\Dia$-free.	
\end{coro}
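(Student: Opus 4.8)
The plan is to handle the two directions separately; the forward direction is immediate, and the converse reduces to Theorem \ref{thm:c:line} by showing that a connected claw-free $\Dia$-free graph must be a line graph.

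For the forward implication, I would assume $H$ is an $i$-graph. By Proposition \ref{prop:i:diamond} the graph $\Dia$ is not $i$-graph realizable, so Corollary \ref{coro:i:notInduced} shows that $H$ contains no induced $\Dia$; that is, $H$ is $\Dia$-free. The claw-free hypothesis is not used here, so this half is identical to the corresponding half of Theorem \ref{thm:c:line}.

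For the converse, assume $H$ is connected, claw-free, and $\Dia$-free. Here I would invoke Beineke's forbidden induced subgraph characterization of line graphs (\cite{B70}; see also \cite[Theorem 6.26]{CLZ}), which asserts that a graph is a line graph if and only if it has no induced subgraph isomorphic to any of nine specified graphs. As recorded in the remark preceding this corollary, eight of these nine minimal non-line graphs contain an induced copy of $\Dia = K_4 - e$, and the remaining one is the claw $K_{1,3}$. Since $H$ is $\Dia$-free it avoids all eight, and since $H$ is claw-free it avoids $K_{1,3}$; hence $H$ is a line graph. Being a connected $\Dia$-free line graph, $H$ is an $i$-graph by Theorem \ref{thm:c:line}, and the proof is complete.

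There is no substantial obstacle: the argument is an assembly of Proposition \ref{prop:i:diamond}, Corollary \ref{coro:i:notInduced}, Beineke's theorem, and Theorem \ref{thm:c:line}, with the slightly delicate verification (that exactly eight of Beineke's nine graphs contain an induced $\Dia$) already discharged by the remark before the statement. The one point worth flagging explicitly is why ``claw-free'' cannot simply be derived from ``$\Dia$-free'': the claw $K_{1,3}$ is triangle-free, hence itself $\Dia$-free, so it is precisely the Beineke graph not excluded by $\Dia$-freeness, which is why it must be retained as a separate hypothesis.
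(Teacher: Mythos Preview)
Your proposal is correct and mirrors the paper's own argument: the paper likewise observes (in the paragraph immediately preceding the corollary) that eight of Beineke's nine forbidden subgraphs contain an induced $\Dia$ and the ninth is the claw, so a connected claw-free $\Dia$-free graph is a line graph and Theorem \ref{thm:c:line} applies. The forward direction is identical as well.
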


% \begin{proof}
% 	If $H$ is $i$-graph realizable, it is $\Dia$-free.
% 	Conversely, suppose that $H$ is $\Dia$-free.  Then by the forbidden subgraph characterization of line graphs \cite{B70}, $H$ is a line graph.  From Theorem \ref{thm:c:line}, $H$ is a $\Dia$-free line graphs and thus an $i$-graph.
% \end{proof}
% % <<< END: Coro {coro:c:clawfree} 

\medskip

While Theorem \ref{thm:c:line} and Corollary \ref{coro:c:clawfree} reveal the $i$-graph realizability of many famous graph families (including another construction for cycles, which are connected, claw-free, and $\Dia$-free), the realizability problem for graphs containing claws remains unresolved.  Moreover, among clawed graphs are the theta graphs which we first alluded to in Section \ref{sec:iGr} as containing three of the small known non-$i$-graphs.  In the next section we apply similar techniques with graph complements to construct all theta graphs that are $i$-graphs.

%---------------------------------------SECTION: Graph Complements ----------------
\section{Theta Graphs From Graph Complements} \label{sec:c:thetaComp}

Consider a graph $G$ with $i(G)=3$.  Each $i$-set of $G$ is represented as a triangle (a $K_3$) in $\cp{G}$.  
%We adhere to the convention that all triangles are \emph{maximal} $K_3$ cliques; that is, if the vertices $\{u,v,w,x\}$ form a clique, then $\{u,v,w\}$ is not considered to be a triangle.  
If $X$ and $Y$ are two $i$-sets of $G$ with $\edge{X,u,v,Y}$, then $\cp{G}[X]$ and $\cp{G}[Y]$ are triangles in $\cp{G}$, and have $|X \cap Y| =2$.  Although it is technically the induced subgraphs $\cp{G}[X]$ and $\cp{G}[Y]$ that are the triangles of $\cp{G}$,  for notational simplicity we refer to $X$ and $Y$ as triangles.  In $\cp{G}$, the triangle $X$ can be transformed into the triangle $Y$ by removing the vertex $u$ and adding in the vertex $v$  (where $u\not\sim_{\cp{G}} v$).  Thus, we say that two triangles are \emph{adjacent} if they share exactly one edge.  Moreover, since two $i$-sets of a graph $G$ with $i(G)=3$ are adjacent if and only if their associated triangles in $\cp{G}$ are adjacent, we use the same notation for $i$-set adjacency in $G$  as triangle adjacency in $\cp{G}$; that is, the notation $X \sim Y$ represents both $i$-sets $X$ and $Y$ of $G$ being adjacent, and triangles $X$ and $Y$ of $\cp{G}$ being adjacent.

In the following sections we use triangle adjacency to construct \emph{complement seed graphs} for the $i$-graphs of theta graphs; that is, a graph $\cp{G}$ such that $\ig{G}$ is isomorphic to some desired theta graph. Before proceeding with these constructions, we note some observations which will help us with this process.

% \begin{obs}\label{obs:c:cutEdge}
% 	If a connected graph $\cp{G}$ has a bridge, then $i(G)=2$.   
% \end{obs}

\begin{obs}\label{obs:c:i3}
	A graph  $G$ has $i(G)=2$ if and only if  $\cp{G}$ is nonempty and has an edge that does not lie on a triangle.
\end{obs}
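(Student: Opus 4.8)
The plan is to prove the biconditional in Observation~\ref{obs:c:i3} directly in both directions, working with the elementary correspondence between independent sets in $G$ and cliques in $\cp{G}$. Recall that $S \subseteq V(G)$ is independent in $G$ if and only if $S$ induces a clique in $\cp{G}$, and $S$ is a \emph{maximal} independent set of $G$ if and only if the corresponding clique in $\cp{G}$ is a maximal clique. Hence $i(G) = 2$ precisely when the smallest maximal clique of $\cp{G}$ has exactly two vertices, i.e.\ $\cp{G}$ has a maximal clique of size $2$ (a "maximal edge") and no maximal clique of size $1$ (no isolated vertex). Since a graph with no isolated vertices is exactly a \emph{nonempty} graph together with the condition that every vertex is covered, the real content is: $i(G) = 2$ iff $\cp{G}$ is nonempty and has an edge lying on no triangle.

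For the forward direction, suppose $i(G) = 2$. Then $G$ has a maximal independent set $\{u,v\}$ of size $2$; in particular $uv \in E(\cp{G})$, so $\cp{G}$ is nonempty. If the edge $uv$ lay on a triangle $uvw$ in $\cp{G}$, then $\{u,v,w\}$ would be an independent set of $G$ strictly containing $\{u,v\}$, contradicting maximality of $\{u,v\}$. Hence $uv$ lies on no triangle of $\cp{G}$, as required. (Note $i(G) \ne 1$ is automatic here, but if one wants to be careful, $i(G)=2$ rules out a universal vertex of $\cp{G}$, equivalently an isolated vertex of $G$.)

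For the converse, suppose $\cp{G}$ is nonempty and $e = uv$ is an edge of $\cp{G}$ that lies on no triangle. Then $\{u,v\}$ is independent in $G$, and it is maximal: if some $w \notin \{u,v\}$ could be added, then $\{u,v,w\}$ independent in $G$ forces $uvw$ to be a triangle of $\cp{G}$ on the edge $e$, a contradiction. Thus $G$ has a maximal independent set of size $2$, so $i(G) \le 2$. Finally $i(G) \ge 2$: if $i(G) = 1$ then some single vertex $x$ is a maximal (hence dominating) independent set of $G$, meaning $x$ is isolated in $G$ and universal in $\cp{G}$; but a nonempty $\cp{G}$ with a universal vertex $x$ has every edge $xy$ lying on a triangle (since $\cp{G}$ has some edge $ab$, and then $xa$, $xb$, $ab$ or a suitable relabelling gives a triangle through any edge at $x$, and more directly $e=uv$ with a universal $x \notin\{u,v\}$ yields triangle $uvx$) --- actually the cleanest contradiction is simply that $i(G)=1$ would make $\cp{G}$ have a universal vertex, which combined with the existence of the edge $e=uv$ (taking $x\notin\{u,v\}$, possible unless $|V|=2$, and if $|V|=2$ then $\cp{G}=K_2$ has $i(G)=2$ directly) produces the triangle $uvx$ on $e$, contradiction. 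Hence $i(G) = 2$.

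The only mildly delicate point, and the step I expect to require the most care in writing, is the lower bound $i(G) \ge 2$ in the converse: one must correctly exclude the degenerate possibility $i(G) = 1$ using the hypothesis that $\cp{G}$ is nonempty and contains an edge off every triangle, handling the trivial small cases ($|V(G)| \le 2$) separately. Everything else is an immediate translation via complementation, so I would keep the exposition short and not belabor it.
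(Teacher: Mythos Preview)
Your argument has a genuine error in the final step. You write that if $i(G)=1$ then the singleton maximal independent set $\{x\}$ corresponds to $x$ being ``isolated in $G$ and universal in $\cp{G}$''. This is backwards: $\{x\}$ maximal independent in $G$ means every other vertex of $G$ is adjacent to $x$, so $x$ is \emph{universal in $G$} and hence \emph{isolated in $\cp{G}$}. With the directions corrected, your triangle-building argument collapses: an isolated vertex of $\cp{G}$ cannot be used to put a triangle on the edge $e=uv$.

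In fact the converse direction of the observation, as literally stated, is not quite true. Take $G=P_3$ with central vertex $b$; then $i(G)=1$ via $\{b\}$, while $\cp{G}=K_2\cup K_1$ is nonempty and its unique edge lies on no triangle. What \emph{is} true, and what the paper actually uses, is the weaker implication: if $\cp{G}$ has an edge $uv$ not on a triangle, then $\{u,v\}$ is independent and dominating in $G$, so $i(G)\le 2$. The paper's own justification is exactly this one sentence --- it does not attempt to establish $i(G)\ge 2$, because the only application is the contrapositive (to guarantee $i(G)\ge 3$ in the seed graphs, one must ensure every edge of $\cp{G}$ lies on a triangle). Your forward direction and your proof that $i(G)\le 2$ are fine and match the paper; simply drop the attempt to show $i(G)\ge 2$, or add the harmless extra hypothesis that $\cp{G}$ have no isolated vertex (equivalently, $G$ have no universal vertex).
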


\noindent If $\cp{G}$ has an edge $uv$ that does not lie on a triangle, then $\{u,v\}$ is independent and dominating in $G$, and so $i(G) \leq 2$.  When building our seed graphs $G$ with $i(G)=3$, it is therefore necessary to ensure that every edge of the complement $\cp{G}$ belongs to a triangle.

%Similarly, if $G$ is a graph with $i(G)=3$, then $\cp{G}$ contains at least one maximal $K_3$ clique (and no cut-edges).  If the vertices $\{u,v,w,x\}$ form a (possibly non-maximal) clique in $\cp{G}$, then we can disregard the triangle formed by any three vertices of the set as it does not represent an $i$-set of $G$.  For example, although $\{u,v,w\}$ forms a triangle in $\cp{G}$, $x$ is not dominated by $\{u,v,w\}$.  

\begin{obs}
	Let $G$ be a graph with $i(G)=3$.  If  $S$ with $|S| \geq 4$ is a (possibly non-maximal) clique in $\cp{G}$, then no $3$-subset of $S$ is  an $i$-set of $G$.
\end{obs}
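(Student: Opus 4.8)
The plan is to observe that this follows essentially immediately from the definition of an $i$-set as a \emph{maximal} independent set, together with the basic duality between cliques in $\cp{G}$ and independent sets in $G$. First I would record that a clique $S$ in $\cp{G}$ is precisely an independent set of $G$: for distinct $u,v\in S$ we have $uv\in E(\cp{G})$, hence $uv\notin E(G)$, so $G[S]$ has no edges.

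Next, suppose toward a contradiction that some $3$-subset $T\subseteq S$ is an $i(G)$-set. Since $|S|\geq 4>3=|T|$, there is a vertex $v\in S\setminus T$. Because $S$ is independent in $G$ and $T\cup\{v\}\subseteq S$, the set $T\cup\{v\}$ is independent in $G$ and properly contains $T$. Hence $T$ is not a maximal independent set of $G$. But every $i(G)$-set is, by definition, a maximal independent set, so this contradicts the assumption that $T$ is an $i$-set. Therefore no $3$-subset of $S$ is an $i$-set of $G$.

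I do not expect a genuine obstacle here; the statement is a short structural remark. The only points to be careful about are (i) getting the complement duality in the right direction — a clique in $\cp{G}$ gives an \emph{independent} set in $G$, not a dominating one — and (ii) noting that the hypothesis $i(G)=3$ plays no role in the argument beyond explaining why $3$-subsets are the natural candidates to examine: the proof in fact shows that no proper subset of any independent set of $G$ can be a maximal independent set, so in particular it cannot be an $i$-set.
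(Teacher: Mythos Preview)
Your proof is correct and is essentially the same as the paper's: the paper picks $x\in S\setminus T$ and notes that $x$ is undominated by $T$ in $G$, while you phrase the same fact as $T\cup\{x\}$ being independent, i.e., $T$ is not maximal. Since an independent set is maximal if and only if it is dominating, the two arguments are equivalent.
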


\noindent Suppose that $S=\{u,v,w,x\}$ is such a clique of $\cp{G}$.  Then, for example, $x$ is undominated by $\{u,v,w\}$ in $G$, and so $\{u,v,w\}$ is not an $i$-set of $G$.  
Conversely, suppose that $\{u,v,w\}$ is a triangle in a graph $\cp{G}$ with $i(G)=3$.  By attaching a new vertex $x$ to all of $\{u,v,w\}$ in $\cp{G}$, we remove $\{u,v,w\}$ as an $i$-set of $G$, while keeping all other $i$-sets of $G$.  This observation proves to be very useful in the constructions  in the following section: we now have a technique to eliminate any unwanted triangles in $\cp{G}$ (and hence $i$-sets of $G$) that may arise.  Note that this technique is an application of the Deletion Lemma (Lemma \ref{lem:i:inducedI}) in $\cp{G}$ instead of in~$G$.

%\begin{obs}
%	Let $G$ be a graph on at least two vertices.   If for each $uv\in E(\cp{G})$, there exists some vertex $w$ such that $uw$ and $vw \in E(\cp{G})$, then $i(G) =3$. 
%\end{obs}

The use of triangle adjacency in a graph $\cp{G}$ to determine $i$-set adjacency in $G$ provides a key technique to resolve the question of
which theta graphs are $i$-graphs.  In our main result, Theorem \ref{thm:c:thetas}, we show that all theta graphs except the seven listed exceptions are $i$-graphs.  Using this method of complement triangles, the proofs of the lemmas for the affirmative cases make up most of the remainder of Section \ref{sec:c:thetaComp}.  The proofs of  the lemmas for the seven negative cases are given in Section \ref{sec:c:nonthetas}.  

\begin{theorem}\label{thm:c:thetas}
	%All theta graphs except the seven listed in Table \ref{tab:c:thetaLong} are $i$-graphs.
	
	A theta graph is an $i$-graph if and only if it is not one of the seven exceptions listed below: 
	
	\begin{tabular}{lllll}  \centering
		$\thet{1,2,2}$, 	&&&&	\\
		$\thet{2,2,2}$,	\	&		$\thet{2,2,3}$,	\	&		$\thet{2,2,4}$,  \  &	$\thet{2,3,3}$, \ 	&		$\thet{2,3,4}$, \ 	\\
		$\thet{3,3,3}.$ &&&& \\
	\end{tabular}

\end{theorem}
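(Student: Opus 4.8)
The plan is to split the statement into its two directions and handle them with the complement-triangle machinery developed above. For the negative direction, I would show that each of the seven listed theta graphs fails to be an $i$-graph. Three of these, namely $\thet{1,2,2}=\Dia$, $\thet{2,2,3}=\kappa$, and $\thet{2,2,2}=K_{2,3}$, are already known not to be $i$-graph realizable by Proposition \ref{prop:i:diamond}. For the remaining four, $\thet{2,2,4}$, $\thet{2,3,3}$, $\thet{2,3,4}$, and $\thet{3,3,3}$, I would state and prove a separate lemma for each (this is where the bulk of Section~\ref{sec:c:nonthetas} goes). The natural approach for each is a case analysis on the hypothetical seed graph $G$: since a theta graph has a vertex of degree $3$, Lemma~\ref{lem:i:claw} and Proposition~\ref{prop:i:C4struct} severely constrain the local structure of the $i$-sets at that branch vertex, and one pushes these constraints around the three paths until a contradiction (typically: an edge of $\cp{G}$ forced to lie on no triangle, giving $i(G)\le 2$ by Observation~\ref{obs:c:i3}, or an extra unwanted $i$-set appearing). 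Corollary~\ref{coro:i:notInduced} is not directly applicable here since the seven exceptions are not claimed to be induced-subgraph-minimal obstructions, so each must be argued on its own.

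For the positive direction, the goal is to exhibit, for every triple $(j,k,\ell)$ with $j\le k\le\ell$ not among the seven exceptions, a seed graph $G$ (presented via its complement $\cp{G}$) with $\ig{G}\cong\thet{j,k,\ell}$. I would organize this by the smallest path length $j$. When $j=1$ the theta graph is a cycle $C_{k+\ell}$ with one chord; cycles are already $i$-graphs, and the Max Clique Replacement Lemma (Lemma~\ref{lem:i:cliqueRep}) together with the complement-triangle viewpoint should let one attach the chord, except for the single small case $\thet{1,2,2}=\Dia$. The main work is the families with $j\ge 2$: here I expect to build $\cp{G}$ as a "necklace" of triangles, one triangle per vertex of the target theta graph, glued along shared edges exactly according to the desired adjacencies, with the branch vertices of the theta corresponding to triangles sharing edges with three neighbours. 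One then has to (a) verify $i(G)=3$ — equivalently, by Observation~\ref{obs:c:i3}, that every edge of $\cp{G}$ lies on a triangle and no larger independent set exists in $G$ — and (b) kill every spurious triangle of $\cp{G}$ using the vertex-attachment trick (the application of the Deletion Lemma inside $\cp{G}$ noted after Observation~2.?), so that the triangle-adjacency graph of $\cp{G}$ is exactly $\thet{j,k,\ell}$. Small cases near the exceptions (e.g. $\thet{2,2,5}$, $\thet{2,3,5}$, $\thet{2,4,4}$, $\thet{3,3,4}$, $\thet{3,4,4}$) will likely need ad hoc constructions rather than fitting the uniform family, and I would treat these as separate lemmas.

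A cleaner alternative for parts of the positive direction is bootstrapping from the house graph $\house=\thet{1,2,3}$ (Proposition~\ref{prop:i:house}) and from cycles via the Deletion and Max Clique Replacement Lemmas: since $i$-graphs are closed under induced subgraphs, if I can realize one "large" theta graph in a family, all the shorter ones in that family with the same branch structure follow by deletion — but one must be careful, as deleting vertices of a theta graph generally yields a path or cycle, not a smaller theta graph, so this only helps in limited ways. I would therefore mostly rely on direct constructions.

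The main obstacle, I expect, is the negative direction for the four "new" exceptions $\thet{2,2,4}$, $\thet{2,3,3}$, $\thet{2,3,4}$, $\thet{3,3,3}$: unlike $\Dia$, $K_{2,3}$, $\kappa$, these are larger and the forced-structure case analysis on a putative seed graph is more delicate — one has to rule out all complement-triangle configurations, and the argument does not reduce to a single small forbidden induced subgraph. Getting a uniform, non-exhausting argument there (perhaps via a parity or counting obstruction on the triangle-adjacency structure, or by showing any such $\cp{G}$ would force a $K_4$ or a triangle-free edge) is the crux. The positive direction is laborious but routine once the necklace-of-triangles template is set up and the triangle-elimination trick is in hand.
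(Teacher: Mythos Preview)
Your overall organization --- split into the two directions, handle the positive side by explicit complement-triangle constructions organized by $j$, and handle the four new exceptions by case analysis on a putative seed graph --- matches the paper's structure closely. The positive direction in the paper is exactly a sequence of wheel-based complement constructions (your ``necklace of triangles'' sharing a common hub vertex), with the small sporadic cases near the boundary treated individually and the $K_4$-attachment trick used to kill spurious triangles, just as you anticipate.

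The one substantive discrepancy is in your negative direction. You suggest the contradiction will typically be ``an edge of $\cp{G}$ forced to lie on no triangle, giving $i(G)\le 2$ by Observation~\ref{obs:c:i3}.'' This is the wrong tool: nothing tells you $i(G)=3$ for the hypothetical seed (only that $i(G)\ge 3$, from Lemma~\ref{lem:i:K1m}\ref{lem:K1m:c}), so the complement-triangle viewpoint is simply unavailable for the non-realizability proofs. The paper's arguments for $\thet{2,2,4}$, $\thet{2,3,3}$, $\thet{2,3,4}$, $\thet{3,3,3}$ work entirely with the $i$-sets themselves, for arbitrary $i(G)=k\ge 3$, and the workhorse you are missing is the distance bound: Observations~\ref{obs:i:dist} and~\ref{obs:i:d2} say that $d_H(X,Y)\ge |X-Y|$ and that distance~$2$ forces $|X-Y|=2$. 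Together with Proposition~\ref{prop:i:C4struct} and Lemma~\ref{lem:i:K1m} this pins down the composition of almost every $i$-set along the three paths, and the contradiction in each of the four cases is either an unwanted \emph{edge} of $\ig{G}$ (two $i$-sets that should be nonadjacent turn out to differ in a single adjacent pair) or a violation of the distance bound itself --- never a statement about $\cp{G}$.
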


%\nts{Is the list of 7 above good as is?  Or would it be better with all 7 on a single line?}

\noindent Table \ref{tab:c:thetaLong} summarizes the cases used to establish Theorem \ref{thm:c:thetas} and their associated results.

\begin{table}[H] \centering
	\begin{tabular}{|l|l|l|}  \hline
		$\thet{j,k,\ell}$	&	Realizability	&	Result	\\	\hline
	%	&		&		\\	
		$\thet{1,2,2}$	&	non-$i$-graph	&	$\Dia$.  Proposition \ref {prop:i:diamond}	\\	
		$\thet{1,2,\ell}$, $\ell \geq 3$ 	&	$i$-graph	&	Lemma \ref{lem:c:thet12k}	\\	
		$\thet{1,k,\ell}$, $3 \leq k \leq \ell$ 	&	$i$-graph	&	Lemma \ref{lem:c:thet1kl}	\\	
		&		&		\\	
		$\thet{2,2,2}$	&	non-$i$-graph	&	$K_{2,3}$.  Proposition \ref{prop:i:diamond}	\\	
		$\thet{2,2,3}$	&	non-$i$-graph	&	$\kappa$.  Proposition \ref{prop:i:diamond}	\\	
		$\thet{2,2,4}$	&	non-$i$-graph	&	Proposition \ref{prop:c:thet224}	\\	
		$\thet{2,2,\ell}$, $\ell\geq 5$	&	$i$-graph	&	Lemma \ref{lem:c:thet22l}	\\	
		$\thet{2,3,3}$	&	non-$i$-graph	&	Proposition \ref{prop:c:thet233}	\\	
		$\thet{2,3,4}$	&	non-$i$-graph	&	Proposition \ref{prop:c:thet234}	\\	
		$\thet{2,3,\ell}$, $\ell\geq 5 $	&	$i$-graph	&	Lemma \ref{lem:c:thet23k}	\\	
		$\thet{2,4,4}$	&	$i$-graph	&	Lemma \ref {lem:c:thet244}	\\	
		$\thet{2,k,5}$, $4 \leq k \leq 5$	&	$i$-graph	&	Lemma \ref{lem:c:thet2k5}	\\	
		$\thet{2,k,\ell}$, $k \geq 4$, $\ell \geq 6$, $\ell \geq k$	&	$i$-graph	&	 Lemma \ref{lem:c:thet2kl}	\\	
		&		&		\\	
		$\thet{3,3,3}$	&	non-$i$-graph	&	 Proposition \ref{prop:c:thet333}	\\	
		$\thet{3,3,4}$	&	$i$-graph	&	Lemma \ref{lem:c:thet334}	\\	
		$\thet{3,3,5}$	&	$i$-graph	&	Lemma \ref {lem:c:thet335}	\\	
		$\thet{3,3,\ell}$, $\ell\geq 6$	&	$i$-graph	&	Lemma \ref {lem:c:thet33k}	\\	
		$\thet{3,4,4}$	&	$i$-graph	&	Lemma \ref{lem:c:thet344}	\\	
		$\thet{3,4,\ell}$, $\ell\geq 5$	&	$i$-graph	&	Lemma \ref{lem:c:thet34k}	\\	
		$\thet{3,5,5}$	&	$i$-graph	&	Lemma \ref{lem:c:thet355}	\\	
		&		&		\\	
		$\thet{4,4,4}$	&	$i$-graph	&	Lemma \ref{lem:c:thet444}	\\	
		$\thet{j,k,5}$, $4 \leq j \leq k \leq 5 $.	&	$i$-graph	&	Lemma \ref{lem:c:thetjk5}	\\	
		$\thet{j,k,\ell}$, $3 \leq j \leq k \leq \ell$, and $\ell \geq 6$.	&	$i$-graph	&	Lemma \ref{lem:c:thetjkl}	\\	\hline
	\end{tabular} \caption{$i$-graph realizability of theta graphs.} \label{tab:c:thetaLong}
\end{table}

\subsection{$\thet{1,k,\ell}$}  \label{subsec:c:1kl}

We have already seen that the house graph $\mathcal{H} = \thet{1,2,3}$ is an $i$-graph (Proposition \ref{prop:i:house}).  We can further exploit previous results to see that all graphs $\thet{1,2,\ell}$ for $\ell \geq 3$ are $i$-graphs by taking a cycle $C_n$ with $n \geq 4$, and replacing one of its maximal cliques (i.e. an edge) with a $K_3$.  By the Max Clique Replacement Lemma (Lemma \ref{lem:i:cliqueRep}), the resultant $\thet{1,2,n-1}$ is also an $i$-graph.  For reference, we explicitly state this result as a lemma.

\begin{lemma} \label{lem:c:thet12k}  
	For $\ell \geq 3$, the theta graph $\thet{1,2,\ell}$ is an $i$-graph.
\end{lemma}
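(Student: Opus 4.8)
The plan is to realize $\thet{1,2,\ell}$ as a cycle with one edge ``blown up'' into a triangle, via a single application of the Max Clique Replacement Lemma (Lemma \ref{lem:i:cliqueRep}). First I would invoke the known fact that every cycle $C_n$ is an $i$-graph; this was established in \cite{BMT1, LauraD}, and also follows from Corollary \ref{coro:c:clawfree} since a cycle is connected, claw-free, and $\Dia$-free. Fix $\ell \geq 3$ and put $n = \ell + 1 \geq 4$, so that $C_n$ is an $i$-graph.

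Next I would pin down the maximal cliques of $C_n$. Since $n \geq 4$, the cycle $C_n$ is triangle-free, and indeed no two adjacent vertices of $C_n$ have a common neighbour, so each edge of $C_n$ is a maximal clique (a maximal $K_2$). Writing the cyclic order as $v_1, v_2, \dots, v_n, v_1$, I apply Lemma \ref{lem:i:cliqueRep} to the maximal $2$-vertex clique $\{v_1, v_2\}$: the resulting graph $H_w$, obtained from $C_n$ by adjoining a new vertex $w^*$ adjacent to both $v_1$ and $v_2$, is again an $i$-graph.

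It then remains to check that $H_w \cong \thet{1,2,\ell}$. In $H_w$ the vertices $v_1$ and $v_2$ are joined by three internally disjoint paths: the edge $v_1 v_2$ (length $1$), the path $v_1 w^* v_2$ (length $2$), and the path $v_1 v_n v_{n-1} \cdots v_3 v_2$ (length $n - 1 = \ell$). As $\ell \geq 3$, these three lengths are pairwise distinct, so $H_w$ is exactly the theta graph $\thet{1,2,\ell}$; hence $\thet{1,2,\ell}$ is an $i$-graph. The base case $\ell = 3$ uses $n = 4$ and recovers the house graph $\house = \thet{1,2,3}$ of Proposition \ref{prop:i:house}.

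There is no genuine obstacle here: the argument is entirely a packaging of prior results. The only points requiring (slight) care are verifying that for $n \geq 4$ the maximal cliques of $C_n$ are precisely its edges, so that the hypothesis of Lemma \ref{lem:i:cliqueRep} is met, and confirming that the graph produced by that lemma has the stated theta structure rather than, say, accidentally creating a shorter path between the two branch vertices.
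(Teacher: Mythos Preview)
Your proposal is correct and follows essentially the same approach as the paper: the paper's argument (given in the paragraph immediately preceding the lemma) is precisely to take a cycle $C_n$ with $n\geq 4$, note that its edges are maximal cliques, and apply the Max Clique Replacement Lemma to obtain $\thet{1,2,n-1}$. Your write-up simply adds a bit more detail in verifying the resulting theta structure; the one superfluous remark is that the three path lengths being pairwise distinct is not what is needed---what matters is that the three paths are internally disjoint and nontrivial, which they clearly are.
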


%In the following sections, we determine the $i$-graph realizability of all theta graphs, dividing our results into subsections by the length of the theta graph's shortest path.  

%______________________  SUBSUB SECTION:  theta (1,k,l) ______________________
\subsubsection{Construction of $\cp{G}$ for $\thet{1,k,\ell}$, $3 \leq k \leq \ell$}  \label{subsubsec:c:1kl+}

In Figure \ref{fig:c:thet145} below, we provide a first example of the technique we employ repeatedly throughout this section to construct our theta graphs.  To the left is a graph $\cp{G}$, where each of its nine triangles corresponds to an $i$-set of its complement $G$.  The resultant $i$-graph of $G$, $\ig{G} = \thet{1,4,5}$, is presented on the right.  For consistency, we use $X$ and $Y$ to denote the triangles corresponding to the degree $3$ vertices in the theta graphs in this example, as well as all constructions to follow.

%>>>S: Figure {fig:c:thet145}	
\begin{figure}[H] \centering	
	\begin{tikzpicture}	[scale=0.8]
		
		%---------- REF-------------------
		\node (cent) at (0,0) {};
		\path (cent) ++(0:40 mm) node (rcent) {};	
		
		%----- W0 - Left Wheel ----------
		\node [std] at (cent) (w0) {};
		\path (w0) ++(40:4 mm) node (w0L) {$w_0$};
		
		\foreach \i/\c in {1/90,3/270,4/210, 5/150} {				
			\path (w0) ++(\c:20 mm) node [std] (w\i) {};
			\path (w\i) ++(\c:5 mm) node (Lw\i) {$w_{\i}$};
			\draw [thick] (w0)--(w\i);
		}
		
		%Node W2
		\path (w0) ++(0:20 mm) node [std] (w2) {};
		\path (w2) ++(90:5 mm) node (Lw2) {$w_{2}$};
		\draw [thick] (w0)--(w2);	
		
		\draw  [thick] (w1)--(w2)--(w3)--(w4)--(w5)--(w1);		
		
		%----- Right Path ----------
		\foreach \i/\c in {1/15,2/0,3/-15} {				
			\path (rcent) ++(90:\c mm) node [bblue] (v\i) {};
			\path (v\i) ++(0:5 mm) node [color=blue] (Lv\i) {$v_{\i}$};
			\draw[bline] (w2)--(v\i);
		}
		\draw[bline] (w1)--(v1)--(v2)--(v3)--(w3);	
		
		%-- X and Y
		\path (w0) ++(45:10 mm) node  [color=red] (LX) {$X$};
		\path (w0) ++(-45:10 mm) node  [color=red] (LY) {$Y$};
		
		%-- D's	
		\foreach \i/\c in {1/120,2/180,3/240} {				
			\path (w0) ++(\c:12 mm) node [color=red] (A\i) {$A_{\i}$};	
		}		
		
		%-- B's	
		\foreach \i/\c in {1/90, 2/18, 3/-18, 4/-90} {				
			\path (w2) ++(\c:12 mm) node [color=red] (B\i) {$B_{\i}$};	
		}	
		\node[broadArrow,fill=white, draw=black, ultra thick, text width=6mm] at (5.8cm,0){};
	\end{tikzpicture}
	\begin{tikzpicture}		
		%---------- REF-------------------	
		\node (cent) at (0,0) {};
		\node  at (cent) (w0) {};
		\path (w0) ++(0:20 mm) node (w2) {};	
		
		%-- X and Y
		\path (w0) ++(45:10 mm) node [bred] (X) {};
		\path (X) ++(90:5 mm) node [color=red] (LX) {$X$};
		
		\path (w0) ++(-45:10 mm) node [bred] (Y) {};
		\path (Y) ++(-90:5 mm) node [color=red] (LY) {$Y$};
		
		\draw [rline] (X)--(Y);	
		
		%-- A's	
		\foreach \i/\c in {1/120,2/180,3/240} {			
			\path (w0) ++(\c:12 mm) node [bred] (A\i) {};		
			\path (A\i) ++(\c:5 mm) node [color=red] (LA\i) {$A_{\i}$};	
		}		
		\draw [rline] (X)--(A1)--(A2)--(A3)--(Y);
		
		%-- B's	
		\foreach \i/\c in {1/90, 2/22, 3/-22, 4/-90} {				
			\path (w2) ++(\c:15 mm) node [bred] (B\i) {};
			\path (B\i) ++(\c:5 mm) node [color=red] (LB\i) {$B_{\i}$};	
		}	
		\draw [rline] (X)--(B1)--(B2)--(B3)--(B4)--(Y);	
		
		%--- Force vertical spacing
		\path (Y) ++(-90: 20mm) node (S) {};		
		
	\end{tikzpicture}
	\caption{A graph $\cp{G}$ (left) such that $\ig{G} = \thet{1,4,5}$ (right).}
	\label{fig:c:thet145}	
\end{figure}						
% <<< E: Figure {fig:c:thet145}

We proceed now to the general construction of a graph $\cp{G}$ with $\ig{G}=\thet{1,k,\ell}$ for $3 \leq k \leq \ell$.   As it is our first construction using this triangle technique, we provide the construction and proof for Lemma \ref{lem:c:thet1kl} with an abundance of detail.

\begin{cons} \label{cons:c:thet1kl} (See Figure \ref{fig:c:thet1kl}.)  
	Let $\overline{H}\cong W_{k+2}=C_{k+1}\vee K_{1}$, where
	$C_{k+1}=(w_{1},...,
 \newline
 w_{k+1},w_{1})$ and $w_{0}$ 
	is the central hub, i.e., the vertex with degree $k+1$.  Add a path $P_{\ell-2}%
	:(v_{1},...,v_{\ell-2})$, joining each $v_{i},\ i=1,...,\ell-2$, to $w_{2}$.
	Join $v_{1}$ to $w_{1}$ and $v_{\ell-2}$ to $w_{3}$. (If $\ell=3$, then
	$v_{1}=v_{\ell-2}$, hence $v_{1}$ is adjacent to $w_{1},w_{2}$ and $w_{3}$.)
	This is the (planar) graph $\overline{G}$.
\end{cons}

%>>>S: Lemma {lem:c:thet1kl}	
\begin{lemma} \label {lem:c:thet1kl}	
	If $\cp{G}$ is the graph constructed by Construction \ref{cons:c:thet1kl}, then $\ig{G} = \ag{G} = \thet{1,k,\ell}$, $ 3 \leq k \leq \ell$.
\end{lemma}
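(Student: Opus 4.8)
The plan is to verify that the graph $\cp{G}$ from Construction \ref{cons:c:thet1kl} has the required structure by a direct, if somewhat tedious, accounting of all its triangles and their adjacencies under the triangle-adjacency relation of Section \ref{sec:c:thetaComp}. The core claim is that the complement graph $G$ has $i(G)=3$, that its $i$-sets correspond exactly to the triangles of $\cp{G}$, and that these triangles assemble into $\thet{1,k,\ell}$. Throughout I would keep in mind that since no triangle of $\cp{G}$ lies in a $K_4$ (this should be checked as part of the triangle census), Observation \ref{obs:c:i3} and the following observation guarantee that every $3$-subset spanning a triangle is a genuine $i$-set of $G$ and that there are no undominated vertices; combined with the absence of triangle-free edges, this gives $i(G)=3$.

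First I would enumerate the triangles of $\cp{G}$ explicitly. The wheel $\cp{H}=W_{k+2}$ contributes the $k+1$ ``spoke'' triangles $w_0w_iw_{i+1}$ (indices mod $k+1$); label these so that $X=w_0w_1w_2$ and $Y=w_0w_2w_3$, and $A_1,\dots,A_{k-1}$ are the remaining spoke triangles $w_0w_3w_4,\dots,w_0w_{k+1}w_1$ in cyclic order. The attached path together with its two end-joins $v_1w_1$ and $v_{\ell-2}w_3$ contributes the triangles $B_1=w_2w_1v_1$, $B_i=w_2v_{i-1}v_i$ for $2\le i\le \ell-2$, and $B_{\ell-1}=w_2v_{\ell-2}w_3$; one must check these are all, i.e., that no edge $w_1w_2$, $w_2w_3$, $w_0w_2$ lies on any other triangle and that $C_{k+1}$ contributes no non-spoke triangle (true for $k\ge 3$, since $C_{k+1}$ is then induced and triangle-free). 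The degenerate case $\ell=3$, where $v_1$ is joined to $w_1,w_2,w_3$, should be handled as a remark: then $B_1=w_2w_1v_1$ and $B_2=w_2v_1w_3$ are the only path-triangles, and one checks $v_1w_1w_3$ is not a triangle since $w_1\not\sim w_3$ in $C_{k+1}$ for $k\ge 3$ — this yields $\thet{1,k,3}$ correctly. I would also confirm no $K_4$: the only way a fourth vertex could be common to a spoke triangle is via $w_0$ adjacent to three consecutive cycle vertices forming a triangle, which fails since $C_{k+1}$ is triangle-free for $k\ge 3$; the path triangles are clearly $K_4$-free.

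Next I would compute adjacencies. Two triangles are adjacent iff they share exactly one edge. Among the spokes, $w_0w_iw_{i+1}$ and $w_0w_{i+1}w_{i+2}$ share edge $w_0w_{i+1}$, giving the cycle $X\sim A_1\sim A_2\sim\cdots\sim A_{k-1}\sim Y\sim X$ — wait, that would close a cycle of length $k+1$, so I must be careful: $X$ and $Y$ themselves share edge $w_0w_2$, so the spokes form a cycle $X,Y,A_1,\dots,A_{k-1}$ of length $k+1$ in the triangle-adjacency graph. But the path contributes a second $X$–$Y$ route: $X=B_0$ shares $w_1w_2$ with $B_1$, each $B_i$ shares $w_2v_i$ (or the appropriate edge) with $B_{i+1}$, and $B_{\ell-1}$ shares $w_2w_3$ with $Y$, giving a path $X\sim B_1\sim\cdots\sim B_{\ell-1}\sim Y$ of length $\ell$. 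Together with the direct edge $X\sim Y$ (length $1$) this is exactly $\thet{1,k,\ell}$ provided there are no extra adjacencies. The main obstacle, and where I would spend the most care, is ruling out these spurious edges: I must check no $B_i$ is adjacent to any $A_j$ or to $X$/$Y$ except as listed, no two non-consecutive $B$'s share an edge (they share at most the vertex $w_2$, never an edge, for $\ell$ large enough — the small cases $\ell=3,4,5$ need individual inspection since $B_i$ and $B_{i+2}$ could conceivably share an edge through $w_2$), and that $A_j\not\sim A_{j'}$ for nonconsecutive indices (clear, as they'd need to share an edge but two nonconsecutive spokes meet only in $w_0$). I would also note that $X\not\sim A_j$ for $j\ge 2$ and $Y\not\sim A_j$ for $j\le k-2$, again because nonconsecutive spokes share only the hub vertex. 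Having established the triangle-adjacency graph is precisely $\thet{1,k,\ell}$ and that every triangle is a true $i$-set with $i(G)=3$, the isomorphism $\ig{G}\cong\thet{1,k,\ell}$ follows. Since the construction never uses the distinction between maximal and maximum independent sets — all arguments go through verbatim with $\alpha$ in place of $i$, as $\alpha(G)=3$ here too — the same graph witnesses $\ag{G}=\thet{1,k,\ell}$, which I would state as a one-line corollary of the identical reasoning.
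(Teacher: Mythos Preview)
Your proposal is correct and follows essentially the same route as the paper: enumerate the triangles of $\cp{G}$, verify $\cp{G}$ is $K_4$-free so that $i(G)=\alpha(G)=3$, list the adjacencies giving the three $X$--$Y$ paths of lengths $1$, $k$, and $\ell$, and rule out spurious edges. Your labelling of the $A_i$ runs in the opposite cyclic direction from the paper's, and the mid-argument ``wait'' should be cleaned up, but the content matches.

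One small point worth borrowing from the paper: instead of your direct case-checking for non-adjacencies (including the unnecessary worry about $B_i$ and $B_{i+2}$, which share only the vertex $w_2$ and never an edge, regardless of $\ell$), the paper observes that $\cp{G}$ is planar and every triangle is \emph{facial} in the given embedding. Hence each triangle is adjacent to at most three others, so once $X$ and $Y$ already have three listed neighbours each, no further edges can be incident with them; this disposes of most of the spurious-edge analysis in one line. You might adopt that shortcut to tighten the argument.
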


%>>>S: Figure {fig:c:thet1kl}	
\begin{figure}[H] \centering	
	\begin{tikzpicture}	[scale=0.8]
		
		%---------- REF-------------------
		\node (cent) at (0,0) {};
		\path (cent) ++(0:45 mm) node (rcent) {};	
		
		%----- W0 - Left Wheel ----------
		\node [std] at (cent) (w0) {};
		\path (w0) ++(15:6 mm) node (w0L) {$w_0$};
		
		%	\foreach \i/\c /\l in {1/90/1,3/270/3,4/210/4, 5/150/{k}} {		
			\foreach \i/\c /\l in {1/45/1,3/-45/3,4/-90/4, 5/-135/{5}, 6/135/{k}, 7/90/{k+1}} {						
				\path (w0) ++(\c:20mm) node [std] (w\i) {};
					\path (w\i) ++(\c:6 mm) node (Lw\i) {$w_{\l}$};
				\draw[thick] (w0)--(w\i);
			}
			
			%Node W2
			\path (w0) ++(0:20 mm) node [std] (w2) {};
				\path (w2) ++(80:5 mm) node (Lw2) {$w_{2}$};
			\draw[thick]  (w0)--(w2);	
			
			\draw[thick]  (w1)--(w2)--(w3)--(w4)--(w5);
			\draw[thick]  (w6)--(w7)--(w1);		
			
			\path (w0) ++(165:20mm) coordinate (z1) {};
			\path (w0) ++(195:20mm)coordinate (z2) {};
			
			\draw[thick] (w5)--(z2)--(z1)--(w6);
			
			\draw [dashed,thick] (z2)--(w0);
			\draw [dashed,thick] (z1)--(w0);
			
			%---- V-Path	
			\foreach \i / \j  / \k in {1/2.2/1, 2/1/2, 3/-1/{\ell-3}, 4/-2.2/{\ell-2}}
			{	
				\path (rcent) ++(90:\j cm) node [bblue] (v\i) {};
				\path (v\i) ++(0:7 mm) node (Lv\i) {\color{blue} $v_{\k}$};
				
				\draw[bline] (w2)--(v\i);
			}
			
			\draw[bline] (v1)--(v2);
			\draw[bline] (v3)--(v4);
			
			\draw[bline] (v2)--(v3);
			
			\draw[bline] (v1)--(w1);
			\draw[bline] (v4)--(w3);	
			
			\coordinate  (q1) at ($(v2)!0.33!(v3)$) {};
			\coordinate  (q2) at ($(v2)!0.66!(v3)$) {};
			
			\draw [dashed,bline] (q2)--(w2);
			\draw [dashed,bline] (q1)--(w2);		
			
			%-- X and Y
			\path (w0) ++(22:14 mm) node  [color=red] (LX) {$X$};
			\path (w0) ++(-22:14 mm) node  [color=red] (LY) {$Y$};		
			
			%-- A's	
			\foreach \i/\c / \l in {1/67/1,2/112/2,3/-115/{k-2}, 4/-67/{k-1}} 			
			\path (w0) ++(\c:14 mm) node [scale=0.9, color=red] (A\i) {${A_{\l}}$};				
			
			%-- B's	
			\foreach \i/\c /\l in {1/80/1,  4/-80/{\ell-1}} 				
			\path (w2) ++(\c:11 mm) node [scale=0.9,color=red] (B\i) {$B_{\l}$};	
			\foreach \i/\c /\l in { 2/32/2, 3/-30/{\ell-2}} 			
			\path (w2) ++(\c:20 mm) node [scale=0.9,color=red] (B\i) {$B_{\l}$};	
			
		\end{tikzpicture} \hspace{5mm}	
		\caption{The graph $\cp{G}$ from Construction \ref{cons:c:thet1kl} such that $\ig{G} = \thet{1,k,\ell}$ for $3 \leq k \leq \ell$.}
		\label{fig:c:thet1kl}	
	\end{figure}						
	% <<< E: Figure {fig:c:thet1kl}

	\begin{proof}
		To begin, notice that since in $\cp{G}$, the vertices $\mathcal{W}=\{w_0,w_1,\dots,w_{k},w_{k+1}\}$ form a wheel on at least five vertices, $\cp{H} \ncong K_4$.  Likewise, the graph induced by $\{w_0, w_1, w_2, w_3, v_1, \newline v_2,$ $\dots, v_{\ell-2}\}$ in $\cp{G}$ is also a wheel on $\ell +2$ vertices, where $w_2$ is the central hub, and so it too contains no $K_4$.  Therefore, $\cp{G}$ is $K_4$-free and the triangles of $\cp{G}$ are precisely the maximal cliques of $\cp{G}$, and so $\omega(\cp{G}) = i(G) = \alpha(G) = 3$.  Since the $i$-sets of $G$ are identical to its $\alpha$-sets, $\ig{G}=\ag{G}$, and so for ease of notation, we will refer only to  $\ig{G}$ throughout the remainder of this proof.
		
		We label the triangles as in Figure \ref{fig:c:thet1kl} by dividing them into two collections.  The first are the triangles composed only of the vertices from $\mathcal{W}$ and each containing $w_0$: let $X = \{w_0,w_1,w_2\}$, $Y = \{w_0, w_2,w_3\}$, $A_1 =\{w_0,w_{k+1},w_1\} $, $A_2=\{w_0, w_{k}, w_{k+1}\}$, $\dots$ $A_{k-2} = \{w_0, w_{4}, w_{5}\}$,  $A_{k-1} = \{w_0, w_3, w_4\}$.
		%$A_3 = \{w_0, w_3,w_4\}$, $\dots$, and $A_{k+1}  = \{w_0, w_{k+1}, w_1\}$.  
		The remainder are the triangles with vertex sets not fully contained in $\cp{H}$: $B_1 = \{w_2,w_1,v_1\}$, $B_2 = \{w_2, v_1, v_2\}$, $B_3 = \{w_2, v_2, v_3\}$, $\dots$, $B_{\ell-2} = \{w_2, v_{\ell-3}, v_{\ell-2}\}$,  and $B_{\ell-1} = \{w_2, v_{\ell-2},w_3\}$.  We refer to these collections as $\mathcal{S} = \{X, Y\}$,  $\mathcal{A}=\{A_1,A_2,\dots,A_{k-1}\}$, and $\mathcal{B} = \{B_1, B_2, \dots, B_{\ell-1}\}$. It is clear from Figure \ref{fig:c:thet1kl} that these are the only triangles of $\cp{G}$.  Therefore $V(\ig{G})=\{X,Y, A_1,A_2,\dots,A_{k-1}, B_1,B_2,\dots,B_{\ell-1}\}$.

		We now show that the required adjacencies hold.  		
		From the construction of $\cp{G}$, the following are immediate for $\ig{G}$:
		
		\begin{enumerate}[itemsep=0pt, label=(\roman*)]
			\item \label{it:c:thet1kl:1}  $\edge{X, w_1, w_3,Y}$, 
			
			\item \label{it:c:thet1kl:j} $\edge{X, w_0,  v_1,B_1}  \adedge{w_1,v_2,B_2} \adedge{v_1,v_3,B_3} \dots \edge{B_{\ell-2}, v_{\ell-3},  w_3,B_{\ell-1}} \adedge{v_{\ell-2},w_0,Y}$, 
			
			\item \label{it:c:thet1kl:k}  $\edge{X, w_2,  w_{k+1}, A_1}  \adedge{w_1,w_k,A_2} \dots \edge{A_{k-2}, w_{5},  w_3, A_{k-1}} \adedge{w_{4},w_2,Y}$. 
		\end{enumerate}
		
		\noindent Hence, we need only show that there are no additional unwanted edges generated in the construction of $\ig{G}$.  	
		
		Since $\cp{G}$ is a planar graph and all of its triangles are facial (that is, the edges of the $K_3$ form a face in the plane embedding in Figure \ref{fig:c:thet1kl}), each triangle is adjacent to at most three others.  From \ref{it:c:thet1kl:1} -- \ref{it:c:thet1kl:k}  above, triangles $X$ and $Y$ are both adjacent to the maximum three (and hence $\deg_{\ig{G}}(X)=\deg_{\ig{G}}(Y)=3$).  
		
		Recall that to be adjacent, two triangles share exactly two vertices.  Notice that the triangles of $\mathcal{A}$ are composed entirely of vertices from $\mathcal{W}-\{w_2\}$, and that for $2\leq i \leq \ell-2$, $B_i\cap\mathcal{W}=\{w_2\}$; furthermore, $B_1\cap\mathcal{W}=\{w_1,w_2\}$ and $B_{\ell-2}\cap\mathcal{W}=\{w_2,w_3\}$. Therefore no triangle of $\mathcal{B}$ is adjacent to any triangle of $\mathcal{A}$. It is similarly easy to see that there are no additional unwanted adjacencies between two triangles of $\mathcal{A}$ or two triangles of $\mathcal{B}$.
		
		% It remains only to show that there are no additional unwanted adjacencies between two triangles of $\mathcal{A}$ or two triangles of $\mathcal{B}$.
		% Clearly from the definitions, there is no triangle of $\mathcal{A}$ aside from $A_1$ that contains both the vertices $w_1$ and $w_{k+1}$.  For $2 \leq i \leq k-2$, consider a triangle $A_i$.  From \ref{it:c:thet1kl:k}, we have already shown that $A_i$ is adjacent to two other triangles: one sharing the vertex pair $\{w_0,w_{(k+2)-i}\}$ and the other sharing the vertices $\{w_0,w_{(k+3)-i}\}$.  From their definitions, we see that there is no triangle $A_j$ with $i\neq j$ sharing the third vertex pair with $A_i$.  That is, there is no $A_j$ such that $\{w_{(k+2)-i},w_{(k+3)-i} \} \subseteq A_j$.  Hence, for all $A_i\in \mathcal{A}$, $A_i$ is adjacent to exactly two other triangles, and so $\deg_{\ig{G}}(A_i)=2$.
		
		% Applying arguments very similar to those from $\mathcal{A}$ to the triangles of $\mathcal{B}$ shows that no  additional unwanted edges are generated in the construction of the $i$-graph.  		
		% This completes the exhaustive examination of the triangles of $\cp{G}$.  
  
  We conclude that the graph $\cp{G}$ generated by Construction \ref{cons:c:thet1kl} yields $\ig{G} =\ag{G}= \thet{1,k,\ell}$.
	\end{proof}
	% <<< E: Lemma {lem:c:thet1kl}
	
	% \subsubsection{The $i$-Graphs of the Complements of Wheels are Cycles}  \label{subsec:c:wheel}
 
 \medskip
	
	Before we proceed with the remainder of the theta graph constructions, let us return to Figure \ref{fig:c:thet1kl} to notice the prominence of the wheel subgraph in the complement seed graph $\cp{G}$.  In the constructions throughout this paper, this wheel subgraph will appear repeatedly; indeed, all of the complement seed graphs for the $i$-graphs of theta graphs have a similar basic form: begin with a wheel, add a path of some length, and then add some collection of edges between them.   As stated without proof in Lemma \ref{lem:c:wheel}, Figure \ref{fig:c:wheel} below demonstrates that a wheel in a triangle-based complement seed graph $\cp{G}$ corresponds to a cycle in the $i$-graph of $G$.  Using this result, in our later constructions with a wheel subgraph, we already have two of the three paths of a theta graph formed. Hence, we need only confirm that whatever unique additions are present in a given construction form the third path in the $i$-graph.

%>>>S: Figure {fig:c:wheel}
\begin{figure}[H] \centering	
	\begin{tikzpicture}	[scale=0.8]
		%---------- REF-------------------
		\node (cent) at (0,0) {};
		
		%----Left Wheel
		\node[std,label={0:$w_0$}] (w0) at (cent) {};
		
		\foreach \i/\j in {1/1,2/2,3/3,4/4,5/5,6/k}
		{
			\node[std,label={150-\i*60: $w_{\j}$}] (w\i) at (150-\i*60:2.5cm) {}; 
			\draw[thick] (w0)--(w\i);        
		}
		
		\draw[thick] (w1)--(w2)--(w3)--(w4)--(w5);
		\draw[thick] (w6)--(w1);
		\draw[thick] (w5)--(w6);
		
		\coordinate  (v1) at ($(w5)!0.33!(w6)$) {};
		\coordinate  (v2) at ($(w5)!0.66!(w6)$) {};		
		
		\draw [dashed, thick] (v2)--(w0);
		\draw [dashed, thick] (v1)--(w0);				
		
		%----- Internal Red Nodes	
		\foreach \i/\j in {1/1,2/2,3/3,4/4,6/k}
	{	\node[dred] (a\i) at (120-\i*60:12mm) {}; 
			\path (a\i) ++(120-\i*60:6 mm) node (Aw\i) {\color{red}$A_{\j}$};
	}
	
%		\node[dred,label={120-\i*60: \color{red}$A_{\j}$}] (a\i) at (120-\i*60:13mm) {};
	
		\draw[thick,red, densely dashdotted](a6)--(a1)--(a2)--(a3)--(a4);
		
		\foreach \i/\j in {5/195,6/180,7/165}
		\coordinate (b\i) at (\j:12mm) {}; 		
		
		\draw[dashed,red, thick] (a4)--(b5)--(b6)--(b7)--(a6);
		
	\end{tikzpicture} 
	\caption{The wheel  $H_k=W_{k+1}$, with the $i$-graph of its complement, $\ig{\cp{H_k}} \cong \ag{\cp{H_k}} \cong  C_k$, embedded in red.}
	\label{fig:c:wheel}
\end{figure}		
% <<< E: Figure {fig:c:wheel}

 %>>>S: Lemma {lem:c:wheel}	
	\begin{lemma}\label{lem:c:wheel}
		For $k\geq 4$, let $H_{k}$ be the wheel $W_{k+1} =C_{k} \vee K_1$.  Then $\ig{\cp{H_{k}}} \cong \ag{\cp{H_{k}}} \cong C_k$.
	\end{lemma}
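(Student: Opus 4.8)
The plan is to read off $\ig{G}$ (equivalently $\ag{G}$) directly from the triangle structure of $\cp{G}=H_k=W_{k+1}$, writing $w_0$ for the hub and $(w_1,\dots,w_k,w_1)$ for the rim cycle $C_k$. First I would enumerate the triangles of $\cp{G}$: since $k\geq 4$, the rim $C_k$ is an induced (chordless) cycle, so no triangle of $W_{k+1}$ lies on the rim, and hence every triangle consists of $w_0$ together with one rim edge, giving exactly the $k$ triangles $A_i=\{w_0,w_i,w_{i+1}\}$, $i\in\mathbb{Z}_k$. For the same reason $W_{k+1}$ is $K_4$-free, so its triangles are precisely its maximal cliques and $\omega(\cp{G})=3$; thus $\alpha(G)=\omega(\cp{G})=3$. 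As $W_{k+1}$ has no isolated vertices and every edge of $\cp{G}$ lies on one of the $A_i$, Observation~\ref{obs:c:i3} rules out $i(G)\le 2$, and together with $i(G)\le\alpha(G)=3$ this gives $i(G)=\alpha(G)=3$. Hence the $i$-sets and $\alpha$-sets of $G$ coincide, $\ig{G}=\ag{G}$, and $V(\ig{G})=\{A_1,\dots,A_k\}$.

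Next I would determine the edges. Two triangles $A_i,A_j$ can be adjacent in $\ig{G}$ only if they agree in exactly two vertices. Every $A_i$ contains $w_0$, and $A_i\cap A_j=\{w_0\}\cup(\{w_i,w_{i+1}\}\cap\{w_j,w_{j+1}\})$, which equals $\{w_0\}$ unless $j\equiv i\pm1\pmod k$; so the only candidate edges of $\ig{G}$ are the pairs $A_iA_{i+1}$. For such a pair, $A_i\cap A_{i+1}=\{w_0,w_{i+1}\}$, and the token move from $A_i$ to $A_{i+1}$ slides from $w_i$ to $w_{i+2}$, which is a legal slide in $G$ exactly when $w_i\not\sim_{\cp{G}}w_{i+2}$. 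Since $k\geq 4$, the vertices $w_i$ and $w_{i+2}$ are distinct and at distance $2$ on the rim cycle, hence non-adjacent in $W_{k+1}$; therefore $A_i\sim A_{i+1}$ for every $i\in\mathbb{Z}_k$. (This also follows from $\cp{G}$ being $K_4$-free, since then any two triangles sharing an edge correspond to a legal slide in $G$, as noted earlier in this section.)

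Combining the two steps, $\ig{G}$ has vertex set $\{A_1,\dots,A_k\}$ with $A_i\sim A_j$ if and only if $j\equiv i\pm1\pmod k$ — that is, $\ig{G}\cong C_k$ — and the same holds for $\ag{G}$ because $i(G)=\alpha(G)$. I do not expect any genuine obstacle: the whole argument is a finite verification, and the hypothesis $k\geq 4$ is used only to guarantee that the rim is chordless (so the triangles are exactly the $A_i$ and $\cp{G}$ is $K_4$-free) and that $w_i\not\sim_{\cp{G}}w_{i+2}$; for $k=3$ one has $W_4=K_4$ and the statement genuinely fails. The one place deserving an explicit remark is $k=4$, where $w_{i+2}$ coincides with $w_{i-2}$; this is harmless, since $w_{i+2}\neq w_i$ still holds, so the slide $w_i\to w_{i+2}$ is well defined and the cycle structure is unaffected.
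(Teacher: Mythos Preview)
Your proof is correct and follows exactly the triangle-adjacency approach the paper sets up at the start of Section~\ref{sec:c:thetaComp}. Note that the paper itself states Lemma~\ref{lem:c:wheel} without proof, pointing only to Figure~\ref{fig:c:wheel} as an illustration; your argument simply supplies the routine verification that the figure is meant to convey, so there is nothing to compare beyond that.
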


	We note the following analogous  - although less frequently applied - result for fans of the form $K_1+P_k$, as illustrated in Figure \ref{fig:c:fan}.

%>>>S: Figure {fig:c:fan}

\begin{figure}[H] \centering	
	\begin{tikzpicture}	[scale=0.8]
		%---------- REF-------------------
		\node (cent) at (0,0) {};
		\path (cent) ++(0:40 mm) node (rcent) {};	
		
		%----Right Path
		\node[std,label={180:$v_0$}] (w0) at (cent) {};
		
		\foreach \i/\d/\j in {1/1/1,2/2/2,3/3/3,4/4/4,5/6/{k-1},6/7/{k}}
		{
			\path (rcent) ++(90:40mm-\d*10mm) node[std,label={0: $v_{\j}$}] (w\i) {};
			\draw[thick] (w0)--(w\i);        
		}
		
		\draw[thick] (w1)--(w2)--(w3)--(w4);
		\draw[thick] (w5)--(w6);
		\draw[dashed, thick] (w4)--(w5);
		
		\coordinate  (v1) at ($(w4)!0.33!(w5)$) {};
		\coordinate  (v2) at ($(w4)!0.66!(w5)$) {};
		
		\draw [thick, dashed] (v2)--(w0);
		\draw [thick, dashed] (v1)--(w0);	
		
		%----- Internal Red Nodes		
		\foreach \i/\d /\r /\j in {1/1/25/1,2/2/10/2,3/3/0/3,5/6/-25/{k-1}}
		{
			\path (rcent) ++(90:35mm-\d*10mm) coordinate  (z\i) {};	
			\coordinate  (y\i) at ($(z\i)!0.35!(w0)$) {};
			\node[dred] (a\i) at (y\i) {};	
				\path (a\i) ++(\r:6mm) node[scale=0.9]  (AL\i) {\color{red}$A_{\j}$};	
		}
		
		%----- Lines	
		\path (a3) ++(270:5mm) coordinate (d3) {};	
		\path (a5) ++(90:5mm) coordinate (d5) {};	
		
		\draw[thick,red](a1)--(a2)--(a3);
		\draw[thick,red](a3)--(d3);
		\draw[thick,red](a5)--(d5);
		
		\draw[dashed,thick, red, densely dashdotted] (d3)--(d5);
	\end{tikzpicture} 	
	\caption{The  fan  $H_k=K_1 \vee P_k$, with the $i$-graph of its complement, $\ig{\cp{H_k}} \cong \ag{\cp{H_k}} \cong P_{k-1}$, embedded in red.}
	\label{fig:c:fan}	
\end{figure}
% <<< E: Figure {fig:c:fan}

	%>>>S: Lemma {lem:c:wheel}	
	\begin{lemma}\label{lem:c:fan}
		For $k\geq 2$, let $H_{k}$ be the $k$-fan $K_1 \vee P_{k}$.  Then $\ig{\cp{H_{k}}} \cong \ag{\cp{H_k}} \cong P_{k-1}$.
	\end{lemma}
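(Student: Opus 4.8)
The plan is to argue inside the complement seed graph exactly as for the wheel in Lemma~\ref{lem:c:wheel}. Put $\cp{G}=H_k=K_1\vee P_k$, writing $v_0$ for the apex and $v_1,\dots,v_k$ for the spine of the $P_k$, so that in $H_k$ the vertex $v_0$ is universal and $v_iv_{i+1}\in E(H_k)$ for $1\le i\le k-1$. First I would list the triangles of $H_k$: since $P_k$ is triangle-free, $H_k$ is $K_4$-free, so the triangles of $H_k$ are precisely its maximal cliques; and since $v_0$ is universal it lies in every maximal clique, so $Q\mapsto Q\setminus\{v_0\}$ is a bijection between the maximal cliques of $H_k$ and those of $P_k$. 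As $k\ge 2$, the maximal cliques of $P_k$ are its $k-1$ edges $v_iv_{i+1}$, hence the triangles/maximal cliques of $H_k$ are exactly $T_i=\{v_0,v_i,v_{i+1}\}$, $1\le i\le k-1$.

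Next I would note that all of these maximal cliques have the same size $3$, so $\cp{G}$ has all of its maximal independent sets of size $3$; equivalently $G$ is well-covered with $i(G)=\alpha(G)=3$, the $i$-sets and $\alpha$-sets of $G$ coincide, and both equal $\{T_1,\dots,T_{k-1}\}$. In particular $\ig{G}=\ag{G}$ has $k-1$ vertices, and there is nothing further to check for $i(G)$ (this also rules out maximal independent sets of $G$ of size $1$ or $2$).

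Finally I would settle the edges via triangle adjacency in $\cp{G}$: $T_i\sim T_j$ iff the two triangles share exactly one edge, i.e.\ iff $|T_i\cap T_j|=2$. Since $v_0\in T_i\cap T_j$ always, this holds exactly when $\{i,i+1\}$ and $\{j,j+1\}$ meet in one point, i.e.\ when $|i-j|=1$; then $T_i$ and $T_{i+1}$ share the edge $v_0v_{i+1}$ and are adjacent, while for $|i-j|\ge 2$ they meet only in $v_0$ and are nonadjacent. (Equivalently, the token slide $v_i\mapsto v_{i+2}$ along the edge $v_iv_{i+2}\in E(\cp{G})$ carries $T_i$ to $T_{i+1}$.) Thus the reconfiguration graph on $\{T_1,\dots,T_{k-1}\}$ is the path $T_1T_2\cdots T_{k-1}$, giving $\ig{G}=\ag{G}\cong P_{k-1}$.

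There is no genuine obstacle here; the argument is a direct enumeration, and the only point needing care is the base case $k=2$, where $P_2$ is a single edge, $H_2=K_3$, $\cp{G}=\overline{K_3}$ has the unique $i$-set $\{v_0,v_1,v_2\}$, and $\ig{G}\cong K_1=P_1$, in agreement with the statement.
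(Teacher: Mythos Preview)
Your argument is correct and is exactly the kind of direct enumeration the paper intends; in fact the paper states this lemma without proof, offering only Figure~\ref{fig:c:fan} as illustration, so there is no separate ``paper proof'' to compare against.  Your treatment of the maximal cliques of $H_k$, the well-coveredness of $G=\cp{H_k}$, and the adjacency of the $T_i$ via shared edges is precisely what the surrounding discussion (Observation~\ref{obs:c:i3} and the triangle-adjacency convention) sets up.

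One small slip: in your parenthetical you write ``the token slide $v_i\mapsto v_{i+2}$ along the edge $v_iv_{i+2}\in E(\cp{G})$''.  The slide must occur along an edge of $G$, not of $\cp{G}=H_k$; indeed $v_i$ and $v_{i+2}$ are nonadjacent in the fan $H_k$, hence adjacent in $G=\cp{H_k}$, which is what makes the slide legal.  This does not affect the main argument, which already establishes adjacency via $|T_i\cap T_j|=2$.
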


	%......................  SUB SECTION:  theta (2,k,l) .........................
	\subsection{$\thet{2,k,\ell}$  for $2  \leq k  \leq \ell$ }  \label{subsec:c:2kl}

	%______________________  SUBSUB SECTION:  theta (2,2,\ell) ______________________
	\subsubsection{Construction of $\cp{G}$ for $\thet{2,2,\ell}$, $\ell \geq 5$}  \label{subsubsec:c:22l}

	As stated in Proposition \ref{prop:i:diamond},  $K_{2,3} \cong \thet{2,2,2}$  and $\kappa \cong \thet{2,2,3}$  are not $i$-graphs.   Extending these results, we find that the length of the third path in $\thet{2,2,\ell}$ has a transition point between $\ell=4$ and $\ell=5$; while $\ell=4$ is still too short to form an $i$-graph (see Lemma \ref{prop:c:thet224}), for  $\ell \geq 5$, $\thet{2,2,\ell}$ is $i$-graph realizable.

	%In the following constructions, we build our graphs around the graph $\cp{H} \cong W_5 = C_4 \vee K_1$, labelling the degree 3 vertices as $w_1,w_2,w_3,w_4$ and the central degree 4 vertex as $w_0$.

	%>>> S:CONS {con:c:thet22k}
	\begin{cons} \label{con:c:thet22k}
		See Figures \ref{fig:c:thet22k} and \ref{fig:c:thet225}.	Begin with the graph $\cp{H} \cong W_5 = C_4 \vee K_1$, labelling the degree $3$ vertices as $w_1,w_2,w_3,w_4$ and the central degree $4$ vertex as~$w_0$.

		\begin{enumerate}[itemsep=0pt, label=(\alph*)]			
  
  \item
  \label{con:c:thet22k:ell}
			If $\ell \geq 6$ (as in Figure \ref{fig:c:thet22k}), attach to $\cp{H}$ a path  $P_{\ell -3}: (v_1, v_2,\dots,v_{\ell-3})$ by joining $w_1$ to $v_1, v_2, \dots, v_{\ell-4}$.  Join $v_{\ell-5}$ to $v_{\ell-3}$.  Next, join $w_2$ to $v_1$, and $w_3$ to $v_{\ell-3}$.  Then, join $w_4$ to $v_{\ell-4}$ and $v_{\ell-3}$.  Add a new vertex $z$, joined to $w_1, w_4,$ and $v_{\ell-4}$.

  \item 
     \label{con:c:thet22k:5} 
            If $\ell = 5$ (as in Figure \ref{fig:c:thet225}), attach to $\cp{H}$ a path of $P_{2}: (v_1, v_2)$, by joining $w_1$ to $v_1$, and $w_2$ to $v_1$ and $v_2$.  Then join $w_3$ to $v_2$, and $w_4$ to both $v_1$ and $v_2$.  Add two new vertices, $z_1$ and $z_2$, joining $z_1$ to $v_1, w_1$, and $w_4$, and $z_2$ to $v_2, w_2$ and $w_3$.
			
		\end{enumerate}
		
		We label the resultant (planar) graph $\cp{G}$.	
	\end{cons}
	% <<< E: Cons {con:c:thet22k}

%>>> S: Figure {fig:c:thet22k}		
\begin{figure}[H] \centering	
	\begin{tikzpicture}	[scale=0.8]
		%---------- REF-------------------
		\node (cent) at (0,0) {};
		\path (cent) ++(0:45 mm) node (rcent) {};	
		
		%----Left Wheel
		\node[std] (w0) at (cent) {};
			\path (cent) ++(20:5 mm) node (Lw0) {$w_{0}$};
		
		\foreach \i in {1,2,4}
		{
			\node[std,label={180-\i*90: $w_{\i}$}] (w\i) at (180-\i*90:2cm) {}; 
			\draw[thick] (w0)--(w\i);        
		}
	
		\node[std,label={0: $w_{3}$}] (w3) at (180-270:2cm) {}; 
		\draw[thick] (w0)--(w3);

		\draw[thick] (w1)--(w2)--(w3)--(w4)--(w1);
		
		%---- V-Path	
		\foreach \i / \j  / \k in {1/3/1, 2/2/2, 3/-1/{\ell-5},  5/-3.4/{\ell-3}}
		{	
			\path (rcent) ++(90:\j cm) node [bblue] (v\i) {};
		}
	
			\path (rcent) ++(90:-2.2cm) node [bblue] (v4) {};
		
		\path (v1) ++(0:4 mm) node (Lv1) {\color{blue} $v_{1}$};
		\path (v2) ++(180:4 mm) node (Lv2) {\color{blue} $v_{2}$};
		\path (v3) ++(170:8 mm) node (Lv3) {\color{blue} $v_{\ell-5}$};
		\path (v4) ++(15:10 mm) node (Lv4) {\color{blue} $v_{\ell-4}$};
		\path (v5) ++(260:4 mm) node (Lv5) {\color{blue} $v_{\ell-3}$};		
		
		\draw[bline] (v1)--(v2);
		\draw[bline] (v3)--(v4)--(v5);
		
		\draw[bline,dashed] (v2)--(v3);
		
		%---- Zs
		\path (v4) ++(0:17 mm) node [bzed, label={[zelim2]0:$\boldsymbol{z}$}] (z) {};
		
		\path (v4) ++(270:2.6cm) coordinate  (c1) {};
		\draw[zline] (z) to[out=250, in=20]  (c1) to [out=200,in=250] (w4); 
		
		\path (v1) ++(90:2cm) coordinate  (c2) {};
		
		\path (v4) ++(180:1.5 cm) coordinate  (c2) {};
		\draw[bline] (v3) to[out=190, in=90]  (c2) to [out=270,in=170] (v5);

	%	\draw[rline] (z) to[out=0, in=20]  (c2) to [out=200,in=45] (w1); 
		\draw[zline] (w1) to[out=50, in=170]  (35:6.9cm) to [out=350,in=80]  (z); 
		
		\draw[zline] (v4)--(z);		
		
		%---- V3 to V5 
		%\path (v4) ++(180:1.3 cm) coordinate  (c1) {};
		%\draw[bline] (v3) to[out=190, in=90]  (c1) to [out=270,in=170] (v5);  
	%	\draw[bline] (v3) to[out=190, in=170]  (v5);  
		
		%---- V1 to W1 W2
		\draw[bline] (w1)--(v1)--(w2);	
		
		%-- V5 to W4
		\draw[bline] (v5) to[out=180,in=-90] (w4);  
		
		%-- V5 to W3
		\draw[bline] (v5) to[out=170,in=-30] (w3);  
		
		%-- V4 to W4
		\draw[bline] (v4) to[out=-45, in=25]  (4.6cm,-4.2cm) to [out=205,in=260] (w4);  	
		%	\node[std] at (4.6cm,-4.2cm) {};
		
		%-- W1 to V2  
		\draw[bline] (w1) to[out=20, in=170]  (35:6.1cm) to [out=350,in=45]  (v2); 
		%	\node[std] at (35:6.2cm)  {};
		
		%-- W1 to  V3 
		\draw[bline] (w1) to[out=38, in=170]  (35:6.4cm) to [out=350,in=45]  (v3); 
		%\node[std] at (30:5.8cm) {};
		
		%-- W1 to V4
		\draw[bline] (w1) to[out=46, in=170]  (35:6.6cm) to [out=350,in=40]  (v4); 
		%\node[std] at (30:5.8cm) {};
		
		%-- W1 to Dashed T1 T2
		\path (rcent) ++(90:1 cm) coordinate  (t1) {};
		\draw[bline, dashed] (w1) to[out=28, in=170]   (35:6.2cm)  to [out=350,in=45]  (t1); 
		
		\path (rcent) ++(90:0 cm) coordinate  (t2) {};
		\draw[bline, dashed] (w1) to[out=34, in=170]  (35:6.3cm) to [out=350,in=45]  (t2); 
		
		%----- Internal Red Labellings	
		\path (w0) ++(45:9 mm) node [color=red] (X) {$X$};			
		\path (w0) ++(45:-9 mm) node [color=red] (Y) {$Y$};
		
		%-- A's
		\path (w0) ++(-45:8 mm) node [color=red] (A1) {$A$};	
		
		%-- B's	
		\path (w0) ++(135:8 mm) node [color=red] (B1) {$B$};			
		
		%-- D's
		\path (w2) ++(90:15 mm) node [color=red] (D1) {$D_1$};	
		\path (v1) ++(170:9 mm) node [color=red] (D2) {$D_2$};	
		\path (v4) ++(180:7 mm) node [color=red] (D3) {$D_{\ell-3}$};
		\path (v5) ++(195:20mm) node [color=red] (D4) {$D_{\ell-2}$};
		\path (w4) ++(-56:21 mm) node [color=red] (D5) {$D_{\ell-1}$};			
		
	\end{tikzpicture} 
	\caption{The graph $\cp{G}$ from Construction \ref{con:c:thet22k} such that $\ig{G} = \thet{2,2,\ell}$ for $\ell \geq 6$.}
	\label{fig:c:thet22k}
\end{figure}	
% <<< E: Figure {fig:c:thet22k}

%>>> S: Figure {fig:c:thet225}	
\begin{figure}[H] \centering	
	\begin{tikzpicture}	[scale=0.8]
		
		%---------- REF-------------------
		\node (cent) at (0,0) {};
		\path (cent) ++(0:45 mm) node (rcent) {};	
		
		%----Left Wheel
		\node[std] (w0) at (cent) {};
		\path (w0) ++(30: 4 mm) node (Lw0) {${w_0}$};	
		
		\foreach \i in {2,4}
		{
			\node[std,label={180-\i*90: $w_{\i}$}] (w\i) at (180-\i*90:2.5cm) {}; 
			\draw[thick] (w0)--(w\i);        
		}
	
		\node[std,label={180: $w_{1}$}] (w1) at (180-90:2.5cm) {}; 
		%	\path (w1) ++(180:5 mm) node (Lw1) {$w_{1}$};	
			\draw[thick] (w0)--(w1);   
			
		\node[std,label={300: $w_{3}$}] (w3) at (180-270:2.5cm) {}; 
			\draw[thick] (w0)--(w3);

		\draw[thick] (w1)--(w2)--(w3)--(w4)--(w1);
		
		%---- V-Path	
		\foreach \i / \j in {1/1.5,2/-1.5}
		{	
			\path (rcent) ++(90:\j cm) node [std] (v\i) {};
	%		\path (v\i) ++(0:4 mm) node (Lv\i) {$v_{\i}$};
		}
		
			\path (v1) ++(0:5 mm) node (Lv1) {$v_{1}$};
			\path (v2) ++(290:5 mm) node (Lv2) {$v_{2}$};	
	
		\draw[thick] (w1)--(v1)--(v2)--(w3);
		\draw[thick] (v2)--(w2)--(v1);	
		
		%---- Zs
		\path (w2) ++(270:1 cm) node [bzed] (z2) {};
			\path (z2) ++(270:4 mm) node (Lz2) {\color{zelim} $\boldsymbol{z_{2}}$};
		\draw[zline] (w2)--(z2)--(w3);
		\draw[zline] (z2)--(v2);
		
		\path (w1) ++(125:1.2 cm) node [bzed] (z1) {};
			\path (z1) ++(5:6 mm) node (Lz1) {\color{zelim} $\boldsymbol{z_{1}}$};
		\draw[zline] (w4)--(z1)--(w1);
		\draw[zline] (z1)--(v1);
		
		%--- V2 to W4 
		\draw[thick] (v2) to[out=230, in=-5]  (270:3.3cm) to [out=170,in=280] (w4);
		
		%--- V1 to W4
		\draw[thick] (v1) to[out=130, in=0]  (90:4.3cm) to [out=180,in=90] (w4);	
		
		%------------ THETA
		
		%---- INNER Cycle	
		\foreach \i / \j / \rad in {1/X/45,2/B/135,3/Y/200,4/A/-20}
		{	
			\node[dred] (t\i) at (-45+\i*90:1.0cm) {}; 
			\path (t\i) ++(\rad: 4 mm) node (Lt\i) {\color{red}${\j}$};				
		}
		
		%	\path (t3) ++(200: 4 mm) node (Lt3) {\color{red}${Y}$};	

		\path (w2) ++(90: 7mm) node [dred] (d1) {};
		\path (d1) ++(90: 4 mm) node (Ld1) {\color{red}${D_1}$};
		
		\path (w2) ++(0: 15mm) node [dred] (d2) {};
		\path (d2) ++(90: 4 mm) node (Ld2) {\color{red}${D_2}$};	
		
		\path (v2) ++(-55: 15mm) node [dred] (d3) {};
		\path (d3) ++(0: 5 mm) node (Ld3) {\color{red}${D_3}$};	
		
		\path (t3) ++(260: 15mm) node [dred] (d4) {};
		\path (d4) ++(130: 6mm) node (Ld4) {\color{red}${D_4}$};					
		
		\draw[rlinemb](t1)--(t2)--(t3)--(t4)--(t1);
		\draw[rlinemb](t1)--(d1)--(d2);	
		\draw[rlinemb](t3)--(d4);
		
		%--- D2 to D3 
		\draw[rlinemb] (d2) to [out=-45,in=90] (d3);
		
		%--- D3 to D4
		\draw[rlinemb] (d3) to [out=230,in=300] (d4);
		
	\end{tikzpicture} 
	
	\caption{The graph $\cp{G}$ from Construction \ref{con:c:thet22k} such that $\ig{G} = \thet{2,2,5}$, with $\thet{2,2,5}$ overlaid in red.}
	\label{fig:c:thet225}
\end{figure}	
% <<< E: Figure {fig:c:thet225}
	
	% <<< E: Figure {fig:c:thet22l}

	As with our other constructions, the triangles of $\cp{G}$ are its smallest maximal cliques, and so $i(G)=3$. However, we now employ a technique of adding vertices to create $K_4$'s in $\cp{G}$ and eliminate any ``unwanted" triangles that might arise in our construction.  In \ref{con:c:thet22k:5}, the addition of $z_1$  and $z_2$  eliminate triangles $\{w_1,w_4,v_1\}$ and $\{w_2,w_3,v_2\}$, respectively.  Similarly, in   \ref{con:c:thet22k:ell}, $z$  prevents $\{w_1,w_4,v_{\ell-3}\}$ from being a maximal clique of $\cp{G}$ and hence an $i$-set of $G$.
	The unfortunate trade-off in this triangle-elimination technique is that the remaining triangles are no longer $\alpha$-sets; the constructions work only for $i$-graphs, not $\alpha$-graphs.

	%>>>S: Lemma {lem:c:thet22l}	
	\begin{lemma} \label{lem:c:thet22l}
		If $\cp{G}$ is the graph constructed by Construction \ref{con:c:thet22k}, then $\ig{G} = \thet{2,2,\ell}$, for $\ell \geq 5.$
	\end{lemma}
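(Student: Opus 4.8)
The plan is to follow the template established in the proof of Lemma~\ref{lem:c:thet1kl}, treating the two cases $\ell\ge 6$ (Figure~\ref{fig:c:thet22k}) and $\ell=5$ (Figure~\ref{fig:c:thet225}) in parallel. The first step is to pin down $V(\ig G)$. By inspection of Construction~\ref{con:c:thet22k}, $\cp G$ has no isolated vertices and every edge of $\cp G$ lies on a triangle, so $i(G)\neq 2$ by Observation~\ref{obs:c:i3}. Next I would check that the only cliques of $\cp G$ of order at least $4$ are the $K_4$'s forced by the elimination vertices: $\{w_1,w_4,v_{\ell-4},z\}$ when $\ell\ge 6$, and $\{w_1,w_4,v_1,z_1\}$ together with $\{w_2,w_3,v_2,z_2\}$ when $\ell=5$. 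Hence $\cp G$ has maximal cliques of orders $3$ and $4$ only, so $i(G)=3$; by the observation immediately following Observation~\ref{obs:c:i3}, the $i$-sets of $G$ are exactly the triangles of $\cp G$ not contained in a $K_4$. (Those $K_4$'s do occur, so $G$ is not well-covered, which is why the statement concerns $\ig G$ but not $\ag G$.)

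The second step is to list those triangles. For $\ell\ge 6$ I expect them to be the four ``wheel'' triangles through $w_0$, namely $X=\{w_0,w_1,w_2\}$, $B=\{w_0,w_1,w_4\}$, $Y=\{w_0,w_3,w_4\}$, $A=\{w_0,w_2,w_3\}$, together with $D_1=\{w_1,w_2,v_1\}$, the triangles $D_i=\{w_1,v_{i-1},v_i\}$ for $2\le i\le\ell-4$, then $D_{\ell-3}=\{v_{\ell-5},v_{\ell-4},v_{\ell-3}\}$ (which uses the chord $v_{\ell-5}v_{\ell-3}$), $D_{\ell-2}=\{v_{\ell-4},v_{\ell-3},w_4\}$, and $D_{\ell-1}=\{v_{\ell-3},w_3,w_4\}$. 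Here the role of $z$ is precisely to absorb the would-be triangle $\{w_1,w_4,v_{\ell-4}\}$ into a $K_4$, and one must also confirm that every triangle inside that $K_4$ is non-maximal and so contributes no vertex to $\ig G$. For $\ell=5$ the list is $X,A,Y,B$ together with $D_1=\{w_1,w_2,v_1\}$, $D_2=\{w_2,v_1,v_2\}$, $D_3=\{w_4,v_1,v_2\}$, $D_4=\{w_3,w_4,v_2\}$, with $z_1$ and $z_2$ deleting $\{w_1,w_4,v_1\}$ and $\{w_2,w_3,v_2\}$ respectively. In both cases the count of triangles is $\ell+3 = |V(\thet{2,2,\ell})|$.

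The third step is the adjacency check, recalling that two of these triangles are adjacent in $\ig G$ if and only if they share exactly two vertices. The four wheel triangles lie in the subgraph $\cp H\cong W_5$, so by Lemma~\ref{lem:c:wheel} (with $k=4$) they induce the $4$-cycle $X\sim A\sim Y\sim B\sim X$, giving two internally disjoint $X$-$Y$ paths of length $2$. By construction consecutive $D_i$'s share an edge, yielding the path $X\sim D_1\sim D_2\sim\cdots\sim D_{\ell-1}\sim Y$ of length $\ell$, internally disjoint from the wheel triangles. It then remains to verify there is no further coincidence, i.e.\ that no two listed triangles other than the named consecutive pairs meet in two vertices; this is a finite intersection count (for instance each $D_i$ with $2\le i\le\ell-4$ contains $w_1$ and two consecutive path vertices, so it meets each of $X,A,B,Y$ and each non-adjacent $D_j$ in at most one vertex). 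Together these give $\ig G\cong\thet{2,2,\ell}$.

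I expect the main obstacle to be the completeness of the triangle enumeration and the ``no extra adjacency'' verification when $\ell\ge 6$: because $w_1$ is joined to nearly every path vertex, $\cp G$ carries many triangles, and the subtle behaviour is concentrated at the far end of the path, where the chord $v_{\ell-5}v_{\ell-3}$ and the $K_4$ through $z$ interact to bend the $D$-path back onto $Y$ while killing $\{w_1,w_4,v_{\ell-4}\}$. One could hope to bound triangle-degrees using planarity (a triangle of a plane graph borders at most three faces), but the embedding drawn in Figure~\ref{fig:c:thet22k} is not visibly plane, so I would instead argue directly from pairwise intersections; the $\ell=5$ case is small enough to check by hand.
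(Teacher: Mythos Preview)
Your proposal is correct and follows essentially the same approach as the paper. The paper's proof is in fact more terse than yours: it handles only $\ell\ge 6$ explicitly (deferring $\ell=5$ to the figure and to \cite{LauraD}), lists exactly the same triangles $X,Y,A,B,D_1,\dots,D_{\ell-1}$ you identified, records the same three adjacency chains, and then declares both the vertex and edge verifications ``straightforward''; your outline simply makes those verifications more explicit. One small remark: the paper does assert that $\cp G$ is planar, so the facial-triangle degree bound you mention is in principle available, but---as you anticipated---the paper does not invoke it here and instead relies on direct inspection.
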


	\begin{proof}		
		We only prove the lemma for case where $\ell \geq 6$; the single case where $\ell = 5$ is adequately illustrated in Figure \ref{fig:c:thet225} and details can be found in \cite{LauraD}.

	As in the previous constructions, since each edge of $\cp{G}$ belongs to a triangle and some triangles are not contained in $K_4$'s, these triangles of $\cp{G}$ form the smallest maximal cliques of $\cp{G}$.  We label these triangles as in Figure \ref{fig:c:thet22k}; in particular,	
			
			\begin{center}	
				\begin{tabular}{rlcrl}
					$X$ & $=\{w_0,w_1,w_2\},$  & \null \hspace{20mm} \null &  	$D_1$ & $= \{w_1,w_2,v_1\},$ \\
					$Y$ & $= \{w_0,w_3,w_4\},$ &&   	$D_i$ & $= \{w_1,v_{i-1},v_i\}$  for $  2 \leq i \leq \ell-4,$\\
					$A$ & $=\{w_0,w_2,w_3\},$  &&		$D_{\ell-3}$ & $= \{v_{\ell-5}, v_{\ell-4}, v_{\ell-3} \},$ \\
					$B$ & $=\{w_0,w_1,w_4\},$  &&   	$D_{\ell-2}$ & $= \{w_{4}, v_{\ell-4}, v_{\ell-3} \},$ \\
					& && 							$D_{\ell-1}$ & $= \{w_3,w_4,v_{\ell-3}\}.$
				\end{tabular}
			\end{center}

   It is straightforward to verify that these $\ell+3$ sets are precisely the maximal cliques of $\cp{G}$ of order $3$ and, hence, the $i$-sets of $G$. Therefore they form the vertex set of $\ig{G}$. Moving to the edges of $\ig{G}$, the following adjacencies are clear from Figure \ref{fig:c:thet22k}:
			%=========== EDGES =====================
			\begin{enumerate}[itemsep=0pt, label=(\roman*)]
				\item \label{lem:c:thet22l:i}  $\edge{X, w_1, w_3,A} \adedge{w_2,w_4,Y}$, 
				
				\item \label{lem:c:thet22l:j} $\edge{X, w_2, w_4,B} \adedge{w_1,w_3,Y}$, 
				
				\item \label{lem:c:thet22l:k} $\edge{X, w_0,  v_1,D_1}  \adedge{w_2,v_2,D_2} \adedge{v_1,v_3,D_3} \dots \edge{D_{\ell-3}, w_1,  v_{\ell-2},D_{\ell-2}} \adedge{v_{\ell-4},w_4,D_{\ell-1}}  \adedge{v_{\ell-2},w_0,Y}$. 
			\end{enumerate}

           As for its vertex set,  it is straightforward to verify that the edge set of $\ig{G}$ consists of precisely the edges listed in $\mathrm{(i)-(iii)}$.  We conclude that $\ig{G} = \thet{2,2,\ell}$.

	\end{proof}

	% <<< E: Lemma {lem:c:thet22l}

	%______________________  SUBSUB SECTION:  theta (2,3,ell) ______________________
	
	\subsubsection{Construction of $\cp{G}$ for $\thet{2,3,\ell}$ for $\ell \geq 5$}  \label{subsubsec:c:23k}

	For many of the results going forward, we apply small modifications to previous constructions.  In the first of these, we begin with the graphs from Construction \ref{con:c:thet22k}, which were used to find $i$-graphs for $\thet{2,2,5}$ and $\thet{2,2,\ell}$ for $\ell \geq 6$,  and expand the central wheel used in there  to build $i$-graphs for $\thet{2,3,5}$ and $\thet{2,3,\ell}$ for $\ell \geq 6$.

	%>>> S:CONS {con:c:thet23k}
	\begin{cons} \label{con:c:thet23k}  Refer to Figures \ref{fig:c:thet23k} and \ref{fig:c:thet235}.	
		
		\begin{enumerate}[itemsep=0pt, label=(\alph*)]
			\item \label{con:c:thet23k:ell}
			If $\ell \geq 6$, begin with a copy of the graph $\cp{G}$ from Construction \ref{con:c:thet22k}.  Subdivide the edge $w_1w_4$, adding the new vertex $w_5$.  Join $w_5$ to $w_0$, so that $w_0,w_1,\dots,w_5$ forms a wheel.  Delete the vertex $z$.

			\item \label{con:c:thet23k:5} If $\ell=5$,	begin with a copy of the graph $\cp{G}$ from Construction \ref{con:c:thet22k}.  Subdivide the edge $w_1w_4$, adding the new vertex $w_5$.  Join $w_5$ to $w_0$, so that $w_0,w_1,\dots,w_5$ forms a wheel.  Delete the vertex $z_1$.
		\end{enumerate}
		
		We rename the resultant (planar) graph $\cp{G_{2,3,\ell}}$ for $\ell \geq 5$.	
	\end{cons}
	% <<< E: Cons {con:c:thet23k}

	%\nts{For any of the modified constructions, be careful that $\cp{G^*}$ is being used, not just $\cp{G}$}.

	%>>> S: Figure {fig:c:thet23k}		
\begin{figure}[H] \centering	
	\begin{tikzpicture}	[scale=0.8]
		%---------- REF-------------------
		\node (cent) at (0,0) {};
		\path (cent) ++(0:40 mm) node (rcent) {};	
		
		%----Left Wheel
		%----Left Wheel
		\node[std] (w0) at (cent) {};
			\path (cent) ++(25:5 mm) node (Lw0) {$w_{0}$};
		
		\foreach \i in {1,2,4}
		{
			\node[std,label={180-\i*90: $w_{\i}$}] (w\i) at (180-\i*90:2cm) {}; 
			\draw[thick] (w0)--(w\i);        
		}
	
		\node[std,label={0: $w_{3}$}] (w3) at (180-270:2cm) {}; 
		\draw[thick] (w0)--(w3);        
	
		\draw[thick] (w1)--(w2)--(w3)--(w4);

		%---- Wheel Division
		
		\node[std,label={135: $w_5$}] (w5) at (135:2cm) {}; 
		\draw[thick] (w0)--(w5); 
		\draw[thick] (w4)--(w5)--(w1);

		%---- V-Path	
		\foreach \i / \j  / \k in {1/3/1, 2/2/2, 3/-1/{\ell-5}, 4/-2.2/{\ell-4}, 5/-3.4/{\ell-3}}
		{	
			\path (rcent) ++(90:\j cm) node [bblue] (v\i) {};
		}
		
		\path (v1) ++(20:4mm) node (Lv1) {\color{blue} $v_{1}$};
		\path (v2) ++(200:4 mm) node (Lv2) {\color{blue} $v_{2}$};
		\path (v3) ++(160:8 mm) node (Lv3) {\color{blue} $v_{\ell-5}$};
		\path (v4) ++(0:8 mm) node (Lv4) {\color{blue} $v_{\ell-4}$};
		\path (v5) ++(260:4 mm) node (Lv5) {\color{blue} $v_{\ell-3}$};		
		
		\draw[bline] (v1)--(v2);
		\draw[bline] (v3)--(v4)--(v5);
		
		\draw[bline,dashed] (v2)--(v3);
		
		%---- V3 to V5 
		\path (v4) ++(180:1.5 cm) coordinate  (c1) {};
		\draw[bline] (v3) to[out=190, in=90]  (c1) to [out=270,in=170] (v5);  
		
		%---- V1 to W1 W2
		\draw[bline] (w1)--(v1)--(w2);	
		
		%-- V5 to W4
		\draw[bline] (v5) to[out=195,in=-90] (w4);  
		
		%-- V5 to W3
		\draw[bline] (v5) to[out=180,in=-30] (w3);  
		
		%-- V4 to W4
		\path (v5) ++(270: 10mm) coordinate  (cv4) {};		
		\draw[bline] (v4) to[out=-35, in=10]  (cv4) to [out=190,in=260] (w4);  	
		%	\node[std] at (cv4)  {};
		
		%-- W1 to V2  
		\draw[bline] (w1) to[out=25, in=180]  (40:5.8cm) to [out=350,in=45]  (v2); 
		%	\node[std] at  (40:5.8cm)  {};
		
		%-- W1 to  V3 
		\draw[bline] (w1) to[out=38, in=180]  (40:6.4cm) to [out=350,in=45]  (v3); 
	%	\node[std] at (35:6.4cm) {};
		
		%-- W1 to V4
		\draw[bline] (w1) to[out=40, in=180]  (40:6.6cm) to [out=350,in=40]  (v4); 		
		%\node[std] at (40:6.6cm)  {};
		
		%-- W1 to Dashed T1 T2
		\path (rcent) ++(90:1 cm) coordinate  (t1) {};
		\draw[bline, dashed] (w1) to[out=26, in=180]   (40:6cm)  to [out=350,in=45]  (t1); 
		
		\path (rcent) ++(90:0 cm) coordinate  (t2) {};
		\draw[bline, dashed] (w1) to[out=31, in=180]  (40:6.2cm) to [out=350,in=45]  (t2); 
		
		%----- Internal Red Labellings	
		\path (w0) ++(45:9 mm) node [color=red] (X) {$X$};			
		\path (w0) ++(45:-9 mm) node [color=red] (Y) {$Y$};
		
		%-- A's
		\path (w0) ++(-45:8 mm) node [color=red] (A1) {$A$};	
		
		%-- B's	
		%\path (w0) ++(135:8 mm) node [color=red] (B1) {$B$};		
		\path (w0) ++(113:12 mm) node [color=red] (B1) {$B_1$};	
		\path (w0) ++(158:12 mm) node [color=red] (B2) {$B_2$};	
		
		%-- D's
		\path (w2) ++(90:15 mm) node [color=red] (D1) {$D_1$};	
		\path (v1) ++(170:9 mm) node [color=red] (D2) {$D_2$};	
		\path (v4) ++(180:8 mm) node [color=red] (D3) {$D_{\ell-3}$};
		\path (v5) ++(195:20mm) node [color=red] (D4) {$D_{\ell-2}$};
	%	\path (w4) ++(-56:21 mm) node [color=red] (D5) {$D_{\ell-1}$};		
		\path (w3) ++(270:5 mm) node [color=red] (D5) {$D_{\ell-1}$};		
		
	\end{tikzpicture} 
	\caption{The graph $\cp{G_{2,3,\ell}}$ from Construction \ref{con:c:thet23k} \ref{con:c:thet23k:ell} such that $\ig{G_{2,3,\ell}} = \ag{G_{2,3,\ell}} =  \thet{2,3,\ell}$ for $\ell \geq 6$.}
	\label{fig:c:thet23k}
\end{figure}	
% <<< E: Figure {fig:c:thet23k}

%>>> S: Figure {fig:c:thet235}	
\begin{figure}[H] \centering	
	\begin{tikzpicture}	[scale=0.8]			
		%---------- REF-------------------
		\node (cent) at (0,0) {};
		\path (cent) ++(0:45 mm) node (rcent) {};	
		
		%----Left Wheel
		\node[std] (w0) at (cent) {};
		\path (w0) ++(30: 4 mm) node (Lw0) {${w_0}$};	
		
		\foreach \i / \rad in {1/90,2/0,3/300,4/180}
		{
			\node[std,label={\rad: $w_{\i}$}] (w\i) at (180-\i*90:2.5cm) {}; 
			\draw[thick] (w0)--(w\i);        
		}
		\draw[thick] (w1)--(w2)--(w3)--(w4);
		
		%---- Wheel Division
		\node[std] (w5) at (135:2.2cm) {}; 
			\path (w5) ++(73: 5 mm) node (Lw5) {${w_5}$};
		\draw[thick] (w0)--(w5); 
		\draw[thick] (w4)--(w5)--(w1);				
		
		%---- V-Path	
		\foreach \i / \j / \rad in {1/1.5/0,2/-1.5/270}
		{	
			\path (rcent) ++(90:\j cm) node [std] (v\i) {};
			\path (v\i) ++(\rad:5 mm) node (Lv\i) {$v_{\i}$};
			%\node[bblue,label={0: $v_\i$}] (v\i) at (4,2-\i) {};
		}
		\draw[thick] (w1)--(v1)--(v2)--(w3);
		\draw[thick] (v2)--(w2)--(v1);	
		
		%---- Zs
		\path (w2) ++(270:1 cm) node [bzed] (z2) {};
			\path (z2) ++(270:4 mm) node (Lz2) {\color{zelim}$\boldsymbol{z_{2}}$};
		\draw[zline] (w2)--(z2)--(w3);
		\draw[zline] (z2)--(v2);			
		
		%--- V2 to W4 
		\draw[thick] (v2) to[out=230, in=-5]  (270:3.3cm) to [out=170,in=280] (w4);
		
		%--- V1 to W4
		\draw[thick] (v1) to[out=130, in=5]  (90:3.3cm) to [out=190,in=80] (w4);	
		
		%------------ THETA--------------------			
		%---- INNER Cycle	
		\foreach \i / \j / \rad in {1/X/90,3/Y/160,4/A/0}
		{	
			\node[dred] (t\i) at (-45+\i*90:1.0cm) {}; 
			\path (t\i) ++(\rad: 4 mm) node (Lt\i) {\color{red}${\j}$};
			
		}
		
		%-- Modified B
		\path (w0) ++(113:10 mm) node [dred] (t2a) {};	
			\path (t2a) ++(113: 6 mm) node (Lt2a) {\color{red}${B_1}$};
		\path (w0) ++(158:10 mm) node [dred] (t2b) {};	
			\path (t2b) ++(158: 6 mm) node (Lt2b) {\color{red}${B_2}$};		
		
		%---- D- Path
		\path (w2) ++(90: 7mm) node [dred] (d1) {};
		\path (d1) ++(90: 4 mm) node (Ld1) {\color{red}${D_1}$};
		
		\path (w2) ++(0: 15mm) node [dred] (d2) {};
		\path (d2) ++(90: 4 mm) node (Ld2) {\color{red}${D_2}$};	
		
		\path (v2) ++(-45: 10mm) node [dred] (d3) {};
		\path (d3) ++(0: 5 mm) node (Ld3) {\color{red}${D_3}$};	
		
		\path (t3) ++(260: 15mm) node [dred] (d4) {};
		\path (d4) ++(140: 6mm) node (Ld4) {\color{red}${D_4}$};					
		
		\draw[rlinemb](t1)--(t2a)--(t2b)--(t3)--(t4)--(t1);
		\draw[rlinemb](t1)--(d1)--(d2);	
		\draw[rlinemb](t3)--(d4);
		
		%--- D2 to D3 
		\draw[rlinemb] (d2) to [out=-45,in=90] (d3);
		
		%--- D3 to D4
		\draw[rlinemb] (d3) to [out=230,in=300] (d4);
		
	\end{tikzpicture} 
	
	\caption{The graph $\cp{G_{2,3,5}}$ from Construction \ref{con:c:thet23k} \ref{con:c:thet23k:5} such that $\ig{G_{2,3,5}} = \thet{2,3,5}$, with $\ig{G_{2,3,5}}$ overlaid in red.}
	\label{fig:c:thet235}
\end{figure}	
% <<< E: Figure {fig:c:thet235}

	In Construction \ref{con:c:thet23k} \ref{con:c:thet23k:ell}, notice that the vertex $z$ is deleted from $\cp{G}$.    In the original Construction \ref{con:c:thet22k} for a graph $\cp{G}$ with $\ig{G} = \thet{2,2,\ell}$ for $\ell \geq 6$, $z$ served to eliminate the unwanted triangle formed by $\{w_1,w_4, v_{\ell-4}\}$.   Now with the expanded wheel including $w_5$, $\{w_1,w_4, v_{\ell-4}\}$ is not a triangle in $\cp{G_{2,3,\ell}}$ ($\ell \geq 6$), and $z$ is not needed.  Indeed, as $\cp{G_{2,3,\ell}}$ now has $\alpha({G_{2,3,\ell}}) = 3$, its triangles are also $\alpha$-sets in $G$, and so we can immediately extend the construction from $i$-graphs to $\alpha$-graphs.
	
	The extension, however, does not apply to  Construction \ref{con:c:thet23k} \ref{con:c:thet23k:5} for the graph $\cp{G_{2,3,5}}$.  Here, we no longer require  $z_1$ (which served to eliminate the unwanted triangle formed by $\{w_1,w_4, v_1\}$), but $z_2$ remains and forms the clique $\{w_2,w_3,z_2,v_2\}$; thus, $\alpha({G_{2,3,5}}) = 4$.  
	
	%In Construction \ref{con:c:thet23k} \ref{con:c:thet23k:5}, notice the vertex $z_1$ is deleted from $\cp{G}$.    In the original Construction \ref{con:c:thet22k} for a graph $\cp{G}$ with $\ig{G} = \thet{2,2,5}$, $z_1$ served to eliminate the unwanted triangle formed by $\{w_1,w_4, v_1\}$.   Now with the expanded wheel including $w_5$, $\{w_1,w_4, v_1\}$ is not a triangle in $\cp{G^*}$, and $z_1$ is not needed.  The proof for the  following Lemma \ref{lem:c:thet23k} is otherwise very similar to the proof of Lemma \ref{lem:c:thet22l}, and so is omitted.
	
	The proof for the  following Lemma \ref{lem:c:thet23k} is otherwise very similar to the proof of Lemma \ref{lem:c:thet22l}, and so is omitted.

	%>>>S: Lemma {lem:c:thet23k}
	
	\begin{lemma}\label{lem:c:thet23k}
		If $\cp{G_{2,3,5}}$ is the graph constructed in Construction \ref{con:c:thet23k} \ref{con:c:thet23k:5}, then $\ig{G_{2,3,5}} = \thet{2,3,5}$.  
		For $\ell \geq 6$, if $\cp{G_{2,3,\ell}}$  is the graph constructed in Construction \ref{con:c:thet23k} \ref{con:c:thet23k:ell}, then $\ig{G_{2,3,\ell}} = \ag{G_{2,3,\ell}} = \thet{2,3,\ell}$ for $\ell \geq 6$.  	
	\end{lemma}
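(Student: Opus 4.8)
The plan is to follow the template of the proof of Lemma~\ref{lem:c:thet22l}, regarding $\cp{G_{2,3,\ell}}$ as the small modification of the graph $\cp{G}$ of Construction~\ref{con:c:thet22k} prescribed by Construction~\ref{con:c:thet23k}: the central wheel $W_5$ on $w_0,\dots,w_4$ is replaced by the wheel $W_6$ on $w_0,\dots,w_5$ (subdivide $w_1w_4$ by $w_5$ and join $w_5$ to $w_0$), and the vertex $z$ (resp.\ $z_1$) is deleted. I would first record that $\cp{G_{2,3,\ell}}$ is planar and that every edge of it lies on a triangle, so $i(G_{2,3,\ell})\ge 3$ by Observation~\ref{obs:c:i3}; since there are triangles not contained in any $K_4$, in fact $i(G_{2,3,\ell})=3$, and the $i$-sets of $G_{2,3,\ell}$ are exactly the order-$3$ maximal cliques of $\cp{G_{2,3,\ell}}$. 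For $\ell\ge 6$ I would additionally argue that $\cp{G_{2,3,\ell}}$ is $K_4$-free: the unique $K_4$ of $\cp{G}$ was $\{w_1,w_4,v_{\ell-4},z\}$, and subdividing $w_1w_4$ (which removes the edge $w_1w_4$) together with deleting $z$ destroys it while creating no new $K_4$, since the new vertex $w_5$ lies only in the triangles $\{w_0,w_1,w_5\}$ and $\{w_0,w_4,w_5\}$. Hence $\alpha(G_{2,3,\ell})=3=i(G_{2,3,\ell})$ and $\ag{G_{2,3,\ell}}=\ig{G_{2,3,\ell}}$. For $\ell=5$ the $K_4$ on $\{w_2,w_3,v_2,z_2\}$ survives, so $\alpha(G_{2,3,5})=4>3$ and only the $i$-graph statement is claimed.

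Next I would enumerate the order-$3$ maximal cliques. The subdivided wheel contributes the five triangles $X=\{w_0,w_1,w_2\}$, $A=\{w_0,w_2,w_3\}$, $Y=\{w_0,w_3,w_4\}$, $B_2=\{w_0,w_4,w_5\}$, $B_1=\{w_0,w_1,w_5\}$, while the path part contributes $D_1,\dots,D_{\ell-1}$ exactly as in Construction~\ref{con:c:thet22k} (for $\ell\ge 6$ these are unchanged, since none of them uses the edge $w_1w_4$ or the vertex $z$; for $\ell=5$ one records the explicit list read off Figure~\ref{fig:c:thet235}). Using the planar embedding of Figure~\ref{fig:c:thet23k} (resp.\ Figure~\ref{fig:c:thet235}) one checks that these $\ell+4$ sets are all the order-$3$ maximal cliques, so $V(\ig{G_{2,3,\ell}})=\{X,Y,A,B_1,B_2,D_1,\dots,D_{\ell-1}\}$.

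For the edges, I would use that the embedding is planar with every triangle facial, so each triangle is adjacent to at most three others. The wheel on $w_0,\dots,w_5$ contributes the induced cycle $X\sim A\sim Y\sim B_2\sim B_1\sim X$ in $\ig{G_{2,3,\ell}}$, i.e.\ a path of length $2$ and a path of length $3$ between $X$ and $Y$, exactly as predicted by Lemma~\ref{lem:c:wheel}; the path part contributes the induced path $X\sim D_1\sim D_2\sim\cdots\sim D_{\ell-1}\sim Y$ of length $\ell$ exactly as in the proof of Lemma~\ref{lem:c:thet22l} (deleting $z$ only removes cliques and so cannot create an adjacency, and the chord $w_1w_4$ was never used by any $D_i$). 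It then remains to rule out cross-edges: a wheel triangle other than $X$ or $Y$ meets a path triangle in at most one vertex $w_i$ and hence shares no edge with it, and there are no further shared edges within either part. Putting this together gives $\ig{G_{2,3,\ell}}=\thet{2,3,\ell}$ for $\ell\ge 5$, with the additional equality $\ag{G_{2,3,\ell}}=\ig{G_{2,3,\ell}}$ for $\ell\ge 6$.

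The main obstacle is the routine but fiddly combinatorial bookkeeping in the last two steps: confirming the exact triangle list and the absence of spurious adjacencies near the three junctions $w_1,w_3,w_4$ where the wheel meets the path, and near $v_{\ell-4},v_{\ell-3}$ where the chords $v_{\ell-5}v_{\ell-3}$, $w_4v_{\ell-4}$, $w_4v_{\ell-3}$ and $w_3v_{\ell-3}$ lie, together with the $K_4$-count needed to pin down $\alpha$. Faciality of the triangles bounds every triangle's degree by $3$, so the analysis stays local and finite; and for $\ell=5$ the graph is fixed, so that case reduces to a direct finite verification (given in \cite{LauraD}).
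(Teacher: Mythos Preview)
Your proposal is correct and follows essentially the same approach as the paper: the paper explicitly omits the proof, stating only that it is ``very similar to the proof of Lemma~\ref{lem:c:thet22l},'' and the surrounding discussion makes precisely the observations you do about $K_4$-freeness for $\ell\ge 6$ (yielding $\ag{G_{2,3,\ell}}=\ig{G_{2,3,\ell}}$) and the surviving clique $\{w_2,w_3,v_2,z_2\}$ forcing $\alpha(G_{2,3,5})=4$. Your enumeration of the $\ell+4$ maximal triangles and the adjacency check via faciality in the planar embedding is exactly the template of Lemma~\ref{lem:c:thet22l}.
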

	
	% <<< E: Lemma {lem:c:thet23k}

	%Given the similarly between Constructions \ref{con:c:thet22k} and \ref{con:c:thet23k}, the proof for Lemma \ref{lem:c:thet23k} 

	%______________________  SUBSUB SECTION:  theta (2,4,4) ______________________
	
	\subsubsection{Construction of $\cp{G}$ for $\thet{2,4,4}$}  \label{subsubsec:c:244}

	In the following construction for a graph $\cp{G}$ with $\ig{G} = \thet{2,4,4}$, we again apply the technique of  adding vertices to eliminate unwanted triangles.
	
	%>>>S: Figure {fig:c:thet244}
	\begin{figure}[H] \centering	
		\begin{tikzpicture}	[scale=0.8]	
			%---------- REF-------------------
			\node (cent) at (0,0) {};
			\path (cent) ++(0:50mm) node (rcent) {};	
			
			%----Left Wheel
			\node[std,label={0:$w_0$}] (w0) at (cent) {};
			
			\foreach \i in {1,2,3,4,5,6}
			{
				\node[std,label={150-\i*60: $w_{\i}$}] (w\i) at (150-\i*60:2cm) {}; 
				\draw[thick] (w0)--(w\i);        
			}
			\draw[thick] (w1)--(w2)--(w3)--(w4)--(w5)--(w6)--(w1);
			
			%---- V-Path
			\path (rcent) ++(90:0 cm) node [bblue] (v) {};
			\path (v) ++(0:4 mm) node (Lv) {\color{blue} $v$};	
			
			\foreach \i / \j in {2,3}
			\draw[bline] (v)--(w\i);	
			
			%--- W1 to V
			\draw[bline] (w1)   to[out=0,in=100] (v); 	
			%--- W4 to V
			\draw[bline] (w4)   to[out=0,in=-100] (v); 
			%--- W4 to W1
			\draw[bline] (w4) to[out=-15, in=270]  (0:6.5cm)  to[out=90,in=15] (w1); 		
			
			%---- Zs
			\path (w0) ++(0:3.5 cm) node [bzed, label={[zelim]180:$\boldsymbol{z}$}] (z) {};
			\draw[zline] (w2)--(z)--(w3);
			\draw[zline] (z)--(v);
			
			%\path (w0) ++(60:-3 cm) node [bred, label={[red]180:$z'$}] (z1) {};
			\path (w0) ++(180:4 cm) node [bzed, label={[zelim]180:$\boldsymbol{z'}$}] (z1) {};
			%	\draw[rline] (z1)--(w0);
			\draw[zline] (w0)   to[out=165,in=15] (z1); 
			\draw[zline] (w1)   to[out=180,in=80] (z1); 		
			\draw[zline] (w4)   to[out=180,in=-80] (z1); 	
			
			%----- Internal Red Labellings	
			\path (w0) ++(60:12 mm) node [color=red] (X) {$X$};	
			\path (w0) ++(0:12 mm) node [color=red] (A) {$A$};	
			\path (w0) ++(-60:12 mm) node [color=red] (Y) {$Y$};	
			
			%-- B's	
			\foreach \i/\c in {1/120,2/180,3/240} {				
				\path (w0) ++(\c:12 mm) node [color=red] (B\i) {$B_{\i}$};	
			}
			
			%-- D's
			\path (w2) ++(05:15 mm) node [color=red] (D1) {$D_1$};	
			\path (v) ++(60:8 mm) node [color=red] (D2) {$D_2$};	
			\path (w3) ++(-05:15 mm) node [color=red] (D3) {$D_3$};
			
		\end{tikzpicture} 
		\caption{A graph $\cp{G}$ such that $\ig{G} = \thet{2,4,4}$.}
		\label{fig:c:thet244}
	\end{figure}	
	% <<< E: Figure {fig:c:thet244}

	\begin{cons} \label{con:c:thet244}
		Refer to Figure \ref{fig:c:thet244}.	Begin with a copy of the graph $\cp{H} \cong W_7 = C_6 \vee K_1$, labelling the degree $3$ vertices as $w_1,w_2,\dots, w_6$ and the central degree $6$ vertex as $w_0$.  Join $w_1$ to $w_4$.    Add a new vertex $v$ to $\cp{H}$, joining $v$ to $w_1, \dots, w_4$.    Then, add the new vertex $z$, joined to $v$, $w_2$ and $w_3$, and the new vertex $z'$, joined to $w_0, w_1$, and $w_4$.  We label the resultant (non-planar) graph $\cp{G}$.	
	\end{cons}

	%>>>S: Lemma {lem:c:thet244}	
	\begin{lemma}\label{lem:c:thet244}
		If $\cp{G}$ is the graph constructed by Construction \ref{con:c:thet244}, then $\ig{G} = \thet{2,4,4}$.
	\end{lemma}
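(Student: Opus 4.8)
The plan is to follow the template of the proof of Lemma~\ref{lem:c:thet1kl}: list the triangles (i.e. $3$-cliques) of $\cp G$, decide which of them are maximal cliques --- equivalently, the $i$-sets of $G$, hence the vertices of $\ig G$ --- and then read off the adjacencies, using that two $i$-sets are adjacent in $\ig G$ exactly when their triangles of $\cp G$ share a single edge.

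First I would verify that $i(G)=3$ and pin down $V(\ig G)$. From Construction~\ref{con:c:thet244} one checks that every edge of $\cp G$ lies on a triangle and that $\cp G$ has no isolated vertex, so by Observation~\ref{obs:c:i3} and the absence of isolated vertices the smallest maximal clique of $\cp G$ has order at least $3$. A short case analysis --- sorting a clique by whether it contains $w_0$, $v$, $z$, or $z'$ --- should show that the only cliques of $\cp G$ of order at least $4$ are the two copies of $K_4$ on $\{w_0,z',w_1,w_4\}$ and on $\{v,z,w_2,w_3\}$; this is precisely what the vertices $z$ and $z'$ are for, their role being an application of the Deletion Lemma (Lemma~\ref{lem:i:inducedI}) inside $\cp G$. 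Since some triangle is then maximal, $i(G)=3$, and discarding the eight triangles contained in the two $K_4$'s should leave precisely the nine maximal triangles $X=\{w_0,w_1,w_2\}$, $A=\{w_0,w_2,w_3\}$, $Y=\{w_0,w_3,w_4\}$, $B_1=\{w_0,w_1,w_6\}$, $B_2=\{w_0,w_5,w_6\}$, $B_3=\{w_0,w_4,w_5\}$, $D_1=\{v,w_1,w_2\}$, $D_2=\{v,w_1,w_4\}$, $D_3=\{v,w_3,w_4\}$, so that $V(\ig G)=\{X,A,Y,B_1,B_2,B_3,D_1,D_2,D_3\}$, of the expected size $9=2+1+3+3$.

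For the edges I would split these nine $i$-sets into the six triangles through $w_0$, namely $X,A,Y,B_3,B_2,B_1$, which correspond bijectively to the six edges of the cycle $(w_1,w_2,\dots,w_6,w_1)$ --- the chord $w_1w_4$, which would otherwise produce a seventh such triangle, having been neutralised by $z'$ --- and the three triangles through $v$, namely $D_1,D_2,D_3$. Two $w_0$-triangles are adjacent iff their non-hub edges meet, so the first six induce the line graph of $C_6$, i.e. the $6$-cycle $X-A-Y-B_3-B_2-B_1-X$; this is the wheel-to-cycle phenomenon of Lemma~\ref{lem:c:wheel}. The three $v$-triangles have non-hub edges $w_1w_2$, $w_1w_4$, $w_3w_4$ and so induce the path $D_1-D_2-D_3$. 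Finally a $w_0$-triangle and a $v$-triangle can meet only in vertices from $\{w_1,\dots,w_6\}$, hence are adjacent iff they share the same non-hub edge; of $w_1w_2$, $w_1w_4$, $w_3w_4$ only $w_1w_2$ and $w_3w_4$ are edges of the $6$-cycle, so the only cross-edges are $X\sim D_1$ and $Y\sim D_3$. Assembling the pieces, $\ig G$ is the $6$-cycle on $\{X,A,Y,B_1,B_2,B_3\}$ together with the path $X-D_1-D_2-D_3-Y$, which is exactly $\thet{2,4,4}$ with branch vertices $X$ and $Y$ joined by paths of lengths $2$ (through $A$), $4$ (through $B_1,B_2,B_3$), and $4$ (through $D_1,D_2,D_3$).

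I expect the main obstacle to be purely bookkeeping in the first step: being exhaustive enough to be sure that $\{w_0,z',w_1,w_4\}$ and $\{v,z,w_2,w_3\}$ are the \emph{only} cliques of $\cp G$ of order at least $4$, so that no further triangles are eliminated and no spurious adjacency among the nine survives. (It may also be worth remarking that, in contrast with several neighbouring lemmas, the construction is claimed only for $i$-graphs: the two $K_4$'s force $\alpha(G)=4>3$, so the surviving triangles are not $\alpha$-sets and $\ag G$ is a different, larger graph.)
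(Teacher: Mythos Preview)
Your proposal is correct and follows essentially the same approach as the paper: enumerate the maximal $3$-cliques of $\cp{G}$, note that $z$ and $z'$ kill the unwanted triangles $\{v,w_2,w_3\}$ and $\{w_0,w_1,w_4\}$, and then read off adjacencies as shared edges. Your organisation is a touch more explicit than the paper's---you split the nine surviving triangles into the six $w_0$-triangles and the three $v$-triangles and invoke the wheel-to-cycle correspondence (Lemma~\ref{lem:c:wheel}) to get the $6$-cycle directly, whereas the paper simply lists the adjacencies as three paths and asserts (deferring details to \cite{LauraD}) that nothing else occurs---but the underlying argument is the same. Your closing remark that $\alpha(G)=4$ (so the construction works only for $\ig{G}$, not $\ag{G}$) is also exactly the point the paper makes in the surrounding discussion.
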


	\begin{proof}	
		As shown in Figure \ref{fig:c:thet244}, the triangles of $\cp{G}$ forming maximal cliques of $\cp{G}$, and therefore the $i$-sets of $G$, are labelled them as follows:	
		
		%=========== VERTICES  ==========		
		\begin{center}	
			\begin{tabular}{rlcrlcrl}
				$X$ & $=\{w_0,w_1,w_2\},$  & \null \hspace{1mm} \null &  	$B_1$ & $= \{w_0,w_1,w_6\},$ & \null \hspace{1mm} \null &  	$D_1$ & $= \{w_1,w_2,v\},$ \\
				$Y$ & $= \{w_0,w_3,w_4\},$ &&   	$B_2$ & $= \{w_0,w_5,w_6\}$,  & \null \hspace{1mm} \null &  	$D_2$ & $= \{w_1,w_4,v\},$ \\
				$A$ & $=\{w_0,w_2,w_3\},$  &&		$B_3$ & $= \{w_0,w_4,w_5 \},$ & \null \hspace{1mm} \null &  	$D_3$ & $= \{w_3,w_4,v\}.$ \		
			\end{tabular}
		\end{center}
		
		%=========== NO EXTRA TRIANGLES ==========	
		Similarly to the construction for $\thet{2,2,5}$  in the proof of Lemma \ref{lem:c:thet22l}, the vertices $z$ and $z'$ are added to ensure that $\{v,w_2,w_3\}$ and $\{w_0,w_1,w_4\}$, respectively, are not maximal cliques in  $\cp{G}$, and hence are not $i$-sets of $G$.
		
		It can be seen that there are no other triangles in $\cp{G}$ beyond the nine listed above; we omit the details, which can be found in \cite[Lemma 5.19]{LauraD}.

		From Figure \ref{fig:c:thet244}, the following triangle adjacencies are immediate:
		%=========== EDGES =====================		
		\begin{enumerate}[itemsep=0pt, label=(\roman*)]
			\item \label{lem:c:thet244:i}  $\edge{X, w_1, w_3,A} \adedge{w_2,w_4,Y}$, 
			
			\item \label{lem:c:thet244:j} $\edge{X, w_2, w_6,B_1} \adedge{w_1,w_5,B_2} \adedge{w_6,w_4,B_3} \adedge{w_5,w_3,Y} $, 
			
			\item \label{lem:c:thet244:k} $\edge{X, w_0,  v,D_1}  \adedge{w_2,w_4,D_2} \adedge{w_1,w_3,D_3}  \adedge{v,w_0,Y}$. 
		\end{enumerate}

		% =========== NO EXTRA EDGES =============	
		
		% Lemma \ref{lem:c:wheel} ensures that there are no additional adjacencies between two triangles on the wheel $\cp{H}$. Thus, we need only check that there are no unaccounted for edges between the triangles of the collection $\mathcal{D} =\{D_1,D_2,D_3\}$, either among themselves, or with triangles from the wheel.  For the former, observe in Figure \ref{fig:c:thet244} that $D_1$ and $D_3$ share no edges and are therefore not adjacent.   For the latter, notice that each wheel triangle contains the vertex $w_0$, which  is not a part of any of $\mathcal{D}$.  Thus only triangles of $\mathcal{D}$ containing two consecutively numbered wheel vertices (i.e. $w_iw_{i+1}$ or $w_6w_1$) are adjacent to wheel triangles.  From the figure, this occurs exactly twice: $w_1w_2$ in $D_1$ and $w_3w_4$ in $D_3$, and has already been accounted for in the adjacencies $\edge{X, w_0,  v,D_1} $ and $\edge{Y,w_0,v,D_3}$.  
  
       It is again straightforward to verify that there are no additional edges in $\ig{G}$ than those listed above. We conclude that $\ig{G} = \thet{2,4,4}$ as required.
	\end{proof}
	% <<< E:  Lemma {lem:c:thet244}	
	
	\medskip

	Notice that Construction \ref{con:c:thet244}  is our first theta graph construction that is not planar as it has a $K_{3,3}$ minor.  Indeed, with the exception of the constructions that are based upon Construction \ref{con:c:thet244}, all of our $i$-graph constructions use planar complement seed graphs.

	\begin{problem}	\null 	
		\begin{enumerate}[label=(\roman*)]
			\item Find a planar graph-complement construction for $\thet{2,4,4}$.  
			
			\item 	 Do all $i$-graphs with largest induced stars of $K_{1,3}$, always have a planar graph comple-ment construction?
			
			A large target graph requires a large seed graph in order to generate a sufficient number of unique $i$-sets.   Can a target graph become too dense to allow for a planar graph-complement construction?	
		\end{enumerate}
		
	\end{problem}

	%~~~~~~~~~~~~~~~~~~~~
	Moving forward, we will no longer explicitly check that there are no additional unaccounted for triangles in our constructions.  Should the construction indeed result in triangles of $\cp{G}$ that produce extraneous vertices in $\ig{G}$, we can easily remove them using the Deletion Lemma (Lemma \ref{lem:i:inducedI}).   
 
 % % In fact, in the previous results like in Construction \ref{con:c:thet244}, the addition of the vertices $z$ and $z'$ were exactly such applications.  Recall that in the Deletion Lemma, when removing a vertex $X$ from the $i$-graph, a new vertex $z$ is added to the seed graph and joined to every vertex \emph{except} those of $X$.  In the complement seed graph $\cp{G}$, this amounts to adding $z$ and joining to everything in $X$ (and nothing else), exactly as we did in Construction \ref{con:c:thet244}.
	
	% Although we are quite certain that the graphs presented in the following constructions are the exact ones that build the given theta graphs, it is of no actual consequence to the existence of the theta graph as an $i$-graph if they are not; by the Deletion Lemma, it is enough to present the graph $\cp{G}$ with the knowledge that its complement produces  an $i$-graph that contains the desired theta graph as an induced subgraph.

	%______________________  SUBSUB SECTION:  theta (2,4,5) ______________________
	
	\subsubsection{Constructions of $\cp{G}$ for $\thet{2,4,5}$ and $\thet{2,5,5}$}  \label{subsubsec:c:245}
	
	%	\nts{Subdivision of 235 on B-Line once and twice, resp}
	
	%	\nts{Check construction environment}

	\begin{cons} \label{cons:c:thet2k5}	  Refer to Figure  \ref{fig:c:thet255}.
		\begin{enumerate}[itemsep=0pt, label=(\alph*)]
			\item \label{con:c:thet245:a}
			Begin with a copy of the graph $\cp{G_{2,3,5}}$ from Construction \ref{con:c:thet23k} \ref{con:c:thet23k:5} for $\thet{2,3,5}$.  Subdivide the edge $w_1w_5$, adding the new vertex $w_6$.  Join $w_6$ to $w_0$, so that $w_0,w_1,\dots,w_6$ forms a wheel.  Call this graph $\cp{G_{2,4,5}}$.

			\item \label{con:c:thet255:b} Begin with a copy of the graph $\cp{G_{2,4,5}}$  from  Construction \ref{cons:c:thet2k5} \ref{con:c:thet245:a}.  Subdivide the edge $w_1w_6$, adding the new vertex $w_7$.  Join $w_7$ to $w_0$, so that $w_0,w_1,\dots,w_7$ forms a wheel.  Call this graph $\cp{G_{2,5,5}}$.  
		\end{enumerate}
	\end{cons}

%>>> S: Figure {fig:c:thet255}	
\begin{figure}[H] \centering	
	\begin{tikzpicture}	[scale=0.8]
		
		%---------- REF-------------------
		\node (cent) at (0,0) {};
		\path (cent) ++(0:45 mm) node (rcent) {};	
		
		%----Left Wheel
		\node[std] (w0) at (cent) {};
		\path (w0) ++(30: 4 mm) node (Lw0) {${w_0}$};	
		
		\foreach \i in {1,2,4}
		{
			\node[std,label={180-\i*90: $w_{\i}$}] (w\i) at (180-\i*90:2.5cm) {}; 
			\draw[thick] (w0)--(w\i);        
		}
	
		\node[std,label={310: $w_{3}$}] (w3) at (180-270:2.5cm) {}; 
		\draw[thick] (w0)--(w3);

		\draw[thick] (w1)--(w2)--(w3)--(w4);
		
		%---- Wheel Division
		%	\node[std,label={92: $w_5$}] (w5) at (157.5:2.5cm) {}; 
		\node[std] (w5) at (157.5:2.5cm) {}; 
		\path (w5) ++(94: 5 mm) node (Lw5) {${w_5}$};	
		\draw[thick] (w0)--(w5); 
		
		\node[std,label={90: $w_6$}] (w6) at (135:2.5cm) {}; 
		\draw[thick] (w0)--(w6); 
		
		\node[std,label={90: $w_7$}] (w7) at (112.5:2.5cm) {}; 
		\draw[thick] (w0)--(w7); 
		\draw[thick] (w4)--(w5)--(w6)--(w7)--(w1);

		%---- V-Path	
		\foreach \i / \j / \rad in {1/1.5/0,2/-1.5/270}
		{	
			\path (rcent) ++(90:\j cm) node [std] (v\i) {};
			\path (v\i) ++(\rad:5 mm) node (Lv\i) {$v_{\i}$};
			%\node[bblue,label={0: $v_\i$}] (v\i) at (4,2-\i) {};
		}
		\draw[thick] (w1)--(v1)--(v2)--(w3);
		\draw[thick] (v2)--(w2)--(v1);	
		
		%---- Zs
		\path (w2) ++(270:1 cm) node [bzed] (z2) {};
			\path (z2) ++(270:4 mm) node (Lz2) {\color{zelim}$\boldsymbol{z_{2}}$};
		\draw[zline] (w2)--(z2)--(w3);
		\draw[zline] (z2)--(v2);	
		
		%--- V2 to W4 
		\draw[thick] (v2) to[out=230, in=-5]  (270:3.3cm) to [out=170,in=280] (w4);
		
		%--- V1 to W4
		\draw[thick] (v1) to[out=130, in=5]  (110:3.5cm) to [out=195,in=110] (w4);	
		
		%	\node[std] (t1) at (110:3.3cm) {};

		%------------ THETA--------------------

		%---- INNER Cycle	
		\foreach \i / \j / \rad in {1/X/90,3/Y/160,4/A/0}
		{	
			\node[dred] (t\i) at (-45+\i*90:1.0cm) {}; 
			\path (t\i) ++(\rad: 5 mm) node (Lt\i) {\color{red}${\j}$};
			
		}	
		
		%-- Modified B
		%\path (w0) ++(101.25:10 mm) node [bred, label={[red]101.25: $B_1$}] (t2a) {};
		\path (w0) ++(101.25:10 mm) node [dred] (t2a) {};	
		\path (t2a) ++(101.25: 6 mm) node (Lt2a) {\color{red}${B_1}$};		
		\path (w0) ++(123.75:10 mm) node [dred] (t2b) {};
		\path (t2b) ++(123.75: 6 mm) node (Lt2b) {\color{red}${B_2}$};	
		\path (w0) ++(146.25:10 mm) node [dred] (t2c) {};	
		\path (t2c) ++(146.25: 6 mm) node (Lt2c) {\color{red}${B_3}$};	
		\path (w0) ++(168.75:10 mm) node [dred] (t2d) {};	
		\path (t2d) ++(168.75: 6 mm) node (Lt2d) {\color{red}${B_4}$};

		%---- D- Path
		\path (w2) ++(90: 7mm) node [dred] (d1) {};
		\path (d1) ++(90: 4 mm) node (Ld1) {\color{red}${D_1}$};
		
		\path (w2) ++(0: 15mm) node [dred] (d2) {};
		\path (d2) ++(90: 4 mm) node (Ld2) {\color{red}${D_2}$};	
		
		\path (v2) ++(-45: 10mm) node [dred] (d3) {};
		\path (d3) ++(0:5mm) node (Ld3) {\color{red}${D_3}$};	
		
		\path (t3) ++(260: 15mm) node [dred] (d4) {};
		\path (d4) ++(145: 6 mm) node (Ld4) {\color{red}${D_4}$};

		\draw[rlinemb](t1)--(t2a)--(t2b)--(t2c)--(t2d)--(t3)--(t4)--(t1);
		\draw[rlinemb](t1)--(d1)--(d2);	
		\draw[rlinemb](t3)--(d4);
		
		%--- D2 to D3 
		\draw[rlinemb] (d2) to [out=-45,in=90] (d3);
		
		%--- D3 to D4
		\draw[rlinemb] (d3) to [out=230,in=300] (d4);
		
	\end{tikzpicture} 
	
	\caption{The graph $\cp{G_{2,5,5}}$ from Construction \ref{cons:c:thet2k5}	 such that $\ig{G} = \thet{2,5,5}$, with $\ig{G}$ overlaid in red.}
	\label{fig:c:thet255}
\end{figure}	
% <<< E: Figure {fig:c:thet255}

	%>>>S: Lemma {lem:c:thet2k5}	
	\begin{lemma} \label{lem:c:thet2k5}	
		If $\cp{G_{2,k,5}}$ is the graph constructed by Construction \ref{cons:c:thet2k5}, then $\ig{G_{2,k,5}} = \thet{2,k,5}$, for $4 \leq k \leq 5$.
	\end{lemma}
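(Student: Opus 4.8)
The plan is to read Construction~\ref{cons:c:thet2k5} as two successive applications of one local operation on the complement seed graph for $\thet{2,3,5}$ — subdivide the rim edge of the central wheel incident with $w_1$ and join the new rim vertex to the hub $w_0$ — and to show each application lengthens exactly one path of the realized theta graph by one. The base case $\ig{G_{2,3,5}}=\thet{2,3,5}$ is Lemma~\ref{lem:c:thet23k}. First I would observe that this operation alters only rim edges of the wheel that are not incident with $w_2$, $w_3$, or $w_4$, and that the unique altered rim edge incident with $w_1$ carries no triangle through $v_1$; hence the subgraph of $\cp{G_{2,3,5}}$ induced by $\{v_1,v_2,z_2\}\cup\{w_1,w_2,w_3,w_4\}$ — and with it the triangles $X=\{w_0,w_1,w_2\}$, $A=\{w_0,w_2,w_3\}$, $Y=\{w_0,w_3,w_4\}$ and the four ``$D$'' triangles $D_1,\dots,D_4$ of the length-$5$ path — is untouched. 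On the wheel side, each application destroys one rim triangle through $w_0$ and creates two new ones, so the chain of ``$B$'' triangles grows by one, giving $B_1,\dots,B_{k-1}$ for $\thet{2,k,5}$.

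Next I would verify $i(G_{2,k,5})=3$ exactly as in the proof of Lemma~\ref{lem:c:thet22l}: every edge of $\cp{G_{2,k,5}}$ still lies on a triangle and some triangle, say $X$, lies on no $K_4$, so by Observation~\ref{obs:c:i3} the order-$3$ maximal cliques of $\cp{G_{2,k,5}}$ are precisely the $i$-sets of $G_{2,k,5}$, namely $X,A,Y,B_1,\dots,B_{k-1},D_1,\dots,D_4$. I would note here that $\{w_2,w_3,v_2,z_2\}$ remains a $K_4$, so $\alpha(G_{2,k,5})=4>3$, which is why the statement asserts a conclusion about the $i$-graph only and not about the $\alpha$-graph. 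Following the convention adopted just before this subsection, any stray triangle produced by the modification may be removed by attaching a new vertex to its three vertices, i.e.\ by applying the Deletion Lemma (Lemma~\ref{lem:i:inducedI}) inside $\cp{G_{2,k,5}}$.

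Finally I would check the adjacencies. By the same reasoning as in Lemma~\ref{lem:c:wheel}, the $k+2$ triangles of $\cp{G_{2,k,5}}$ lying in the central wheel form a $(k+2)$-cycle in $\ig{G_{2,k,5}}$; since $X$ and $Y$ are the only two of these triangles that also meet the path part, this cycle decomposes into the length-$2$ path $X\sim A\sim Y$ and the length-$k$ path $X\sim B_1\sim\cdots\sim B_{k-1}\sim Y$. The $D$-triangles reproduce the length-$5$ path $X\sim D_1\sim D_2\sim D_3\sim D_4\sim Y$ verbatim from $\cp{G_{2,3,5}}$. No cross-edges occur: each $B$-triangle is disjoint from $\{v_1,v_2\}$ and so shares at most one vertex with any $D$-triangle, and $A$ likewise shares at most one vertex with any $D$-triangle, so the three paths meet only at $X$ and $Y$. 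Hence $\ig{G_{2,k,5}}=\thet{2,k,5}$ for $k\in\{4,5\}$. The main obstacle is the triangle census — confirming that the rim subdivision creates no $K_4$ and no triangle joining the wheel part to the path part except at $X$ and $Y$ — but reducing to the already-verified $\thet{2,3,5}$ case confines that bookkeeping to the one or two new vertices introduced in Construction~\ref{cons:c:thet2k5}.
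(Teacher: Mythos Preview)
Your proof is correct. The paper itself omits the proof entirely, stating only that the lemma ``follows readily from Figure~\ref{fig:c:thet255},'' so your argument supplies precisely the verification the paper leaves implicit: that $i(G_{2,k,5})=3$, that the order-$3$ maximal cliques are exactly $X,A,Y,B_1,\dots,B_{k-1},D_1,\dots,D_4$, and that their shared-edge adjacencies trace out $\thet{2,k,5}$.

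Your inductive framing---treating each rim subdivision as a local operation that lengthens only the $B$-path while leaving the $A$- and $D$-paths intact---is a modest but genuine refinement over the paper's direct-from-figure approach. It lets you import the already-verified triangle census for $\cp{G_{2,3,5}}$ from Lemma~\ref{lem:c:thet23k} and confine the new bookkeeping to the neighbourhood of the one or two added vertices $w_6,w_7$, rather than re-enumerating all triangles from scratch. Your observation that the $K_4$ on $\{w_2,w_3,v_2,z_2\}$ persists (so $\alpha(G_{2,k,5})=4$ and the lemma correctly makes no $\alpha$-graph claim) is also a nice point the paper does not make explicit here.
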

	% <<< E: Lemma {lem:c:thet2k5}	

	\noindent Lemma \ref{lem:c:thet2k5} follows readily from Figure \ref{fig:c:thet255} and we omit the proof.
	
	%______________________  SUBSUB SECTION:  theta (2,4,4) ______________________
	
	\subsubsection{Construction of $\cp{G}$ for $\thet{2,k,\ell}$ for $\ell \geq k \geq 4$ and $\ell \geq 6$}  \label{subsubsec:c:2kl}

	%	\nts{Subdivision of 23L on B-Line (and D line)}
	
	\begin{cons}\label{cons:c:thet2kl} 
		See Figure \ref{fig:c:thet2kl}.   Begin with a copy of the graph $\cp{G_{2,3,\ell}}$ from Construction \ref{con:c:thet23k} \ref{con:c:thet23k:ell} for $\ell\geq 5$.  Subdivide the edge $w_1w_5$ $k-3$ times (for $k \leq \ell$), adding the new vertices $w_6$, $w_7, \dots, w_{k+2}$.  Join $w_6, w_7, \dots, w_{k+2}$ to $w_0$, so that $w_0,w_1,\dots,w_{k+2}$ forms a wheel.  Call this graph $\cp{G_{2,k,\ell}}$.		
	\end{cons}

%>>> S: Figure {fig:c:thet2kl}	
\begin{figure}[H] \centering	
	\begin{tikzpicture}	[scale=0.8]
		%---------- REF-------------------
		\node (cent) at (0,0) {};
		\path (cent) ++(0:40 mm) node (rcent) {};	
		
		%----Left Wheel
		\node[std] (w0) at (cent) {};
		\path (w0) ++(25: 5 mm) node (Lw0) {${w_0}$};	
		
		\foreach \i in {1,2,4}
		{
			\node[std,label={180-\i*90: $w_{\i}$}] (w\i) at (180-\i*90:2cm) {}; 
			\draw[thick] (w0)--(w\i);        
		}
	
		\node[std,label={0: $w_{3}$}] (w3) at (180-270:2cm) {}; 
		\draw[thick] (w0)--(w3);     
	
		\draw[thick] (w1)--(w2)--(w3)--(w4);		
		
		\node[std] (w5) at (157.5:2cm) {}; 
		\path (w5) ++(157: 5 mm) node (Lw5) {${w_5}$};	
		\draw[thick] (w0)--(w5); 		
		
		\node[std,label={112: $w_{k-3}$}] (w7) at (112.5:2cm) {}; 
		\draw[thick] (w0)--(w7); 
		\draw[thick] (w4)--(w5);
		\draw[thick] (w7)--(w1);	
		\draw[thick,dashed](w5)--(w7);		
		
		\coordinate  (c1) at ($(w5)!0.33!(w7)$) {};
		\coordinate  (c2) at ($(w5)!0.66!(w7)$) {};
		
		\draw [dashed, thick] (c2)--(w0);
		\draw [dashed, thick] (c1)--(w0);				
		
		%---- V-Path	
		\foreach \i / \j  / \k in {1/3/1, 2/2/2, 3/-1/{\ell-5}, 4/-2.2/{\ell-4}, 5/-3.4/{\ell-3}}
		{	
			\path (rcent) ++(90:\j cm) node [bblue] (v\i) {};
		}
		
		\path (v1) ++(0:4 mm) node (Lv1) {\color{blue} $v_{1}$};
		\path (v2) ++(200:4 mm) node (Lv2) {\color{blue} $v_{2}$};
		\path (v3) ++(150:6 mm) node (Lv3) {\color{blue} $v_{\ell-5}$};
		\path (v4) ++(0:8 mm) node (Lv4) {\color{blue} $v_{\ell-4}$};
		\path (v5) ++(260:4 mm) node (Lv5) {\color{blue} $v_{\ell-3}$};		
		
		\draw[bline] (v1)--(v2);
		\draw[bline] (v3)--(v4)--(v5);
		
		\draw[bline,dashed] (v2)--(v3);		
		
		%---- V3 to V5 
		\path (v4) ++(180:1.5 cm) coordinate  (c1) {};
		\draw[bline] (v3) to[out=190, in=90]  (c1) to [out=270,in=170] (v5);  
		
		%---- V1 to W1 W2
		\draw[bline] (w1)--(v1)--(w2);	
		
		%-- V5 to W4
		\draw[bline] (v5) to[out=195,in=-90] (w4);  
		
		%-- V5 to W3
		\draw[bline] (v5) to[out=180,in=-30] (w3);  
		
		%-- V4 to W4
		\path (v5) ++(270: 10mm) coordinate  (cv4) {};		
		\draw[bline] (v4) to[out=-35, in=10]  (cv4) to [out=190,in=260] (w4);  	
		%	\node[std] at (cv4)  {};
		
		%-- W1 to V2  
		\draw[bline] (w1) to[out=25, in=180]  (40:5.8cm) to [out=350,in=45]  (v2); 
		%	\node[std] at  (40:5.8cm)  {};
		
		%-- W1 to  V3 
		\draw[bline] (w1) to[out=38, in=180]  (40:6.4cm) to [out=350,in=45]  (v3); 
		%	\node[std] at (35:6.4cm) {};
		
		%-- W1 to V4
		\draw[bline] (w1) to[out=40, in=180]  (40:6.6cm) to [out=350,in=40]  (v4); 		
		%\node[std] at (40:6.6cm)  {};
		
		%-- W1 to Dashed T1 T2
		\path (rcent) ++(90:1 cm) coordinate  (t1) {};
		\draw[bline, dashed] (w1) to[out=26, in=180]   (40:6cm)  to [out=350,in=45]  (t1); 
				
		\path (rcent) ++(90:0 cm) coordinate  (t2) {};
		\draw[bline, dashed] (w1) to[out=31, in=180]  (40:6.2cm) to [out=350,in=45]  (t2); 
		
		%----- Internal Red Labellings	
		\path (w0) ++(45:9 mm) node [color=red] (X) {$X$};			
		\path (w0) ++(45:-9 mm) node [color=red] (Y) {$Y$};
		
		%-- A's
		\path (w0) ++(-45:8 mm) node [color=red] (A1) {$A$};			
		
		%-- B's	
		\path (w0) ++(100:15 mm) node [color=red] (B1) {\scalebox{0.9}{$B_1$}};	
		\path (w0) ++(170:14 mm) node [color=red] (B2) {$B_{k-1}$};		
		
		%-- D's
		\path (w2) ++(90:15 mm) node [color=red] (D1) {$D_1$};	
		\path (v1) ++(170:9 mm) node [color=red] (D2) {$D_2$};	
		\path (v4) ++(180:8 mm) node [color=red] (D3) {$D_{\ell-3}$};
		\path (v5) ++(195:20mm) node [color=red] (D4) {$D_{\ell-2}$};
		\path (w3) ++(270:5 mm) node [color=red] (D5) {$D_{\ell-1}$};			
		
	\end{tikzpicture} 
	\caption{The graph $\cp{G_{2,k,\ell}}$ from Construction \ref{cons:c:thet2kl} such that $\ig{\cp{G_{2,k,\ell}}} = \thet{2,k,\ell}$ for $\ell \geq k \geq 4$ and $\ell \geq 6$.}
	\label {fig:c:thet2kl}
\end{figure}	
% <<< E: Figure {fig:c:thet2kl}

	\noindent Lemma \ref{lem:c:thet2kl} follows from Figure \ref{fig:c:thet2kl}.
	
	%>>>S: Lemma {lem:c:thet2kl}	
	\begin{lemma} \label{lem:c:thet2kl}
		If $\cp{G_{2,k,\ell}}$ is the graph constructed by Construction \ref{cons:c:thet2kl}, then $\ig{G_{2,k,\ell}} = \ag{G_{2,k,\ell}} = \thet{2,k,\ell}$, for $\ell \geq k \geq 4$ and $\ell \geq 6$.
	\end{lemma}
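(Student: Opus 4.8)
The plan is to follow the template of the proofs of Lemmas~\ref{lem:c:thet22l} and~\ref{lem:c:thet23k}, exploiting the fact that $\cp{G_{2,k,\ell}}$ is obtained from the graph $\cp{G_{2,3,\ell}}$ of Construction~\ref{con:c:thet23k}\,\ref{con:c:thet23k:ell} by nothing more than subdividing the rim-edge $w_1w_5$ of the central wheel $k-3$ times (and joining each new vertex to the hub $w_0$); in particular the $v$-path together with its attachments, and the triangle $A$, are left untouched. First I would observe that, exactly as in Construction~\ref{con:c:thet23k}\,\ref{con:c:thet23k:ell}, no triangle-eliminating ($z$-type) vertex is present, every edge of $\cp{G_{2,k,\ell}}$ lies on a triangle, and $\cp{G_{2,k,\ell}}$ is $K_4$-free (the enlarged wheel $w_0,\dots,w_{k+2}$ contains no $K_4$, and neither does the fan at $w_1$ carrying the $v$-path). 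Hence the triangles of $\cp{G_{2,k,\ell}}$ are precisely its order-$3$ maximal cliques, which are exactly the $i$-sets \emph{and} the $\alpha$-sets of $G_{2,k,\ell}$; so $\ig{G_{2,k,\ell}}=\ag{G_{2,k,\ell}}$ and it suffices to analyse triangle adjacency in $\cp{G_{2,k,\ell}}$. If any stray triangle shows up, it may be removed via the Deletion Lemma (Lemma~\ref{lem:i:inducedI}) applied in $\cp{G_{2,k,\ell}}$, as in the remark following Construction~\ref{con:c:thet244}.

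Next I would list the triangles, as labelled in Figure~\ref{fig:c:thet2kl}, in three families: the pair $X=\{w_0,w_1,w_2\}$, $Y=\{w_0,w_3,w_4\}$; the single triangle $A=\{w_0,w_2,w_3\}$; the ``rim'' triangles $B_1=\{w_0,w_1,w_{k+2}\},\ B_2=\{w_0,w_{k+2},w_{k+1}\},\ \dots,\ B_{k-2}=\{w_0,w_6,w_5\},\ B_{k-1}=\{w_0,w_5,w_4\}$, one for each rim-edge on the $w_1$--$w_4$ arc that avoids $w_2,w_3$; and the ``$v$-path'' triangles $D_1,\dots,D_{\ell-1}$, inherited verbatim from Construction~\ref{con:c:thet23k}. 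Conceptually this is what one expects: by Lemma~\ref{lem:c:wheel} the enlarged wheel already yields a cycle $C_{k+2}$ through $X$ and $Y$, and these two vertices cut it into the $A$-path (length $2$) and the $B$-path (length $k$), while the $v$-path supplies the third path (length $\ell$).

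Then I would read off the three internally disjoint $X$--$Y$ paths of $\ig{G_{2,k,\ell}}$, in each slide checking that the swapped pair of vertices is nonadjacent in $\cp{G_{2,k,\ell}}$ (for instance $w_2\not\sim w_{k+2}$, and after the subdivision $w_1\not\sim w_4$, $w_1\not\sim w_{k+1}$, and $w_i\not\sim w_{i+2}$ along the rim):
\begin{enumerate}[itemsep=0pt,label=(\roman*)]
  \item $\edge{X,w_1,w_3,A}\adedge{w_2,w_4,Y}$;
  \item $\edge{X,w_2,w_{k+2},B_1}\adedge{w_1,w_{k+1},B_2}\adedge{w_{k+2},w_k,B_3}\dots\edge{B_{k-2},w_6,w_4,B_{k-1}}\adedge{w_5,w_3,Y}$;
  \item the $D$-path $\edge{X,w_0,v_1,D_1}\dots\adedge{v_{\ell-2},w_0,Y}$, copied from Construction~\ref{con:c:thet23k}.
\end{enumerate}

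Finally comes the step that I expect to take the bulk of the work (the paper omits it): checking that there are no further edges, so that $\deg(X)=\deg(Y)=3$ and every other vertex of $\ig{G_{2,k,\ell}}$ has degree $2$. The key facts are: $X\cap Y=\{w_0\}$; $A\cap B_i\subseteq\{w_0\}$ and $A$ meets each $D_j$ in at most one of $w_2,w_3$; every $B_i$ is contained in $\{w_0,w_1,w_4,w_5,\dots,w_{k+2}\}$ whereas an intermediate $D_j$ consists of $v$-vertices together with either $w_1$ or $w_4$, so $|B_i\cap D_j|\le 1$; two $B_i$'s share an edge only when consecutive on the rim; and the $D_j$-adjacencies are precisely those already established in Construction~\ref{con:c:thet23k}. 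Counting triangles through the edges at $X$, the edge $\{w_0,w_1\}$ lies on exactly $X$ and $B_1$, the edge $\{w_0,w_2\}$ on exactly $X$ and $A$, and the edge $\{w_1,w_2\}$ on exactly $X$ and $D_1$, so $\deg(X)=3$, and symmetrically $\deg(Y)=3$. Together with (i)--(iii), this shows $\ig{G_{2,k,\ell}}$ is exactly the union of the three listed paths, i.e.\ $\ig{G_{2,k,\ell}}=\ag{G_{2,k,\ell}}=\thet{2,k,\ell}$ for $\ell\geq k\geq 4$ and $\ell\geq 6$.
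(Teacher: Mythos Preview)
Your proposal is correct and follows essentially the same approach as the paper: the paper's entire proof is the single sentence ``Lemma~\ref{lem:c:thet2kl} follows from Figure~\ref{fig:c:thet2kl},'' and your argument spells out precisely the triangle enumeration, the three $X$--$Y$ paths, and the non-adjacency checks that this sentence is implicitly invoking (mirroring the template of Lemmas~\ref{lem:c:thet22l} and~\ref{lem:c:thet23k}). Your observation that the absence of any $z$-vertex yields $\alpha(G_{2,k,\ell})=i(G_{2,k,\ell})=3$, hence $\ig{G_{2,k,\ell}}=\ag{G_{2,k,\ell}}$, matches the paper's remark preceding Lemma~\ref{lem:c:thet23k}.
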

	% <<< E: Lemma {lem:c:thet2kl}

	%......................  SUB SECTION:  theta (3,k,l) .........................
	\subsection{$\thet{3,k,\ell}$}  \label{subsec:c:3kl}
	
	In the rest of Section \ref{sec:c:thetaComp} we only give the constructions and their associated figures, and state the lemmas without proof. Details can be found in \cite[Chapter 5]{LauraD}.
	
	%.______________________ SUBSUB SECTION:  theta (3,3,4) ______________________
	
	\subsubsection{The graph $\cp{G}$ for $\thet{3,3,4}$}  \label{subsubsec:c:334}

 The triangles $X, A_1, A_2, Y, B_2, B_1, D_1, D_2, D_3$ in Figure \ref{fig:c:thet334comp}, and the obvious paths formed by them, illustrate the following lemma.

 %>>> START Prop {lem:c:thet334}
	\begin{lemma} \label{lem:c:thet334}
		$\thet{3,3,4}$ is an $i$-graph.
	\end{lemma}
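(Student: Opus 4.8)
The plan is to produce a complement seed graph $\cp{G}$ in the wheel-plus-path style of Section \ref{sec:c:thetaComp}, whose maximal $3$-cliques are exactly the nine triangles $X, Y, A_1, A_2, B_1, B_2, D_1, D_2, D_3$ indicated in Figure \ref{fig:c:thet334comp}, and whose triangle-adjacency graph is $\thet{3,3,4}$. The key observation is that the two length-$3$ paths of $\thet{3,3,4}$, together with the two degree-$3$ vertices $X$ and $Y$, close up into a $6$-cycle $X, A_1, A_2, Y, B_2, B_1, X$; by Lemma \ref{lem:c:wheel} such a $6$-cycle is exactly the $i$-graph coming from a wheel $W_7 = C_6 \vee K_1$. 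So the skeleton of $\cp{G}$ should be a $W_7$, and the remaining work is to graft on the third, length-$4$ path $X \sim D_1 \sim D_2 \sim D_3 \sim Y$ without disturbing the rest.

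Concretely, I would take $\cp{H} \cong W_7$ with rim $(w_1,\dots,w_6)$ and hub $w_0$, so that the rim triangles give $X=\{w_0,w_1,w_2\}$, $A_1=\{w_0,w_2,w_3\}$, $A_2=\{w_0,w_3,w_4\}$, $Y=\{w_0,w_4,w_5\}$, $B_2=\{w_0,w_5,w_6\}$, $B_1=\{w_0,w_6,w_1\}$ arranged in a $6$-cycle. To create the $D$-path I would add one new vertex $v$ joined to $w_1,w_2,w_4,w_5$ (so that $D_1=\{w_1,w_2,v\}$ shares the edge $w_1w_2$ with $X$ and $D_3=\{w_4,w_5,v\}$ shares $w_4w_5$ with $Y$), together with whatever chord and auxiliary vertices are needed to make the middle triangle $D_2$ appear and to suppress any unwanted triangle the grafting creates --- exactly the $K_4$-insertion device of Constructions \ref{con:c:thet22k} and \ref{con:c:thet244}, which removes an $i$-set by the Deletion-Lemma argument (Lemma \ref{lem:i:inducedI}) applied inside $\cp{G}$; the precise placement is read off Figure \ref{fig:c:thet334comp}. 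As with $\thet{2,4,4}$, the triangle-killing will generally cost the $\alpha$-graph property, which is why only the $i$-graph claim is made.

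With the construction fixed, the verification follows the three-step template of the proof of Lemma \ref{lem:c:thet1kl}: first, check that $\cp{G}$ is $K_4$-free and that every edge lies on a triangle, so that the maximal $3$-cliques are the smallest maximal cliques and $i(G)=3$ (using Observation \ref{obs:c:i3} to rule out $i(G)=2$), and then confirm there are exactly the nine listed triangles; second, read off the three token-slide paths $X\sim A_1\sim A_2\sim Y$, $X\sim B_1\sim B_2\sim Y$, $X\sim D_1\sim D_2\sim D_3\sim Y$, each step being a single slide because consecutive triangles share an edge; and third, show no further adjacencies occur. This last step is the only delicate point: since two triangles are adjacent iff they share exactly two vertices, one must verify that no $D$-triangle shares an edge with an $A$- or $B$-triangle, that $\deg_{\ig{G}}(X)=\deg_{\ig{G}}(Y)=3$ and not more, and that the six internal triangles have degree $2$ --- the main risk being that the chord used to build $D_2$ secretly shortcuts a rim path or gives $X$ or $Y$ a fourth neighbour, which is precisely what the auxiliary $K_4$-vertex is there to prevent. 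Once this bookkeeping is carried out (full details are in \cite[Chapter 5]{LauraD}) it follows that $\ig{G} = \thet{3,3,4}$.
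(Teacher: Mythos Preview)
Your approach is essentially the paper's own: a $W_7$ skeleton giving the six rim triangles $X,A_1,A_2,Y,B_2,B_1$, one extra vertex $v$ joined to $w_1,w_2,w_4,w_5$ for $D_1$ and $D_3$, the chord $w_2w_5$ to create $D_2=\{w_2,w_5,v\}$, and an auxiliary vertex (the paper calls it $v_0$) joined to $w_0,w_2,w_5$ to kill the stray triangle $\{w_0,w_2,w_5\}$ that the chord produces. Your reading that the $K_4$-insertion costs the $\alpha$-graph claim is also correct.

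One small slip in your verification outline: you say ``check that $\cp{G}$ is $K_4$-free,'' but the construction you have just described is \emph{not} $K_4$-free --- the whole point of the auxiliary vertex is to create a $K_4$ on $\{w_0,w_2,w_5,v_0\}$. The correct first step (cf.\ the proof of Lemma \ref{lem:c:thet22l}) is to check that every edge lies on a triangle and that \emph{some} triangles are not contained in a $K_4$; then $i(G)=3$ and the $i$-sets are exactly the triangles that are maximal cliques, i.e.\ the nine listed ones.
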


%>>> S: Figure {fig:c:thet334comp}	
\begin{figure}[H] \centering	
	\begin{tikzpicture}	[scale=0.8]
		%---------- REF-------------------
		\node (cent) at (0,0) {};
		\path (cent) ++(0:40 mm) node (rcent) {};	
		
		%----Left Wheel
		\node[std,label={0:$w_0$}] (w0) at (cent) {};
		
		\foreach \i / \j in {1/2,2/6,3/1,4/3,5/5,6/4}
		{
			\node[std] (w\i) at (150-\i*60:2cm) {}; 
				\path (w\i) ++(150-\i*60:5 mm) node (Lw\i) {$v_{\j}$};
			\draw[thick] (w0)--(w\i);        
		}
		\draw[thick] (w1)--(w2)--(w3)--(w4)--(w5)--(w6)--(w1);
		
		%---- V-Path	
		\path (rcent) ++(90:-0.5 cm) node [bblue,label={[blue]0: $v_7$}] (v) {};	
		
		%---- W2 to V
		\draw[bline] (v)--(w2);
		
		%---- V to W1-Path
		\draw[bline] (v) to[out=90,in=0] (w1);

		%---- V2 to W4-Path
		\draw[bline] (v) to[out=230,in=0] (w4);  	
		
		%-- V to W5
		\draw[bline] (v) to [out=260,in=15]  (-60:3.5cm) to [out=195,in=270] (w5);  
		
		%--- W2 to W5	
		\draw[bline] (w2) to [out=90, in=20]  (120:3cm) to [out=200,in=160] (w5); 
		
		%------ Z 
		\path (w0) ++(120:5 mm) node [bzed] (z) {};
				\path (z) ++(90:3 mm) node (Lz) {\color{zelim}$\boldsymbol{v_0}$};
		\draw[zline] (z)--(w0);
		\draw[zline] (w5)--(z)--(w2);

		%----- Internal Red Labellings	
		\path (w0) ++(60:12 mm) node [color=red] (X) {$X$};			
		\path (w0) ++(60:-12 mm) node [color=red] (Y) {$Y$};
		
		%-- A's
		\path (w0) ++(0:13 mm) node [color=red] (A1) {$A_1$};	
		\path (w0) ++(-60:12 mm) node [color=red] (A2) {$A_2$};
		
		%-- B's	
		\path (w0) ++(-60:-14 mm) node [color=red] (B1) {$B_1$};	
		\path (w0) ++(180:13 mm) node [color=red] (B2) {$B_2$};
		
		%-- D's
		\path (w2) ++(-10:15mm) node [color=red] (D1) {$D_1$};	
		\path (v) ++(280:10 mm) node [color=red] (D2) {$D_2$};
		\path (w4) ++(-25:15 mm) node [color=red] (D3) {$D_3$};
		
	\end{tikzpicture} 
	\caption{A graph $\cp{G}$ such that $\ig{G} = \thet{3,3,4}$.}
	\label{fig:c:thet334comp}
\end{figure}
	\subsubsection{Construction of $\cp{G}$ for $\thet{3,3,5}$}  \label{subsubsec:c:335}

	%>>> S: Cons {con:c:thet335}
	\begin{cons} \label{con:c:thet335}
		Refer to  Figure \ref{fig:c:thet335}.	Begin with a copy of the graph $\cp{H} \cong W_7 = C_6 \vee K_1$, labelling the degree $3$ vertices as $w_1,w_2,\dots, w_6$ and the central degree $6$ vertex as $w_0$.     Add new vertices $v_1$ and $v_2$ to $\cp{H}$, joined to each other.  Then, join $v_1$ to each of $\{w_1,w_2,w_5\}$, and $v_2$ to each of $\{w_2, w_4,w_5\}$.  Call this graph $\cp{G_{3,3,5}}$.
	\end{cons}
	% <<< E: Cons {con:c:thet335}

%>>> S: Figure {fig:c:thet335}	
\begin{figure}[H] \centering	
	\begin{tikzpicture}	[scale=0.8]
		%---------- REF-------------------
		\node (cent) at (0,0) {};
		\path (cent) ++(0:40 mm) node (rcent) {};	
		
		%----Left Wheel
		\node[std,label={0:$w_0$}] (w0) at (cent) {};
		
		\foreach \i in {1,4,5,6}
		{
			\node[std,label={150-\i*60: $w_{\i}$}] (w\i) at (150-\i*60:2cm) {}; 
			\draw[thick] (w0)--(w\i);        
		}
		
		\node[std] (w2) at (150-120:2cm) {}; 
				\path (w2) ++(-10:7 mm) node (Lw2) {$w_2$};		
		
		\node[std] (w3) at (-30:2cm) {}; 
			\path (w3) ++(-20:7 mm) node (Lw3) {$w_3$};			
			
		\draw[thick] (w0)--(w3);        		
		\draw[thick] (w0)--(w2);

		\draw[thick] (w1)--(w2)--(w3)--(w4)--(w5)--(w6)--(w1);
		
		%---- V-Path	
		\foreach \i/\j in {1/1.7, 2/-0.5}
		{
			\path (rcent) ++(90:\j cm) node [bblue,label={[blue]0: $v_\i$}] (v\i) {};
		}	
		
		\draw[bline] (w1)--(v1)--(v2)--(w2)--(v1);
		
		%---- V2 to W4-Path
		\draw[bline] (v2) to[out=230,in=0] (w4);  	
		
		%-- V2 to W5
		\draw[bline] (v2) to [out=260,in=15]  (-60:3.5cm) to [out=195,in=270] (w5);  
		
		%--- V1 to W5		
		\draw[bline] (v1) to [out=-45, in=70]  (-15:5cm) to [out=250,in=15]  (-60:4cm) to[out=195,in=260] (w5); 
		
		%----- Internal Red Labellings	
		\path (w0) ++(60:12 mm) node [color=red] (X) {$X$};			
		\path (w0) ++(60:-12 mm) node [color=red] (Y) {$Y$};
		
		%-- A's
		\path (w0) ++(0:13 mm) node [color=red] (A1) {$A_1$};	
		\path (w0) ++(-60:12 mm) node [color=red] (A2) {$A_2$};
		
		%-- B's	
		\path (w0) ++(-60:-12 mm) node [color=red] (B1) {$B_1$};	
		\path (w0) ++(180:12 mm) node [color=red] (B2) {$B_2$};
		
		%-- D's
		\path (w2) ++(100:5 mm) node [color=red] (D1) {$D_1$};	
		\path (w2) ++(-10:17mm) node [color=red] (D2) {$D_2$};	
		\path (v2) ++(280:10 mm) node [color=red] (D3) {$D_3$};
		\path (w4) ++(-25:15 mm) node [color=red] (D4) {$D_4$};	
		
	\end{tikzpicture} 
	\caption{A graph $\cp{G_{3,3,5}}$ from Construction \ref{con:c:thet335} such that $\ig{G_{3,3,5}} =$ $\thet{3,3,5}$.}
	\label{fig:c:thet335}
\end{figure}		
% <<< E: Figure {fig:c:thet335}

	%>>>S: Lemma {lem:c:thet335}
	\begin{lemma}\label{lem:c:thet335}
		If $\cp{G_{3,3,5}}$ is the graph constructed by Construction \ref{con:c:thet335}, then $\ig{G_{3,3,5}} = \ag{G_{3,3,5}} = \thet{3,3,5}$.
	\end{lemma}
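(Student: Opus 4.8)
The plan is to follow the template of Lemma~\ref{lem:c:thet1kl}: first show $i(G_{3,3,5})=\alpha(G_{3,3,5})=3$, so that the $i$-sets (equivalently $\alpha$-sets) of $G_{3,3,5}$ are exactly the triangles of $\cp{G_{3,3,5}}$ and $\ig{G_{3,3,5}}=\ag{G_{3,3,5}}$; then list those triangles; then check that, under triangle adjacency, they form $\thet{3,3,5}$. For the first step I would verify that $\cp{G_{3,3,5}}$ is $K_4$-free: a $4$-clique would have to lie in the wheel on $w_0,\dots,w_6$ --- impossible, since $w_jw_{j+2}\notin E(\cp{G_{3,3,5}})$, so $w_0$ lies on no $K_4$ --- or contain $v_1$ or $v_2$ together with a triangle in its neighbourhood, and $N(v_1)=\{v_2,w_1,w_2,w_5\}$, $N(v_2)=\{v_1,w_2,w_4,w_5\}$ contain none. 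Since moreover $\cp{G_{3,3,5}}$ has no isolated vertex and every edge lies on a triangle, Observation~\ref{obs:c:i3} gives $i(G_{3,3,5})\ge 3$, a maximal triangle gives $i(G_{3,3,5})\le 3$, and $K_4$-freeness gives $\alpha(G_{3,3,5})=3$; hence $i(G_{3,3,5})=\alpha(G_{3,3,5})=\omega(\cp{G_{3,3,5}})=3$ and, exactly as in Lemma~\ref{lem:c:thet1kl}, $\ig{G_{3,3,5}}=\ag{G_{3,3,5}}$.

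Next I would enumerate the triangles of $\cp{G_{3,3,5}}$. The wheel on $w_0,\dots,w_6$ gives the six triangles $\{w_0,w_j,w_{j+1}\}$ (indices modulo $6$), which in the notation of Figure~\ref{fig:c:thet335} are $X=\{w_0,w_1,w_2\}$, $A_1=\{w_0,w_2,w_3\}$, $A_2=\{w_0,w_3,w_4\}$, $Y=\{w_0,w_4,w_5\}$, $B_2=\{w_0,w_5,w_6\}$, $B_1=\{w_0,w_6,w_1\}$; since triangle adjacency depends only on the vertex sets, Lemma~\ref{lem:c:wheel} tells us these induce in $\ig{G_{3,3,5}}$ the $6$-cycle $X\sim A_1\sim A_2\sim Y\sim B_2\sim B_1\sim X$, that is, two internally disjoint $X$--$Y$ paths of length $3$. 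Every other triangle meets $\{v_1,v_2\}$, and inspecting $N(v_1)$ and $N(v_2)$ shows these are exactly $D_1=\{v_1,w_1,w_2\}$, $D_2=\{v_1,v_2,w_2\}$, $D_3=\{v_1,v_2,w_5\}$, $D_4=\{v_2,w_4,w_5\}$. Thus $V(\ig{G_{3,3,5}})=\{X,Y,A_1,A_2,B_1,B_2,D_1,D_2,D_3,D_4\}$, ten vertices, as $\thet{3,3,5}$ requires.

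It then remains to identify the third path and rule out extra edges. Directly from the construction $\edge{X,w_0,v_1,D_1}\adedge{w_1,v_2,D_2}\adedge{w_2,w_5,D_3}\adedge{v_1,w_4,D_4}\adedge{v_2,w_0,Y}$, an $X$--$Y$ path of length $5$. The hard part --- such as it is --- is the exhaustive check that no two triangles share exactly two vertices beyond those already listed: a $D_i$ meets the wheel only in $\{w_1,w_2,w_4,w_5\}$, so it can be adjacent to a wheel triangle only through $\{w_1,w_2\}$ (this gives $D_1\sim X$) or $\{w_4,w_5\}$ (this gives $D_4\sim Y$), and no further such edge occurs; the non-consecutive pairs $D_1D_3$, $D_2D_4$, $D_1D_4$ share at most one vertex; and the wheel triangles induce exactly $C_6$ by Lemma~\ref{lem:c:wheel}. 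Since the $D_i$-path is internally disjoint from the two $3$-paths, $\ig{G_{3,3,5}}$ is precisely three internally disjoint $X$--$Y$ paths of lengths $3,3,5$, whence $\ig{G_{3,3,5}}=\ag{G_{3,3,5}}=\thet{3,3,5}$. This final bookkeeping is routine only because $\cp{G_{3,3,5}}$ is small and the wheel triangles are handled wholesale by Lemma~\ref{lem:c:wheel}.
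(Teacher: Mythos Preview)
Your proof is correct and follows exactly the template the paper establishes in the detailed proofs of Lemmas~\ref{lem:c:thet1kl} and~\ref{lem:c:thet22l}. The paper itself omits the proof of this lemma (at the start of Section~\ref{subsec:c:3kl} it states that the remaining constructions are given only with figures and lemma statements, deferring details to~\cite{LauraD}), so there is nothing to compare against directly; your argument is precisely the kind the paper intends, including the wholesale handling of the six wheel triangles via Lemma~\ref{lem:c:wheel}, which the paper explicitly sanctions for later constructions.
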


	%______________________  SUBSUB SECTION:  theta (3,3,k) ______________________
	
	\subsubsection{Construction of $\cp{G}$ for $\thet{3,3,\ell}$ for $\ell \geq 6$}  \label{subsubsec:c:33k}

	%>>> S: Cons {con:c:thet33k}
	\begin{cons} \label{con:c:thet33k}
		Refer to Figure \ref{fig:c:thet33k}.	Begin with a copy of the graph $\cp{H} \cong W_7 = C_6 \vee K_1$, labelling the degree $3$ vertices as $w_1,w_2,\dots, w_6$ and the central degree $4$ vertex as $w_0$.     For $\ell \geq 6$, add to $\cp{H}$  a new path of $\ell-3$ vertices labelled as $v_1$, $v_2, \dots, v_{\ell-3}$.  Then, join $w_1$ to each of $\{v_1,v_2,\dots, v_{\ell-4}\}$, $w_4$ to $v_{\ell-3}$, and $w_5$ to $v_{\ell-4}$ and $v_{\ell-3}$.  Finally, join $v_{\ell-5}$ to $v_{\ell-3}$, so that $\{v_{\ell-5}, v_{\ell-4}, v_{\ell-3}\}$ form a  $K_3$.  
		Call this graph $\cp{G_{3,3,\ell}}$ for $\ell \geq 6$.
	\end{cons}
	% <<< E: Cons {con:c:thet33k}

	Although the general construction still applies for the case when $\ell=6$, we include a separate figure for the construction of $\thet{3,3,6}$ for reference below, because of the additional complication that now $v_1 = v_{\ell-5}$, and so the single vertex now has dual roles in the construction.

%>>> S: Figure {fig:c:thet336}	
\begin{figure}[H] \centering	
	\begin{tikzpicture}	[scale=0.8]
		
		%---------- REF-------------------
		\node (cent) at (0,0) {};
		\path (cent) ++(0:40 mm) node (rcent) {};	
		
		%----Left Wheel
		\node[std,label={0:$w_0$}] (w0) at (cent) {};
		
		\foreach \i in {1, 5,6 }
		{
			\node[std,label={150-\i*60: $w_{\i}$}] (w\i) at (150-\i*60:2cm) {}; 
			\draw[thick] (w0)--(w\i);        
		}
	
		\foreach \i / \j in {2/-30, 3/-30 , 4/-60}
		{
			\node[std] (w\i) at (150-\i*60:2cm) {}; 	
				\path (w\i) ++(\j:5mm) node (Lw\i) {$w_{\i}$};		
			\draw[thick] (w0)--(w\i);        
		}	
	
		\draw[thick] (w1)--(w2)--(w3)--(w4)--(w5)--(w6)--(w1);
		
		%---- V-Path	
		\foreach \i / \j / \rad in {1/1.5/4,2/0/5,3/-1.5/4}
		{	
			\path (rcent) ++(90:\j cm) node [bblue] (v\i) {};
			\path (v\i) ++(0:\rad mm) node (Lv\i) {\color{blue} $v_{\i}$};
			%\node[bblue,label={0: $v_\i$}] (v\i) at (4,2-\i) {};
		}
	
		\draw[bline] (w1)--(v1)--(v2)--(v3)--(w4);
		\draw[bline] (w2)--(v1);
		
		%---- V1 to V3 
		\draw[bline] (v1) to[out=210,in=150] (v3);  	
		
		%-- V3 to W5
		\draw[bline] (v3) to[out=230,in=290] (w5);  
		
		%--- V2 to W5 and W1
		\draw[bline] (v2) to[out=-45, in=90]  (-15:4.9cm) to [out=270,in=0]    (-60:3.7cm) to[out=180,in=280] (w5); 
		\draw[bline] (v2) to[out=45, in=-90]  (15:4.9cm) to [out=-270,in=0]  (45:3.7cm) to[out=-180,in=20] (w1);	
	%	\node[std] at   (-20:5.4cm) {};
	%	\node[bred] at   (-60:3.7cm){};
		
		%----- Internal Red Labellings	
		\path (w0) ++(60:12 mm) node [color=red] (X) {$X$};			
		\path (w0) ++(60:-12 mm) node [color=red] (Y) {$Y$};
		
		%-- A's
		\path (w0) ++(0:13 mm) node [color=red] (A1) {$A_1$};	
		\path (w0) ++(-60:12 mm) node [color=red] (A2) {$A_2$};
		
		%-- B's	
		\path (w0) ++(-60:-12 mm) node [color=red] (B1) {$B_1$};	
		\path (w0) ++(180:12 mm) node [color=red] (B2) {$B_2$};
		
		%-- D's
		\path (w2) ++(100:5 mm) node [color=red] (D1) {$D_1$};	
		\path (v1) ++(140:9 mm) node [color=red] (D2) {$D_2$};	
		\path (v2) ++(180:5 mm) node [color=red] (D3) {$D_3$};
		\path (v3) ++(250:9mm) node [color=red] (D4) {$D_4$};
		\path (w4) ++(-8:17 mm) node [color=red] (D5) {$D_5$};		
		
	\end{tikzpicture} 
	\caption{The graph $\cp{G_{3,3,6}}$ from Construction \ref{con:c:thet33k} such that $\ig{G_{3,3,6}} =  \ag{G_{3,3,6}}= \thet{3,3,6}$.}
	\label{fig:c:thet336}
\end{figure}	
% <<< E: Figure {fig:c:thet336}

%>>> S: Figure {fig:c:thet33k}	
%$\thet{3,3,k}$
\begin{figure}[H] \centering	
	\begin{tikzpicture}	[scale=0.8]
		%---------- REF-------------------
		\node (cent) at (0,0) {};
		\path (cent) ++(0:40 mm) node (rcent) {};	
		
		%----Left Wheel
		\node[std,label={0:$w_0$}] (w0) at (cent) {};
		
		\foreach \i in {1,3,4,5,6}
		{
			\node[std,label={150-\i*60: $w_{\i}$}] (w\i) at (150-\i*60:2cm) {}; 
			\draw[thick] (w0)--(w\i);        
		}
		\node[std,label={0: $w_{2}$}] (w2) at (30:2cm) {}; 
		\draw[thick] (w0)--(w2);        
		
		\draw[thick] (w1)--(w2)--(w3)--(w4)--(w5)--(w6)--(w1);
		
		%---- V-Path	
		\foreach \i / \j  / \k in {1/3/1, 2/2/2, 3/-1/{\ell-5}, 4/-2.2/{\ell-4}, 5/-3.4/{\ell-3}}
		{	
			\path (rcent) ++(90:\j cm) node [bblue] (v\i) {};
		%	\path (v\i) ++(0:5 mm) node (Lv\i) {\color{blue} $v_{\k}$};
			%\node[bblue,label={0: $v_\i$}] (v\i) at (4,2-\i) {};
		}
		
		\path (v1) ++(45:4 mm) node (Lv1) {\color{blue} $v_{1}$};
		\path (v2) ++(200:5 mm) node (Lv2) {\color{blue} $v_{2}$};
		\path (v3) ++(150:6 mm) node (Lv3) {\color{blue} $v_{\ell-5}$};
		\path (v4) ++(0:8 mm) node (Lv4) {\color{blue} $v_{\ell-4}$};
		\path (v5) ++(260:5 mm) node (Lv5) {\color{blue} $v_{\ell-3}$};

		\draw[bline] (v1)--(v2);
		\draw[bline] (v3)--(v4)--(v5);
		
		\draw[bline,dashed] (v2)--(v3);
		
		%---- V3 to V5 
		\path (v4) ++(180:1.3 cm) coordinate  (c1) {};
		\draw[bline] (v3) to[out=190, in=90]  (c1) to [out=270,in=170] (v5);  
		
		%---- V1 to W1 W2
		\draw[bline] (w1)--(v1)--(w2);	
		
		%-- V5 to W4
		\draw[bline] (v5) to[out=180,in=-45] (w4);  
		
		%-- V5 to W5
		\draw[bline] (v5) to[out=200,in=290] (w5);  
		
		%-- V4 to W5
		\draw[bline] (v4) to[out=-35, in=25]  (4.6cm,-4.2cm) to [out=205,in=270] (w5);  	
		%	\node[std] at (4.6cm,-4.2cm) {};

		%-- W1 to V2  
		\draw[bline] (w1) to[out=25, in=180]  (40:5.8cm) to [out=350,in=45]  (v2); 
		%	\node[std] at  (40:5.8cm)  {};
		
		%-- W1 to  V3 
		\draw[bline] (w1) to[out=38, in=180]  (40:6.4cm) to [out=350,in=45]  (v3); 
		%	\node[std] at (35:6.4cm) {};
		
		%-- W1 to V4
		\draw[bline] (w1) to[out=40, in=180]  (40:6.6cm) to [out=350,in=40]  (v4); 		
		%\node[std] at (40:6.6cm)  {};
		
		%-- W1 to Dashed T1 T2
		\path (rcent) ++(90:1 cm) coordinate  (t1) {};
		\draw[bline, dashed] (w1) to[out=26, in=180]   (40:6cm)  to [out=350,in=45]  (t1); 
		
		\path (rcent) ++(90:0 cm) coordinate  (t2) {};
		\draw[bline, dashed] (w1) to[out=31, in=180]  (40:6.2cm) to [out=350,in=45]  (t2); 
		
		%----- Internal Red Labellings	
		\path (w0) ++(60:12 mm) node [color=red] (X) {$X$};			
		\path (w0) ++(60:-12 mm) node [color=red] (Y) {$Y$};
		
		%-- A's
		\path (w0) ++(0:14 mm) node [color=red] (A1) {$A_1$};	
		\path (w0) ++(-60:12 mm) node [color=red] (A2) {$A_2$};
		
		%-- B's	
		\path (w0) ++(-60:-12 mm) node [color=red] (B1) {$B_1$};	
		\path (w0) ++(180:12 mm) node [color=red] (B2) {$B_2$};
		
		%-- D's
		\path (w2) ++(105:8 mm) node [color=red] (D1) {$D_1$};	
		\path (v1) ++(170:9 mm) node [color=red] (D2) {$D_2$};	
		\path (v4) ++(180:7 mm) node [color=red] (D3) {$D_{\ell-3}$};
		\path (w4) ++(-55:25mm) node [color=red] (D4) {$D_{\ell-2}$};
		\path (w4) ++(-50:15 mm) node [color=red] (D5) {$D_{\ell-1}$};		
		
	\end{tikzpicture} 
	\caption{The graph $\cp{G_{3,3,\ell}}$ from Construction \ref{con:c:thet33k} such that $\ig{G} = \ag{G} = \thet{3,3,\ell}$ for $\ell \geq 6$.}
	\label{fig:c:thet33k}
\end{figure}	
% <<< E: Figure {fig:c:thet33k}

	%>>>S: Lemma {lem:c:thet33k}
	\begin{lemma}\label{lem:c:thet33k}
		If $\cp{G_{3,3,\ell}}$ is the graph constructed by Construction \ref{con:c:thet33k}, then $\ig{G_{3,3,\ell}} = \ag{G_{3,3,\ell}} = \thet{3,3,\ell}$ for $\ell \geq 6$.
	\end{lemma}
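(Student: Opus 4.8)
The plan is to argue exactly as in the proofs of Lemmas~\ref{lem:c:thet1kl}, \ref{lem:c:thet22l} and~\ref{lem:c:thet244}, working with triangle adjacency in the complement seed graph $\cp{G_{3,3,\ell}}$ of Construction~\ref{con:c:thet33k}. First I would verify that $\cp{G_{3,3,\ell}}$ is $K_4$-free: the wheel $W_7=C_6\vee K_1$ contains no $K_4$ because $C_6$ is triangle-free, and a short case check on the neighbourhood of each $v_i$ shows that none of the triangles introduced by the attached path $v_1,\dots,v_{\ell-3}$ — namely $\{w_1,w_2,v_1\}$, the triangles $\{w_1,v_{i-1},v_i\}$, $\{v_{\ell-5},v_{\ell-4},v_{\ell-3}\}$, $\{w_5,v_{\ell-4},v_{\ell-3}\}$ and $\{w_4,w_5,v_{\ell-3}\}$ — lies in a $K_4$, since in each case no further vertex is adjacent to all three members. (Should a stray triangle appear, it could in any event be destroyed by attaching a new dominating vertex to it, i.e.\ by applying the Deletion Lemma~\ref{lem:i:inducedI} inside $\cp{G}$, as noted after Construction~\ref{con:c:thet244}.) Every edge of $\cp{G_{3,3,\ell}}$ then lies on a triangle, so by Observation~\ref{obs:c:i3} we have $i(G_{3,3,\ell})\ge 3$, while $K_4$-freeness gives $\omega(\cp{G_{3,3,\ell}})=3$; hence the triangles of $\cp{G_{3,3,\ell}}$ are precisely its maximal cliques and $i(G_{3,3,\ell})=\alpha(G_{3,3,\ell})=3$. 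Since the $i$-sets and $\alpha$-sets of $G_{3,3,\ell}$ coincide, $\ig{G_{3,3,\ell}}=\ag{G_{3,3,\ell}}$, and it remains only to identify this graph.

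Second, I would name the triangles of $\cp{G_{3,3,\ell}}$ as in Figure~\ref{fig:c:thet33k}: the six hub triangles of the wheel, $X=\{w_0,w_1,w_2\}$, $A_1=\{w_0,w_2,w_3\}$, $A_2=\{w_0,w_3,w_4\}$, $Y=\{w_0,w_4,w_5\}$, $B_2=\{w_0,w_5,w_6\}$, $B_1=\{w_0,w_1,w_6\}$, together with $D_1=\{w_1,w_2,v_1\}$, $D_i=\{w_1,v_{i-1},v_i\}$ for $2\le i\le \ell-4$, $D_{\ell-3}=\{v_{\ell-5},v_{\ell-4},v_{\ell-3}\}$, $D_{\ell-2}=\{w_5,v_{\ell-4},v_{\ell-3}\}$ and $D_{\ell-1}=\{w_4,w_5,v_{\ell-3}\}$. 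Inspecting the adjacencies shown in Figure~\ref{fig:c:thet33k} confirms that these $\ell+5$ sets are all the triangles, so $V(\ig{G_{3,3,\ell}})=\{X,Y,A_1,A_2,B_1,B_2,D_1,\dots,D_{\ell-1}\}$, matching $|V(\thet{3,3,\ell})|=\ell+5$. Then I would exhibit the three internally disjoint $X$--$Y$ paths by listing the slides explicitly: $\edge{X,w_1,w_3,A_1}\adedge{w_2,w_4,A_2}\adedge{w_3,w_5,Y}$ and $\edge{X,w_2,w_6,B_1}\adedge{w_1,w_5,B_2}\adedge{w_6,w_4,Y}$ give the two length-$3$ paths — these are precisely the $C_6$ of Lemma~\ref{lem:c:wheel} realised on the hub triangles, with $X$ and $Y$ antipodal — while $\edge{X,w_0,v_1,D_1}\adedge{w_2,v_2,D_2}\adedge{v_1,v_3,D_3}\cdots\adedge{w_1,v_{\ell-3},D_{\ell-3}}\adedge{v_{\ell-5},w_5,D_{\ell-2}}\adedge{v_{\ell-4},w_4,D_{\ell-1}}\adedge{v_{\ell-3},w_0,Y}$ gives the length-$\ell$ path.

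The final step, showing that $\ig{G_{3,3,\ell}}$ contains no further edges, is the laborious part and the only real obstacle. Since two triangles of $\cp{G_{3,3,\ell}}$ are adjacent iff they share exactly two vertices, one checks: each $D_i$ with $2\le i\le\ell-4$ meets $\{w_0,\dots,w_6\}$ only in $w_1$, $D_{\ell-3}$ meets it not at all, and $D_1,D_{\ell-2},D_{\ell-1}$ meet the hub triangles only along edges already used, so no $D$-triangle is adjacent to any $A$- or $B$-triangle; non-consecutive $D_i$'s share at most one vertex (the $D$-chain is therefore an induced path); and $X\not\sim Y$, $X\not\sim A_2$, $X\not\sim B_2$, $Y\not\sim A_1$, $Y\not\sim B_1$, because all those pairs meet only in $w_0$. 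The one genuinely new point is the case $\ell=6$, where $v_1=v_{\ell-5}$ plays a double role — $\{v_1,v_2,v_3\}$ is the $K_3$ $D_{\ell-3}$ and the interior part $D_1,\dots,D_{\ell-4}$ of the $D$-chain collapses to $D_1,D_2$ — and here I would verify the handful of adjacencies directly against Figure~\ref{fig:c:thet336}, confirming that the $D$-path still has length $6$. With both the vertex set and the edge set pinned down, we conclude $\ig{G_{3,3,\ell}}=\ag{G_{3,3,\ell}}=\thet{3,3,\ell}$ for every $\ell\ge 6$.
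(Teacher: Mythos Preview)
Your proposal is correct and follows exactly the template the paper uses for Lemmas~\ref{lem:c:thet1kl}, \ref{lem:c:thet22l} and~\ref{lem:c:thet244}: verify $K_4$-freeness so that $i=\alpha=3$, list the triangles of $\cp{G_{3,3,\ell}}$, exhibit the three $X$--$Y$ paths via explicit slides (with Lemma~\ref{lem:c:wheel} handling the hub cycle), and then rule out extra adjacencies by intersecting triangle vertex sets. The paper itself does not give a proof of this lemma --- it states at the start of Section~\ref{subsec:c:3kl} that the remaining lemmas are stated without proof and refers to \cite[Chapter~5]{LauraD} --- so your write-up is in fact more detailed than what appears in the paper, but it is precisely the intended argument.
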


	\subsubsection{Construction of $\cp{G}$ for $\thet{3,4,4}$ }  \label{subsubsec:c:344}

	%\nts{PLACEHOLDER}
	%>>> S: Cons {con:c:thet344}
	\begin{cons}\label{con:c:thet344} See Figure \ref{fig:c:thet344}.	
		Begin with a copy of the graph $\cp{G}$ from Construction \ref{con:c:thet244}   for $\thet{2,4,4}$, which we rename here as $\cp{G_{2,4,4}}$.  Subdivide the edge $w_2w_3$, adding a new vertex $u_1$, and joining $u_1$ to $w_0$.  
		Delete the vertex $z$.   Call this graph $\cp{G_{3,4,4}}$.
		
		%Then, for each $3 \leq i \leq 6$,  relabel the vertices of the outter wheel from $w_i$ to $w_{i+1}$, and relabel $w^*$ as $w_3$.

	\end{cons}
	% <<< E: Cons {con:c:thet344}

	%>>> S: Figure {fig:c:thet344}	
	\begin{figure}[H] \centering	
		\begin{tikzpicture}	[scale=0.8]
			
			%---------- REF-------------------
			\node (cent) at (0,0) {};
			\path (cent) ++(0:50mm) node (rcent) {};	
			
			%----Left Wheel
			\node[std] (w0) at (cent) {};
			\path (w0) ++(55:6 mm) node (Lw0) {$w_0$};	
			
			\foreach \i in {1,2,3,4,5,6}
			{
				\node[std,label={150-\i*60: $w_{\i}$}] (w\i) at (150-\i*60:2cm) {}; 
				\draw[thick] (w0)--(w\i);        
			}
			
			\node[std,label={0: $u_1$}] (w) at (0:2cm) {}; 
			\draw[thick] (w0)--(w);
			
			\draw[thick] (w1)--(w2)--(w)--(w3)--(w4)--(w5)--(w6)--(w1);
			
			%---- V-Path
			
			\path (rcent) ++(90:0 cm) node [bblue] (v) {};
			\path (v) ++(0:4 mm) node (Lv) {\color{blue} $v$};	
			
			\foreach \i / \j in {2,3}
			\draw[bline] (v)--(w\i);	
			
			%--- W1 to V
			\draw[bline] (w1)   to[out=0,in=100] (v); 	
			%--- W4 to V
			\draw[bline] (w4)   to[out=0,in=-100] (v); 
			%--- W4 to W1
			\draw[bline] (w4) to[out=-15, in=270]  (0:6.5cm)  to[out=90,in=15] (w1);

			%\path (w0) ++(60:-3 cm) node [bred, label={[red]180:$z'$}] (z1) {};
			\path (w0) ++(180:4 cm) node [bzed, label={[zelim]180:$\boldsymbol{z'}$}] (z1) {};
			%	\draw[rline] (z1)--(w0);
			\draw[zline] (w0)   to[out=165,in=15] (z1); 
			\draw[zline] (w1)   to[out=180,in=80] (z1); 		
			\draw[zline] (w4)   to[out=180,in=-80] (z1);

			%----- Internal Red Labellings	
			\path (w0) ++(60:12 mm) node [color=red] (X) {$X$};	
			\path (w0) ++(15:12 mm) node [color=red] (A1) {$A_1$};	
			\path (w0) ++(-15:12 mm) node [color=red] (A2) {$A_2$};	
			\path (w0) ++(-60:12 mm) node [color=red] (Y) {$Y$};	
			
			%-- B's	
			\foreach \i/\c in {1/120,2/180,3/240} {				
				\path (w0) ++(\c:12 mm) node [color=red] (B\i) {$B_{\i}$};	
			}
			
			%-- D's
			\path (w2) ++(05:15 mm) node [color=red] (D1) {$D_1$};	
			\path (v) ++(60:8 mm) node [color=red] (D2) {$D_2$};	
			\path (w3) ++(-05:15 mm) node [color=red] (D3) {$D_3$};
			
		\end{tikzpicture} 
		\caption{The   graph  \; $\cp{G_{3,4,4}}$ \;  from    Construction   \ref{con:c:thet344}    such    that    $\ig{G_{3,4,4}}$    $=$  $\thet{3,4,4}$.}
		\label{fig:c:thet344}
	\end{figure}		
	% <<< E: Figure {fig:c:thet344}

	%>>>S: Lemma {lem:c:thet344}
	\begin{lemma}\label{lem:c:thet344}
		If $\cp{G_{3,4,4}}$ is the graph constructed by Construction \ref{con:c:thet344}, then $\ig{G_{3,4,4}} = \thet{3,4,4}$.
	\end{lemma}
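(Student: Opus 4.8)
The plan is to follow the templates used for Lemma~\ref{lem:c:thet22l} and Lemma~\ref{lem:c:thet244}: first identify the order-$3$ maximal cliques of $\cp{G_{3,4,4}}$, recognise them as the $i$-sets of $G_{3,4,4}$ and hence as $V(\ig{G_{3,4,4}})$, then exhibit the three internally disjoint $X$--$Y$ paths of $\thet{3,4,4}$ as explicit slide sequences, and finally verify that no further edges occur.

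First I would catalogue the triangles. Keeping the notation of Constructions~\ref{con:c:thet244} and \ref{con:c:thet344}, let $w_0,\dots,w_6$ be the wheel vertices, $v$ the vertex joined to $w_1,w_2,w_3,w_4$, $z'$ the vertex joined to $w_0,w_1,w_4$, and $u_1$ the new vertex subdividing $w_2w_3$ (and joined to $w_0$). Deleting $z$ is legitimate because once $w_2w_3$ is subdivided the set $\{v,w_2,w_3\}$ is no longer a triangle, so $z$ has nothing left to suppress; on the other hand $w_1w_4$ is still an edge, so $z'$ must be kept and $\{z',w_0,w_1,w_4\}$ is a $K_4$. As in the earlier constructions, every edge of $\cp{G_{3,4,4}}$ lies on a triangle, and the triangles not contained in a $K_4$ are exactly the ten sets $X=\{w_0,w_1,w_2\}$, $A_1=\{w_0,w_2,u_1\}$, $A_2=\{w_0,u_1,w_3\}$, $Y=\{w_0,w_3,w_4\}$, $B_3=\{w_0,w_4,w_5\}$, $B_2=\{w_0,w_5,w_6\}$, $B_1=\{w_0,w_6,w_1\}$, $D_1=\{v,w_1,w_2\}$, $D_2=\{v,w_1,w_4\}$, $D_3=\{v,w_3,w_4\}$. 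By Observation~\ref{obs:c:i3} together with the $K_4$-suppression technique described after it, these ten sets are precisely the $i$-sets of $G_{3,4,4}$; in particular $i(G_{3,4,4})=3$ and $V(\ig{G_{3,4,4}})$ is this ten-element set.

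Next I would record the adjacencies realising the three paths, all read off directly from Figure~\ref{fig:c:thet344}: $\edge{X,w_1,u_1,A_1} \adedge{w_2,w_3,A_2} \adedge{u_1,w_4,Y}$, of length $3$; $\edge{X,w_2,w_6,B_1} \adedge{w_1,w_5,B_2} \adedge{w_6,w_4,B_3} \adedge{w_5,w_3,Y}$, of length $4$; and $\edge{X,w_0,v,D_1} \adedge{w_2,w_4,D_2} \adedge{w_1,w_3,D_3} \adedge{v,w_0,Y}$, of length $4$. These three paths are internally disjoint and share the endpoints $X$ and $Y$.

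The remaining work is to show there are no other edges. The seven triangles containing $w_0$ correspond to the edges of the $7$-cycle $(w_1,w_2,u_1,w_3,w_4,w_5,w_6)$, so by Lemma~\ref{lem:c:wheel} they induce in $\ig{G_{3,4,4}}$ the $7$-cycle $X,A_1,A_2,Y,B_3,B_2,B_1,X$ and nothing more; the chord $w_1w_4$ does not alter these vertex sets, and the only triangle it creates, $\{w_0,w_1,w_4\}$, is killed by $z'$. The three $D$-triangles all contain $v$, and among them only $D_1\cap D_2=\{v,w_1\}$ and $D_2\cap D_3=\{v,w_4\}$ have size two, so they induce the path $D_1D_2D_3$. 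A $D$-triangle can meet one of the seven hub-triangles only in its two non-$v$ vertices; for $D_1$ the pair $\{w_1,w_2\}$ lies only in $X$, for $D_3$ the pair $\{w_3,w_4\}$ lies only in $Y$, and for $D_2$ the pair $\{w_1,w_4\}$ lies in no $i$-set at all, since it is contained in the $K_4$ through $z'$. Hence the only edges between the $D$-path and the $7$-cycle are $XD_1$ and $YD_3$, and $\ig{G_{3,4,4}}$ is exactly the theta graph with internally disjoint $X$--$Y$ paths of lengths $3$, $4$, and $4$, i.e.\ $\thet{3,4,4}$. I expect the main obstacle to be precisely this last bookkeeping step around $D_2$ and the role of $z'$ in suppressing $\{w_0,w_1,w_4\}$ --- the reason the chord $w_1w_4$ does not create an unwanted fifth neighbour --- and I note that any stray triangle that might have been overlooked could in any case be removed using the Deletion Lemma (Lemma~\ref{lem:i:inducedI}).
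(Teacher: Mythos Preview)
Your proposal is correct and follows exactly the template the paper establishes in the proofs of Lemmas~\ref{lem:c:thet22l} and~\ref{lem:c:thet244}; the paper itself omits the proof of Lemma~\ref{lem:c:thet344} (referring to \cite{LauraD}), but your argument is precisely the one implied by Figure~\ref{fig:c:thet344} and the surrounding discussion. The key points---that subdividing $w_2w_3$ removes the triangle $\{v,w_2,w_3\}$ (making $z$ unnecessary), that $z'$ still suppresses $\{w_0,w_1,w_4\}$, and that $D_2$ therefore has no hub-triangle neighbour---are all handled correctly.
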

	%>>>E: Lemma {lem:c:thet344}

	%______________________  SUBSUB SECTION:  theta (3,4,l) ______________________
	\subsubsection{Construction of $\cp{G}$ for $\thet{3,4,\ell}$, $\ell \geq 5$}  \label{subsubsec:c:34l}

	%\nts{SUBDIVISION of 344 on B-line}

	%>>> S: Cons {con:c:thet34k}
	\begin{cons}\label{cons:c:thet34k} 
		Begin with a copy of the graph $\cp{G_{3,4,4}}$ from Construction \ref{con:c:thet344} for $\thet{3,4,4}$.  Subdivide the edge $w_1w_6$ $\ell-4$ times (for $\ell \geq 5$), adding the new vertices $w_7$, $w_8, \dots, w_{\ell+2}$.  Join $w_7, w_8, \dots, w_{\ell+2}$ to $w_0$, so that $w_0,w_1,\dots,w_{\ell+2}$ forms a wheel.  Call this graph $\cp{G_{3,4,\ell}}$. 
	\end{cons}
	% <<< E: Cons {con:c:thet34k}
	
	%>>>S: Lemma {lem:c:thet34k}
	\begin{lemma}\label{lem:c:thet34k}
		If $\cp{G_{3,4,\ell}}$ is the graph constructed by Construction \ref{cons:c:thet34k}, then $\ig{G_{3,4,\ell}} = \thet{3,4,\ell}$ for $\ell \geq 5$.
	\end{lemma}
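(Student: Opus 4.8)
The plan is to piece together the base case $\thet{3,4,4}$ (Lemma~\ref{lem:c:thet344}) with the wheel--to--cycle correspondence of Lemma~\ref{lem:c:wheel}, and then to check that the repeated subdivision in Construction~\ref{cons:c:thet34k} affects only one of the three theta-paths. First I would observe that $\cp{G_{3,4,\ell}}$ is obtained from $\cp{G_{3,4,4}}$ by subdividing the single rim edge $w_1w_6$ of its wheel $\ell-4$ times and joining each new vertex $w_7,\dots,w_{\ell+2}$ to the hub $w_0$; consequently $\cp{G_{3,4,\ell}}$ consists of a wheel on $\{w_0,w_1,w_2,u_1,w_3,\dots,w_{\ell+2}\}$ (with rim cycle of length $\ell+3$), the chord $w_1w_4$, the vertex $v$ joined to $w_1,w_2,w_3,w_4$, and the vertex $z'$ joined to $w_0,w_1,w_4$. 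Each new vertex $w_j$ ($7\le j\le\ell+2$) has degree $3$ and lies in exactly the two triangles $\{w_0,w_{j-1},w_j\}$ and $\{w_0,w_j,w_{j+1}\}$, neither of which is contained in a $K_4$. Hence, exactly as in the earlier constructions, every edge of $\cp{G_{3,4,\ell}}$ lies on a triangle and some triangles lie in no $K_4$, so these triangles are precisely the minimum maximal cliques of $\cp{G_{3,4,\ell}}$; by Observation~\ref{obs:c:i3} this gives $i(G_{3,4,\ell})=3$ and identifies $V(\ig{G_{3,4,\ell}})$ with the set of triangles.

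Next I would enumerate the $i$-sets. The subdivision destroys exactly one triangle of $\cp{G_{3,4,4}}$, namely the old $B_1=\{w_0,w_1,w_6\}$ (the edge $w_1w_6$ lies on no other triangle), and creates the new rim triangles $\{w_0,w_1,w_{\ell+2}\},\{w_0,w_{\ell+2},w_{\ell+1}\},\dots,\{w_0,w_7,w_6\}$; all other triangles of $\cp{G_{3,4,4}}$ survive. As in Lemma~\ref{lem:c:thet344}, the triangle $\{w_0,w_1,w_4\}$ sits inside the $K_4$ on $\{w_0,w_1,w_4,z'\}$, hence is not an $i$-set. Thus the $i$-sets of $G_{3,4,\ell}$ are $X=\{w_0,w_1,w_2\}$, $A_1=\{w_0,w_2,u_1\}$, $A_2=\{w_0,u_1,w_3\}$, $Y=\{w_0,w_3,w_4\}$, the ``$D$-triangles'' $D_1=\{w_1,w_2,v\}$, $D_2=\{w_1,w_4,v\}$, $D_3=\{w_3,w_4,v\}$, and the ``$B$-chain'' of rim triangles running from $X$ the long way around the rim through $w_{\ell+2},w_{\ell+1},\dots,w_7,w_6,w_5$ to $Y$.

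Then I would verify the adjacencies, recalling that two $i$-sets are adjacent in $\ig{G_{3,4,\ell}}$ iff the corresponding triangles of $\cp{G_{3,4,\ell}}$ share exactly one edge. The three internally disjoint $X$--$Y$ paths are $X\sim A_1\sim A_2\sim Y$ of length $3$; $X\sim D_1\sim D_2\sim D_3\sim Y$ of length $4$; and the $B$-chain $X\sim\{w_0,w_1,w_{\ell+2}\}\sim\{w_0,w_{\ell+2},w_{\ell+1}\}\sim\cdots\sim\{w_0,w_7,w_6\}\sim\{w_0,w_6,w_5\}\sim\{w_0,w_5,w_4\}\sim Y$, whose length, counting rim edges from $w_1$ around to $w_4$, is exactly $\ell$. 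Equivalently, Lemma~\ref{lem:c:wheel} shows that the rim triangles of the enlarged wheel induce a cycle $C_{\ell+3}$ in $\ig{G_{3,4,\ell}}$ on which $X$ and $Y$ lie at distances $3$ and $\ell$, while the $D$-triangles form an internally disjoint path of length $4$ joining $X$ and $Y$; superimposing these yields $\thet{3,4,\ell}$. I would close by confirming that $\deg(X)=\deg(Y)=3$, every other vertex of $\ig{G_{3,4,\ell}}$ has degree $2$, and no additional edges occur: the only potential stray adjacencies involve a shared edge at $w_1w_4$ or $w_1v$ or $w_4v$, and checking the triangles through those edges shows each is already listed among the $D$-chain or is eliminated by $z'$.

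The main obstacle is precisely this last bookkeeping step — confirming that the local structure around $v$, $w_1$, $w_4$, and $z'$ produces no unexpected triangle and no unintended edge of $\ig{G_{3,4,\ell}}$. This is routine but tedious, and it is robust: any spurious triangle could be removed by applying the Deletion Lemma (Lemma~\ref{lem:i:inducedI}) inside $\cp{G}$, exactly as the paper remarks after Lemma~\ref{lem:c:thet244}. The substantive content is just the single clean fact that subdividing a rim edge of the wheel $k$ times lengthens precisely one of the three theta-paths by $k$, which takes $\thet{3,4,4}$ to $\thet{3,4,\ell}$ with $\ell=4+k\ge 5$.
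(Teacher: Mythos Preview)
Your proposal is correct and follows exactly the template the paper establishes in its detailed proofs of Lemmas~\ref{lem:c:thet1kl}, \ref{lem:c:thet22l}, and \ref{lem:c:thet244}; the paper itself omits the proof of Lemma~\ref{lem:c:thet34k}, deferring to \cite{LauraD}, but your argument---enumerating the rim triangles via Lemma~\ref{lem:c:wheel}, noting that the subdivision of $w_1w_6$ replaces the single triangle $B_1=\{w_0,w_1,w_6\}$ by a chain of $\ell-3$ rim triangles, and checking that the $D$-chain and the eliminated $K_4$ at $z'$ are unaffected---is precisely what that omitted proof would contain.
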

	%>>>E: Lemma {lem:c:thet34k}
	
	%______________________  SUBSUB SECTION:  theta (3,5,5) ______________________
	\subsubsection{Construction of $\cp{G}$ for $\thet{3,5,5}$}  \label{subsubsec:c:355}

	%\nts{SUBDIVISION of 335 twice on B-line}

	%>>> S: Cons {con:c:thet355}
	\begin{cons} \label{con:c:thet355}
		Refer to  Figure \ref{fig:c:thet355}.	
		Begin with a copy of the graph $\cp{G_{3,3,5}}$ from  Construction \ref{con:c:thet335}.
		Subdivide the edge $w_1w_6$ twice, adding the new vertices $w_7$ and  $w_8$.  Join $w_7$ and $w_8$ to $w_0$, so that $w_0,w_1,\dots,w_{8}$ forms a wheel.  Call this graph $\cp{G_{3,5,5}}$.
	\end{cons}
	% <<< E: Cons {con:c:thet355}

	%>>> S: Figure {fig:c:thet355}	
	\begin{figure}[H] \centering	
		\begin{tikzpicture}	[scale=0.8]
			%---------- REF-------------------
			\node (cent) at (0,0) {};
			\path (cent) ++(0:40 mm) node (rcent) {};	
			
			%----Left Wheel
			\node[std,label={0:$w_0$}] (w0) at (cent) {};
			
			\foreach \i in {1,2,3,4,5}
			{
				\node[std,label={150-\i*60: $w_{\i}$}] (w\i) at (150-\i*60:2cm) {}; 
				\draw[thick] (w0)--(w\i);        
			}
			
			\foreach \i in {6,7,8}
			{
				\node[std,label={360-\i*30: $w_{\i}$}] (w\i) at (360-\i*30:2cm) {}; 
				\draw[thick] (w0)--(w\i);   
			}
			
			\draw[thick] (w1)--(w2)--(w3)--(w4)--(w5)--(w6)--(w7)--(w8)--(w1);
			
			%---- V-Path	
			%\foreach \i in {1,2}
			\foreach \i/\j in {1/1.7, 2/-0.5}
			{
				%\node[bblue,label={[blue]0: $v_\i$}] (v\i) at (4,3.5-2*\i) {};
				\path (rcent) ++(90:\j cm) node [bblue,label={[blue]0: $v_\i$}] (v\i) {};
				%\path (v\i) ++(0:4 mm) node (Lv\i) {\color{blue} $v_{\i}$};		
			}	
			
			\draw[bline] (w1)--(v1)--(v2)--(w2)--(v1);
			
			%---- V2 to W4-Path
			\draw[bline] (v2) to[out=230,in=0] (w4);  	
			
			%-- V2 to W5
			%\node[std] (v2w5A) at (-60:3.5cm) {};
			%\node[std] (v2w5B) at  (-60:4cm){};
			\draw[bline] (v2) to [out=260,in=15]  (-60:3.5cm) to [out=195,in=270] (w5);  
			
			%--- V1 to W5		
			%\node[std] (v1w5A) at (-15:5cm){};
			%\node[std] (v1w5B) at  (-60:4cm){};
			\draw[bline] (v1) to [out=-45, in=70]  (-15:5cm) to [out=250,in=15]  (-60:4cm) to[out=195,in=260] (w5); 
			
			%----- Internal Red Labellings	
			\path (w0) ++(60:12 mm) node [color=red] (X) {$X$};			
			\path (w0) ++(60:-12 mm) node [color=red] (Y) {$Y$};
			
			%-- A's
			\path (w0) ++(0:13 mm) node [color=red] (A1) {$A_1$};	
			\path (w0) ++(-60:12 mm) node [color=red] (A2) {$A_2$};
			
			%-- B's	
			\path (w0) ++(105:12 mm) node [color=red] (B1) {$B_1$};	
			\path (w0) ++(135:12 mm) node [color=red] (B2) {$B_2$};
			\path (w0) ++(165:12 mm) node [color=red] (B2) {$B_3$};
			\path (w0) ++(195:12 mm) node [color=red] (B2) {$B_4$};
			
			%-- D's
			\path (w2) ++(100:5 mm) node [color=red] (D1) {$D_1$};	
			\path (w2) ++(-10:15mm) node [color=red] (D2) {$D_2$};	
			\path (v2) ++(280:10 mm) node [color=red] (D3) {$D_3$};
			\path (w4) ++(-25:15 mm) node [color=red] (D4) {$D_4$};	
			
		\end{tikzpicture} 
		\caption{A graph $\cp{G_{3,5,5}}$ from Construction \ref{con:c:thet355} such that $\ig{G_{3,5,5}} = \thet{3,5,5}$.}
		\label{fig:c:thet355}
	\end{figure}		
	% <<< E: Figure {fig:c:thet355}

	%>>>S: Lemma {lem:c:thet355}
	\begin{lemma}\label{lem:c:thet355}
		If $\cp{G_{3,5,5}}$ is the graph constructed by Construction \ref{con:c:thet355}, then $\ig{G_{3,5,5}} = \ag{G_{3,5,5}} =\thet{3,5,5}$.
	\end{lemma}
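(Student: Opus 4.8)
The plan is to follow the same template as the earlier theta-graph lemmas: identify the triangles (equivalently, the order-$3$ maximal cliques) of $\cp{G_{3,5,5}}$, argue that these are exactly the $i$-sets of $G_{3,5,5}$, and then read off the triangle adjacencies to recover $\thet{3,5,5}$.

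First I would record that $\cp{G_{3,5,5}}$ is $K_4$-free: it is obtained from the wheel $W_9 = C_8 \vee K_1$ on $w_0, w_1, \dots, w_8$, and a wheel on at least five vertices contains no $K_4$, so it suffices to note that adjoining $v_1, v_2$ creates no $K_4$ — the only candidate $4$-cliques would be $\{v_1, v_2, w_2, x\}$ or $\{v_1, v_2, w_5, x\}$, but $N(v_1)\cap N(v_2) = \{w_2, w_5\}$ meets neither $N(w_2)$ nor $N(w_5)$. A similarly routine check shows every edge of $\cp{G_{3,5,5}}$ lies on a triangle. By Observation \ref{obs:c:i3} this gives $i(G_{3,5,5}) = 3$, and since $\cp{G_{3,5,5}}$ is $K_4$-free and has triangles, $\alpha(G_{3,5,5}) = \omega(\cp{G_{3,5,5}}) = 3$ as well; hence $G_{3,5,5}$ is well covered, its $i$-sets and $\alpha$-sets coincide, and it suffices to compute $\ig{G_{3,5,5}}$.

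Next I would enumerate the twelve order-$3$ maximal cliques: the eight wheel triangles $\{w_0, w_j, w_{j+1}\}$ around the rim cycle $(w_1, w_2, \dots, w_8, w_1)$, together with $D_1 = \{w_1, w_2, v_1\}$, $D_2 = \{v_1, w_2, v_2\}$, $D_3 = \{v_1, v_2, w_5\}$, and $D_4 = \{v_2, w_4, w_5\}$; these are the vertices of $\ig{G_{3,5,5}}$. Setting $X = \{w_0, w_1, w_2\}$ and $Y = \{w_0, w_4, w_5\}$, Lemma \ref{lem:c:wheel} applied to the rim wheel $W_9$ shows the eight wheel triangles induce a cycle $C_8$ through $X$ and $Y$, which $X$ and $Y$ split into an $X$–$Y$ path of length $3$ through $A_1 = \{w_0, w_2, w_3\}$, $A_2 = \{w_0, w_3, w_4\}$, and an $X$–$Y$ path of length $5$ through $B_1 = \{w_0, w_1, w_8\}, B_2 = \{w_0, w_8, w_7\}, B_3 = \{w_0, w_7, w_6\}, B_4 = \{w_0, w_6, w_5\}$. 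It then remains to check that $D_1, D_2, D_3, D_4$ form an induced $X$–$Y$ path of length $5$: consecutive $D_i$'s share exactly one edge (e.g. $D_1 \cap D_2 = \{v_1, w_2\}$, $D_2 \cap D_3 = \{v_1, v_2\}$, $D_3 \cap D_4 = \{v_2, w_5\}$), $D_1 \sim X$ and $D_4 \sim Y$ via the edges $w_1 w_2$ and $w_4 w_5$, no $D_i$ shares two vertices with any wheel triangle other than $X$ or $Y$ nor with a non-consecutive $D_j$, and $X \not\sim Y$ since $X \cap Y = \{w_0\}$.

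I do not expect a genuine obstacle here — the content is bookkeeping — but the step requiring the most care is the verification in the last paragraph that no unwanted triangles or adjacencies appear, in particular confirming that the subdivision of the rim edge $w_1 w_6$ (which lies on the single triangle $\{w_0, w_1, w_6\}$, since $w_6 \notin N(v_1)\cup N(v_2)$) interacts with neither $v_1$ nor $v_2$, so that its only effect is to stretch the former length-$3$ "$B$-arc" of $\cp{G_{3,3,5}}$ into a length-$5$ path, turning $\thet{3,3,5}$ (Lemma \ref{lem:c:thet335}) into $\thet{3,5,5}$. The equality $\ag{G_{3,5,5}} = \thet{3,5,5}$ is then immediate from the well-coveredness established at the outset.
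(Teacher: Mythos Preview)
Your proposal is correct and follows exactly the template that the paper uses for its detailed proofs (Lemmas \ref{lem:c:thet1kl} and \ref{lem:c:thet22l}); in fact the paper omits the proof of Lemma \ref{lem:c:thet355} entirely, stating that the construction and Figure \ref{fig:c:thet355} suffice, so your write-up is precisely the kind of verification the paper defers to the reader. One small point: your $K_4$-freeness check only explicitly rules out $4$-cliques containing \emph{both} $v_1$ and $v_2$; you should also note (trivially) that neither $N(v_1)\cap\{w_1,\dots,w_8\}=\{w_1,w_2,w_5\}$ nor $N(v_2)\cap\{w_1,\dots,w_8\}=\{w_2,w_4,w_5\}$ contains a triangle, so no $K_4$ contains exactly one of $v_1,v_2$ either.
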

	%>>>E: Lemma {lem:c:thet355}

	Notice that subdividing only once in Construction \ref{con:c:thet355} (adding only $w_7$ and not $w_8$) gives an alternative  (planar) construction for $\thet{3,4,5}$.

	%......................  SUB SECTION:  theta (j,k,l) .........................
	\subsection{$\thet{j,k,\ell}$ for $4 \leq j \leq k \leq \ell$ and $3 \leq j \leq k \leq \ell$, $\ell \geq 6$}  \label{subsec:c:jkl}

	%______________________  SUBSUB SECTION:  theta (4,k,l) ______________________
	\subsubsection{Construction of $\cp{G}$ for $\thet{j,k,\ell}$ for   $4 \leq j \leq k \leq \ell \leq 5$}  \label{subsubsec:c:4kl}

	%>>> S: Cons {con:c:thet444}
	\begin{cons} \label{con:c:thet444}
		Refer to  Figure \ref{fig:c:thet444}.	
		Begin with a copy of the graph $\cp{G_{3,4,4}}$ from  Construction \ref{con:c:thet344}.
		Subdivide the edge $u_1w_3$, adding the new vertex $u_2$.  Join $u_2$ to $w_0$, so that
		\begin{align*}
			w_0,w_1,w_2,u_1,u_2,w_3,\dots,w_{6}
		\end{align*}
		forms a wheel.  Call this graph $\cp{G_{4,4,4}}$.
	\end{cons}
	% <<< E: Cons {con:c:thet444}

	%>>> S: Figure {fig:c:thet444}
	\begin{figure}[H] \centering	
		\begin{tikzpicture}	[scale=0.8]
			
			%---------- REF-------------------
			\node (cent) at (0,0) {};
			\path (cent) ++(0:50mm) node (rcent) {};	
			
			%----Left Wheel
			\node[std] (w0) at (cent) {};
			\path (w0) ++(55:6 mm) node (Lw0) {$w_0$};	
			
			\foreach \i in {1,2,3,4,5,6}
			{
				\node[std,label={150-\i*60: $w_{\i}$}] (w\i) at (150-\i*60:2cm) {}; 
				\draw[thick] (w0)--(w\i);        
			}
			
			\node[std,label={0: $u_1$}] (u1) at (10:2cm) {}; 
			\draw[thick] (w0)--(u1);

			\node[std,label={0: $u_2$}] (u2) at (-10:2cm) {}; 
			\draw[thick] (w0)--(u2);

			\draw[thick] (w1)--(w2)--(u1)--(u2)--(w3)--(w4)--(w5)--(w6)--(w1);
			
			%---- V-Path
			
			\path (rcent) ++(90:0 cm) node [bblue] (v) {};
			\path (v) ++(0:4 mm) node (Lv) {\color{blue} $v$};	
			
			\foreach \i / \j in {2,3}
			\draw[bline] (v)--(w\i);	
			
			%--- W1 to V
			\draw[bline] (w1)   to[out=0,in=100] (v); 	
			%--- W4 to V
			\draw[bline] (w4)   to[out=0,in=-100] (v); 
			%--- W4 to W1
			\draw[bline] (w4) to[out=-15, in=270]  (0:6.5cm)  to[out=90,in=15] (w1);

			%\path (w0) ++(60:-3 cm) node [bred, label={[red]180:$z'$}] (z1) {};
			\path (w0) ++(180:4 cm) node [bzed, label={[zelim]180:$\boldsymbol{z'}$}] (z1) {};
			%	\draw[rline] (z1)--(w0);
			\draw[zline] (w0)   to[out=165,in=15] (z1); 
			\draw[zline] (w1)   to[out=180,in=80] (z1); 		
			\draw[zline] (w4)   to[out=180,in=-80] (z1);

			%----- Internal Red Labellings	
			\path (w0) ++(60:12 mm) node [color=red] (X) {$X$};	
			\path (w0) ++(20:15 mm) node [color=red] (A1) {$A_1$};	
			\path (w0) ++(0:15 mm) node [color=red] (A2) {$A_2$};	
			\path (w0) ++(-20:15 mm) node [color=red] (A3) {$A_3$};	
			\path (w0) ++(-60:12 mm) node [color=red] (Y) {$Y$};	
			
			%-- B's	
			\foreach \i/\c in {1/120,2/180,3/240} {				
				\path (w0) ++(\c:12 mm) node [color=red] (B\i) {$B_{\i}$};	
			}
			
			%-- D's
			\path (w2) ++(05:15 mm) node [color=red] (D1) {$D_1$};	
			\path (v) ++(60:8 mm) node [color=red] (D2) {$D_2$};	
			\path (w3) ++(-05:15 mm) node [color=red] (D3) {$D_3$};
			
		\end{tikzpicture} 
		\caption{The graph \; $\cp{G_{4,4,4}}$ \; from Construction \ref{con:c:thet444} such that $\ig{G_{4,4,4}}$ $=$ $\thet{4,4,4}$.}
		\label{fig:c:thet444}
	\end{figure}		
	% <<< E: Figure {fig:c:thet444}

	%>>>S: Lemma {lem:c:thet444}
	\begin{lemma}\label{lem:c:thet444}
		If $\cp{G_{4,4,4}}$ is the graph constructed by Construction \ref{con:c:thet444}, then $\ig{G_{4,4,4}} = \thet{4,4,4}$.
	\end{lemma}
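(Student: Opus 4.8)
The plan is to follow the complement--triangle template of the proofs of Lemmas~\ref{lem:c:thet22l} and~\ref{lem:c:thet244}. The seed graph $\cp{G_{4,4,4}}$ is obtained from the seed graph $\cp{G_{2,4,4}}$ of $\thet{2,4,4}$ (Lemma~\ref{lem:c:thet244}) by two successive operations of the form ``subdivide a rim edge of the central wheel and join the new vertex to the hub $w_0$'', with $z$ deleted in between (once $w_2w_3$ has been subdivided, $\{v,w_2,w_3\}$ is no longer a triangle, so $z$ is superfluous). Heuristically each such operation lengthens the shortest $X$--$Y$ path by one, passing from $\thet{2,4,4}$ to $\thet{3,4,4}$ (Lemma~\ref{lem:c:thet344}) and then to $\thet{4,4,4}$; so the claim to prove is that all three internally disjoint $X$--$Y$ paths of $\ig{G_{4,4,4}}$ have length~$4$.

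\emph{Vertex set.} First I would determine $V(\ig{G_{4,4,4}})$. Every edge of $\cp{G_{4,4,4}}$ lies on a triangle (rim and hub edges on the wheel triangles; the four edges at $v$ on one of $D_1,D_2,D_3$; the edges at $z'$ on a face of the $K_4$ on $\{w_0,w_1,w_4,z'\}$), and $\cp{G_{4,4,4}}$ has no isolated vertex, so Observation~\ref{obs:c:i3} gives $i(G_{4,4,4})=3$, and the $i$-sets are precisely the maximal $3$-cliques of $\cp{G_{4,4,4}}$. Enumerating triangles: those containing $w_0$ are the eight triangles $\{w_0\}\cup e$ with $e$ an edge of the rim $8$-cycle $w_1w_2u_1u_2w_3w_4w_5w_6w_1$, namely $X,A_1,A_2,A_3,Y,B_3,B_2,B_1$, together with $\{w_0,w_1,w_4\}$ (from the chord $w_1w_4$); those containing $v$ (whose only neighbours are $w_1,w_2,w_3,w_4$, and among those only $w_1w_2,w_1w_4,w_3w_4$ are edges) are $D_1,D_2,D_3$; the remaining triangles are the four $3$-subsets of the $K_4$ on $\{w_0,w_1,w_4,z'\}$. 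The four faces of that $K_4$, in particular $\{w_0,w_1,w_4\}$, are not maximal cliques and hence not $i$-sets---this is exactly the purpose of $z'$, an application of the Deletion Lemma (Lemma~\ref{lem:i:inducedI}) inside $\cp{G}$. Hence, writing $\mathcal{A}=\{A_1,A_2,A_3\}$, $\mathcal{B}=\{B_1,B_2,B_3\}$ and $\mathcal{D}=\{D_1,D_2,D_3\}$, we have $V(\ig{G_{4,4,4}})=\{X,Y\}\cup\mathcal{A}\cup\mathcal{B}\cup\mathcal{D}$.

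\emph{Edges.} Two triangles are adjacent iff they share exactly one edge. The eight $i$-sets containing $w_0$ correspond bijectively to the edges of the rim $8$-cycle, and two of them are adjacent iff the corresponding rim edges are consecutive; hence they induce a $C_8$ in $\ig{G_{4,4,4}}$ (this is Lemma~\ref{lem:c:wheel} applied to $W_9$), which splits at $X$ and $Y$ into the two length-$4$ paths
\begin{gather*}
\edge{X,w_1,u_1,A_1}\adedge{w_2,u_2,A_2}\adedge{u_1,w_3,A_3}\adedge{u_2,w_4,Y},\\
\edge{X,w_2,w_6,B_1}\adedge{w_1,w_5,B_2}\adedge{w_6,w_4,B_3}\adedge{w_5,w_3,Y}.
\end{gather*}
The triangles of $\mathcal{D}$ all contain $v$ and none contains $w_0$; among themselves they induce the path $D_1D_2D_3$, while the only adjacencies between a triangle of $\mathcal{D}$ and a $w_0$-triangle are $\edge{X,w_0,v,D_1}$ and $\edge{D_3,v,w_0,Y}$ (any other would require a $w_0$-triangle equal to the non-$i$-set $\{w_0,w_1,w_4\}$). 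Thus the third $X$--$Y$ path is
\[
\edge{X,w_0,v,D_1}\adedge{w_2,w_4,D_2}\adedge{w_1,w_3,D_3}\adedge{v,w_0,Y},
\]
again of length~$4$ and internally disjoint from the other two, meeting them only in $X$ and $Y$; in particular $\deg(X)=\deg(Y)=3$ in $\ig{G_{4,4,4}}$. Together these give $\ig{G_{4,4,4}}\cong\thet{4,4,4}$.

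The step I expect to be the fussiest is the triangle census. Since $\cp{G_{4,4,4}}$ is non-planar---it is based on Construction~\ref{con:c:thet244} and inherits its $K_{3,3}$ minor---the facial-triangle shortcut available in Lemma~\ref{lem:c:thet1kl} does not apply, so one must check by hand that there are no triangles beyond those listed and that no two of the eleven $i$-sets meet in two vertices except along the three paths above. This is short once one exploits the ``regional'' structure: $\{X,Y\}\cup\mathcal{A}\cup\mathcal{B}$ all contain $w_0$ and realize the rim-edge correspondence, whereas $\mathcal{D}$ all contain $v$ and avoid $w_0$; the only real point of care is confirming that the deletion of $z$ and the presence of $z'$ are both correctly accounted for, exactly as flagged after Construction~\ref{con:c:thet244}.
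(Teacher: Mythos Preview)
Your proof is correct and follows exactly the complement--triangle template that the paper uses for Lemmas~\ref{lem:c:thet22l} and~\ref{lem:c:thet244}; the paper itself omits the proof of Lemma~\ref{lem:c:thet444} (deferring to \cite{LauraD}), so your write-up is precisely the kind of argument that was left to the reader. The triangle census, the identification of the eleven $i$-sets, and the verification that the only $\mathcal{D}$--wheel adjacencies are $D_1\sim X$ and $D_3\sim Y$ (because $D_2$'s only candidate partner $\{w_0,w_1,w_4\}$ is killed by $z'$) are all right.
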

	%>>>E: Lemma {lem:c:thet444}

	%>>> S: Cons {con:c:thetjk5}
	\begin{cons} \label{con:c:thetjk5}
		%Refer to  Figure \ref{fig:c:thetjk5}
		Begin with a copy of the graph $\cp{G_{3,3,5}}$ from  Construction \ref{con:c:thet335}.
		For $k=4$ subdivide the edge $w_1w_6$ once, adding the vertex $w_7$; for $k=5$, subdivide a second time, adding the vertex $w_8$. 
		For $j=4$, subdivide the edge $w_2w_3$, adding the vertex $u_1$; for $j=5$ ($j \leq k$), subdivide a second time, adding the vertex $u_2$.  Connect all new vertices to $w_0$ to form a wheel.
		Call this graph $\cp{G_{j,k,5}}$ for $4 \leq j \leq k \leq 5$.
	\end{cons}
	% <<< E: Cons {con:c:thetjk5}
	
	An example of the construction of $\cp{G_{5,5,5}}$ is given in Figure \ref{fig:c:thet555} below.

%>>> S: Figure {fig:c:thet555}	
\begin{figure}[H] \centering	
	\begin{tikzpicture}	[scale=0.8]
		%---------- REF-------------------
		\node (cent) at (0,0) {};
		\path (cent) ++(0:40 mm) node (rcent) {};	
		
		%----Left Wheel
		\node[std] (w0) at (cent) {};
		\path (w0) ++(60: 8 mm) node (Lw0) {${w_0}$};	
		
		\foreach \i in {1,4,5}
		{
			\node[std,label={150-\i*60: $w_{\i}$}] (w\i) at (150-\i*60:2cm) {}; 
			\draw[thick] (w0)--(w\i);        
		}
	
		\node[std] (w2) at (30:2cm) {}; 
				\path (w2) ++(39:8mm) node (Lw2) {$w_2$};	
		\draw[thick] (w0)--(w2); 
		
		\foreach \i in {6,7,8}
		{
			\node[std,label={360-\i*30: $w_{\i}$}] (w\i) at (360-\i*30:2cm) {}; 
			\draw[thick] (w0)--(w\i);   
		}
		
		\foreach \i in {1,2}
		{
			\node[std,label={0: $u_{\i}$}] (u\i) at (30-\i*30:2cm) {}; 
			\draw[thick] (w0)--(u\i);   
		}		
		
		\node[std] (w3) at (300:2cm) {}; 
				\path (w3) ++(5:6 mm) node (Lw3) {$w_3$};	
		\draw[thick] (w0)--(w3);

		\draw[thick] (w1)--(w2)--(u1)--(u2)--(w3)--(w4)--(w5)--(w6)--(w7)--(w8)--(w1);
		
		%---- V-Path	
		\foreach \i/\j in {1/1.8, 2/-0.5}
		{
			\path (rcent) ++(90:\j cm) node [bblue,label={[blue]0: $v_\i$}] (v\i) {};
		}	
		
		\draw[bline] (w1)--(v1)--(v2)--(w2)--(v1);
		
		%---- V2 to W4-Path
		\draw[bline] (v2) to[out=230,in=0] (w4);  	
		
		%-- V2 to W5	
		\draw[bline] (v2) to [out=260,in=15]  (-60:3.5cm) to [out=195,in=270] (w5);  
		
		%--- V1 to W5		
		\draw[bline] (v1) to [out=-45, in=70]  (-15:5cm) to [out=250,in=15]  (-60:4cm) to[out=195,in=260] (w5); 
		
		%----- Internal Red Labellings	
		\path (w0) ++(60:13 mm) node [color=red] (X) {$X$};			
		\path (w0) ++(60:-12 mm) node [color=red] (Y) {$Y$};
		
		%-- A's
		\path (w0) ++(15:15 mm) node [color=red] (A1) {$A_1$};	
		\path (w0) ++(-15:15 mm) node [color=red] (A2) {$A_2$};
		\path (w0) ++(-45:15 mm) node [color=red] (A1) {$A_3$};	
		\path (w0) ++(-75:15 mm) node [color=red] (A2) {$A_4$};
		
		%-- B's	
		\path (w0) ++(105:15 mm) node [color=red] (B1) {$B_1$};	
		\path (w0) ++(135:15 mm) node [color=red] (B2) {$B_2$};
		\path (w0) ++(165:15 mm) node [color=red] (B2) {$B_3$};
		\path (w0) ++(195:15 mm) node [color=red] (B2) {$B_4$};
		
		%-- D's
		\path (w2) ++(100:5 mm) node [color=red] (D1) {$D_1$};	
		\path (w2) ++(-10:15mm) node [color=red] (D2) {$D_2$};	
		\path (v2) ++(280:10 mm) node [color=red] (D3) {$D_3$};
		\path (w4) ++(-25:15 mm) node [color=red] (D4) {$D_4$};	
		
	\end{tikzpicture} 
	\caption{A graph $\cp{G_{5,5,5}}$ from Construction \ref{con:c:thet355} such that $\ig{G_{5,5,5}} = \thet{5,5,5}$.}
	\label{fig:c:thet555}
\end{figure}		
% <<< E: Figure {fig:c:thet555}

	%>>>S: Lemma {lem:c:thetjk5}
	\begin{lemma}\label{lem:c:thetjk5}
		If $\cp{G_{j,k,5}}$ is the graph constructed by Construction \ref{con:c:thetjk5}, then $\ig{G_{j,k,5}} = \ag{G_{j,k,5}} =\thet{j,k,5}$ for $4 \leq j \leq k \leq 5$.
	\end{lemma}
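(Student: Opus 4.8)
The plan is to bootstrap from Lemma~\ref{lem:c:thet335} using the recurring principle of this section: subdividing a rim edge of a wheel in a complement seed graph and joining the new vertex to the hub merely enlarges the wheel, and correspondingly lengthens by one the associated cycle (here, one of the two branch-to-branch paths) in the $i$-graph. Recall from Construction~\ref{con:c:thet335} and Figure~\ref{fig:c:thet335} that $\cp{G_{3,3,5}}$ consists of the wheel $W_7$ with hub $w_0$ and rim $w_1w_2\cdots w_6w_1$, together with two further vertices $v_1,v_2$, and that by Lemma~\ref{lem:c:thet335} we have $\ig{G_{3,3,5}}=\ag{G_{3,3,5}}=\thet{3,3,5}$ with branch triangles $X=\{w_0,w_1,w_2\}$ and $Y=\{w_0,w_4,w_5\}$, one length-$3$ path through $A_1=\{w_0,w_2,w_3\},\,A_2=\{w_0,w_3,w_4\}$, one length-$3$ path through $B_1=\{w_0,w_1,w_6\},\,B_2=\{w_0,w_5,w_6\}$, and a length-$5$ ``$D$-path'' whose five internal triangles use only vertices of $\{w_0,w_1,w_2,w_4,w_5,v_1,v_2\}$. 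In particular $\cp{G_{3,3,5}}$ is $K_4$-free.

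First I would record the effect of one subdivision step. Let $h$ be the hub of a wheel in a complement seed graph and let $w_aw_b$ be a rim edge whose associated triangle $T=\{h,w_a,w_b\}$ is a maximal clique. Subdividing $w_aw_b$ by a new vertex $w_c$ and adding the edge $w_ch$ deletes $w_aw_b$, so it destroys $T$ and creates exactly the two triangles $\{h,w_a,w_c\}$ and $\{h,w_c,w_b\}$; these are adjacent to each other (shared edge $hw_c$) and inherit, respectively, the two triangle-adjacencies $T$ had along the rim (via $hw_a$ and via $hw_b$). No other triangle of the seed graph is affected; no $K_4$ is created, since $\deg(w_c)=3$ and its rim neighbours $w_a,w_b$ are now nonadjacent; and $w_c$ is nonadjacent to every vertex outside $\{h,w_a,w_b\}$. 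Thus in the $i$-graph the operation replaces the vertex $T$, lying on a cycle, by an induced $P_2$, lengthening that cycle by exactly one and leaving everything else unchanged.

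Now I would apply this to $\cp{G_{3,3,5}}$. The edge $w_2w_3$ is the rim edge of the triangle $A_1$ on the length-$3$ $A$-path, and the edge $w_1w_6$ is the rim edge of the triangle $B_1$ on the length-$3$ $B$-path. Subdividing $w_2w_3$ a total of $j-3\in\{1,2\}$ times (each new vertex joined to $w_0$) replaces the $A$-path by a path of length $j$ between $X$ and $Y$; subdividing $w_1w_6$ a total of $k-3\in\{1,2\}$ times replaces the $B$-path by a path of length $k$. These two families of subdivisions act on disjoint edges, introduce only degree-$3$ vertices nonadjacent to $v_1,v_2$ and to one another's new vertices, and do not touch $w_1w_2$; hence $X$ and $Y$ survive with exactly their three incidences (for $X$: to the first $A$-triangle via $w_0w_2$, to the first $B$-triangle via $w_0w_1$, and to $D_1$ via $w_1w_2$; similarly for $Y$), and the $D$-path, using only $\{w_0,w_1,w_2,w_4,w_5,v_1,v_2\}$, is unchanged. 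Therefore $\ig{G_{j,k,5}}$ is the theta graph with branch vertices $X,Y$ joined by internally disjoint paths of lengths $j$, $k$, and $5$, i.e.\ $\ig{G_{j,k,5}}=\thet{j,k,5}$. Since no edge was added among old vertices and every new vertex has degree $3$ with nonadjacent rim neighbours, $\cp{G_{j,k,5}}$ remains $K_4$-free, so $\alpha(G_{j,k,5})=i(G_{j,k,5})=3$, every $i$-set is an $\alpha$-set, and $\ag{G_{j,k,5}}=\ig{G_{j,k,5}}=\thet{j,k,5}$; this covers the three cases $(j,k)\in\{(4,4),(4,5),(5,5)\}$. (Any stray triangle inadvertently produced could be removed by applying the Deletion Lemma, Lemma~\ref{lem:i:inducedI}, inside $\cp{G_{j,k,5}}$.)

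The step I expect to be the main obstacle is the bookkeeping in the third paragraph: one must check carefully that the two triangles created by each subdivision inherit exactly the old rim-adjacencies and no spurious ones (i.e.\ that the new degree-$3$ vertex shares two vertices with no unforeseen triangle of the seed graph), and that in the cases $j=5$ or $k=5$ the second subdivision---performed on an edge incident with the vertex added by the first---behaves identically. This ``no unwanted triangles, no unwanted edges'' verification is routine here precisely because each added vertex has degree exactly $3$ and the affected edges lie in the wheel portion of $\cp{G_{3,3,5}}$, where all triangles are facial.
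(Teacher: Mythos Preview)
Your argument is correct and follows exactly the approach implicit in the paper's construction: the paper omits the proof entirely, simply referring to the figure, but Construction~\ref{con:c:thetjk5} is precisely the ``subdivide a rim edge and join the new vertex to the hub'' operation you isolate, applied to $\cp{G_{3,3,5}}$ on the edges $w_1w_6$ (for the $B$-path) and $w_2w_3$ (for the $A$-path). Two small slips worth tidying: the $D$-path has \emph{four} internal triangles $D_1,\dots,D_4$, not five, and none of them contain $w_0$ (their vertex set is $\{w_1,w_2,w_4,w_5,v_1,v_2\}$); neither point affects your argument, since all you need is that the $D$-triangles avoid $w_3$ and $w_6$, which they do.
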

	%>>>E: Lemma {lem:c:thetjk5}

	%______________________  SUBSUB SECTION:  theta (j,k,l) ______________________
	\subsubsection{Construction of $\cp{G}$ for $\thet{j,k,\ell}$ for   $3 \leq j \leq k \leq \ell$, $\ell \geq 6$.}  \label{subsubsec:c:jkl}

	%>>> S: Cons {con:c:thetijk}
	\begin{cons} \label{con:c:thetijk}
		%Refer to  Figure \ref{fig:c:thetijk}.	
		Begin with a copy of the graph $\cp{G_{3,3,\ell}}$ from  Construction \ref{con:c:thet33k} for $\thet{3,5,\ell}$ for $\ell \geq 6$.
		For $3 \leq k \leq \ell$, subdivide the edge $w_1w_6$ $k-3$ times, adding the new vertices $w_7, w_8, \dots, w_{k+3}$.  Join each of  $w_7, w_8, \dots, w_{k+3}$  to $w_0$, so that $w_0,w_1,\dots,w_{k+3}$ forms a wheel. 
		Then, for  $3 \leq j \leq k$,  subdivide the edge $w_2w_3$ $j-3$ times, adding the new vertices $u_1, u_2, \dots, u_{j-3}$.  Again, join each  of $u_1, u_2, \dots, u_{j-3}$ to $w_0$ to form a wheel.
		Call this graph $\cp{G_{j,k,\ell}}$ for $3 \leq j \leq k \leq \ell$ and $\ell \geq 6$.
	\end{cons}
	% <<< E: Cons {con:c:thetijk}

%>>> S: Figure {fig:c:thetijk}	
%$\thet{i,k,k}$
\begin{figure}[H] \centering	
	\begin{tikzpicture}	[scale=0.8]
		%---------- REF-------------------
		\node (cent) at (0,0) {};
		\path (cent) ++(0:45 mm) node (rcent) {};	
		
		%----Left Wheel
		\node[std] (w0) at (cent) {};
		\path (w0) ++(55: 7 mm) node (Lw0) {${w_0}$};	
		
		%	\foreach \i / \j in {1/1,  4/{k+1},5/{k+2}}
		\foreach \i / \j / \r in {1/1/2,  {4b}/{3}/7,  {5b}/{6}/11, {6b}/{k+3}/13}
		{
			\node[std,label={150-\r*30: $w_{\j}$}] (w\i) at (150-\r*30:2cm) {}; 
			\draw[thick] (w0)--(w\i);        
		}
		
		\node[std,label={270: $w_{4}$}] (w4) at (260:2cm) {}; 
		\draw[thick] (w0)--(w4);        	
	
		\node[std,label={0: $w_{2}$}] (w2) at (30:2cm) {}; 
		\draw[thick] (w0)--(w2);       	
		
		\node[std,label={210: $w_{5}$}] (w5) at (220:2cm) {}; 
		\draw[thick] (w0)--(w5);

		%-- cycle insert nodes
		\node[std,label={0: $u_1$}] (w3) at (0:2cm) {}; 
		\draw[thick] (w0)--(w3);     
		
		\node[std] (u2) at (-15:2cm) {}; 
			\path (u2) ++(-8: 6 mm) node (Lu2) {${u_2}$};	
		\draw[dashed] (w0)--(u2);        
		
		\node[std] (uj) at (-45:2cm) {};
			\path (uj) ++(-25: 8 mm) node (Luj) {$u_{j-3}$};	
		\draw[dashed] (w0)--(uj);

		\node[std] (q1) at (165:2cm) {}; 
			\path (q1) ++(170: 6 mm) node (Lq1) {${w_7}$};	
		\draw[dashed] (w0)--(q1);        
		
		\node[std] (qk) at (135:2cm) {};
				\path (qk) ++(160: 8 mm) node (Lqk) {$w_{k+2}$};	
		\draw[dashed] (w0)--(qk);

		\draw[thick] (w5)--(w5b)--(q1)--(qk)--(w6b)--(w1);
		
		\draw[thick] (w2)--(w3)--(u2)--(uj)--(w4b)--(w4);

		\draw[thick] (w1)--(w2);
		\draw[thick] (w4)--(w5);
		
		%---- V-Path	
		\foreach \i / \j  / \k in {1/3/1, 2/2/2, 3/-1/{\ell-5}, 4/-2.2/{\ell-4}, 5/-3.4/{\ell-3}}
		{	
			\path (rcent) ++(90:\j cm) node [bblue] (v\i) {};
		}
	
		\path (v1) ++(90:4 mm) node (Lv1) {\color{blue} $v_{1}$};
		\path (v2) ++(200:5 mm) node (Lv2) {\color{blue} $v_{2}$};
		\path (v3) ++(150:6 mm) node (Lv3) {\color{blue} $v_{\ell-5}$};
		\path (v4) ++(0:8 mm) node (Lv4) {\color{blue} $v_{\ell-4}$};
		\path (v5) ++(260:5 mm) node (Lv5) {\color{blue} $v_{\ell-3}$};

		\draw[bline] (v1)--(v2);
		\draw[bline] (v3)--(v4)--(v5);
		
		\draw[bline,dashed] (v2)--(v3);
		
		%---- V3 to V5 
		\path (v4) ++(180:1.3 cm) coordinate  (c1) {};
		\draw[bline] (v3) to[out=190, in=90]  (c1) to [out=270,in=170] (v5);  
		
		%---- V1 to W1 W2
		\draw[bline] (w1)--(v1)--(w2);	
		
		%-- V5 to W4
		\draw[bline] (v5) to[out=180,in=-45] (w4);  
		
		%-- V5 to W5
		\draw[bline] (v5) to[out=200,in=290] (w5);

			%-- V4 to W5
		\draw[bline] (v4) to[out=-35, in=25]  (4.6cm,-4.2cm) to [out=205,in=270] (w5);  	
		%	\node[std] at (4.6cm,-4.2cm) {};

		%-- W1 to V2  
		\draw[bline] (w1) to[out=25, in=180]  (40:5.8cm) to [out=350,in=45]  (v2); 
		%	\node[std] at  (40:5.8cm)  {};
		
		%-- W1 to  V3 
		\draw[bline] (w1) to[out=38, in=180]  (40:6.4cm) to [out=350,in=45]  (v3); 
		%	\node[std] at (35:6.4cm) {};
		
		%-- W1 to V4
		\draw[bline] (w1) to[out=40, in=180]  (40:6.6cm) to [out=350,in=40]  (v4); 		
		%\node[std] at (40:6.6cm)  {};
		
		%-- W1 to Dashed T1 T2
		\path (rcent) ++(90:1 cm) coordinate  (t1) {};
		\draw[bline, dashed] (w1) to[out=26, in=180]   (40:6cm)  to [out=350,in=45]  (t1); 
		
		\path (rcent) ++(90:0 cm) coordinate  (t2) {};
		\draw[bline, dashed] (w1) to[out=31, in=180]  (40:6.2cm) to [out=350,in=45]  (t2); 
		
		%----- Internal Red Labellings	
		\path (w0) ++(60:13 mm) node [color=red] (X) {$X$};			
		\path (w0) ++(60:-14 mm) node [color=red] (Y) {\scalebox{0.9}{$Y$}};
		
		%-- A's
		\path (w0) ++(15:14 mm) node [color=red] (A1)  {\scalebox{0.9}{$A_1$}};	
		%\path (w0) ++(60:-12 mm) node [color=red] (Y) {$Y$};
		\path (w0) ++(-80:15 mm) node [color=red] (A2) {\scalebox{0.9}{$A_{j-1}$}};
		
		%-- B's	
		\path (w0) ++(-75:-14 mm) node [color=red] (B1) {\scalebox{0.9}{$B_1$}};	
		\path (w0) ++(200:13 mm) node [color=red] (B2) {\scalebox{0.9}{$B_{k-1}$}};

		%-- D's
		\path (w2) ++(105:8 mm) node [color=red] (D1) {$D_1$};	
		\path (v1) ++(170:10 mm) node [color=red] (D2) {$D_2$};	
		\path (v4) ++(180:7 mm) node [color=red] (D3) {$D_{\ell-3}$};
		\path (w4) ++(-46:30mm) node [color=red] (D4) {$D_{\ell-2}$};
		\path (w4) ++(-50:15 mm) node [color=red] (D5) {$D_{\ell-1}$};

		%----- DASH SPLITS
		\coordinate  (c1) at ($(u2)!0.33!(uj)$) {};
		\coordinate  (c2) at ($(u2)!0.66!(uj)$) {};
		
		\draw [dashed] (c2)--(w0);
		\draw [dashed] (c1)--(w0);		
		
		%---	
		\coordinate  (d1) at ($(q1)!0.33!(qk)$) {};
		\coordinate  (d2) at ($(q1)!0.66!(qk)$) {};
		
		\draw [dashed] (d2)--(w0);
		\draw [dashed] (d1)--(w0);

	\end{tikzpicture} 
	\caption{The graph $\cp{G_{j,k,\ell}}$ from Construction \ref{con:c:thetijk} such that $\ig{G_{j,k,\ell}} = \thet{j,k,\ell}$ for $3\leq j \leq k \leq \ell$ and $\ell \geq 6$.}
	\label{fig:c:thetijk}
\end{figure}	
% <<< E: Figure {fig:c:thetijk}

	%>>>S: Lemma {lem:c:thetjkl}
	\begin{lemma}\label{lem:c:thetjkl}
		If  $\cp{G_{j,k,\ell}}$ for $3 \leq j \leq k \leq \ell$ and $\ell \geq 6$ is the graph constructed by Construction \ref{con:c:thetijk}, then $\ig{G_{j,k,\ell}} = \ag{G_{j,k,\ell}} = \thet{j,k,\ell}$.
	\end{lemma}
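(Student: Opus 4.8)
The plan is to mimic the structure of the proofs of Lemmas~\ref{lem:c:thet22l} and~\ref{lem:c:thet244}: enumerate the triangles of $\cp{G_{j,k,\ell}}$ (these will be the vertices of $\ig{G_{j,k,\ell}}$), exhibit the three internally disjoint paths between two common endpoints, and verify that no other adjacencies arise. First I would establish the two ``background'' facts that make triangle adjacency compute $\ig{}$. Relative to $\cp{G_{3,3,\ell}}$ of Construction~\ref{con:c:thet33k}, the only new vertices are the subdivision vertices $u_1,\dots,u_{j-3}$ inserted on $w_2w_3$ and $w_7,\dots,w_{k+3}$ inserted on $w_1w_6$, each of degree $3$ whose only neighbour outside its subdivided arc is the hub $w_0$; hence subdividing a rim edge of the central wheel and re-joining the new vertices to $w_0$ creates no $K_4$ and leaves every new edge on exactly one triangle. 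Since $\cp{G_{3,3,\ell}}$ already enjoys these properties, $\cp{G_{j,k,\ell}}$ is $K_4$-free and every edge lies on a triangle, so $\omega(\cp{G_{j,k,\ell}})=3$, whence $i(G_{j,k,\ell})=\alpha(G_{j,k,\ell})=3$ by Observation~\ref{obs:c:i3}, and therefore $\ig{G_{j,k,\ell}}=\ag{G_{j,k,\ell}}$; from here I would argue only about $\ig{G_{j,k,\ell}}$.

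Next I would decompose $\cp{G_{j,k,\ell}}$ into two overlapping pieces. The first is the wheel $W=W_{j+k+1}$ on the hub $w_0$ together with the rim cycle $w_1,w_2,u_1,\dots,u_{j-3},w_3,w_4,w_5,w_6,w_7,\dots,w_{k+3}$, which has exactly $j+k$ rim vertices; by Lemma~\ref{lem:c:wheel} the $j+k$ triangles lying inside $W$ (each of the form $\{w_0\}\cup e$ for a rim edge $e$) induce a cycle $C_{j+k}$ in $\ig{}$. Setting $X=\{w_0,w_1,w_2\}$ and $Y=\{w_0,w_4,w_5\}$ (the triangles on the rim edges $w_1w_2$ and $w_4w_5$), this cycle splits into an $X$--$Y$ path of length $j$ through the triangles $A_1,\dots,A_{j-1}$ built from the arc $w_2,u_1,\dots,u_{j-3},w_3$, and an $X$--$Y$ path of length $k$ through $B_1,\dots,B_{k-1}$ built from the arc $w_1,w_{k+3},\dots,w_7,w_6$. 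The second piece is the tail $v_1,\dots,v_{\ell-3}$ together with the edges $w_1v_1,\dots,w_1v_{\ell-4}$, $w_2v_1$, $w_4v_{\ell-3}$, $w_5v_{\ell-4}$, $w_5v_{\ell-3}$ and $v_{\ell-5}v_{\ell-3}$, inherited verbatim from Construction~\ref{con:c:thet33k}; since no $v_i$ is adjacent to any subdivision vertex, none of these edges or the triangles they span is disturbed by the subdivisions, so exactly as for $\thet{3,3,\ell}$ we obtain an $X$--$Y$ path of length $\ell$ through $D_1=\{w_1,w_2,v_1\}$, $D_i=\{w_1,v_{i-1},v_i\}$ for $2\le i\le \ell-4$, $D_{\ell-3}=\{v_{\ell-5},v_{\ell-4},v_{\ell-3}\}$, $D_{\ell-2}=\{w_5,v_{\ell-4},v_{\ell-3}\}$, and $D_{\ell-1}=\{w_4,w_5,v_{\ell-3}\}$. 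The required slides along these three paths are the analogues of items (i)--(iii) in the proofs of Lemmas~\ref{lem:c:thet22l} and~\ref{lem:c:thet244}.

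It remains to rule out additional edges and to confirm that the triangle list is complete. For the former, a vertex-set comparison suffices: every $A$-triangle is contained in $\{w_0,w_2,u_1,\dots,u_{j-3},w_3,w_4\}$, every $B$-triangle in $\{w_0,w_1,w_6,w_7,\dots,w_{k+3},w_5\}$, and every $D$-triangle in $\{w_1,w_2,w_4,w_5,v_1,\dots,v_{\ell-3}\}$, so two triangles drawn from different families meet in at most one vertex unless one of them is $X$ or $Y$; hence no adjacency occurs between distinct families away from the common endpoints, and the three paths are internally disjoint. Counting gives $(j+k)+(\ell-1)=j+k+\ell-1$ vertices and $j+k+\ell$ edges in $\ig{G_{j,k,\ell}}$, matching $\thet{j,k,\ell}$, and the adjacency picture just assembled is exactly that of $\thet{j,k,\ell}$, so $\ig{G_{j,k,\ell}}=\ag{G_{j,k,\ell}}=\thet{j,k,\ell}$. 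I expect the main obstacle to be purely organizational: correctly handling the degenerate parameter values ($j=3$ or $k=3$, where an arc is not subdivided, and $\ell=6$, where $v_1=v_{\ell-5}$ carries two roles) and verifying that the triangle list above is exhaustive — a finite, if tedious, check that the only $3$-vertex maximal cliques are those named. Should an unexpected triangle of $\cp{G_{j,k,\ell}}$ appear, it could be excised via the Deletion Lemma (Lemma~\ref{lem:i:inducedI}) applied inside $\cp{G_{j,k,\ell}}$ as in the remark following Construction~\ref{con:c:thet244}, although that would affect only the $\ig{}$ statement and not the $\ag{}$ one.
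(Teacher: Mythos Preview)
Your proposal is correct and follows exactly the template the paper uses in its earlier, fully written proofs (Lemmas~\ref{lem:c:thet1kl}, \ref{lem:c:thet22l}, \ref{lem:c:thet244}); the paper itself omits the proof of Lemma~\ref{lem:c:thetjkl}, referring the reader to \cite{LauraD}, but the figure and the remark preceding it make clear that precisely your decomposition into the wheel piece (handled by Lemma~\ref{lem:c:wheel}) and the $v$-tail piece inherited from Construction~\ref{con:c:thet33k} is intended. Your vertex-set separation argument for ruling out cross-family adjacencies, and your caveat that the Deletion Lemma fallback would rescue only the $\ig{}$ statement and not the $\ag{}$ one, are both apt.
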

	%>>>E: Lemma {lem:c:thetjkl}

	%\nts{Note here that having shown all of the above lemmas, then we have proven one direction of THeorem 8 (if it is one of these graphs, then it is an i-graph)}

	The lemmas above imply the sufficiency of Theorem \ref{thm:c:thetas}: if a theta graph is not one of seven exceptions listed, then it is an $i$-graph.  In the next section, we complete the proof by examining the exception cases.

	%From the Lemmas of Section \ref{sec:c:thetaComp}, we have shown one direction of the proof of Theorem \ref{thm:c:thetas}; that is, if a theta graph is one of those constructed in Section \ref{sec:c:thetaComp}, then it is and $i$-graph.  In the next section, we complete the proof by examining the exception case: the seven theta graphs that are not $i$-graphs.
	
	%@@@@@@@@@@@@@@@@@@@@@@@@@@@@@@@@@@@@@@ SPLIT TO NON IGRAPHS @@@@@@@@@@@@@@@@@@@@@@@

	%---------------------------------------SECTION: Graph Complements ----------------
	\section{Theta Graphs that are not $i$-Graphs} \label{sec:c:nonthetas}

	In this section we show that $\thet{2,2,4}$, $\thet{2,3,3}$, $\thet{2,3,4}$, and $\thet{3,3,3}$ are not $i$-graphs; together with Proposition \ref{prop:i:diamond}, this completes the proof of Theorem \ref{thm:c:thetas}.

	%______________________  SUBSUB SECTION:  theta (2,2,4) ______________________
	\subsection{$\thet{2,2,4}$ is not an $i$-Graph}  \label{subsubsec:c:224}
	
	%>>>S: Prop {prop:c:thet224}
	\begin{prop}	\label{prop:c:thet224}
		The graph $\thet{2,2,4}$ is not $i$-graph realizable.
	\end{prop}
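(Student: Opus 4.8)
The plan is a proof by contradiction. Suppose $G$ is a graph with $H:=\ig{G}\cong\thet{2,2,4}$, and fix an isomorphism; write $X,Y$ for the two vertices of degree $3$, $A,B$ for the interior vertices of the two paths of length $2$, and $X,D_1,D_2,D_3,Y$ (in this order) for the path of length $4$. Then in $H$ the adjacencies are exactly $N(X)=\{A,B,D_1\}$, $N(Y)=\{A,B,D_3\}$, $N(A)=N(B)=\{X,Y\}$, $N(D_1)=\{X,D_2\}$, $N(D_2)=\{D_1,D_3\}$, $N(D_3)=\{D_2,Y\}$. The vertices $X,A,Y,B$ induce a $C_4$ of $H$ with $XY,AB\notin E(H)$, so Proposition~\ref{prop:i:C4struct} applies: after relabelling we may take $X=\{x_1,x_2\}\cup V$, $A=\{y_1,x_2\}\cup V$, $B=\{x_1,y_2\}\cup V$, $Y=\{y_1,y_2\}\cup V$ as in Figure~\ref{fig:i:C4struct}, where $V=\{v_3,\dots,v_k\}$ with $k=i(G)$, the vertices $x_1,x_2,y_1,y_2,v_3,\dots,v_k$ are pairwise distinct, and $x_1y_1,x_2y_2\in E(G)$ are the two edges of $G$ accounting for the four edges of the $C_4$.

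Next I would pin down $D_1$ and $D_3$. The set $\{X,A,B,D_1\}$ induces a $K_{1,3}$ in $H$ with centre $X$ of degree $3$, so Lemma~\ref{lem:i:K1m}(i) shows $X-D_1$ is a singleton distinct from $X-A=\{x_1\}$ and $X-B=\{x_2\}$; hence $X-D_1=\{v_s\}$ for some $s$, i.e.\ $D_1=\{x_1,x_2,d\}\cup(V-v_s)$ with $v_sd\in E(G)$ and $d\notin X$. Lemma~\ref{lem:i:K1m}(ii) gives $|A\cap D_1|=|B\cap D_1|=k-2$, which forces $d\notin\{y_1,y_2\}$; so $d$ is distinct from all the named vertices. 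Symmetrically, $D_3=\{y_1,y_2,e\}\cup(V-v_t)$ with $v_te\in E(G)$, $e\notin Y$ and $e\notin\{x_1,x_2\}$. Now $d_H(D_1,D_3)=2$ in $\thet{2,2,4}$, so Observation~\ref{obs:i:d2} gives $|D_1\cap D_3|=k-2$. Counting the intersection: $x_1,x_2$ lie outside $D_3$, $d$ lies in $D_3$ iff $d=e$, and the number of common $v_i$ is $k-3$ if $s=t$ and $k-4$ if $s\ne t$; matching with $k-2$ rules out $s\ne t$ (which gives at most $k-3$) and then forces $d=e$. Writing $W:=V-v_s$, we conclude $D_1=\{x_1,x_2,d\}\cup W$ and $D_3=\{y_1,y_2,d\}\cup W$.

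Finally I would examine the vertex $D_2$, which is a common neighbour of $D_1$ and $D_3$ in $H$. Write $D_2=(D_1-p)\cup\{q\}$ with $p\in D_1$, $q\notin D_1$ and $pq\in E(G)$. If $p\notin\{x_1,x_2\}$, then $\{x_1,x_2\}\subseteq D_2$, and since $x_1,x_2\notin D_3$ this gives $|D_2\cap D_3|\le k-2$, contradicting $D_2\sim D_3$; hence $p\in\{x_1,x_2\}$. Using the symmetry $x_1\leftrightarrow x_2$, $y_1\leftrightarrow y_2$, $A\leftrightarrow B$ (which fixes $X,Y,D_1,D_3$ and preserves the whole configuration) we may assume $p=x_1$, so $D_2=\{x_2,d,q\}\cup W$; comparing with $D_3$ forces $q\in\{y_1,y_2\}$. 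If $q=y_2$, then $D_2=\{x_2,y_2,d\}\cup W$ contains the $G$-edge $x_2y_2$ and is not independent, which is impossible since $D_2$ is an $i$-set of $G$. If $q=y_1$, then $D_2=\{x_2,y_1,d\}\cup W$ differs from $A$ only in $D_2-A=\{d\}$ and $A-D_2=\{v_s\}$, and $dv_s\in E(G)$, so $D_2\sim A$ in $H$; but $D_2\ne A$ (since $d\notin A$) and $D_2\notin\{X,Y\}$ (since $y_1\in D_2\setminus X$ and $x_2\in D_2\setminus Y$), contradicting $N(A)=\{X,Y\}$. Every case is contradictory, so no such $G$ exists.

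I expect the main obstacle to be this last case analysis: determining exactly which $i$-sets can serve as $D_2$, and in particular noticing that the surviving subcase $q=y_1$ cannot be closed off by a non-independence argument (because $x_2y_1$ need not be a $G$-edge) and must instead be ruled out through the forbidden adjacency $D_2\sim A$ in $H$. The earlier steps are routine once Proposition~\ref{prop:i:C4struct}, Lemma~\ref{lem:i:K1m} and Observation~\ref{obs:i:d2} are in hand, though some care is needed to keep the labelling consistent when invoking the $X\leftrightarrow Y$ and $A\leftrightarrow B$ symmetries.
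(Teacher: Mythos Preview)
Your proof is correct and follows essentially the same line as the paper's: fix the $C_4$ structure $X,A,B,Y$ via Proposition~\ref{prop:i:C4struct}, pin down the first vertex on the long path using the $K_{1,3}$ at $X$, and then show that no admissible middle vertex $D_2$ exists because every candidate is either non-independent or forces an illegal adjacency with $A$ (or $B$).

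The one structural difference is that you also determine $D_3$ explicitly (via the $K_{1,3}$ at $Y$) and use $d_H(D_1,D_3)=2$ to deduce $s=t$ and $d=e$ before analysing $D_2$, whereas the paper never names $C_3$ and instead constrains $C_2$ directly through $d_H(C_2,Y)=2$. Your route is a little more symmetric and makes the bookkeeping in the final case split cleaner; the paper's is slightly shorter since it needs one fewer set to compute. Both arrive at the same contradiction $A\sim D_2$ (or $B\sim D_2$), so the difference is cosmetic rather than substantive.
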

	
	%~~~~~~~~~~~~FIG START~~~~~~~~~~~
	\begin{figure}[H]
		\centering
		\begin{tikzpicture}%[line width=0.3mm,scale=.8]			
			
			%---------- REF-------------------
			\node(cent) at (0,0) {};			
			\path (cent) ++(0:60 mm) node(rcent)  {};		
			
			%----- left C4 ----------	
			\foreach \i/\j/\c in {x/X/90,y/Y/-90} {				
				\path (cent) ++(\c:20 mm) node [std] (v\i) {};
				%\path (v\i) ++(\c:5 mm) node (Lv\i) {${\j}$};
			}
			
			\path (vx) ++(90:5 mm) node (Lvx) {$\color{blue}X=\{x_1,x_2,x_3,\dots, x_k\}$};
			\path (vy) ++(-90:5 mm) node (Lvy) {$\color{blue}Y=\{y_1,y_2,x_3,\dots, x_k\}$};
			
			\foreach \i/\j/\c in {a/A/180,b/B/0} {				
				\path (cent) ++(\c:8 mm) node [std] (v\i) {};
				%\path (v\i) ++(\c:5 mm) node (Lv\i) {${\j}$};
			}

			%\path (va) ++(180:25 mm) node (Lva) {$A=\{y_1,x_2,x_3,\dots, x_k\}$};
			\path (va) ++(175:20 mm) node (Lvau) {$\color{blue}A=$};
			\path (va) ++(185:20 mm) node (Lvad) {$\color{blue}\{y_1,x_2,x_3,\dots, x_k\}$};
			\path (vb) ++(0:25 mm) node (Lvb) {\color{blue}$B=\{x_1,y_2,x_3,\dots, x_k\}$};

			\draw[thick] (vx)--(va)--(vy)--(vb)--(vx)--cycle;				
			
			%----- right P4 ----------
			\foreach \i/\c in {1/15,2/0,3/-15} {				
				\path (rcent) ++(90:\c mm) node [std] (c\i) {};
				%\path (c\i) ++(0:5 mm) node (Lc\i) {$C_{\i}$};
			}
			
			\draw[thick] (vx)--(c1)--(c2)--(c3)--(vy);
			
			%\path (cent) ++(-90: 20mm) node (HL)  {$\mathcal{H}$};
			
			\path (c1) ++(0:25 mm) node (Lvc1) {\color{blue}$C_1=\{x_1,x_2,y_3,\dots, x_k\}$};
			%\path (c2) ++(0:25 mm) node (Lvc2) {\color{red}$C_2=\{x_1,x_2,y_3,\dots, x_k\}$};
			%\path (c3) ++(0:25 mm) node (Lvc3) {\color{red}$C_3=\{x_1,x_2,y_3,\dots, x_k\}$};
			\path (c2) ++(0:8 mm) node (Lvc2) {\color{red}$C_2=$};
			\path (Lvc2) ++(0:20 mm) node (Lvc2C) {\color{red} OR };
			\path (Lvc2C) ++(90:5 mm) node (Lvc2A) {\color{red} $\{y_1,x_2,y_3,\dots, x_k\}$};
			\path (Lvc2C) ++(-90:5 mm) node (Lvc2B) {\color{red} $\{x_1,y_2,y_3,\dots, x_k\}$};
			
			\path (c3) ++(0:5 mm) node (Lvc3) {\color{red}$C_3$};

		\end{tikzpicture}				
		\caption{$H=\thet{2,2,4}$ non-construction.}
		\label{fig:c:thet224}%
	\end{figure}

	\begin{proof}
		Suppose to the contrary that $\thet{2,2,4}$ is realizable as an $i$-graph, and  that $H = \thet{2,2,4} \cong \ig{G}$ for some graph $G$.  Label the vertices of $H$ as in Figure \ref{fig:c:thet224}.
		
		From Proposition \ref{fig:i:C4struct}, the composition of the following $i$-sets of $G$ are immediate: 
		
		\begin{align*}
		X&=\{x_1,x_2,x_3,\dots,x_k\},& Y=\{y_1,y_2,x_3,\dots,x_k\}, \\
		A&=\{y_1,x_2,x_3,\dots,x_k\},& B=\{x_1,y_2,x_3,\dots,x_k\}, \\
		C_1&=\{x_1,x_2,y_3,\dots,x_k\} 
		\end{align*}
		
		\noindent
		where $k \geq 3$ and $y_1, y_2, y_3$ are three distinct vertices in $G-X$.  These sets are illustrated in blue in Figure \ref{fig:c:thet224}.  This leaves only the composition of $C_2$ and $C_3$ (in red) to be determined.  
		As we construct $C_2$, notice first that $y_3\in C_2$; otherwise, if say some other $z \in C_2$ so that $\edge{C_1,{y_3},z,C_2}$, then $\edge{X,x_3,z,C_2}$, and $XC_2 \in E(H)$.  Thus, a token on one of $\{x_1,x_2,x_4,\dots,x_k\}$ moves in the transition from $C_1$ to $C_2$.  We consider three cases.
		
		\textbf{Case 1:}  The token on $x_1$ moves.  If $\edge{C_1,{x_1},z,C_2}$ for some $z \notin \{y_1,y_2\}$, then $|C_2 \cap Y |=3$, contradicting the distance requirement between $i$-sets from Observation \ref{obs:i:dist}.    Moreover, from the composition of  $B$, $x_1 \not\sim y_2$, and so  $\edge{C_1,{x_1},y_1,C_2}$, so that $C_2 = \{y_1,x_2,y_3,\dots, x_k\}$.  However, since $x_3 \sim y_3$, we have that $\edge{A,x_3,y_3,C_2}$, so that $AC_2 \in E(G)$, a contradiction.
		
		%Again from the distance requirement, in the two-step transition from $C_2$ to $Y$, tokens are removed from $x_2$ and $y_3$, and tokens are moved to $y_2$ and $x_3$.  If $\edge{C_2,x_2,y_2,C_3}$ so that $C_3=\{y_1,y_2,y_3,\dots, x_k\}$,
		
		\textbf{Case 2:}  The token on $x_2$ moves.  An argument similar to Case 1 constructs $C_2 = \{x_1,y_2,y_3,\dots,x_k\}$, with $\edge{B,x_3,y_3,C_2}$, resulting in the contradiction $BC_2\in E(G)$.

		\textbf{Case 3:}  The token on $x_i$ for some $i\in \{4,5,\dots,k\}$ moves.	From the compositions of $X$ and $Y$, $x_i$ is not adjacent to any of $\{x_3,y_1,y_2\}$, so the token at $x_i$ moves to some other vertex, say $z$, so that $\edge{C_1,x_i,z,C_2}$ and $\{x_1,x_2,y_3,z\} \subseteq C_2$.  This again contradicts the distance requirement of Observation \ref{obs:i:dist} as $|C_2 \cap Y| = 4$.

		\noindent In all cases, we fail to construct a graph $G$ with $\ig{G} \cong \thet{2,2,4}$ and so conclude that no such graph exists.		
	\end{proof}
	
	% <<< E: Prop {prop:c:thet224}

	%______________________  SUBSUB SECTION:  theta (2,3,3) ______________________
	\subsection{ $\thet{2,3,3}$ is not an $i$-Graph} \label{subsubsec:c:233}

	%>>>S: Prop {prop:c:thet233}
	\begin{prop}	\label{prop:c:thet233}
		The graph $\thet{2,3,3}$ is not $i$-graph realizable.
	\end{prop}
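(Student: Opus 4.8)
The plan is to argue by contradiction: suppose $H=\ig{G}\cong\thet{2,3,3}$ for some graph $G$, and pin down the $i$-sets of $G$ so tightly that the last vertex of $H$ cannot be filled in, exactly in the spirit of the $\thet{2,2,4}$ argument. Label the two degree-$3$ vertices of $\thet{2,3,3}$ as $X,Y$, let $P$ be the internal vertex of the length-$2$ path, and write the two length-$3$ paths as $X\sim A_1\sim A_2\sim Y$ and $X\sim B_1\sim B_2\sim Y$; recall that $A_2\not\sim P$ and $A_2\not\sim X$ in $H$. Since $\{X\}\cup N_H(X)$ induces a $K_{1,3}$ (and likewise at $Y$), Lemma~\ref{lem:i:K1m} gives $i(G)=k\geq 3$; write $X=\{x_1,\dots,x_k\}$. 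As $d_H(X,Y)=2$, Observation~\ref{obs:i:d2} gives $|X-Y|=2$, so after relabelling $Y=\{y_1,y_2,x_3,\dots,x_k\}$.

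First I would determine $P$. Writing $\edge{X,x,p,P}$ and $\edge{P,p',y,Y}$, Lemma~\ref{lem:i:claw} applied along $X\sim P\sim Y$ forces $p\neq p'$ (since $XY\notin E(H)$), so the vertex added to form $P$ persists into $Y$; hence $p\in Y-X=\{y_1,y_2\}$, WLOG $p=y_1$. Then $|P\cap Y|=k-1$ forces the deleted vertex $x\in\{x_1,x_2\}$, WLOG $x=x_1$, so $P=\{y_1,x_2,x_3,\dots,x_k\}$ with $\edge{X,x_1,y_1,P}$ and $\edge{P,x_2,y_2,Y}$; in particular $x_1y_1,x_2y_2\in E(G)$. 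Next I would determine $A_1$ and $B_1$, the other two neighbours of $X$. By Lemma~\ref{lem:i:K1m} the sets $X-P,X-A_1,X-B_1$ are pairwise distinct singletons and $|P\cap A_1|=|P\cap B_1|=|A_1\cap B_1|=k-2$. Writing each of $A_1,B_1$ as $(X-x_j)\cup\{n\}$ with $j\neq 1$, the condition $|P\cap\cdot|=k-2$ forces $n\neq y_1$, and then $d_H(\cdot,Y)=2$ (hence $|{\cdot}-Y|=2$) forces either $j=2$ and $n\notin X\cup\{y_1,y_2\}$ (\emph{type I}), or $j\geq 3$ and $n=y_2$ (\emph{type II}). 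Two type-II neighbours of $X$ would intersect in $k-1$ vertices, and two type-I neighbours would give $X-A_1=X-B_1$; so exactly one of $A_1,B_1$ is type I and the other type II. Swapping the two length-$3$ paths and relabelling $x_3,\dots,x_k$ if necessary, $A_1=\{x_1,a_1,x_3,\dots,x_k\}$ with $\edge{X,x_2,a_1,A_1}$ (so $x_2a_1\in E(G)$) and $B_1=(X-x_3)\cup\{y_2\}$ with $\edge{X,x_3,y_2,B_1}$ (so $x_3y_2\in E(G)$).

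The contradiction should then come from placing $A_2$. Since $A_2$ is a common neighbour of $A_1$ and $Y$ and $A_1\cap Y=\{x_3,\dots,x_k\}$, the slide $A_1\to A_2$ must delete a vertex of $A_1-Y=\{x_1,a_1\}$ and add one of $Y-A_1=\{y_1,y_2\}$; so $A_2=(A_1-a)\cup\{b\}$ with $a\in\{x_1,a_1\}$ and $b\in\{y_1,y_2\}$. If $b=y_2$, then $x_3\in A_2$ (as $a\neq x_3$), so $\{x_3,y_2\}\subseteq A_2$, contradicting independence of $A_2$ because $x_3y_2\in E(G)$. If $b=y_1$ and $a=a_1$, then $\{x_1,y_1\}\subseteq A_2$, again contradicting independence because $x_1y_1\in E(G)$. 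Finally if $b=y_1$ and $a=x_1$, then $A_2=\{y_1,a_1,x_3,\dots,x_k\}$ differs from $P$ in exactly the pair $\{a_1,x_2\}$, and since $a_1x_2\in E(G)$ this forces $A_2\sim P$ in $H$, contradicting $A_2\not\sim P$ in $\thet{2,3,3}$. Every case is impossible, so no such $G$ exists.

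I expect the bulk of the work to be the structural pinning-down in the middle step — in particular extracting the type-I / type-II dichotomy for the two non-$P$ neighbours of $X$ from Lemma~\ref{lem:i:K1m} together with the distance observations, and keeping the several WLOG relabellings consistent. Once the compositions of $P,A_1,B_1$ and the forced edges $x_1y_1,x_2y_2,x_2a_1,x_3y_2$ of $G$ are in hand, eliminating $A_2$ is a short three-way case check.
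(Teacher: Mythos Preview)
Your proof is correct, but it takes a different route from the paper's and contains an unnoticed shortcut.

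\textbf{Comparison with the paper.} The paper does not fix $Y$ first. It labels the three neighbours of $X$ as $A,B_1,C_1$ (the single vertex on the length-$2$ path and the first vertices on the two length-$3$ paths), writes $A=\{y_1,x_2,\dots\}$, $B_1=\{x_1,y_2,x_3,\dots\}$, $C_1=\{x_1,x_2,y_3,\dots\}$ directly from Lemma~\ref{lem:i:K1m}, and then pushes forward along both length-$3$ paths to obtain two incompatible descriptions of $Y$. Your approach instead uses $d_H(X,Y)=2$ and Observation~\ref{obs:i:d2} to nail down $Y$ immediately, then $P$, and only afterwards tries to fit $A_1,B_1$ around this data; the contradiction is then localized at a single vertex $A_2$. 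What your route buys is that all the slides and forced edges in $G$ ($x_1y_1$, $x_2y_2$, $x_2a_1$, $x_3y_2$) are explicit and the final three-case check for $A_2$ is clean; what the paper's route buys is symmetry (the two length-$3$ paths are treated identically) and no need for a type dichotomy.

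\textbf{The shortcut you missed.} Once you have established $\edge{P,x_2,y_2,Y}$, you know $x_2y_2\in E(G)$. But your ``type~II'' set $B_1=(X-x_3)\cup\{y_2\}=\{x_1,x_2,y_2,x_4,\dots,x_k\}$ contains both $x_2$ and $y_2$ and is supposed to be an $i$-set, hence independent --- an immediate contradiction. Combined with your observation that two type-I neighbours would give $X-A_1=X-B_1$, this already finishes the proof at the dichotomy step: neither type-II is possible nor can both be type~I, so no such $G$ exists. Your entire $A_2$ case analysis, while correct, is therefore redundant. (Logically nothing is wrong --- you are inside a proof by contradiction and eventually do reach one --- but the claim ``exactly one of $A_1,B_1$ is type~I and the other type~II'' is in fact already contradictory, which you did not notice.)
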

	
	%~~~~~~~~~~~~FIG START~~~~~~~~~~~
	\begin{figure}[H]
		\centering
		\begin{tikzpicture}%[line width=0.3mm,scale=.8]			
			
			%---------- REF-------------------
			\node(cent) at (0,0) {};			
			\path (cent) ++(0:60 mm) node(rcent)  {};		
			
			%----- left C4 ----------	
			\foreach \i/\j/\c in {x/X/90,y/Y/-90} {				
				\path (cent) ++(\c:20 mm) node [std] (v\i) {};
				%\path (v\i) ++(\c:5 mm) node (Lv\i) {${\j}$};
			}
			
			\path (vx) ++(90:5 mm) node (Lvx) {$\color{blue}X=\{x_1,x_2,x_3,\dots, x_k\}$};
			\path (vy) ++(-90:5 mm) node (Lvy) {$\color{red}Y$};
			%\path (vy) ++(-90:5 mm) node (Lvy) {$\color{blue}Y=\{y_1,y_2,x_3,\dots, x_k\}$};
			
			\path (cent) ++(180:8 mm) node [std] (va) {};	
			
			%\path (va) ++(180:25 mm) node (Lva) {$A=\{y_1,x_2,x_3,\dots, x_k\}$};
			\path (va) ++(175:20 mm) node (Lvau) {$\color{blue}A=$};
			\path (va) ++(185:20 mm) node (Lvad) {$\color{blue}\{y_1,x_2,x_3,\dots, x_k\}$};
			
			\draw[thick] (vx)--(va)--(vy);

			%------------ B Pathway
			
			\path (cent) ++(0:8 mm) node (vb) {};
			
			\path (vb) ++(90:8 mm) node [std] (vb1) {};
			\path (vb1) ++(0:25 mm) node (Lvb) {\color{blue}$B_1=\{x_1,y_2,x_3,\dots, x_k\}$};
			
			\path (vb) ++(90:-8 mm) node [std] (vb2) {};
			\path (vb2) ++(0:5 mm) node (Lvb) {\color{red}$B_2$};	
			%  \path (vb2) ++(0:25 mm) node (Lvb) {\color{blue}$B_2=\{x_1,y_2,x_3,\dots, x_k\}$};	
			
			\draw[thick](vx)--(vb1)--(vb2)--(vy);

			%----- right P4 ----------
			\foreach \i/\c in {1/14,2/-14} {				
				\path (rcent) ++(90:\c mm) node [std] (c\i) {};
				%\path (c\i) ++(0:5 mm) node (Lc\i) {$C_{\i}$};
			}
			
			\draw[thick] (vx)--(c1)--(c2)--(vy);
			
			%\path (cent) ++(-90: 20mm) node (HL)  {$\mathcal{H}$};
			
			\path (c1) ++(0:25 mm) node (Lvc1) {\color{blue}$C_1=\{x_1,x_2,y_3,\dots, x_k\}$};
			%\path (c2) ++(0:25 mm) node (Lvc2) {\color{red}$C_2=\{x_1,x_2,y_3,\dots, x_k\}$};
			%\path (c3) ++(0:25 mm) node (Lvc3) {\color{red}$C_3=\{x_1,x_2,y_3,\dots, x_k\}$};
			\path (c2) ++(0:5 mm) node (Lvc2) {\color{red}$C_2$};
			%	\path (Lvc2) ++(0:20 mm) node (Lvc2C) {\color{red} OR };
			%	\path (Lvc2C) ++(90:5 mm) node (Lvc2A) {\color{red} $\{y_1,x_2,y_3,\dots, x_k\}$};
			%	\path (Lvc2C) ++(-90:5 mm) node (Lvc2B) {\color{red} $\{x_1,y_2,y_3,\dots, x_k\}$};
			
			%	\path (c3) ++(0:5 mm) node (Lvc3) {\color{red}$C_3$};

		\end{tikzpicture}				
		\caption{$H=\thet{2,3,3}$ non-construction.}
		\label{fig:c:thet233}%
	\end{figure}

	\begin{proof}
		To begin, we proceed similarly to the proof of Proposition \ref{prop:c:thet224}: suppose to the contrary that $\thet{2,3,3}$ is realizable as an $i$-graph, and  that $H = \thet{2,3,3} \cong \ig{G}$ for some graph $G$.  Label the vertices of $H$ as in Figure \ref{fig:c:thet233}.  As before, the corresponding $i$-sets in blue are established from previous results, and those in red are yet to be determined.
		Moreover, from the composition of these four blue $i$-sets, we observe that for each $i\in \{1,2,3\}$, $x_i \sim y_j$ if and only if $i=j$.

		Unlike the construction for $\thet{2,2,4}$, we no longer start with knowledge of the exact composition of $Y$.  	
		We proceed with a series of observations on the contents of the various $i$-sets:
		
		\begin{enumerate}[itemsep=1pt, label=(\roman*)]
			\item $y_1 \in Y$, $y_2\in B_2$, and $y_3 \in C_2$  by three applications of  Proposition \ref{prop:i:C4struct}.  \label{prop:c:thet233:a}
			
			\item $y_1 \not\in B_2$ and $y_1 \not\in C_2$.  If $y_1 \in B_2$, then $\edge{B_1,x_1,y_1,B_2}$ (because $A$ shows that $y_1$ is not adjacent to $x_3,\dots,x_k$) so that $B_2  = \{y_1,y_2,x_3,\dots,x_k\}$, and therefore $\edge{A,x_2,y_2,B_2}$, which is impossible.   Similarly, if $y_1 \in C_2$, then $\edge{A, x_3,y_3,C_2}$, which is also impossible.  \label{prop:c:thet233:b}
			
			\item $y_3 \notin B_2$.  Otherwise, $B_2 = \{x_1,y_2,y_3,x_4,\dots,x_k\}$ and so $\edge{C_1,x_2,y_2,B_2}$.  \label{prop:c:thet233:c}
			
			\item $y_2 \not\in Y$ and $y_3\notin Y$.  If $y_2 \in Y$, then  $Y=\{y_1,y_2,x_3,\dots,x_k\}$ and so $\edge{B_1,x_1,y_1,Y}$.	Likewise, if $y_3 \in Y$ then $\edge{C_1,x_1,y_1,Y}$.  \label{prop:c:thet233:d}
			
		\end{enumerate}

		From \ref{prop:c:thet233:a} and \ref{prop:c:thet233:b}, $y_1 \in Y$ but $y_1 \not\in B_2$, and similarly from \ref{prop:c:thet233:d} $y_2 \in B_2$ but $y_2 \not\in Y$; therefore,
		$\edge{B_2,y_2,y_1,Y}$.   Now, since $x_1 \sim y_1$, and $y_1 \in Y$, we have that $x_1 \not\in Y$. Thus, if $x_1$ were in $B_2$, its token would move in the transition from $B_2$ to $Y$.  However,  have already established that it is the token at $y_2$ that moves, and so $x_1 \not\in B_2$.  We conclude that $\edge{B_1,x_1,z,B_2}$ for some $z \not\in \{y_1,y_3\}$, so that $B_2 = \{z,y_2,x_3,\dots,x_k\}$.  Notice that since $B_2$ is independent, and $x_2 \sim y_2$, it follows that $z \neq x_2$.
		
		Using similar arguments, we determine that $\edge{C_2,y_3,y_1,Y}$, and that $\edge{C_1,x_1, w,C_2}$ for some $w\notin \{y_1,y_2,x_1\}$.  Moreover, $C_2=\{w,x_2,y_3,x_4,\dots,x_k\}$.  Again, note that since $x_3 \sim y_3$, $w \neq x_3$.
		
		From $\edge{B_2,y_2,y_1,Y}$, we have that $Y=\{y_1,z,x_3,x_4\dots,x_k\}$.  However, from $\edge{C_2,y_3,y_1,Y}$, we also have that $Y=\{y_1,x_2,w,x_4,\dots,x_k\}$.  As we have already established that $z \neq x_2$ and $w \neq x_3$, we arrive at two contradicting compositions of $Y$.
		Thus, no such graph $G$ exists, and we conclude that $\thet{2,3,3}$ is not an $i$-graph.
	\end{proof}
	
	% <<< E: Prop {prop:c:thet233}

	%______________________  SUBSUB SECTION:  theta (2,3,4) ______________________
	\subsection{ $\thet{2,3,4}$ is not an $i$-Graph} \label{subsubsec:c:234}

	%>>>S: Prop {prop:c:thet234}
	\begin{prop}	\label{prop:c:thet234}
		The graph $\thet{2,3,4}$ is not $i$-graph realizable.
	\end{prop}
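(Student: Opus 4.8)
The plan is to argue by contradiction, in the spirit of Propositions \ref{prop:c:thet224} and \ref{prop:c:thet233}. The idea is to assume $\thet{2,3,4}\cong\ig{G}$, read off the $i$-sets of $G$ that are forced as one traces the three paths of the theta graph out of a hub $X$, and show that the forced token slides eventually produce either an $i$-set of $G$ that fails to be independent, or an edge of $\ig{G}$ that is not present in $\thet{2,3,4}$.

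First I would fix notation. Let $X$ and $Y$ be the two degree-$3$ vertices of $H=\thet{2,3,4}$, let $A$ be the internal vertex of the length-$2$ path, let $B_1,B_2$ be the internal vertices of the length-$3$ path (so $X\sim B_1\sim B_2\sim Y$), and let $C_1,C_2,C_3$ be those of the length-$4$ path (so $X\sim C_1\sim C_2\sim C_3\sim Y$). Since $\deg_H(X)=3$, Lemma \ref{lem:i:K1m} gives $k:=i(G)=|X|\geq 3$; writing $X=\{x_1,\dots,x_k\}$ and applying Lemmas \ref{lem:i:K1m} and \ref{lem:i:claw} to the induced $K_{1,3}$ on $\{X,A,B_1,C_1\}$, I may relabel so that $A=(X\setminus\{x_1\})\cup\{y_1\}$, $B_1=(X\setminus\{x_2\})\cup\{y_2\}$, $C_1=(X\setminus\{x_3\})\cup\{y_3\}$ with $y_1,y_2,y_3$ pairwise distinct and outside $X$. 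Exactly as in Proposition \ref{prop:c:thet233}, independence of $A$, $B_1$, $C_1$ forces
\[
x_i\sim_G y_j\iff i=j\qquad\text{for all }i\in\{1,\dots,k\},\ j\in\{1,2,3\}.
\]

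Next I would pin down $Y$, then $B_2$, using the two short paths. Because $d_H(X,Y)=2$, Observation \ref{obs:i:d2} gives $|X-Y|=2$, and applying Lemma \ref{lem:i:claw} along $X-A-Y$ (with $X\not\sim Y$) shows the slide $A\to Y$ does not disturb $y_1$; hence $Y=(X\setminus\{x_1,x_b\})\cup\{y_1,w\}$ for some $b\in\{2,\dots,k\}$ and some $w\notin X$ with $x_b\sim_G w$. Now $d_H(B_1,Y)=2$ forces $|B_1-Y|=2$, which (after a short case check) rules out every $b\geq3$ and also forces $w\notin\{y_1,y_2,y_3\}$; thus $Y=(X\setminus\{x_1,x_2\})\cup\{y_1,w\}$, with $w$ a genuinely new vertex and $x_2\sim_G w$. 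Turning to the length-$3$ path, $X\not\sim B_2$ together with Lemma \ref{lem:i:claw} along $X-B_1-B_2$ forces the slide $B_1\to B_2$ to delete $x_1$, so $B_2=(X\setminus\{x_1,x_2\})\cup\{y_2,u\}$ with $u\in\{y_1,w\}$. The choice $u=y_1$ would make $A\sim B_2$ (the slide $x_2\to y_2$ transforms $A$ into $B_2$), which is impossible in $\thet{2,3,4}$; hence $u=w$, $B_2=(X\setminus\{x_1,x_2\})\cup\{y_2,w\}$, and this pins down two further facts: $x_1\sim_G w$ (the slide $x_1\to w$ along $B_1\to B_2$), and $B_2\to Y$ is the slide $y_2\to y_1$.

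Finally I would run the length-$4$ path, which is where the real work lies. A direct computation gives $C_1\setminus Y=\{x_1,x_2,y_3\}$ and $Y\setminus C_1=\{y_1,w,x_3\}$, so the three slides $C_1\to C_2\to C_3\to Y$ must remove $x_1,x_2,y_3$ in some order while adding $y_1,w,x_3$ in some order, each slide joining a removed vertex to an adjacent added one. Applying Lemma \ref{lem:i:claw} along $A-Y-C_3$ and along $B_2-Y-C_3$ (using $A\not\sim C_3$ and $B_2\not\sim C_3$) shows the slide $C_3\to Y$ adds neither $w$ nor $y_1$, hence adds $x_3$ and so removes $y_3$, its only neighbour among $x_1,x_2,y_3$; while Lemma \ref{lem:i:claw} along $X-C_1-C_2$ (using $X\not\sim C_2$) shows the slide $C_1\to C_2$ removes $x_1$ or $x_2$. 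Only two configurations survive. If $C_1\to C_2$ removes $x_2$, it is forced to add $w$ (since $x_2$ is adjacent to neither $y_1$ nor $x_3$), giving $C_2=(X\setminus\{x_2,x_3\})\cup\{y_3,w\}$, which contains both $x_1$ and $w$ and so is not independent because $x_1\sim_G w$, a contradiction. Otherwise $C_1\to C_2$ removes $x_1$; since the slide that removes $x_2$ must add $w$, the slide $C_1\to C_2$ must add $y_1$, giving $C_2=(X\setminus\{x_1,x_3\})\cup\{y_1,y_3\}$; but then the slide $x_3\to y_3$ transforms $A$ into $C_2$, so $A\sim C_2$ in $\ig{G}$, contradicting $d_H(A,C_2)=3$. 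Both cases are impossible, so no seed graph $G$ exists and $\thet{2,3,4}$ is not an $i$-graph. The only genuinely delicate part is the bookkeeping of the forced slides in these last two steps; the crucial point is that analysing the length-$3$ path pins down $B_2$ and forces the adjacency $x_1\sim_G w$, and this adjacency, together with the forbidden edge $A\sim C_2$, is exactly what eliminates the two remaining length-$4$ configurations.
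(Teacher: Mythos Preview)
Your proof is correct. The setup---pinning down $X,A,B_1,C_1$ via Lemma~\ref{lem:i:K1m}, then $Y$ and $B_2$ (with your $w$ playing the role of the paper's $z$), and in particular deducing $x_1\sim_G w$ from the slide $B_1\to B_2$---matches the paper exactly; this is the portion inherited from Proposition~\ref{prop:c:thet233}.

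The divergence is in how the length-$4$ path is handled. You fully resolve all three slides $C_1\to C_2\to C_3\to Y$: using Lemma~\ref{lem:i:claw} at $Y$ to force the last slide to be $y_3\to x_3$, and then case-splitting on whether the first slide removes $x_1$ or $x_2$, killing one case by non-independence ($x_1\sim_G w$) and the other by the forbidden edge $A\sim C_2$. The paper instead never looks past $C_2$: it argues directly that $y_1\notin C_2$ (carried over from observation~(ii) of Proposition~\ref{prop:c:thet233}), that $x_3\notin C_2$ (since $y_3\in C_2$), and that $w\notin C_2$ (since $w\sim x_1,x_2$ and at least one of $x_1,x_2$ survives in $C_2$ by the distance bound $|X-C_2|\le 2$). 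Hence $C_2$ misses all three elements of $Y\setminus\{x_4,\dots,x_k\}$, forcing $|Y-C_2|\ge 3$ and contradicting $d_H(C_2,Y)=2$.

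Both arguments hinge on the same adjacency $x_1\sim_G w$; the paper's route is shorter because it converts that adjacency into the single exclusion $w\notin C_2$ and finishes with a distance count, whereas your route trades brevity for a complete structural picture of the $C$-path.
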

	
	%~~~~~~~~~~~~FIG START~~~~~~~~~~~
	\begin{figure}[H]
		\centering
		\begin{tikzpicture}%[line width=0.3mm,scale=.8]			
			
			%---------- REF-------------------
			\node(cent) at (0,0) {};			
			\path (cent) ++(0:60 mm) node(rcent)  {};		
			
			%----- left C4 ----------	
			\foreach \i/\j/\c in {x/X/90,y/Y/-90} {				
				\path (cent) ++(\c:20 mm) node [std] (v\i) {};
				%\path (v\i) ++(\c:5 mm) node (Lv\i) {${\j}$};
			}
			
			\path (vx) ++(90:5 mm) node (Lvx) {$\color{blue}X=\{x_1,x_2,x_3,\dots, x_k\}$};
			\path (vy) ++(-90:5 mm) node (Lvy) {$\color{red}Y = \{y_1,z,x_3,\dots, x_k\}$};
			%\path (vy) ++(-90:5 mm) node (Lvy) {$\color{blue}Y=\{y_1,y_2,x_3,\dots, x_k\}$};
			
			\path (cent) ++(180:8 mm) node [std] (va) {};

			%\path (va) ++(180:25 mm) node (Lva) {$A=\{y_1,x_2,x_3,\dots, x_k\}$};
			\path (va) ++(175:20 mm) node (Lvau) {$\color{blue}A=$};
			\path (va) ++(185:20 mm) node (Lvad) {$\color{blue}\{y_1,x_2,x_3,\dots, x_k\}$};

			\draw[thick] (vx)--(va)--(vy);

			%------------ B Pathway
			
			\path (cent) ++(0:8 mm) node (vb) {};
			
			\path (vb) ++(90:8 mm) node [std] (vb1) {};
			\path (vb1) ++(0:25 mm) node (Lvb) {\color{blue}$B_1=\{x_1,y_2,x_3,\dots, x_k\}$};
			
			\path (vb) ++(90:-8 mm) node [std] (vb2) {};
			\path (vb2) ++(0:25 mm) node (Lvb) {\color{red}$B_2=\{z, y_2,x_3,\dots, x_k\}$};	
			%  \path (vb2) ++(0:25 mm) node (Lvb) {\color{blue}$B_2=\{x_1,y_2,x_3,\dots, x_k\}$};	
			
			\draw[thick](vx)--(vb1)--(vb2)--(vy);

			%----- right P4 ----------
			\foreach \i/\c in {1/15,2/0,3/-15} {				
				\path (rcent) ++(90:\c mm) node [std] (c\i) {};
				%\path (c\i) ++(0:5 mm) node (Lc\i) {$C_{\i}$};
			}
			
			\draw[thick] (vx)--(c1)--(c2)--(c3)--(vy);
			
			%\path (cent) ++(-90: 20mm) node (HL)  {$\mathcal{H}$};
			
			\path (c1) ++(0:25 mm) node (Lvc1) {\color{blue}$C_1=\{x_1,x_2,y_3,\dots, x_k\}$};
			%\path (c2) ++(0:25 mm) node (Lvc2) {\color{red}$C_2=\{x_1,x_2,y_3,\dots, x_k\}$};
			%\path (c3) ++(0:25 mm) node (Lvc3) {\color{red}$C_3=\{x_1,x_2,y_3,\dots, x_k\}$};
			
			%\path (c2) ++(0:28 mm) node (Lvc2) {\color{red}$C_2 = \{w,x_2,y_3,x_4\dots, x_k\}$};
			\path (c2) ++(0:5 mm) node (Lvc2) {\color{red}$C_2$};
			
			%	\path (Lvc2) ++(0:20 mm) node (Lvc2C) {\color{red} OR };
			%	\path (Lvc2C) ++(90:5 mm) node (Lvc2A) {\color{red} $\{y_1,x_2,y_3,\dots, x_k\}$};
			%	\path (Lvc2C) ++(-90:5 mm) node (Lvc2B) {\color{red} $\{x_1,y_2,y_3,\dots, x_k\}$};
			
			\path (c3) ++(0:5 mm) node (Lvc3) {\color{red}$C_3$};

		\end{tikzpicture}				
		\caption{$H=\thet{2,3,4}$ non-construction.}
		\label{fig:c:thet234}%
	\end{figure}

	\begin{proof}
		The construction for our contradiction begins similarly to that of $\thet{2,3,3}$ in Proposition \ref{prop:c:thet233}.  As before, we illustrate the graph  in Figure \ref{fig:c:thet234}, labelling the known sets in blue, and those yet to be determined in red.  Given the similarity of $\thet{2,3,3}$ and $\thet{2,3,4}$, many of the observations from Proposition \ref{prop:c:thet233} carry through to our current proof.  In particular, all of \ref{prop:c:thet233:a} - \ref{prop:c:thet233:d} hold here, including that $y_1\notin C_2$ from  \ref{prop:c:thet233:b}.  Moreover, the compositions of $Y$ and $B_2$ also hold, where $z$ is some vertex with $z \not\in \{y_1,x_2,y_2\}$.
		
		We now attempt to build $C_2$.  From Proposition \ref{prop:i:C4struct}, since $X \not\sim C_2$, $y_3 \in C_2$ (and $x_3 \not\in C_2$).  From the distance requirement of Observation \ref{obs:i:dist}, $|X - C_2| \leq 2$, and so at least one of $x_1$ or $x_2$ is in $C_2$.  Recall from the construction for Proposition \ref{prop:c:thet233} that $\edge{A,x_2,z,Y}$ and $\edge{B_1,x_1,z,B_2}$, and so $z$ is adjacent to both $x_1$ and $x_2$.  Hence, $z \not\in C_2$.  
		
		Gathering these results shows that none of $\{x_3,z,y_1\}$ are in $C_2$, and thus, $d(C_2, Y) \geq 3$, contradicting the distance requirement of Observation \ref{obs:i:dist}.  We conclude that no graph $G$ exists such that $\ig{G} = \thet{2,3,4}$.
	\end{proof}
	
	% <<< E: Prop {prop:c:thet234}

	%______________________SUBSUB SECTION:  theta (3,3,3) ______________________
	
	\subsection{$\thet{3,3,3}$ is not an $i$-Graph}  \label{subsubsec:c:333}

	%>>>S: Prop {prop:c:thet333}
	\begin{prop}	\label{prop:c:thet333}
		The graph $\thet{3,3,3}$ is not $i$-graph realizable.
	\end{prop}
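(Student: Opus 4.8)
The plan is to argue by contradiction in the spirit of Propositions~\ref{prop:c:thet224}, \ref{prop:c:thet233} and \ref{prop:c:thet234}: assume $H=\thet{3,3,3}\cong\ig{G}$ for some graph $G$, and trace token slides along the three internally disjoint $X$--$Y$ paths until either some vertex of $\ig{G}$ is forced to be a non-independent set, or two vertices are forced to be adjacent in $\ig{G}$ while lying at distance $3$ in $\thet{3,3,3}$. Label the two degree-$3$ vertices of $\thet{3,3,3}$ as $X$ and $Y$ and the three paths as $X,A_1,A_2,Y$; $X,B_1,B_2,Y$; and $X,C_1,C_2,Y$.

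First I would set up the picture at $X$. Since $\deg_H(X)=3$, Lemma~\ref{lem:i:K1m} gives $i(G)\ge 3$; write $X=\{x_1,\dots,x_k\}$ with $k\ge 3$. The set $\{X,A_1,B_1,C_1\}$ induces a $K_{1,3}$ in $\thet{3,3,3}$, so parts \ref{lem:K1m:a} and \ref{lem:K1m:b} of Lemma~\ref{lem:i:K1m} allow us to write $A_1=X-x_1+y_1$, $B_1=X-x_2+y_2$, $C_1=X-x_3+y_3$, with each $x_iy_i\in E(G)$ and $y_1,y_2,y_3$ distinct vertices of $G-X$. Next, for each path, Observation~\ref{obs:i:d2} applied to the distance-$2$ pairs $\{X,A_2\},\{X,B_2\},\{X,C_2\},\{Y,A_1\},\{Y,B_1\},\{Y,C_1\}$, together with Lemma~\ref{lem:i:claw}, constrains the middle slide: on path $A$ it moves the token off some $x_j$ with $j\ge 2$ (otherwise $X\sim A_2$), and a short check using $|X-A_2|=2$ forces its destination to lie outside $X$, so $A_2=(X-x_1-x_j)\cup\{y_1,q_A\}$ with $q_A\notin X$ and $X-A_2=\{x_1,x_j\}$; analogous statements hold on paths $B$ and $C$.

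The crux --- and the step I expect to be the main obstacle --- is to pin down $Y$: combining the three copies of the above data with Observations~\ref{obs:i:dist} and \ref{obs:i:d2} should force $X-Y=\{x_1,x_2,x_3\}$ and $Y-X=\{y_1,y_2,y_3\}$, that is, $Y=\{y_1,y_2,y_3\}\cup(X\cap Y)$. The delicate part is excluding $|X-Y|\le 2$: since Observation~\ref{obs:i:d2} runs only one way, one must track which of $x_1,x_2,x_3$ can still be occupied after the last slide of each path and argue that the three paths cannot simultaneously agree on fewer than three vacated vertices, in the bookkeeping style of the $\thet{2,3,3}$ argument. Once $Y$ is known the remaining analysis is short: the constraints then force the middle slide of path $A$ to move an $x_j$ with $j\in\{2,3\}$ to a destination in $\{y_2,y_3\}$, and after relabelling the two paths other than $A$ so that this $x_j$ is $x_2$ we get two cases. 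If the destination is $y_2$, then $A_2=(X-x_1-x_2)\cup\{y_1,y_2\}=B_1-x_1+y_1$, so (using $x_1y_1\in E(G)$) $A_2\sim B_1$ in $\ig{G}$, contradicting $d_H(A_2,B_1)=3$ in $\thet{3,3,3}$. If the destination is $y_3$, then $A_2=(X-x_1-x_2)\cup\{y_1,y_3\}$ contains both $x_3$ and $y_3$, which are adjacent in $G$ since $x_3y_3\in E(G)$; this contradicts $A_2$ being an $i$-set, hence an independent set, of $G$. Either way the assumption fails, so $\thet{3,3,3}$ is not $i$-graph realizable.
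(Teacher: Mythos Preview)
Your overall strategy matches the paper's exactly: set up $X,A_1,B_1,C_1$ via Lemma~\ref{lem:i:K1m}, determine $Y$, then derive a contradiction at $A_2$. Your endgame is correct and is precisely the paper's final step (the paper writes out only your ``destination $y_2$'' case, the cross case $x_2\to y_3$ being excluded by independence, as you observe).

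The step you flag as the crux, however, is handled more directly in the paper than you anticipate, and your plan to trace tokens through $A_2,B_2,C_2$ in order to pin down $Y$ is an unnecessary detour. The key is to apply Observation~\ref{obs:i:d2} not to $(X,Y)$---where, as you note, it says nothing useful---but to the three distance-$2$ pairs $(A_1,Y),(B_1,Y),(C_1,Y)$, obtaining $|A_1-Y|=|B_1-Y|=|C_1-Y|=2$. Now suppose some $x_i$ with $i\ge 4$ lies outside $Y$, say $x_4\notin Y$. Since $|X-Y|\le d_H(X,Y)=3$ by Observation~\ref{obs:i:dist}, at least one of $x_1,x_2,x_3$ lies in $Y$; say $x_1\in Y$, so $y_1\notin Y$ by independence. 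Then $A_1-Y\supseteq\{y_1,x_4\}$, and $|A_1-Y|=2$ forces $x_2,x_3,x_5,\dots,x_k\in Y$, whence $Y=\{x_1,x_2,x_3,z,x_5,\dots,x_k\}$ for some $z\sim x_4$; but then $\edge{X,x_4,z,Y}$, contradicting $d_H(X,Y)=3$. Thus $x_4,\dots,x_k\in Y$, and the three constraints $|A_1-Y|=|B_1-Y|=|C_1-Y|=2$ then force $x_1,x_2,x_3\notin Y$ and $y_1,y_2,y_3\in Y$. This yields $Y=\{y_1,y_2,y_3,x_4,\dots,x_k\}$ without ever examining $A_2,B_2,C_2$, after which your (and the paper's) contradiction at $A_2$ finishes the proof.
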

	
	%~~~~~~~~~~~~FIG START~~~~~~~~~~~
	\begin{figure}[H]
		\centering
		\begin{tikzpicture}%[line width=0.3mm,scale=.8]			
			
			%---------- REF-------------------
			\node(cent) at (0,0) {};			
			\path (cent) ++(0:60 mm) node(rcent)  {};		
			
			%----- left C4 ----------	
			\foreach \i/\j/\c in {x/X/90,y/Y/-90} {				
				\path (cent) ++(\c:20 mm) node [std] (v\i) {};
				%\path (v\i) ++(\c:5 mm) node (Lv\i) {${\j}$};
			}
			
			\path (vx) ++(90:5 mm) node (Lvx) {$\color{blue}X=\{x_1,x_2,x_3,\dots, x_k\}$};
			\path (vy) ++(-90:5 mm) node (Lvy) {$\color{red}Y$};
			%\path (vy) ++(-90:5 mm) node (Lvy) {$\color{blue}Y=\{y_1,y_2,x_3,\dots, x_k\}$};
			
			\path (cent) ++(180:8 mm) node (va) {};	
			
			%\path (va) ++(180:25 mm) node (Lva) {$A=\{y_1,x_2,x_3,\dots, x_k\}$};
			
			\path (va) ++(90:8 mm) node [std] (va1) {};
			\path (va1) ++(180:25 mm) node (Lva1) {\color{blue}$A_1=\{y_1,x_2,x_3,\dots, x_k\}$};
			
			\path (va) ++(90:-8 mm) node [std] (va2) {};
			\path (va2) ++(180:5 mm) node (Lva2) {\color{red}$A_2$};

			\draw[thick] (vx)--(va1)--(va2)--(vy);

			%------------ B Pathway
			
			\path (cent) ++(0:8 mm) node (vb) {};
			
			\path (vb) ++(90:8 mm) node [std] (vb1) {};
			\path (vb1) ++(0:25 mm) node (Lvb) {\color{blue}$B_1=\{x_1,y_2,x_3,\dots, x_k\}$};
			
			\path (vb) ++(90:-8 mm) node [std] (vb2) {};
			\path (vb2) ++(0:5 mm) node (Lvb) {\color{red}$B_2$};	
			%  \path (vb2) ++(0:25 mm) node (Lvb) {\color{blue}$B_2=\{x_1,y_2,x_3,\dots, x_k\}$};	
			
			\draw[thick](vx)--(vb1)--(vb2)--(vy);

			%----- right P4 ----------
			\foreach \i/\c in {1/14,2/-14} {				
				\path (rcent) ++(90:\c mm) node [std] (c\i) {};
				%\path (c\i) ++(0:5 mm) node (Lc\i) {$C_{\i}$};
			}
			
			\draw[thick] (vx)--(c1)--(c2)--(vy);
			
			%\path (cent) ++(-90: 20mm) node (HL)  {$\mathcal{H}$};
			
			\path (c1) ++(0:25 mm) node (Lvc1) {\color{blue}$C_1=\{x_1,x_2,y_3,\dots, x_k\}$};
			%\path (c2) ++(0:25 mm) node (Lvc2) {\color{red}$C_2=\{x_1,x_2,y_3,\dots, x_k\}$};
			%\path (c3) ++(0:25 mm) node (Lvc3) {\color{red}$C_3=\{x_1,x_2,y_3,\dots, x_k\}$};
			\path (c2) ++(0:5 mm) node (Lvc2) {\color{red}$C_2$};

		\end{tikzpicture}				
		\caption{$H=\thet{3,3,3}$ non-construction.}
		\label{fig:c:thet333}%
	\end{figure}

	\begin{proof}
		Let $H$ be the theta graph $\thet{3,3,3}$, with vertices labelled as in Figure \ref{fig:c:thet333}.  Suppose to the contrary that there exists some graph $G$ such that $H$ is the $i$-graph of $G$;  that is, $\ig{G} = \thet{3,3,3}$.
		
		%From Observation \ref{obs:i:dist}, since $d_H(X,Y) = 3$, $ 2\leq |X-Y | \leq 3$. We consider the two case for $|X-Y|$ separately.
		
		%\noindent \textbf{Case 1:} $|X-Y|=3$. %Without loss of generality, say $Y=\{y_1,y_2,y_3,x_4,\dots,x_k\}$, where $y_1, y_2, y_3 \not\in V(G)-X$.  
		
		Since $d_H(A_1,Y)=2$,  by Observation  \ref{obs:i:d2}, $|A_1 - Y|=2$.  Similarly, $|B_1-Y|=2$ and $|C_1 - Y|=2$.  
		Suppose that, say, $x_4 \not\in Y$. Hence, by Observation \ref{obs:i:dist} $|\{x_1,x_2,x_3\} \cap Y | \geq 1$.  Without loss of generality, say $x_1 \in Y$.    Then since $y_1 \not\in Y$ and $x_4 \notin Y$, both $x_2$ and $x_3 \in Y$ to satisfy $|A_1 - Y|=2$.  However, then $Y=\{x_1,x_2,x_3,z,x_5,\dots,x_k\}$ for some vertex $z \sim x_4$, and so $\edge{X,x_4,z,Y}$, which is not so.  	
		We therefore conclude that $x_4 \in Y$, and likewise $x_i \in Y$ for $i\geq 4$.  Thus, $\{x_1,x_2,x_3\} \cap Y = \varnothing$.
		
		Returning to $A_1$, since $d(A_1,Y)= 2$ and  $x_2,x_3 \notin Y$,  we have that $y_1 \in Y$.  Similarly, $y_2, y_3 \in Y$.  Thus, $Y=\{y_1,y_2,y_3,x_3,\dots,x_k\}$.  Moreover, $A_2$ is obtained from $A_1$ by replacing one of $x_2$ or $x_3$, by $y_2$ or $y_3$,  respectively.  Say, $\edge{A_1,x_2,y_2,A_2}$ so that $A_2=\{y_1,y_2,x_3,\dots,x_k\}$.  Now, however, we have that $\edge{B_1,x_1,y_1,A_2}$, but clearly $B_1 \not\sim A_2$.  It follows that $\thet{3,3,3}$ is not an $i$-graph.
	\end{proof}
	
	% <<< E: Prop {prop:c:thet333}

	This completes the proof of Theorem \ref{thm:c:thetas}.

	%---------------------------------------SECTION: Other Results ----------------
	
	\section{Other Results}  \label{sec:c:other}
	In this section we first display a graph that is neither a theta graph nor an $i$-graph, and then use the method of graph complements to show that every cubic 3-connected bipartite planar graph is an $i$-graph.

	\subsection{A Non-Theta Non-$i$-Graph}  \label{subsubsec:c:nonThetnonI}

	So far, every non-$i$-graph we have observed is either one of the seven theta graphs from Theorem \ref{thm:c:thetas}, or contains one of those seven  as an induced subgraph (as per Corollary \ref{coro:i:notInduced}).  This leads naturally to the question of whether theta graphs provide a forbidden subgraph characterization for $i$-graphs.  Unfortunately, this is not the case.  
	
	Consider the graph $\ntni$ in Figure \ref{fig:c:nonThetnonI}: it is not a theta graph, and although it contains several theta graphs as induced subgraphs, none of those induced subgraphs are among the seven non-$i$-graph theta graphs.  In Proposition \ref{prop:c:nonThetnonI} we confirm  that $\ntni$ is not an $i$-graph.

	%~~~~~~~~~~~~FIG START~~~~~~~~~~~
	\begin{figure}[H]
		\centering
		\begin{tikzpicture}%[line width=0.3mm,scale=.8]			
			
			%---------- REF-------------------
			\node(cent) at (0,0) {};			
			\path (cent) ++(0:65 mm) node(rcent)  {};		
			
			%----- left C4 ----------	
			\foreach \i/\j/\c in {x/X/90,y/Y/-90} {				
				\path (cent) ++(\c:20 mm) node [std] (v\i) {};
				%\path (v\i) ++(\c:5 mm) node (Lv\i) {${\j}$};
			}
			
			\path (vx) ++(90:5 mm) node (Lvx) {$\color{blue}X=\{x_1,x_2,x_3,x_4\dots, x_k\}$};
			\path (vy) ++(-90:5 mm) node (Lvy) {$\color{red}Y=\{y_1,y_2,y_3,x_4,\dots,x_k\}$};
			%\path (vy) ++(-90:5 mm) node (Lvy) {$\color{blue}Y=\{y_1,y_2,x_3,\dots, x_k\}$};
			
			\path (cent) ++(180:8 mm) node (va) {};	
			
			%\path (va) ++(180:25 mm) node (Lva) {$A=\{y_1,x_2,x_3,\dots, x_k\}$};
			
			\path (va) ++(90:8 mm) node [std] (va1) {};
			\path (va1) ++(180:25 mm) node (Lva1) {\color{blue}$A_1=\{y_1,x_2,x_3,\dots, x_k\}$};
			
			\path (va) ++(90:-8 mm) node [std] (va2) {};
			\path (va2) ++(180:25 mm) node (Lva2) {\color{red}$A_2=\{y_1,x_2,y_3,\dots,x_k\}$};

			\draw[thick] (vx)--(va1)--(va2)--(vy);

			%------------ B Pathway
			
			\path (cent) ++(0:8 mm) node (vb) {};
			
			\path (vb) ++(90:8 mm) node [std] (vb1) {};
			\path (vb1) ++(0:25 mm) node (Lvb) {\color{blue}$B_1=\{x_1,y_2,x_3,\dots, x_k\}$};
			
			\path (vb) ++(90:-8 mm) node [std] (vb2) {};
			\path (vb2) ++(0:25 mm) node (Lvb) {\color{blue}$B_2=\{y_1,y_2,x_3,\dots, x_k\}$};	
			%  \path (vb2) ++(0:25 mm) node (Lvb) {\color{blue}$B_2=\{x_1,y_2,x_3,\dots, x_k\}$};	
			
			\draw[thick](vx)--(vb1)--(vb2)--(vy);

			%----- right P4 ----------
			\foreach \i/\c in {1/14,2/0,3/-13} {				
				\path (rcent) ++(90:\c mm) node [std] (c\i) {};
				%\path (c\i) ++(0:5 mm) node (Lc\i) {$C_{\i}$};
			}
			
			\draw[thick] (vx)--(c1)--(c2)--(c3)--(vy);
			
			%\path (cent) ++(-90: 20mm) node (HL)  {$\mathcal{H}$};
			
			\path (c1) ++(0:5 mm) node (Lvc1) {\color{red}$D_1$};
			%\path (c2) ++(0:25 mm) node (Lvc2) {\color{red}$C_2=\{x_1,x_2,y_3,\dots, x_k\}$};
			%\path (c3) ++(0:25 mm) node (Lvc3) {\color{red}$C_3=\{x_1,x_2,y_3,\dots, x_k\}$};
			\path (c2) ++(0:5 mm) node (Lvc2) {\color{red}$D_2$};
			\path (c3) ++(0:5 mm) node (Lvc3) {\color{red}$D_3$};

			\draw[thick](va1)--(vb2);
			
		\end{tikzpicture}				
		\caption{A non-theta non-$i$-graph $\ntni$.}
		\label{fig:c:nonThetnonI}%
	\end{figure}
	
	%>>> S: Prop {prop:c:nonThetnonI}
	\begin{prop} \label{prop:c:nonThetnonI}
		The graph $\ntni$ in Figure \ref{fig:c:nonThetnonI} is not an $i$-graph.
	\end{prop}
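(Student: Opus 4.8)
The plan is to argue by contradiction. Suppose $\ntni\cong\ig{G}$ for some graph $G$, and label the vertices of $\ntni$ as in Figure \ref{fig:c:nonThetnonI}; I would then reconstruct the compositions of the nine $i$-sets, in the spirit of Propositions \ref{prop:c:thet224}--\ref{prop:c:thet333}, until two incompatible descriptions of some set emerge.

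First, $X,A_1,B_2,B_1$ induce a $4$-cycle in $\ntni$, since the potential chords $XB_2$ and $A_1B_1$ are absent. By Proposition \ref{prop:i:C4struct}, after relabeling we may take $X=\{x_1,\dots,x_k\}$, $A_1=\{y_1,x_2,x_3,\dots,x_k\}$, $B_1=\{x_1,y_2,x_3,\dots,x_k\}$, $B_2=\{y_1,y_2,x_3,\dots,x_k\}$, where $k=i(G)\ge 3$, $x_1\sim y_1$, $x_2\sim y_2$, and, from the independence of $B_1$ and $B_2$, also $x_1\not\sim y_2$ and $x_3\not\sim y_2$. The vertices $X$, $A_1$, $B_2$ each have degree $3$ in $\ntni$ and are centres of induced copies of $K_{1,3}$, so Lemma \ref{lem:i:K1m} applies. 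From the star at $B_2$ (leaves $B_1,A_1,Y$): $B_2-Y$ is a singleton distinct from $B_2-B_1=\{y_1\}$ and $B_2-A_1=\{y_2\}$, so after relabeling within $\{x_3,\dots,x_k\}$ we obtain $Y=\{y_1,y_2,y_3,x_4,\dots,x_k\}$ with $x_3\sim y_3$ and (as $Y$ is independent and $x_1\sim y_1$, $x_2\sim y_2$) $y_3\notin X\cup\{y_1,y_2\}$. From the star at $A_1$ together with the adjacency $A_2\sim Y$ and the fact $x_3\not\sim y_2$, I would then force $A_2=\{y_1,x_2,y_3,x_4,\dots,x_k\}$. (The induced $C_4$ on $A_1,A_2,Y,B_2$ carries the same information and can be used as a check.)

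The heart of the argument is the branch $X-D_1-D_2-D_3-Y$, along which $d_{\ntni}(D_1,Y)=3$. The induced star at $X$ (leaves $A_1,B_1,D_1$) and Lemma \ref{lem:i:K1m}(i) show $X-D_1$ is a singleton outside $\{x_1,x_2\}$, and Observation \ref{obs:i:dist} (with independence) then rules out $X-D_1=\{x_i\}$ for every $i\ge 4$; hence $D_1$ is obtained from $X$ by sliding the token at $x_3$ to some $z\sim x_3$. Moreover $z\notin X$, $z\ne y_1,y_2$ by independence, and $z=y_3$ is impossible since it would give $D_1\sim A_2$; so $z$ is a genuinely new vertex, $D_1=\{x_1,x_2,z,x_4,\dots,x_k\}$, and $|D_1-Y|=3=d_{\ntni}(D_1,Y)$. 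Therefore $D_1,D_2,D_3,Y$ is a geodesic, and the usual counting argument shows each step replaces one of $x_1,x_2,z$ by one of $y_1,y_2,y_3$ along an edge of $G$. A short case analysis on the first step now yields a contradiction every time: sliding $z$ forces $D_2=\{x_1,x_2,y_3,x_4,\dots,x_k\}$, which is adjacent to $A_2$; sliding $x_1$ to $y_1$ forces $D_2=\{y_1,x_2,z,x_4,\dots,x_k\}$, which (as $z\sim x_3$) is adjacent to $A_1$; sliding $x_1$ to $y_3$ forces, after inspecting the next step, that either $D_3=A_2$ as a set or $D_2$ is adjacent to $A_2$; and the cases that slide $x_2$ are symmetric under $1\leftrightarrow 2$ and $A_1\leftrightarrow B_1$. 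Each conclusion contradicts $\ntni$, in which $D_2$ and $D_3$ are distinct from, and nonadjacent to, all of $A_1,B_1,A_2$. As no first step survives, no such $G$ exists.

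I expect the only genuine difficulty to be the bookkeeping in the last paragraph --- tracking, in each sub-case of the geodesic $D_1,D_2,D_3,Y$, precisely which nonadjacency or non-equality of $\ntni$ fails. Everything before it is a careful but routine application of Proposition \ref{prop:i:C4struct}, Lemma \ref{lem:i:K1m}, and Observation \ref{obs:i:dist}.
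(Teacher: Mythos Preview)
Your proposal is correct and follows essentially the same strategy as the paper: fix the compositions of $X,A_1,B_1,B_2,A_2,Y$ via the two induced $4$-cycles, pin down $D_1=\{x_1,x_2,z,x_4,\dots,x_k\}$ with $z\notin\{y_1,y_2,y_3\}$, and then show no valid $D_2$ exists along the geodesic to $Y$. If anything, your case analysis is more thorough than the paper's---you explicitly treat the sub-case $x_1\to y_3$, which the paper does not list; your phrasing there is slightly loose, but the contradiction is genuine once you observe that completing the path $D_2\text{--}D_3\text{--}Y$ forces $z\sim y_1$, whence $D_2\sim A_2$.
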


	\begin{proof}
		We proceed similarly to the proofs for the theta non-$i$-graphs in Section \ref{sec:c:nonthetas}.   Let $\ntni$ be the graph in Figure \ref{fig:c:nonThetnonI}, with vertices as labelled, and suppose to the contrary that there is some graph $G$ such that $\ig{G}\cong \ntni$.  
		
		To begin, we determine the vertices of the two induced $C_4$'s of $\ntni$.  Immediately from Proposition \ref{prop:i:C4struct}, the vertices of $X$, $A_1$, $B_1$, and $B_2$ are as labelled in Figure \ref{fig:c:nonThetnonI}.  Using a second application of Proposition \ref{prop:i:C4struct}, $A_2$ differs from $A_1$ in exactly one position, different from $B_2$.  Without loss of generality, say, $\edge{A_1, x_3, y_3, A_2}$.  Then again by the proposition, $Y=\{y_1, y_2, y_3, x_4 \dots, x_k\}$ as in Figure \ref{fig:c:nonThetnonI}.
		
		We now construct the vertices of $D_1$ through a series of claims:
		
		% From Lemma \ref{lem:i:claw}, $D_1$ differs from $X$ in one vertex, different from both $A_1$ and $B_1$.  
		
		%	By Observation \ref{obs:i:d2}, since $|D_1 - Y| =3$,
		
		\begin{enumerate}[label=(\roman*)]
			\item \label{prop:c:nonThetnonI:i} $x_3$ is not in $D_1$. 
			
			From Lemma \ref{lem:i:claw}, $D_1$ differs from $X$ in one vertex, different from both $A_1$ and $B_1$; moreover,  $x_1$ and $x_2$ are in $D_1$.    		
			Notice that $\{x_4,x_5,\dots,x_k\} \subseteq D_1$: suppose to the contrary that, say,  $x_4 \notin D_1$, so that $\edge{X, x_4,z,D_1}$.  Then $z \neq y_1$, since $y_1 \sim_G x_1$ and $D_1$ is independent.  Likewise $z \neq y_2$ and $z \neq y_3$. Thus $D_1 = \{x_1, x_2, x_3, z, x_5,\dots, x_k\}$ has $|Y \cap D_1| =4$, but $d(D_1,Y)=3$, contradicting Observation \ref{obs:i:d2}.

			\item \label{prop:c:nonThetnonI:ii} $y_1$, $y_2,$ and $y_3$ are not in $D_1$.

			As above, $y_1$ and $y_2$ are not in $D_1$, as $D_1$ is an independent set containing $x_1$ and $x_2$.  
			Suppose to the contrary that $\edge{X,x_3,y_3,D_1}$, so that $D_1 = \{x_1, x_2, y_3,  \dots, x_k\}$. Then, since $x_1 \sim_G y_1$, we have that $\edge{D_1,x_1,y_1,A_2}$, a contradiction.
			
			\item \label{prop:c:nonThetnonI:iii} $D_1 = \{x_1, x_2, z, x_4, \dots, x_k\}$, where $z \notin \{y_1,y_2,y_3\}$. 
			
			Immediate from \ref{prop:c:nonThetnonI:i} and \ref{prop:c:nonThetnonI:ii}.

		\end{enumerate}
		
		A contradiction for the existence of $G$ arises as we construct $D_2$.  Since $d_{\ntni}(D_1,Y)=3$ and $|D_1 \cap Y| = 3$, at each step along the path through $D_2, D_3,$ and $Y$, exactly one token departs  from a vertex of $D_1$  and moves to one of $Y-D_1=\{y_1,y_2,y_3\}$.
		However, $D_2 \neq \{y_1,x_2,z,x_4,\dots,x_k\}$, since otherwise, $\edge{A_1,x_3,z,D_2}$.  Likewise, $D_2 \neq \{x_1,y_2,z,x_4,\dots,x_k\}$.  Finally $D_2 \neq \{x_1,x_2,y_3,x_4,\dots,x_k\}$ as again we have $\edge{A_2,y_1,x_1,D_2}$.  Thus, we cannot build a set $D_2$ such that $|D_2 \cap Y| \leq 2$ as required.  We so conclude that no such graph $G$ exists, and that $\ntni$ is not an $i$-graph.
	\end{proof}
	
	Like the seven non-$i$-graph realizable theta graphs of Theorem \ref{thm:c:thetas},  $\ntni$ is a minimal obstruction to a graph being an $i$-graph; every induced subgraph of  $\ntni$ is a theta graph.

	%---------
	\subsection{Maximal Planar Graphs} \label{subsubsec:c:maxPlanar}

	We conclude this section with a result to demonstrate that certain planar graphs are $i$-graphs and $\agraph$-graphs.  Our proof uses the following three known results.

	%\nts{Fix reference}
	
	%>>> S: Theorem {thm:c:ref:planarDual}  
	\begin{theorem} \emph{\cite[Theorem 4.6]{CLZ}} \label{thm:c:ref:cub3con}  A cubic graph is $3$-connected if and only if it is $3$-edge connected.
	\end{theorem}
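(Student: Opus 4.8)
The plan is to prove the two directions separately via the standard chain of inequalities $\kappa(G)\le\lambda(G)\le\delta(G)$ relating vertex connectivity, edge connectivity and minimum degree, supplemented by a short case analysis of vertex cuts of size at most $2$ in a cubic graph. We may assume $G$ is connected, since a disconnected graph is neither $3$-connected nor $3$-edge-connected and the equivalence then holds trivially.

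For the forward implication, suppose $G$ is cubic and $3$-connected. Since $\delta(G)=3$, the inequalities $3=\kappa(G)\le\lambda(G)\le\delta(G)=3$ force $\lambda(G)=3$, so $G$ is $3$-edge-connected. This direction requires nothing beyond the standard inequality.

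For the converse I would prove the contrapositive: if $G$ is cubic and not $3$-connected, then $G$ has an edge cut of size at most $2$. If $G$ has a cutvertex $v$, then its three incident edges are distributed among the $t\ge 2$ components of $G-v$, each of which must receive at least one; hence some component receives exactly one edge from $v$, and that edge is a bridge, so $\lambda(G)\le 1$. Otherwise $G$ has no cutvertex but has a $2$-vertex cut $\{u,v\}$. Then every component of $G-\{u,v\}$ has a neighbour in both $u$ and $v$, since a component joined to, say, only $u$ would be isolated in $G-u$, making $u$ a cutvertex. The number of edges from $\{u,v\}$ to $V(G)\setminus\{u,v\}$ equals $6-2\,[uv\in E(G)]$, distributed among the $t\ge 2$ components $C_1,\dots,C_t$ of $G-\{u,v\}$. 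If $uv\in E(G)$, this total is $4$, so some $C_i$ is joined to $\{u,v\}$ by at most two edges, and these form a $2$-edge-cut. If $uv\notin E(G)$, the total is $6$; if some $C_i$ is joined by at most two edges we are again done, so assume $t=2$ and each $C_i$ is joined by exactly three edges. Since $u$ sends at least one edge into each of $C_1,C_2$ and has exactly three such edges in all, it sends two to one of them and one to the other; relabelling if necessary, $u$ sends two edges to $C_1$ and one to $C_2$, whence $v$ sends exactly one edge to $C_1$. The edges leaving $A:=C_1\cup\{u\}$ are then precisely the single edge from $C_1$ to $v$ and the single edge from $u$ to $C_2$, a $2$-edge-cut. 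In all cases $\lambda(G)\le 2$, so $G$ is not $3$-edge-connected.

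The routine parts are the inequality-chain step and the cutvertex case; the one place requiring genuine care is the final subcase, where one must first rule out (using the absence of a cutvertex) the degenerate possibility that $u$ or $v$ sends all three of its edges into a single component, and then count exactly the edges crossing $C_1\cup\{u\}$. Once that is done, the bound $\lambda(G)\le 2$ drops out and the proof is complete.
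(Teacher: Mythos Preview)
Your proof is correct. The paper itself does not give a proof of this theorem; it simply quotes it as a known result from the textbook~\cite{CLZ} and uses it as a black box in the proof of Theorem~\ref{thm:c:3con}. So there is no ``paper's own proof'' to compare against.

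Your argument is the standard one: the forward direction is immediate from $\kappa(G)\le\lambda(G)\le\delta(G)$, and the converse goes by contrapositive, with a careful count in the case of a $2$-vertex cut $\{u,v\}$ with $uv\notin E(G)$ and both components of $G-\{u,v\}$ receiving exactly three edges. The only delicate point, which you handle correctly, is using the absence of a cutvertex to ensure each component of $G-\{u,v\}$ meets both $u$ and $v$, so that the edge count from $u$ must split $2$--$1$ across the two components, yielding the $2$-edge cut around $C_1\cup\{u\}$.
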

	% <<< E: Theorem {thm:c:ref:planarDual}  

	%>>> S: Theorem {thm:c:ref:planarDual}  
	\begin{theorem} \emph{\cite{W69}} \label{thm:c:ref:3col}  
		A connected planar graph $G$ is bipartite if and only if its dual $\dual{G}$ is Eulerian.
	\end{theorem}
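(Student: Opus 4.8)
The plan is to reduce the statement to a purely combinatorial fact about face boundaries. Fix a plane embedding of $G$. Recall two standard facts about the dual $\dual{G}$: it is connected because $G$ is, and the degree in $\dual{G}$ of the vertex $v_f$ corresponding to a face $f$ equals the number of edge-sides incident with $f$ — equivalently, the length of the closed boundary walk of $f$, where a bridge lying on $f$ is counted twice because both of its sides border $f$. Consequently, ``$\dual{G}$ is Eulerian'' is equivalent to ``every face of $G$ has boundary walk of even length'', and it suffices to show that the latter holds if and only if $G$ is bipartite.

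For the forward direction, suppose $G$ is bipartite with bipartition $(A,B)$. Every closed walk in $G$ alternates between $A$ and $B$, hence has even length; in particular each face boundary walk has even length, so $\dual{G}$ is Eulerian.

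For the converse, assume every face of $G$ has an even-length boundary walk; I will show that every cycle $C$ of $G$ has even length, which establishes that $G$ is bipartite. Regard $C$ as a simple closed curve in the plane. By the Jordan Curve Theorem, $\mathbb{R}^2 \setminus C$ has a bounded ``interior'' component and an unbounded ``exterior'' component, and since $C \subseteq G$ each face of $G$ lies entirely in one of the two. Now count, with multiplicity, the edge-sides incident with interior faces. Each edge of $C$ (which is not a bridge, being on a cycle) contributes exactly one such side, since its other side borders an exterior face; each edge of $G$ drawn strictly inside $C$ contributes two such sides (to two interior faces, or twice to one interior face if the edge is a bridge). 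Writing $k$ for the number of edges strictly inside $C$, we obtain
$$\sum_{f \ \mathrm{interior}} \deg_{\dual{G}}(v_f) \;=\; |E(C)| + 2k .$$
By hypothesis the left-hand side is a sum of even numbers, so $|E(C)|$ is even.

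The bookkeeping with the Jordan Curve Theorem and the identity relating dual degrees to boundary-walk lengths are routine. The one point I would flag is that $G$ is not assumed $2$-connected, so face boundaries are closed walks rather than cycles and bridges appear as loops in $\dual{G}$; working throughout with edge-sides (a bridge supplies two sides to the single face it borders) rather than with edge sets avoids a case analysis and keeps both implications valid in this general setting.
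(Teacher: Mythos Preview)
The paper does not prove this theorem: it is quoted as a known result with a citation to \cite{W69} and used as a black box in the proof of Theorem~\ref{thm:c:3con}. So there is no ``paper's own proof'' to compare against.

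That said, your argument is correct and is essentially the classical proof. The reduction to ``every face boundary walk has even length'' via the identification of $\deg_{\dual{G}}(v_f)$ with the length of the boundary walk of $f$ is exactly right, and your care with edge-sides (so that bridges contribute two) is the appropriate way to avoid a $2$-connectedness hypothesis. The forward direction is immediate; for the converse, your parity count
\[
\sum_{f\ \text{interior to }C}\deg_{\dual{G}}(v_f)=|E(C)|+2k
\]
is the standard trick and correctly forces $|E(C)|$ to be even. Nothing is missing.
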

	% <<< E: Theorem {thm:c:ref:planarDual}  

	%>>> S: Theorem {thm:c:ref:maxPlan}   
	\begin{theorem} \emph{\cite{H1898, TW11}} \label{thm:c:ref:maxPlan}  
		A maximal planar graph $G$ of order at least $3$ has $\chi(G) = 3$ if and only if ${G}$ is Eulerian.
	\end{theorem}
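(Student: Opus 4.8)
The plan is to prove both implications through a single bridge: a proper $3$-colouring of the triangulation $G$ corresponds exactly to a proper $2$-colouring of its dual $\dual{G}$, after which I invoke Theorem~\ref{thm:c:ref:3col}. Throughout, note that a maximal planar graph of order at least $3$ is a triangulation of the sphere (every face, including the outer one, is bounded by a triangle); for order $3$ it is $K_3$, where both sides of the equivalence hold trivially, so I may assume order at least $4$, where the link of each vertex is a cycle. Since $G$ contains a triangle, $\chi(G)\geq 3$ always, so the real content is to characterise when $\chi(G)\leq 3$.

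For the forward direction, suppose $\chi(G)=3$ and fix a proper colouring $c\colon V(G)\to\mathbb{Z}_3$. First I would record the local fact that around each triangular face the three colours are distinct, so they form $\{0,1,2\}$; hence, traversing the face clockwise, the three successive colour-differences lie in $\{+1,-1\}\subseteq\mathbb{Z}_3$ and sum to $0\pmod 3$, which forces all three to be equal. Thus each face $T$ acquires a well-defined sign $s(T)\in\{+1,-1\}$ (clockwise-increasing or clockwise-decreasing). Two faces sharing an edge traverse that edge in opposite directions, so their signs must differ; that is, $s$ is a proper $2$-colouring of $\dual{G}$, so $\dual{G}$ is bipartite. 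Applying Theorem~\ref{thm:c:ref:3col} to $\dual{G}$, whose dual is $G$ by double duality on the sphere, we conclude that $G$ is Eulerian.

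For the converse, suppose $G$ is Eulerian. The same double-dual form of Theorem~\ref{thm:c:ref:3col} gives that $\dual{G}$ is bipartite, i.e. there is a sign function $s$ on the faces of $G$ with edge-adjacent faces receiving opposite signs. I would then reverse the construction above: orient each edge and define a difference $\delta(uv)\in\{+1,-1\}\subseteq\mathbb{Z}_3$ by demanding $c(v)-c(u)=s(T)$ whenever $T$ is the face on the clockwise side of the oriented edge $u\to v$. Because the two faces meeting along any edge carry opposite signs, the two requirements on that edge agree, so $\delta$ is well defined and antisymmetric, and by construction the clockwise sum of $\delta$ around any face $T$ equals $3\,s(T)\equiv 0\pmod 3$.

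The last step, which I expect to be the main obstacle, is to integrate $\delta$ into an actual colouring: I must produce $c\colon V(G)\to\mathbb{Z}_3$ with $c(v)-c(u)=\delta(uv)$ for every edge. This is possible precisely because the face boundaries generate the cycle space of a sphere triangulation (equivalently $H^1(S^2;\mathbb{Z}_3)=0$), so the closed $1$-cochain $\delta$ is a coboundary. Concretely I would fix a spanning tree, set $c$ by integrating $\delta$ from a root along tree paths, and then verify that each non-tree edge is consistent by writing the fundamental cycle it creates as a $\mathbb{Z}_3$-sum of face boundaries, on each of which $\delta$ sums to $0$. Since every value $\delta(uv)$ is nonzero in $\mathbb{Z}_3$, the resulting $c$ is proper, giving $\chi(G)\leq 3$ and hence $\chi(G)=3$. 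The technical care lies entirely in this integration and simple-connectivity argument; the parity hypothesis enters only through Theorem~\ref{thm:c:ref:3col}, which converts \emph{Eulerian} into the bipartiteness of $\dual{G}$ that powers the whole correspondence.
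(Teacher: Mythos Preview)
The paper does not actually prove this statement: Theorem~\ref{thm:c:ref:maxPlan} is quoted from the literature (the citations to Heawood and to Tsai--West) and used as a black box in the proof of Theorem~\ref{thm:c:3con}. So there is no ``paper's own proof'' to compare against.

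That said, your argument is correct and is essentially the classical Heawood proof. The forward direction is clean: the parity argument showing that the three clockwise colour-differences around a face must all be $+1$ or all be $-1$ is right, and the observation that a shared edge is traversed in opposite senses by the two incident faces forces opposite signs, giving a bipartition of $\dual{G}$; then Theorem~\ref{thm:c:ref:3col} (with double duality) yields that $G$ is Eulerian. For the converse you correctly identify that the only nontrivial step is the integration of the $\mathbb{Z}_3$-valued $1$-cochain $\delta$, and your justification via the cycle space of a spherical triangulation being generated by the facial boundaries (equivalently, $H^1(S^2;\mathbb{Z}_3)=0$) is exactly what is needed; the spanning-tree-plus-fundamental-cycles verification you sketch is the standard way to make this explicit. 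One small remark: when you invoke Theorem~\ref{thm:c:ref:3col} in the dual direction you are implicitly using that $\dual{G}$ is connected and planar and that its dual is $G$ again, which holds here because a maximal planar graph of order at least $4$ is $3$-connected; this is worth saying explicitly, but it does not affect correctness.
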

	% <<< E: Theorem {thm:c:ref:maxPlan}  

	In our proof of the following theorem, we consider a graph $G$ that is cubic, 3-connected, bipartite, and planar. We then examine its dual $\dual{G}$, and how those specific properties of $G$ translate to $\dual{G}$.  Then, we construct the complement of $\dual{G}$, which we refer to as $H$.  We claim that $H$ is a seed graph of $G$; that is, $\ig{H}$ contains an induced copy of $G$.

	\begin{comment}
		%>>> S: Theorem {thm:c:ref:planarDual}  
		\begin{theorem} \cite{T1963} \label{thm:c:3conn}  
			Let $G$ be a 3-connected planar graph.  A cycle $C$ of $G$ is a facial cycle of some plane embedding of $G$ if and only if it is an induced cycle and non-separating.
		\end{theorem}
		% <<< E: Theorem {thm:c:ref:maxPlan}  
	\end{comment}
	
	%>>> S: Theorem {thm:c:3con}   
	\begin{theorem} \label{thm:c:3con}  
		Every cubic $3$-connected bipartite planar graph is an $i$-graph and an $\agraph$-graph.	
	\end{theorem}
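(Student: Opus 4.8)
The plan is to realize $G$ as an induced subgraph of the $i$-graph of the complement of its planar dual, following the ``complement triangle'' method of Section~\ref{sec:c:thetaComp}: set $H=\cp{\dual G}$, show $i(H)=\alpha(H)=3$ so that the $i$-sets (= $\alpha$-sets) of $H$ are exactly the triangles of $\dual G$, and then recognise $G$ among those triangles. The Deletion Lemma then finishes the job.

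First I would extract the structural facts about $\dual G$. Since $G$ is cubic and planar, every face of $\dual G$ has length $3$ (the face corresponding to a vertex of $G$ has length equal to that vertex's degree), so $\dual G$ is a triangulation; by Theorem~\ref{thm:c:ref:cub3con} a cubic $3$-connected graph is $3$-edge-connected, hence $G$ has no bridge and no $2$-edge-cut, so $\dual G$ has no loop and no multi-edge and is a simple maximal planar graph of order at least $3$. Because $G$ is bipartite and planar, Theorem~\ref{thm:c:ref:3col} gives that $\dual G$ is Eulerian, and then Theorem~\ref{thm:c:ref:maxPlan} yields $\chi(\dual G)=3$. In particular $\dual G$ is $K_4$-free, so every triangle of $\dual G$ is a maximal clique and $\omega(\dual G)=3$.

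Now put $H=\cp{\dual G}$. Since $\dual G$ is a triangulation every edge lies on a triangle, so Observation~\ref{obs:c:i3} gives $i(H)\neq 2$, and $H$ has no universal vertex because $\dual G$ has no isolated vertex, hence $i(H)\geq 3$; conversely any triangle $\{a,b,c\}$ of $\dual G$ is an independent set of $H$ that dominates $H$ (no vertex of $\dual G$ is adjacent to all of $a,b,c$, as $\dual G$ is $K_4$-free), so $i(H)=3$, and likewise $\alpha(H)=\omega(\dual G)=3$. Thus $H$ is well-covered and $\ig H=\ag H$, with vertex set exactly the triangles of $\dual G$; two triangles $T_1=\{a,b,c\}$ and $T_2=\{a,b,d\}$ sharing the single edge $ab$ are adjacent in $\ig H$, the slide being along the edge $cd\in E(H)$ (indeed $cd\notin E(\dual G)$, else $\{a,b,c,d\}$ is a $K_4$). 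Restricting to the \emph{facial} triangles of $\dual G$: these are in bijection with $V(G)$ by planar duality, and two facial triangles share exactly one edge iff the two faces are adjacent iff the corresponding vertices of $G$ are adjacent; hence the subgraph of $\ig H=\ag H$ induced on the facial triangles is isomorphic to $G$. So $G$ is an induced subgraph of the $i$-graph (and $\alpha$-graph) $\ig H$, and the Deletion Lemma (Lemma~\ref{lem:i:inducedI}) together with its $\alpha$-graph analogue gives that $G$ is both an $i$-graph and an $\alpha$-graph.

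The main obstacle is that $\dual G$ need not be $4$-connected (equivalently, $G$ need not be cyclically $4$-edge-connected — for instance glue two copies of $K_{2,3}$ along their degree-$2$ vertices), so $\dual G$ may have \emph{separating}, non-facial triangles, which show up as extra vertices of $\ig H$ that must be deleted; importantly one cannot instead kill them by attaching a $K_4$-apex, since that would raise $\alpha(H)$ to $4$ and destroy the well-coveredness needed for the $\alpha$-graph conclusion, so the deletion-closure of the $i$-graph and $\alpha$-graph classes is doing real work here. The remaining effort is bookkeeping: checking that the three cited theorems genuinely apply to $\dual G$ (simplicity and $3$-connectedness of the dual), and confirming that $\ig H$ contains no adjacency or vertex beyond those described above.
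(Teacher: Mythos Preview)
Your proof is correct and follows essentially the same route as the paper: pass to the planar dual, use the cited theorems to show $\dual G$ is a simple $K_4$-free triangulation, take $H=\cp{\dual G}$ so that $i(H)=\alpha(H)=3$ and the $i$-sets are exactly the triangles of $\dual G$, identify $G$ with the facial triangles, and clean up via the Deletion Lemma. Your version is in fact slightly more careful than the paper's---you explicitly verify that the slide edge $cd$ lies in $H$, you note the need for an $\alpha$-graph analogue of the Deletion Lemma, and you correctly observe that separating (non-facial) triangles cannot be killed by the $K_4$-apex trick without destroying well-coveredness.
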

	
	\begin{proof}
		Let $G$ be a cubic 3-connected bipartite planar graph and consider the dual $\dual{G}$ of $G$.  Since $G$ is bridgeless, $\dual{G}$ has no loops.  Moreover, since $G$ is 3-connected, Theorem \ref{thm:c:ref:cub3con} implies that no two edges separate $G$; hence, $\dual{G}$ has no multiple edges.  Therefore, $\dual{G}$  is a (simple) graph.  Further, since $G$ is cubic, each face of $\dual{G}$ is a triangle, and so $\dual{G}$ is a maximal planar graph.  
		
		We note the following two key observations.  First, since each face of $\dual{G}$ is a triangle,

		\begin{enumerate}[label=(\arabic*)]
			\item  \label{thm:c:3con:i}   each edge of $\dual{G}$ belongs to a triangle.
			
		\end{enumerate}
		
		\noindent For the second, note that since $G$ is bipartite, $\dual{G}$ is Eulerian (by Theorem \ref{thm:c:ref:3col}).  Then, by Theorem \ref{thm:c:ref:maxPlan}, $\chi(\dual{G})=3$, so $\dual{G}$ does not contain a copy of $K_4$.  Thus,
		
		\begin{enumerate}[label=(\arabic*),resume]
			\item \label{thm:c:3con:ii}   every triangle of $\dual{G}$ is a maximal clique.
		\end{enumerate}

		Now, by the duality of $\dual{G}$ and $G$, there is a one-to-one correspondence between the facial triangles of $\dual{G}$ and the vertices of $G$.   Let $H$ be the complement of $\dual{G}$.  By \ref{thm:c:3con:i} and \ref{thm:c:3con:ii}, $i(H) = \alpha(H)=3$, and every maximal independent set of $G$ corresponds to a triangle of $\dual{G}$.  But again, by duality, the facial triangles of $\dual{G}$ and their adjacencies correspond to the vertices of $G$ and their adjacencies.  Therefore, $\ig{H}$ contains $G$ as an induced subgraph.   Any additional unwanted vertices of $\ig{H}$ can be removed by applying the Deletion Lemma (Lemma \ref {lem:i:inducedI}).
	\end{proof}

\section{Open Problems}
\label{sec:open}
We conclude with a few open problems. Although the problems are stated here for $i$-graphs, many are relevant to other reconfiguration graphs pertaining to domination-type parameters and are also mentioned in \cite{MT18, LauraD}.

\begin{problem}
	\emph{Determine conditions on the graph $G$ under which $\ig{G}$ is (a) connected (b) disconnected.}
\end{problem}	

A graph $G$ is \emph{well-covered} if all its maximal independent sets have the same cardinality, that is, if $\alpha(G)=i(G)$.

\begin{problem}
    \begin{enumerate}
       \item[\emph{(a)}] \emph{Determine those $i$-graphs that are also $\alpha$-graph realizable.}
        \item[\emph{(b)}] \emph{Determine those $i$-graphs $H$ for which there exists a well-covered seed graph $G$ such that $\ig{G}\cong H$. }
    \end{enumerate}
	
\end{problem}

\begin{problem} 
	\emph{We have already seen that classes of graphs trees, cycles, and, more generally, block graphs, are $i$-graphs. \smallskip
 \newline
  As we build new graphs from these families of $i$-graphs, using tree structures, which of those are also $i$-graphs?  For example, are cycle-trees $i$-graphs?  Path-trees?  For what families of graphs $H$ are $H$-trees $i$-graphs?}
\end{problem}

\begin{problem} \label{con:op:tree}
	\emph{Determine the structure of  $i$-graphs of various families of trees.  For example, consider 
 \begin{enumerate}
    \item[(a)] caterpillars in which every vertex has degree $1$ or $3$,
    \item[(b)] spiders ($K_{1,r}$ with each edge subdivided).
    \end{enumerate}}

\end{problem}

\begin{problem} \label{con:op:ham}
	\emph{Find more classes of $i$-graphs that are Hamiltonian, or Hamiltonian traceable.}
\end{problem}

\begin{problem} \label{con:op:domChain}
	\emph{Suppose $G_1, G_2,\dots$ are graphs such that $\ig{G_1} \cong G_2$, $\ig{G_2}\cong G_3$, $\ig{G_3} \cong G_4, \dots$ .  Under which conditions does there exist an integer $k$ such that $\ig{G_k} \cong G_1$?}
	
	%Hedetniemi Domination - Chain type questions
\end{problem}

As a special case of Problem \ref{con:op:domChain}, note that for any $n\geq 1$, $\ig{K_n} \cong K_n$, and that for $k \equiv 2 \Mod{3}$, $\ig{C_k} \cong C_k$.   

\begin{problem} 
	\emph{Characterize the graphs $G$ for which $\ig{G} \cong G$.}
\end{problem}

\medskip

\bigskip

\noindent\textbf{Acknowledgements\hspace{0.1in}}We acknowledge the support of
the Natural Sciences and Engineering Research Council of Canada (NSERC), RGPIN-2014-04760 and RGPIN-03930-2020.

\noindent Cette recherche a \'{e}t\'{e} financ\'{e}e par le Conseil de
recherches en sciences naturelles et en g\'{e}nie du Canada (CRSNG), RGPIN-2014-04760 and
RGPIN-03930-2020.
%BeginExpansion
\begin{center}
\includegraphics[width=2.5cm]{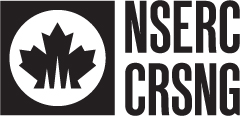}%
\end{center}
%EndExpansion
	
	%--------------------------------------------------------------------------------------%
	%								BIBLIOGRAPHY										
	%--------------------------------------------------------------------------------------%

	\bibliographystyle{abbrv} % Choose the style that you want to use (or your supervisor wants you to use).
	\bibliography{LTPhDBib} % Use your bibtex file. 
	
\end{document}